\begin{document}
\newtheorem*{theo}{Theorem} 
\newtheorem*{pro} {Proposition}
\newtheorem*{cor} {Corollary}
\newtheorem*{lem} {Lemma}
\newtheorem{theorem}{Theorem}[section]
\newtheorem{corollary}[theorem]{Corollary}
\newtheorem{lemma}[theorem]{Lemma}
\newtheorem{proposition}[theorem]{Proposition}
\newtheorem{conjecture}[theorem]{Conjecture}
\newtheorem{problem}[theorem]{Problem}

\theoremstyle{definition}
 \newtheorem{definition}[theorem]{Definition}
  \newtheorem{example}[theorem]{Example}
   \newtheorem{remark}[theorem]{Remark}
   
\newcommand{\NN}{{\mathbb{N}}}
\newcommand{\RR}{{\mathbb{R}}}
\newcommand{\CC}{{\mathbb{C}}}
\newcommand{\PP}{{\mathbb{P}}}
\newcommand{\ZZ}{{\mathbb{Z}}}
\newcommand{\bZ}{{\mathbb{Z}}}
\newcommand{\HH}{\mathfrak H}
\newcommand{\KK}{\mathfrak K}
\newcommand{\LL}{\mathfrak L}
\newcommand{\as}{\ast_{\sigma}}
\newcommand{\supp}{\mbox{supp}}
\newcommand{\tn}{\vert\hspace{-.3mm}\vert\hspace{-.3mm}\vert}
\def\A{{\cal A}}
\def\B{{\cal B}}
\def\D{{\cal D}}
\def\E{{\cal E}}
\def\F{{\cal F}}
\def\H{{\cal H}}
\def\J{{\cal J}}
\def\K{{\cal K}}
\def\L{{\cal L}}
\def\R{{\cal R}}
\def\N{{\cal N}}
\def\M{{\cal M}}
\def\MI{{\cal MI}}
\def\gM{{\frak M}}
\def\O{{\cal O}}
\def\P{{\cal P}}
\def\S{{\cal S}}
\def\T{{\cal T}}
\def\U{{\cal U}}
\def\V{{\mathcal V}}
\def\qed{\hfill$\square$}

\title{
Hypercube subgroups of (outer) reduced Weyl groups of the Cuntz algebras}

\author{Francesco Brenti\footnote{Partially supported by MIUR Excellence Department Projects 
awarded to the Department of Mathematics of the University of Rome ``Tor Vergata'', CUP E83C18000100006 and E83C23000330006.}
, Roberto Conti\footnote{Partially supported by Sapienza University of Rome (Progetti di Ateneo 2023) and INDAM-GNAMPA.}
, Gleb Nenashev
\\}
\date{\today}
\maketitle
\markboth{F. Brenti, R. Conti, G. Nenashev}{}
\renewcommand{\sectionmark}[1]{}

\begin{abstract} 
We develop some tools, of an algebraic and combinatorial nature, which enable us to obtain a detailed
description of certain quadratic subgroups of the (outer) reduced Weyl group of the Cuntz algebra $\O_n$. 
In particular, for $n=4$ our findings give a self-contained theoretical interpretation of the groups tabulated 
in \cite{AJS18}, which were obtained with the help of a computer. For each of these groups we provide a set of generators. 
A prominent role in our analysis is played by a certain family of subgroups of the symmetric group of a discrete square
which we call bicompatible.
\end{abstract}

\vskip 0.7cm
\noindent {\bf MSC 2020}: 05E16, 05A05, 05A15 (Primary); 46L40, 05E10 (Secondary). 

\smallskip
\noindent {\bf Keywords}: 
permutation, stable permutation, Cuntz algebra, automorphism, reduced Weyl group, bicompatible subgroup.

\newpage

\tableofcontents

\section{Introduction}

This work is a contribution to the study of Cuntz algebras automorphisms. The Cuntz algebras $\O_n$ are a well-known family of $C^*$-algebras, that have been central to many investigations in operator algebras (see \cite{ACR21} for a broad overview) since their first appearence in the celebrated work \cite{Cu77}. Although Cuntz himself suggested it as an interesting line of investigation, 
inspired by a comparison with the theory of semisimple Lie groups, the study of the automorphisms of the Cuntz algebras did not immediately fluorish as expected (but see e.g. \cite{Arc,Evans,MaTo}), but started receiving a great deal of attention only during the last twenty years, beginning with the work of Conti and Szymanski \cite{CoSz11}, see also \cite{CKS10,CHS12b,CHS12c}. 
In general, this is a rich topic, quite far from being trivial and
with many different facets. In this paper, we focus mostly on the combinatorial side of this wide area,
following our previous works on the subject \cite{BC,BCN,BCN2}.
The reason why combinatorics enters the game, as foreseen and suggested already in \cite{Cu80}, is that the natural definition of a reduced Weyl group for ${\rm Aut}(\O_n)$ leads to the consideration of a special family of automorphisms that are described, through the Cuntz-Takesaki correspondence, by suitable permutations of the hypercubes $\{1,\ldots,n\}^t$, $t \geq 1$.

In order to describe the problem we are interested in, we need to introduce some notation (see Sect. \ref{prelim} for a more systematic account).

Given an integer $n \geq 2$, the Cuntz algebra $\O_n$ is generated, as a $C^*$-algebra, by a family of $n$ isometries $S_i, \ i = 1,\ldots,n$ such that $\sum_{i=1}^n S_i S_i^* = 1$.
Moreover, any (unital, $*$-preserving) endomorphism of $\O_n$ is of the form $\lambda_u$, for some unitary $u$,
where 
$\lambda_u(S_i) = uS_i, \ i=1,\ldots,n \ . $

For any set of unitaries $E \subset \U(\O_n)$, we define
$$\lambda(E)^{-1}:=\{\lambda_u \in {\rm Aut}(\O_n) \ | \ u \in E\} \ , $$
i.e. the set of automorphisms of $\O_n$ induced by unitaries in $E$.

Defining $\P_n := \bigcup_{t \in {\mathbb N}} \P_n^t$, with 
$$\P_n^t = \Big\{u \in \U(\O_n) \ | \ u = 
\sum\limits_{\substack{(\alpha,\beta) \\ \alpha=(\alpha_1,\ldots,\alpha_t) \\ \beta=(\beta_1,\ldots,\beta_t)}}
S_{\alpha_1} \cdots S_{\alpha_t}  S_{\beta_t}^* \cdots S_{\beta_1}^* \Big\}, $$ 
being naturally identified with the set of permutation matrices in $M_{n^t}({\mathbb C})$ (a set of cardinality $n^t !$), it turns out that the reduced Weyl group $W_n$ of the Cuntz algebra $\O_n$ can be described as
$\lambda(\P_n)^{-1} $.

Now, fix the ``level'' $t \in {\mathbb N}$.  
As $\lambda(\P_n^1)^{-1} := \{\lambda_u \ | \ u \in \P_n^1 \} \simeq \P_n^1 \simeq S_n$ 
(these automorphisms of $\O_n$ are usually referred to as permutative Bogolubov, or quasi-free automorphisms),
the first nontrivial case of course occurs for $t=2$.

For $u \in \P_n^t$ it is well understood at least theoretically
when $\lambda_u \in {\rm Aut}(\O_n)$ (see \cite[Theorem 3.2]{CoSz11} and also Theorem 2.1 therein). However determining $\lambda(\P_n^t)^{-1} $ becomes seriously demanding from the computational point of view, unless $t$ and $n$ are sufficiently small (say $n+t \leq 7$, but not all such cases are fully computed). 
See e.g. \cite{AJS18}, Table 2 for a recent overview of the known cardinalities of such sets. These figures were obtained through massive computer calculations that probably 
even now can not be extended.

Notice that any $\lambda(\P_n^t)^{-1}$ contains automorphisms with both finite and infinite order. In this paper we will only consider those of finite order,
as we will be looking for groups sitting in $\lambda(\P_n^t)^{-1}$
and, more precisely, for the maximal groups in the image $\pi(\lambda(\P_n^t)^{-1})$ of $\lambda(\P_n^t)^{-1}$ in the outer automorphism group ${\rm Out}(\O_n)$, 
the quotient of ${\rm Aut}(\O_n)$ modulo inner automorphisms.
We can now formulate the problem that motivated this work.

\begin{problem}
Given integers $n \geq 2$ and $t \geq 2$, what are the maximal groups contained in  $\{\pi(\lambda_u) \in {\rm Out}(\O_n) \ | \ u \in \P_n^t\}?$
\end{problem}

\medskip
A discussion of such groups for a few values of $n$ and $t$
appears in \cite[Section 8.2]{AJS18}, see the tables 3-5 therein obtained with the help of a computer. 
In particular, for $n=4$ and $t=2$ one finds out that there are 46 such distinct maximal groups in 
$\{\pi(\lambda_u) \in {\rm Out}(\O_4) \ | \ u \in \P_4^2\}$
(see Table 1,
which reproduces Table 5 in \cite{AJS18})
which is a set of cardinality 240480 (cf. \cite[p. 5866]{CoSz11}).

In this work we provide a detailed explicit description of all these 46 maximal groups.
To the best of our knowledge, this is the first time that finite groups of automorphisms of the Cuntz algebras that are not of quasi-free type are examined in such detail.
In order to do this we generalize
and refine some of the results in \cite{BC,BCN,BCN2}. In
particular, as a consequence of these generalizations, we
obtain a proof of Conjecture 12.2 of \cite{BC} which is
quite different from the one recently given in \cite{Pan}.
Our investigation shows that some of these groups are special cases
of groups that exist for any $n \geq 4$, while others do not
seem to be easily generalizable to $n>4$.
Many of the arguments that we present are more general than those strictly needed to get the result, and might be useful elsewhere. Indeed, we believe that we have laid down the basis for handling 
several other cases as well.

The paper is organized as follows. In the next section we recall some notation and results that we use in the rest of the work. Among various things, we introduce the notion of {\it bicompatible subgroup} which plays a crucial role in the sequel. In Section \ref{horiz_vert} we present a number of results pertaining
to the compatibility of elements of $S([n]^2)$  that permute some rows (resp. columns) all in the same way and leave everything else fixed, with other permutations. In particular, we obtain in Theorem \ref{solveconj} a new proof of the conjecture mentioned above, and some related enumerative results. In Section \ref{Subgroups} we construct some notable bicompatible subgroups
of $S([n]^2)$, 
see Theorems \ref{Bicomp1} (that works for all $n$) and \ref{NBCprec} (only for even $n$). The $n=4$ case of these subgroups will be useful later. 
In Section \ref{Quadraticsub}, which is the original motivation of this work, all the previous results are put together to describe in detail all the 46 maximal groups of outer automorphisms of $\O_4$ arising from $S([4]^2)$. 
In Section \ref{bicomp_subgs} we derive a series of results 
about bicompatible subgrups of $S([n]^2)$ and then we apply 
them to the classification of all the bicompatible subgroups 
of $S([4]^2)$ that are isomorphic to $S_4$. These results 
are not needed anywhere else in the paper but we present them because 
we feel that they are of independent interest, and because they show
how some of the subgroups presented and studied in Section 
\ref{Quadraticsub} were found. 
The last section contains some directions for further research.
In particular, we include a couple of figures obtained with the help of a computer describing the stable permutations of rank one in $S([4]^2)$ of cycle type $(2,2)$ and $(2,2,2)$, for which there is no complete theory yet.

\section{Preliminaries} \label{prelim}

Let $n$ be a positive integer. We denote by $[n]$ the set $\{1,\ldots,n\}$ and, more generally, $[n]^k$ for the cartesian power $\{1,\ldots,n\}^{\times k}$. Given a set $A$ we let
$S(A):= \{ \sigma: A \rightarrow A : \sigma \mbox{ bijection }\}$ 
be the symmetric group on $A$ and $S_n:=S([n])$. 
We denote by $1$ the identity of $S_n$. We write permutations $\sigma \in S(A)$ in {\em disjoint cycle form},
omitting the $1$-cycles (i.e., the fixed points). 
So, for example, $\sigma = ( (2,2),(2,4),(4,2)) $ denotes
the permutation $\sigma \in S([4]^2)$ such that $\sigma(2,2)=(2,4)$,
$\sigma(2,4)=(4,2)$, $\sigma(4,2)=(2,2)$, and $\sigma(i,j)=(i,j)$
for every other element $(i,j)$ of $[4]^2$.
For $u \in S([n]^t)$ and $v \in S([n]^r)$ we let the {\em tensor product} of $u$ and $v$
be the permutation $u \otimes v \in S([n]^{t+r})$ defined by
\[
(u \otimes v)(\alpha,\beta) := (u(\alpha),v(\beta))
\]
for all $\alpha \in [n]^t$ and $\beta \in [n]^s$.

\medskip
For $n \geq 2$, the Cuntz algebra $\O_n$ is the universal $C^*$-algebra generated by $n$ isometries $S_1, \ldots, S_n$ whose range projections sum up to 1, i.e.
$\sum_{i=1}^n S_i S_i^* = 1$ ($1$ denotes the unit of the algebra). It readily follows that $S^*_i S_j = \delta_{ij} 1$ for all $i,j \in [n]$.
The Cuntz-Takesaki correspondence is a well-known bijection between the semigroup ${\rm End}(\O_n)$ of (unital, $*$-)endomorphisms of $\O_n$ and the unitary group $\U(\O_n) = \{u \in \O_n \ | \ u^* u = 1 = u u^*\}$ that associates to the unitary $u$ the endomorphism $\lambda_u$ uniquely determined
$\lambda_u(S_i) = u S_i$ for all $i \in [n]$. 
A simple computation then shows that 
$$\lambda_u(S_{i_1} \cdots S_{i_h} S^*_{j_k} \cdots S^*_{j_1}) 
= u_h S_{i_1} \cdots S_{i_h} S^*_{j_k} \cdots S^*_{j_1} (u_k)^*
\ , $$
where, for every $m \geq 1$, $u_m := u \varphi(u) \cdots \varphi^{m-1}(u)$ and $\varphi(x)=\sum_{i=1}^n S_i x S_i^*$ is the so-called canonical endomorphism of $\O_n$, which is associated to the unitary $F := \sum_{i,j=1}^n S_i S_j S_i^* S_j^*$, 
and satisfies $S_i x = \varphi(x)S_i$ for all $x \in \O_n$ and $i \in [n]$.
For convenience, we will sometimes write $S_\alpha$ for $S_{i_1} \cdots S_{i_h} $ where $\alpha$ is the multi-index $(i_1,\ldots,i_h)$ and say that $\ell(\alpha) = h$ is the length of $\alpha$. So, for example, $S_{311}=S_3 S_1 S_1$ and hence
$S_{311}^*:=(S_{311})^*=S_1^* S_1^* S_3^*$.
It is not hard to see that 
for every $u,v \in \U(\O_n)$ one has the identities
\begin{equation}
\label{product}
\lambda_u \circ \lambda_v = \lambda_{\lambda_u(v)u}
\end{equation}
and
\[ 
 {\rm ad}(u) \circ \lambda_v = \lambda_{u v \varphi(u^*)} \, 
\]
that we will repeatedly use throughout the paper without further mention. Here, ${\rm ad}(u)$ denotes the inner automorphism defined by ${\rm ad}(u)(x) = u x u^*$, $x \in \O_n$.

We denote by ${\rm Aut}(\O_n)$ the group of ($*$-)automorphisms of $\O_n$, by ${\rm Out}(\O_n)$ its quotient by the normal subgroup of inner automorphisms and by $\pi: {\rm Aut}(\O_n)$ $\to {\rm Out}(\O_n)$ the canonical projection.

For any integer $t \geq 1$, we set 
$$
\P_n^t = \Big\{u \in \U(\O_n) \ | \ u = \sum_{(\alpha,\beta)} S_\alpha S_\beta^*, \ \ell(\alpha) = t = \ell(\beta)\Big\}
$$ 
which can be identified with $S([n]^t)$, the symmetric group of the hypercube $[n]^t$, and thus also with the set of permutation matrices in $M_{n^t}({\mathbb C}) \simeq M_n \otimes \cdots \otimes M_n$ ($t$ times).
So, for example, if $\sigma = ( (2,2),(2,4),(4,2)) \in S([4]^2)$
then the corresponding unitary of $\P_4^2$ is
\[
S_{24} \, S_{22}^*+S_{42} \, S_{24}^*+S_{22} \, S_{42}^*+
\sum_{(i,j) \in [4]^2 \setminus \{ (2,2),(2,4),(4,2) \}}
S_{ij} \, S_{ij}^*\, ,
\]
(which can also be written as $S_{24} \, S_{22}^*+S_{42} \, S_{24}^*+S_{22} \, S_{42}^*+S_1 S_1^* + S_3 S_3^* +S_{21} \, S_{21}^*+$ 
$S_{23} \, S_{23}^*+S_{41} \, S_{41}^*+S_{43} \, S_{43}^*+S_{44} \, S_{44}^*$).
In the rest of this work we will switch freely between the
notations $\P_n^t$ and $S([n]^t)$.
In particular, if $u \in S([n]^t)$ then we will write $\lambda_u$
to mean the endomorphism associated to the unitary corresponding 
to $u$.
Note that $\P_n^t \subset \P_n^{t+1}$ and
$\varphi(\P_n^t) \subset \P_n^{t+1}$, for all $t$.
Furthermore, it is not hard to check that if $u \in \P^t$ corresponds to a permutation $\sigma \in S([n]^t)$ then $u$, seen 
as an element of $\P^{t+1}$, corresponds to $\sigma \otimes 1 \in S([n]^{t+1})$, and $\varphi(u) \in \P^{t+1}$ corresponds to
$1 \otimes \sigma \in S([n]^{t+1})$. 
For example, $(1,2) \in S([2]^1)$ corresponds
to the unitary $u=S_1 S_2^*+S_2 S_1^*=S_1 1 S_2^*+S_2 1 S_1^*=
S_1 (S_1 S_1^*+S_2 S_2^*) S_2^*+S_2 (S_1 S_1^*+S_2 S_2^*) S_1^*$
$=S_{11} \, S_{21}^*+S_{12} \, S_{22}^*+S_{21} \, S_{11}^*+S_{22} \, S_{12}^*$, which in turn maps to the permutation
$(1,2) \otimes 1 \in S([2]^2)$. Similarly, 
$\varphi(u)=S_1 u S_1^* + S_2 u S_2^*=
S_1 (S_1 S_2^*+S_2 S_1^*) S_1^*+S_2 (S_1 S_2^*+S_2 S_1^*) S_2^*=
S_{11} \, S_{12}^*$ 
$+S_{12} \, S_{11}^*+S_{21} \, S_{22}^*+S_{22} \, S_{21}^*$
maps to $1 \otimes (1,2) \in S([2]^2)$.
In particular, $\lambda_u = \lambda_{u \otimes 1}$ for all
$u \in S([n]^t)$, while $\lambda_{1 \otimes u}= \lambda_{\varphi(u)}= {\rm ad}(u^*) \circ \lambda_u$.
We also set $\P_n = \bigcup_{t \in {\mathbb N}} \P_n^t$.
Note that if $v \in \P_n^2$ then 
$$\lambda_u \circ \lambda_v = \lambda_{u \varphi(u) v \varphi(u^*)}$$ 
for any $u \in \U(\O_n)$.

\medskip
The reduced Weyl group $W_n$ (\cite{CoSz11}) of the Cuntz algebra $\O_n$ is isomorphic to
\[
\lambda^{-1}(\P_n) := \{\lambda_u \in {\rm Aut}(\O_n) \ | \ u \in \P_n\} . 
\]
Our main object of study in this work is the image of $W_n$ in
${\rm Out}(\O_n)$, $\pi(\lambda^{-1}(\P_n))$.
\smallskip

Recall for convenience the following fact.
\begin{proposition}
Let $u \in \P_n^t$ be such that $\lambda_u \in {\rm Aut}(\O_n)$, and let $v \in \P_n^t$ and $w \in \U(\O_n)$ be such that ${\rm Ad}(w) \lambda_u = \lambda_v$ (i.e., 
$\pi(\lambda_u) = \pi(\lambda_v)$). Then $w \in \P_n^{t-1}$.
\end{proposition}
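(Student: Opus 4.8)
The plan is to distill the hypothesis into a single identity between unitaries and then to localize $w$ in three stages: first inside the core UHF subalgebra $\F_n$, then at the finite level $t-1$, and finally as an honest permutation. For the reduction, I would combine the identity $\mathrm{ad}(w)\circ\lambda_u=\lambda_{wu\varphi(w^*)}$ with the injectivity of the Cuntz--Takesaki correspondence $a\mapsto\lambda_a$: the hypothesis $\mathrm{Ad}(w)\lambda_u=\lambda_v$ then collapses to
\[ w\,u\,\varphi(w^*)=v,\qquad\text{equivalently}\qquad w=v\,\varphi(w)\,u^{*}. \]
Everything afterwards is an analysis of this equation, exploiting that $u,v\in\P_n^t\subset\F_n^t$ are gauge-invariant permutation matrices.

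Next I would show $w\in\F_n$. Since $\gamma_z\circ\lambda_a\circ\gamma_z^{-1}=\lambda_{\gamma_z(a)}$ and $u,v$ are fixed by the gauge action $\gamma$, both $\lambda_u$ and $\lambda_v$ commute with $\gamma$, hence so does $\mathrm{Ad}(w)=\lambda_v\circ\lambda_u^{-1}$. Thus $\mathrm{Ad}(\gamma_z(w))=\mathrm{Ad}(w)$ for all $z$, and as $\O_n$ is simple (trivial center) we get $w^{*}\gamma_z(w)=\chi(z)1$ for a character $\chi(z)=z^{m}$; a short trace computation (no homogeneous element of nonzero degree can be unitary, since $\tau(ww^{*})=n^{-|m|}\neq 1$ for $m\neq 0$) forces $m=0$, so $w\in\F_n$. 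The elegant step, pinning the level, comes next: let $E^{(s)}\colon\F_n\to\F_n^s$ be the trace-preserving conditional expectation, which is an $\F_n^s$-bimodule map and satisfies $E^{(s)}\circ\varphi=\varphi\circ E^{(s-1)}$. Applying $E^{(s)}$ to $w=v\varphi(w)u^{*}$ for $s\ge t$ (so that $u,v\in\F_n^s$) gives $w^{(s)}=v\,\varphi(w^{(s-1)})\,u^{*}$ with $w^{(s)}:=E^{(s)}(w)$, and subtracting from the full equation yields $w-w^{(s)}=v\,\varphi(w-w^{(s-1)})\,u^{*}$. Since $v,u$ are unitary and $\varphi$ is isometric, $\|w-w^{(s)}\|=\|w-w^{(s-1)}\|$ for every $s\ge t$; this sequence is therefore constant for $s\ge t-1$ yet tends to $0$, whence $w=w^{(t-1)}\in\F_n^{t-1}=M_{n^{t-1}}$.

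It remains to see that $w$ is a permutation. It is standard that a permutative $\lambda_u$ with $u\in\P_n^t$ maps $\D_n$ into itself and, being an automorphism, onto itself (see \cite{CoSz11}); the same holds for $\lambda_v$, so $\mathrm{Ad}(w)$ normalizes $\D_n$. As $w\in M_{n^{t-1}}$ it then normalizes the diagonal MASA $\D_n^{t-1}$, hence is monomial, $w=DP$ with $P\in\P_n^{t-1}$ and $D\in\U(\D_n^{t-1})$. Absorbing $P$ (note $Pu\varphi(P^{*})\in\P_n^t$ is again a permutation) reduces matters to a diagonal $D$ satisfying $D\,u\,\varphi(D^{*})=v$ with $u,v$ permutations; reading off entries, $v$ is a genuine $0/1$ matrix exactly when the phases of $D$ cancel along the support of the permutation $\rho$ underlying $u$, i.e. when a phase function on $[n]^{t-1}$ is constant along a graph whose edges are indexed by $[n]^t$ through $\rho$. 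The hard part will be precisely here: to conclude that $D$ is scalar I must invoke that $\lambda_u$ is \emph{surjective}, which via the combinatorial invertibility criterion for permutative endomorphisms makes that graph connected and so forces $D\in\mathbb{T}\cdot 1$. Since $\mathrm{Ad}(w)$ is insensitive to scalars, we may finally take $w\in\P_n^{t-1}$. Thus the level-dropping Step $3$ is free and self-contained, whereas the true obstacle is this last step, where the full strength of ``$\lambda_u\in\mathrm{Aut}(\O_n)$'' is genuinely needed to eliminate the diagonal phases.
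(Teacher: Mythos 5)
Your route is genuinely different from the paper's. The paper observes that ${\rm Ad}(w)=\lambda_v\circ\lambda_u^{-1}$ is itself a permutative inner automorphism (using that $\lambda_u^{-1}$ is again permutative), quotes \cite[Lemma 2.3]{CoSz11} to conclude $w\in\P_n$, and then reads off the level from $wu\varphi(w^*)=v$; you instead localize $w$ from scratch. Your first three steps are correct and self-contained: the gauge-eigenvector argument (though the ``trace computation'' must be rephrased, since $\O_n$ is traceless --- what you need is the KMS condition for the unique gauge-KMS state $\omega=\tau\circ E$, giving $1=\omega(ww^*)=n^{-m}\omega(w^*w)=n^{-m}$, hence $m=0$); the isometry argument $\|w-E^{(s)}(w)\|=\|w-E^{(s-1)}(w)\|$ for $s\geq t$, which together with $E^{(s)}(w)\to w$ forces $w=E^{(t-1)}(w)\in \F_n^{t-1}$; and the monomial decomposition $w=DP$ (here note that ``$\lambda_u(\D_n)=\D_n$ because $\lambda_u$ is an automorphism'' is not justified by surjectivity alone --- an automorphism can map an invariant subalgebra strictly into itself --- but by the fact, again from \cite{CoSz11}, that $\lambda_u^{-1}$ is permutative and hence also maps $\D_n$ into itself).

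The genuine gap is the final step. You delegate ``$D$ is scalar'' to a ``combinatorial invertibility criterion'' that would make your phase-identification graph on $[n]^{t-1}$ connected, but no such criterion exists in quotable form: the known invertibility criterion is stability of $u$, and deriving connectedness from it is not immediate. Worse, connectedness of that graph is exactly equivalent to the claim you are proving for diagonal $w$ --- if the graph were disconnected, a phase function constant on each component but not globally would give a nonscalar diagonal $D$ with $Du'\varphi(D^*)$ still a permutation --- so as written the crucial step is assumed rather than proven. Fortunately your own computations close it in one line. Your entrywise observation shows that whenever $u'_{\alpha\beta}=1$ (with $u':=Pu\varphi(P^*)\in\P_n^t$) the $(\alpha,\beta)$ entry of $v=Du'\varphi(D^*)$ is unimodular, hence equal to $1$; comparing supports of the two permutation matrices yields $v=u'$ on the nose. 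Therefore ${\rm Ad}(D)\circ\lambda_{u'}=\lambda_{Du'\varphi(D^*)}=\lambda_{u'}$, and since $\lambda_{u'}={\rm Ad}(P)\circ\lambda_u$ is an automorphism you may compose with its inverse to get ${\rm Ad}(D)={\rm id}$, whence $D\in\CC\cdot 1$ by simplicity of $\O_n$. This is precisely where surjectivity of $\lambda_u$ enters, with no combinatorics at all, and it shows connectedness of your graph as a corollary rather than an input. (With this repair your proof is complete; as in the paper, the honest conclusion for both arguments is $w\in\mathbb{T}\cdot\P_n^{t-1}$, and one normalizes the scalar away since ${\rm Ad}(w)$ does not see it.)
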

\begin{proof}
Deduce that ${\rm Ad}(w) = \lambda_v (\lambda_u)^{-1}$, where the automorphism in the r.h.s. is associated to an element in $\P_n$. 
Thus , by \cite[Lemma 2.3]{CoSz11}, $w$ belongs to $\P_n$. Moreover 
$w u \varphi(w^*) = v$ and hence the conclusion.
\end{proof}
\noindent 
In particular, for 
$u,v \in \P_n^2$, $\pi(\lambda_u) = \pi(\lambda_v)$ if and only if $v = zu \varphi(z)^*$, for some $z \in \P_n^1$.

\bigskip 
Given a permutation $u \in S([n]^t)$, 
define a sequence of permutations $\psi_k(u) \in S([n]^{t+ k})$, $k \geq 0$ by setting $\psi_0(u) := u^{-1}$ and, for $k \in {\mathbb N}$,
\begin{equation}
	\label{def_psi_k}
	\psi_k(u) = \prod_{i=0}^k 
	({\underbrace{1 \otimes \ldots \otimes 1}_{k-i}} \otimes u^{-1} \otimes {\underbrace{1 \otimes \ldots \otimes 1}_i})  
	\prod_{i=1}^k 
	({\underbrace{1 \otimes 1 \ldots \otimes 1}_i} \otimes u \otimes {\underbrace{1 \otimes \ldots \otimes 1}_{k-i}}) 
	\ . 
\end{equation}
Then $u$ as above is said to be {\it stable} if there exists some integer $k \geq 1$ such that
\begin{equation}\label{stable}
	\psi_{k+h}(u) = \psi_{k-1}(u) \otimes {\underbrace{1 \otimes \cdots \otimes 1}_{h+1}}, \quad h \geq 0 \ , 
\end{equation}
and then ${\rm rk}(u)$, the {\it rank} of $u$, is the least such value of $k$.
So, for example, if $t=1$ then all permutations $u \in S([n])$ are 
stable of rank $1$. As is costumary, we call the corresponding
automorphisms, {\em permutative Bogolubov automorphisms}.
If $n=4$ and $t=2$ there are exactly 36 transpositions that are 
stable of rank one \cite[Corollary 8.3]{BC}, of which 12 are horizontal and 12 are vertical 
(see Figure 2, and also 
Corollary \ref{horizontal_in_same_columns}). In the figure,
differently from the rest of this work, where
we represent elements of
$S([n]^2)$ graphically by drawing the cycles of the permutation 
as directed (except for the cycles of length two) 
cycles of the square grid $ [n]^2$,
we draw families of 12 transpositions in a single grid for brevity. 

The following result is proved in \cite[Theorem 3.2]{CoSz11} (with slightly different notation).
\begin{theorem}
Let $n,t \in \NN$, $n \geq 2$, and $u \in S([n]^t)$. Then
$u$ is stable if and only if $\lambda_u$ is an automorphism
of $\O_n$.
\end{theorem}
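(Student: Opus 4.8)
The plan is to reduce the statement to a surjectivity question, pass to the UHF core, reinterpret $\psi_k(u)$ as a conjugation, and then match the stabilization condition \eqref{stable} with the algebraic identity that characterizes invertibility. First, since $\O_n$ is simple and $\lambda_u$ is unital, $\lambda_u$ is automatically injective, so $\lambda_u \in {\rm Aut}(\O_n)$ if and only if it is surjective. Because $\lambda_u(S_i) = u S_i$, surjectivity is equivalent to $u \in \lambda_u(\O_n)$: if $u$ lies in the (closed, hence $C^*$-) image then so does $u^*$, whence $S_i = u^*(uS_i) \in \lambda_u(\O_n)$ for all $i$ and $\lambda_u$ is onto, the converse being trivial. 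Next, $\lambda_u$ commutes with the canonical gauge action (as $u \in \P_n^t$ is gauge-fixed) and $u$ is itself gauge-invariant; averaging a preimage over the gauge group shows $u \in \lambda_u(\O_n)$ if and only if $u$ lies in the image of the restriction of $\lambda_u$ to the UHF core $\F_n$. Everything is thus decided inside $\F_n$, where, by the formula $\lambda_u(S_\alpha S_\beta^*) = u_m S_\alpha S_\beta^* u_m^*$ for $\ell(\alpha) = \ell(\beta) = m$, the map $\lambda_u$ restricts on $\F_n^m \cong M_{n^m}$ to conjugation by $u_m = u \varphi(u) \cdots \varphi^{m-1}(u)$.

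The next step is to unwind the definition \eqref{def_psi_k}. Using the identifications $u \leftrightarrow \sigma \otimes 1$ and $\varphi^i(u) \leftrightarrow 1^{\otimes i} \otimes \sigma$, each factor $1^{\otimes(k-i)} \otimes u^{-1} \otimes 1^{\otimes i}$ equals $\varphi^{k-i}(u^{-1})$ and each factor $1^{\otimes i} \otimes u \otimes 1^{\otimes(k-i)}$ equals $\varphi^i(u)$, all read at level $t+k$. The two products telescope, giving $\psi_k(u) = u_{k+1}^{-1}\, u^{-1}\, u_{k+1}$, i.e. $\psi_k(u)$ is the conjugate of $u^{-1}$ by $u_{k+1}$ (consistent with $\psi_0(u) = u^{-1}$). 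Under this reading the condition \eqref{stable} says precisely that, beyond level ${\rm rk}(u)$, this conjugate ceases to depend on the deep tensor factors, factoring as $\psi_{k-1}(u) \otimes 1^{\otimes(h+1)}$; equivalently, the position of $\lambda_u(\F_n^m) = u_m \F_n^m u_m^*$ inside $\F_n$ stops moving as $m$ grows.

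For the two implications I would argue as follows. For \emph{stable $\Rightarrow$ automorphism}, I would use the stabilized data: from $\psi_{{\rm rk}(u)-1}(u)$ one reads off a permutation $w \in \P_n$ and checks $\lambda_w \circ \lambda_u = {\rm id}$ directly on the generators via \eqref{product}, the point being that \eqref{stable} guarantees the inverse coding defined at level $t + {\rm rk}(u) - 1$ remains consistent at all deeper levels. For \emph{automorphism $\Rightarrow$ stable}, since $\lambda_u$ lies in the group $W_n = \lambda^{-1}(\P_n)$ its inverse is again permutative, $\lambda_u^{-1} = \lambda_w$ with $w \in \P_n^s$ (this is the content of \cite[Lemma 2.3]{CoSz11}). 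Applying \eqref{product} and $\lambda_w(u) = w_t\, u\, w_t^*$ gives $\lambda_{w_t u w_t^* w} = \lambda_w \circ \lambda_u = {\rm id} = \lambda_1$, and since $\lambda_a = \lambda_b$ forces $a$ and $b$ to agree up to the stabilization $a \mapsto a \otimes 1$, this yields the finite-level identity $w_t\, u\, w_t^* = w^{-1}$.

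This last identity already exhibits $\lambda_u$ as invertible at a finite level, but the real work, and the step I expect to be the main obstacle, is converting it into the explicit eventual factorization \eqref{stable} of the conjugates $\psi_k(u) = u_{k+1}^{-1} u^{-1} u_{k+1}$, and showing that there is a well-defined least value $k = {\rm rk}(u)$ beyond which \eqref{stable} holds for all $h \geq 0$. In other words, one must upgrade the soft statement ``$\lambda_u$ has a permutative inverse'' to the hard, uniform combinatorial stabilization of the whole family $\{\psi_k(u)\}$: the single identity $w_t\, u\, w_t^* = w^{-1}$ has to be propagated through all higher levels using the filtration $\P_n^m \subset \P_n^{m+1}$ and $\varphi(\P_n^m) \subset \P_n^{m+1}$, tracking exactly where the conjugating permutations $u_{k+1}$ act so that the tail factors as the identity. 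I expect this bookkeeping, rather than the functional-analytic reductions above, to be the crux of the proof.
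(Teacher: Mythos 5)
First, a point of reference: the paper does not actually prove this theorem — it imports it verbatim from \cite[Theorem 3.2]{CoSz11} — so your attempt can only be measured against that proof, whose strategy (injectivity from simplicity, the surjectivity criterion $u \in \lambda_u(\O_n)$, gauge-averaging down to the UHF core where $\lambda_u$ acts on $\F_n^m$ as ${\rm Ad}(u_m)$, and the identification of stability with eventual constancy of ${\rm Ad}(u_m^*)(u^*)$) your outline does parallel. Your preparatory reductions are all correct, and your key computation
$\psi_k(u) = u_{k+1}^{-1}\, u^{-1}\, u_{k+1}$
is right and is exactly the bridge between the combinatorial definition \eqref{def_psi_k} and the operator-algebraic picture.

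The gap is that you stop precisely where the proof has to happen, and you misjudge where the difficulty sits. The forward direction is actually short given your formula: put $w := \psi_{{\rm rk}(u)-1}(u) \in \P_n^{\ell}$; stability says ${\rm Ad}(u_m^*)(u^*) = w$ for all large $m$, and writing $u_m = u_{\ell}\,\varphi^{\ell}(u_{m-\ell})$ and using that $\varphi^{\ell}(\,\cdot\,)$ commutes with $\F_n^{\ell}$ one gets $\lambda_u(w) = {\rm Ad}(u_{\ell})(w) = u^*$, hence $\lambda_u \circ \lambda_w = \lambda_{\lambda_u(w)u} = {\rm id}$, so $\lambda_u$ is onto and therefore an automorphism; this is three lines, not a morass. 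The same commutation trick disposes of the ``bookkeeping'' you defer in the backward direction — \emph{provided} you know that $\lambda_u^{-1} = \lambda_w$ with $w \in \P_n^s$ localized at a \emph{finite} level $s$, since then $\lambda_u(w)u=1$ gives $w = {\rm Ad}(u_s^*)(u^*)$ and ${\rm Ad}(u_m^*)(u^*) = {\rm Ad}(\varphi^s(u_{m-s})^*)(w) = w$ for all $m \geq s$, which is \eqref{stable}. But that finite localization of the inverse is exactly where the hard content of this implication lives, and your proposal buries it in a citation that does not carry it: the way the present paper uses \cite[Lemma 2.3]{CoSz11}, that lemma concerns inner automorphisms (${\rm Ad}(w) = \lambda_z$ with $z \in \P_n$ forces $w \in \P_n$), not the statement that the inverse of a permutative automorphism is again permutative and finitely localized. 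The latter fact — the group property of the reduced Weyl group, resting on Cuntz's analysis of automorphisms preserving $\F_n$ and the diagonal, i.e.\ on the normalizer structure of the Cartan pair — must be invoked and checked to be proved independently of the theorem at hand; otherwise your argument is circular, because ``$\lambda_u^{-1} = \lambda_w$ with $w \in \P_n^s$'' is essentially the hard half of the equivalence you are proving. So: reductions correct, forward implication fine modulo writing it out, backward implication resting on a mis-attributed external input whose independent availability is the true crux — the opposite of your closing assessment that the functional-analytic part is soft and the combinatorial bookkeeping is the obstacle.
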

\noindent 
Hence, the reduced Weyl group of $\O_n$ can be equivalently
described as $\{ \lambda_u : u \in S([n]^t), u \mbox{ stable },
t \geq 1 \}$.

\medskip 
For $u \in S([n]^{t})$ we write 
$$
(u_1(x_1, \ldots , x_t), \ldots , u_t(x_1, \ldots , x_t)) := u(x_1, \ldots , x_t)
$$
for all $(x_1, \ldots , x_t) \in [n]^t$.
If $u \in S([n]^2)$ 
we let ${}^{a}u \in S([n]^2)$ be the  {\em antitransposed} of $u$ defined by
\begin{equation}
	\label{defatr}
	{}^{a}u(x,y) := (n+1-u_2(n+1-y,n+1-x), n+1-u_1(n+1-y,n+1-x)). 
\end{equation}
Given $u,v \in S([n]^2)$, following \cite{BC}, we say that $u$ is {\em compatible} with $v$ (or that 
$u$ is compatible with $v$ in this order, for emphasis) if
\begin{equation}
	(v \otimes 1) (1 \otimes u)=(1 \otimes u)(v \otimes 1)
\end{equation}
in $S([n]^3)$.
We say that $u$ and $v$ are {\em bicompatible} if $u$ is compatible with $v$ and $v$ is compatible with $u$. 
The following result is a special case of \cite[Prop. 5.13]{BC}.
\begin{proposition}
	\label{at-bicompatible}
Let $u,v \in S([n]^2)$. Then $u$ and $v$ are bicompatible if and only if ${}^{a}u$ and ${}^{a}v$ are bicompatible.
\end{proposition}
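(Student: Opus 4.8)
The plan is to realize the antitranspose as an inner operation and to exploit the fact that compatibility is nothing but a commutation relation in $S([n]^3)$, a relation that is automatically preserved under conjugation. First I would record that the antitranspose is conjugation by a single involution: setting $\tau \in S([n]^2)$, $\tau(x,y) := (n+1-y,\,n+1-x)$, one checks directly from \eqref{defatr} that $(\tau u \tau)(x,y) = \tau(u(\tau(x,y)))$ reproduces exactly the right-hand side of \eqref{defatr}, so that ${}^{a}u = \tau u \tau$ for every $u \in S([n]^2)$. Since $\tau^2 = 1$, this in particular re-exhibits the antitranspose as an involution.

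Next I would introduce the \emph{total reversal} $T \in S([n]^3)$ defined by $T(x,y,z) := (n+1-z,\,n+1-y,\,n+1-x)$, which is again an involution ($T^2 = 1$), and prove the two conjugation identities
\begin{equation}
\label{conjT}
T\,(w \otimes 1)\,T = 1 \otimes {}^{a}w, \qquad T\,(1 \otimes w)\,T = {}^{a}w \otimes 1 \qquad (w \in S([n]^2)).
\end{equation}
Both are direct coordinate computations: applying the left-hand side to a triple $(x,y,z)$, the outer reversals $T$ reverse the coordinate order and negate, so the action of $w$ on the first (resp. last) two coordinates is turned into the action of $\tau w \tau = {}^{a}w$ on the last (resp. first) two coordinates, which is precisely the right-hand side. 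This index chase, although routine, is the technical heart of the argument and the step most prone to indexing slips, so I would carry it out carefully and symmetrically on both identities.

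With \eqref{conjT} in hand the proposition is immediate, and the conceptual point is that $u$ is compatible with $v$ precisely when $v \otimes 1$ and $1 \otimes u$ commute in $S([n]^3)$. Conjugating this commutation relation by the involution $T$ and inserting $TT = 1$, the identities \eqref{conjT} transform it into the statement that $1 \otimes {}^{a}v$ and ${}^{a}u \otimes 1$ commute, i.e. that ${}^{a}v$ is compatible with ${}^{a}u$. Hence
\[
u \text{ compatible with } v \iff {}^{a}v \text{ compatible with } {}^{a}u,
\]
and, exchanging the roles of $u$ and $v$, also $v$ is compatible with $u$ if and only if ${}^{a}u$ is compatible with ${}^{a}v$. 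Since bicompatibility of a pair means compatibility in both orders, the two equivalences together yield that $u,v$ are bicompatible if and only if ${}^{a}u,{}^{a}v$ are bicompatible, as claimed. The only real work is the verification of \eqref{conjT}; everything downstream is formal, and no case analysis on the cycle structure of $u$ or $v$ is needed.
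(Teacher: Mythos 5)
Your proposal is correct, and it is worth noting that the paper itself gives no proof of this proposition at all: it simply cites \cite[Prop. 5.13]{BC}, so your argument supplies a self-contained proof where the paper defers to an external reference. Your two conjugation identities do hold: writing $\tau(x,y):=(n+1-y,n+1-x)$ one checks from \eqref{defatr} that ${}^{a}u=\tau u\tau$, and with $T(x,y,z):=(n+1-z,n+1-y,n+1-x)$ a direct coordinate computation gives
\[
T\,(w\otimes 1)\,T=1\otimes {}^{a}w,
\qquad
T\,(1\otimes w)\,T={}^{a}w\otimes 1,
\]
for all $w\in S([n]^2)$. The one genuinely delicate point is the order reversal, and you handle it correctly: conjugating the relation $(v\otimes 1)(1\otimes u)=(1\otimes u)(v\otimes 1)$ by $T$ yields the commutation of ${}^{a}u\otimes 1$ with $1\otimes {}^{a}v$, which by the paper's convention says that ${}^{a}v$ is compatible with ${}^{a}u$ (not that ${}^{a}u$ is compatible with ${}^{a}v$). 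This is exactly why the statement is formulated for bicompatibility, which is symmetric in the two orders, rather than for one-sided compatibility, and your final step of combining the two equivalences is what makes the proof close. This conjugation-by-an-involution viewpoint is presumably also what underlies the ``transposition techniques'' the paper alludes to when invoking \cite[Prop. 5.13]{BC} elsewhere (e.g.\ before Proposition \ref{vertical_same_rows2}), so your route likely reconstructs the cited argument; its advantage is that it makes the mechanism explicit and reusable, showing in one stroke that any identity among permutations of $S([n]^3)$ built from $w\otimes 1$ and $1\otimes w$ transforms, under $T$, into the corresponding identity for the antitransposed permutations with the tensor slots exchanged.
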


Given a subgroup $S \leq S([n]^2)$ we say that $S$ is
a {\em bicompatible subgroup} if $u$ and $v$ are
bicompatible for all $u,v \in S$. In particular, all the elements
of a bicompatible subgroup are stable of rank 1.

The next result is a slight refinement of \cite[Corollary 5.5]{BC}.
\begin{proposition}
	\label{isogroups}
	Let $u_1, u_2, \ldots, u_m \in S([n]^2)$ be stable permutations such that
	$u_i$ is bicompatible with $u_j$ for 
	all $1 \leq i,j \leq m$. Then the subgroup of ${\rm Aut}(\O_n)$ generated by $\{ \lambda_{u_1}, \ldots , \lambda_{u_m} \}$ is contained in 
	$\lambda^{-1}(P_n^2)$ and is 
	isomorphic to the subgroup of $S([n]^2)$ generated by $\{ u_1, \ldots , u_m \}$.
	Moreover, all the elements of this last subgroup are stable of rank 1.
\end{proposition}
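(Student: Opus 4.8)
The plan is to establish two things: first, that the group generated by the $\lambda_{u_i}$ sits inside $\lambda^{-1}(\P_n^2)$ with all its elements stable of rank $1$; and second, that the map $u_i \mapsto \lambda_{u_i}$ extends to a group isomorphism onto the image. The central mechanism driving both statements is that bicompatibility, together with stability of rank $1$, is preserved under products. Concretely, I expect that if $u$ and $v$ are bicompatible and both stable of rank $1$, then their product $uv$ (computed in $S([n]^2)$) is again stable of rank $1$, and moreover remains bicompatible with any third element $w$ that was bicompatible with both $u$ and $v$. Granting this closure property, the set of products of the $u_i$ forms a bicompatible subgroup $S \leq S([n]^2)$, and Proposition \ref{isogroups} really reduces to understanding how $\lambda$ behaves on such a subgroup.

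First I would work out the precise multiplicative behaviour of $\lambda$ on rank-one permutations. The key identity, available from the preliminaries, is the composition rule \eqref{product} specialized to level-$2$ unitaries: for $u$ and $v$ in $\P_n^2$ one has $\lambda_u \circ \lambda_v = \lambda_{u \varphi(u) v \varphi(u^*)}$. The crucial combinatorial input is that when $u$ is compatible with $v$, the factors $\varphi(u)$ and $v$ commute as permutations of $[n]^3$; writing these in tensor-factor notation, compatibility says exactly $(v \otimes 1)(1 \otimes u) = (1 \otimes u)(v \otimes 1)$, which is the statement that $\varphi(u)$ and the level-$3$ lift of $v$ commute. I would use this to simplify $u \varphi(u) v \varphi(u^*)$ and show that the resulting unitary corresponds, after passing to the reduced Weyl group via the quotient $\pi$, to an element of $\P_n^2$ that is itself stable of rank $1$. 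In effect, I want to exhibit a concrete permutation $w \in S([n]^2)$, built from $u$ and $v$, such that $\lambda_w = \lambda_u \circ \lambda_v$ modulo inner automorphisms, and verify $w$ is bicompatible with the original generators.

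With the multiplicative structure in hand, I would then build the isomorphism. Define a homomorphism $\Phi$ from the abstract subgroup $\langle u_1, \ldots, u_m \rangle \leq S([n]^2)$ by sending a word in the generators to the corresponding word in the $\lambda_{u_i}$; the closure property guarantees this lands in $\lambda^{-1}(\P_n^2)$ and that $\Phi$ is well defined as a group homomorphism onto $\langle \lambda_{u_1}, \ldots, \lambda_{u_m} \rangle$. Surjectivity is immediate by construction. For injectivity I would argue that if a product $u_{i_1} \cdots u_{i_k}$ equals the identity permutation then the corresponding composition of $\lambda$'s is $\lambda_{\mathrm{id}} = \mathrm{id}$, and conversely that $\lambda_w = \mathrm{id}$ forces $w$ to be the identity permutation. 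This last faithfulness statement is where I would lean on the rank-one stability: for a stable permutation $w$ of rank $1$, the automorphism $\lambda_w$ determines $w$, because the permutation can be recovered from the action of $\lambda_w$ on the canonical generators $S_i$ and their products, via the Cuntz--Takesaki correspondence.

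The main obstacle I anticipate is the closure step, namely verifying that the product of two bicompatible rank-one permutations is again stable of rank $1$ and stays bicompatible with the rest of the generators. Compatibility is a commutation relation at level $3$, and while it is symmetric in the two factors after imposing bicompatibility, tracking how it propagates through a product requires care: one must check that $u v$ is compatible with $w$ using that both $u$ and $v$ individually are, and this is not purely formal because compatibility is not obviously closed under multiplication without the bicompatibility hypothesis. I expect this to be precisely the point where \cite[Corollary 5.5]{BC} (of which the present proposition is a refinement) does the heavy lifting, so I would either invoke it directly or reprove the needed associativity and closure of the commutation relations by a direct manipulation at level $3$ using the tensor-product definition. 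The word \emph{refinement} in the statement suggests the new content is chiefly the explicit rank-one conclusion, so I would make sure the closure argument yields not just membership in $\lambda^{-1}(\P_n^2)$ but rank-one stability of every element of the resulting subgroup.
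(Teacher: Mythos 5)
Your mechanism is essentially the paper's (the paper's proof is exactly an induction establishing $\lambda_{u_{i_1}}\cdots\lambda_{u_{i_k}}=\lambda_{u_{i_1}\cdots u_{i_k}}$ and self-compatibility of the product permutation, plus an appeal to \cite[Corollary 5.5]{BC}), but as written your plan contains one genuine flaw: in your second step you only aim to produce $w$ with $\lambda_w=\lambda_u\circ\lambda_v$ \emph{modulo inner automorphisms}, ``after passing to the quotient $\pi$''. That target is both incorrect as a description of what happens and too weak to prove the proposition. Compatibility of $u$ with $v$ says precisely that $\varphi(u)=1\otimes u$ commutes with $v\otimes 1$, so
$u\,\varphi(u)\,v\,\varphi(u^*)=u\,v\,\varphi(u)\,\varphi(u^*)=uv$,
and hence $\lambda_u\circ\lambda_v=\lambda_{uv}$ \emph{exactly}, with no inner correction and no passage to ${\rm Out}(\O_n)$. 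This exactness is indispensable downstream: the proposition concerns a subgroup of ${\rm Aut}(\O_n)$, not of ${\rm Out}(\O_n)$; the well-definedness of your homomorphism $\Phi$ on words, your injectivity argument (a trivial word must map to $\lambda_{1}={\rm id}$, using faithfulness of $u\mapsto\lambda_u$), and even the containment in $\lambda^{-1}(\P_n^2)$ all require the on-the-nose identity. If you literally only knew $\lambda_u\lambda_v={\rm ad}(z)\,\lambda_w$, then $\lambda_u\lambda_v=\lambda_{zw\varphi(z^*)}$ need not lie in $\lambda^{-1}(\P_n^2)$ at all (that requires $z\in\P_n^1$), and no group isomorphism with $\langle u_1,\ldots,u_m\rangle$ would follow. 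The repair is immediate from your own setup: delete the ``modulo inner'' framing and record the exact identity that your commutation argument already yields.

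The other point you flag as the main obstacle, closure of compatibility under products, is in fact purely formal once bicompatibility is assumed, so it needs no heavy lifting: if $w$ is compatible with $u$ and with $v$, then $(w\otimes 1)(1\otimes uv)=(w\otimes 1)(1\otimes u)(1\otimes v)=(1\otimes u)(w\otimes 1)(1\otimes v)=(1\otimes uv)(w\otimes 1)$, and symmetrically in the other argument; iterating (and using bicompatibility of the $u_i$ with each other and with themselves) shows every product $u_{i_1}\cdots u_{i_k}$ is compatible with itself, hence stable of rank $1$. This is exactly the induction the paper invokes. With the exact multiplicativity and this formal closure in place, the remainder of your plan (surjectivity by construction, injectivity from the bijectivity of the Cuntz--Takesaki correspondence) is correct and coincides with the published argument.
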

\begin{proof}
	This follows immediately from \cite[Corollary 5.5]{BC} and the observation that
	if $i_j \in [m]$ for all $j=1,..,k$ then 
	$\lambda_{u_{i_1}}  \cdots \lambda_{u_{i_k}} = 
	\lambda_{u_{i_1} \cdots u_{i_k}}$ and
	$(u_{i_1} \cdots u_{i_k} \otimes 1) (1 \otimes u_{i_1} \cdots u_{i_k}) =
	(1 \otimes u_{i_1} \cdots u_{i_k}) (u_{i_1} \cdots u_{i_k} \otimes 1)$, both of
	which can be easily checked by induction.
\end{proof}

\section{Horizontal and vertical cycles}
\label{horiz_vert}

In this section we establish various results concerning the
compatibility of permutations of $S([n]^2)$ that permute 
certain rows (resp., columns) of $[n]^2$ all in the same way, and leave every other row (resp., column) fixed, with general 
permutations of $S([n]^2)$. These results will be used often in the rest of this work, and generalize various 
others that have already appeared in \cite{BC,BCN,BCN2}.
In particular, as a consequence of them, we obtain a different
proof of \cite[Conj. 12.2]{BC} from the one given in \cite{Pan}. 

Our first result introduces the permutations that we are 
interested in, and characterizes the permutations of $[n]^2$
with which these are compatible.
\begin{proposition}
	\label{horizontal_same_columns}
	Let $1 \leq a_1 < \cdots < a_k \leq n$,
	$v \in S([n]^2)$, $\sigma \in S_n$, and $u \in S([n]^2)$ be such that
	\[
	u(x,y):= 
	\begin{cases}
		(x,\sigma(y)) 
		\mbox{ if } x \in \{ a_1, \ldots , a_k\}, \\
		(x,y),
		\mbox{ otherwise},
	\end{cases}
	\]	
for all $(x,y) \in [n]^2$. 
	Then $u$ is compatible with $v$ if and only if
	$v$ leaves $[n] \times \{ a_1, \ldots , a_k \}$ invariant.
\end{proposition}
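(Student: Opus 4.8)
The plan is to verify the commutation relation $(v \otimes 1)(1 \otimes u) = (1 \otimes u)(v \otimes 1)$ by hand, evaluating the two composite permutations of $S([n]^3)$ on an arbitrary triple $(x,y,z) \in [n]^3$ and comparing them coordinate by coordinate. Writing $A := \{a_1, \ldots, a_k\}$ and $u(y,z) = (u_1(y,z), u_2(y,z))$, $v(x,y) = (v_1(x,y), v_2(x,y))$, one computes
\[
(v \otimes 1)(1 \otimes u)(x,y,z) = \bigl(v_1(x, u_1(y,z)),\, v_2(x, u_1(y,z)),\, u_2(y,z)\bigr)
\]
and
\[
(1 \otimes u)(v \otimes 1)(x,y,z) = \bigl(v_1(x,y),\, u_1(v_2(x,y), z),\, u_2(v_2(x,y), z)\bigr).
\]

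First I would exploit the defining feature of $u$: by construction $u$ never disturbs its first entry, that is $u_1(y,z) = y$ for every $(y,z) \in [n]^2$, whether or not $y \in A$. Substituting this into the two displays collapses the first two coordinate equations into the tautologies $v_1(x,y) = v_1(x,y)$ and $v_2(x,y) = v_2(x,y)$, which hold automatically. Hence the whole commutation relation is equivalent to the single family of scalar identities
\[
u_2(y,z) = u_2(v_2(x,y), z), \qquad (x,y,z) \in [n]^3.
\]

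It then remains to analyse this condition via the dichotomy $u_2(y,z) = \sigma(z)$ when $y \in A$ and $u_2(y,z) = z$ otherwise, together with the analogous split of $u_2(v_2(x,y),z)$ according to whether $v_2(x,y) \in A$. Assuming $\sigma \neq 1$ (the case $\sigma = 1$ makes $u$ the identity and is degenerate), letting $z$ range over $[n]$ forces, for each fixed $(x,y)$, that $y \in A$ and $v_2(x,y) \in A$ must hold or fail together; in other words $y \in A \iff v_2(x,y) \in A$ for all $(x,y) \in [n]^2$. Since $(x,y) \in [n] \times A$ precisely when $y \in A$, and $v(x,y) \in [n] \times A$ precisely when $v_2(x,y) \in A$, this biconditional says exactly that $v$ maps $[n] \times A$ into itself, which for the bijection $v$ is the same as $v([n] \times A) = [n] \times A$. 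This is the asserted invariance, and the argument yields both implications simultaneously.

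The computation is short, so the only real care needed is bookkeeping rather than any genuine obstacle: one must keep straight that the index set $A$ constrains the \emph{first} input coordinate of $u$ in its definition, yet resurfaces as the \emph{second} coordinate of the invariant block $[n] \times A$ — an interchange forced by the way the tensor factors interleave the three coordinates of $[n]^3$. The one hypothesis actually used is $\sigma \neq 1$, which I would record (or assume implicitly), since for $\sigma = 1$ the permutation $u$ is trivial and compatible with every $v$.
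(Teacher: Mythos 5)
Your proof is correct, and it takes a genuinely different route from the paper's. The paper first reduces to the case where $\sigma$ is a single transposition, writes $1 \otimes u$ as a product of disjoint transpositions $((s,i),(s,j))$ indexed by $s \in [n] \times \{ a_1, \ldots , a_k \}$, and compares this cycle decomposition with that of the conjugate $(v \otimes 1)(1 \otimes u)(v^{-1} \otimes 1)$, whose transpositions are indexed by the points $v(s)$: the two products of disjoint transpositions coincide precisely when $v$ permutes $[n] \times \{ a_1, \ldots , a_k \}$. You instead evaluate the two composites pointwise for arbitrary $\sigma$, exploit the fact that $u$ never moves its first coordinate to collapse the commutation relation to the scalar identity $u_2(y,z) = u_2(v_2(x,y),z)$, and finish by a case analysis on membership in $A := \{ a_1, \ldots , a_k \}$. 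Your route buys two things. First, it treats general $\sigma$ uniformly; the paper's reduction to transpositions, as written, really only delivers the ``if'' direction (invariance implies compatibility with each transposition factor, hence with their product $u$), since compatibility of $u$ with $v$ does not formally decompose into compatibility of the individual factors with $v$ --- though the paper's conjugation computation does extend verbatim to general $\sigma$, so this is a matter of exposition rather than a genuine error. Second, your analysis surfaces the hypothesis $\sigma \neq 1$, which the statement actually needs (for $\sigma = 1$ the permutation $u$ is the identity, hence compatible with every $v$, so the stated equivalence fails in that degenerate case) and which the paper leaves implicit; recording it, as you propose, is the right call. What the paper's route buys is brevity and stylistic consistency, since the same conjugation-of-cycles technique recurs in Proposition \ref{horizontal_same_columns_2_cycles} and the other results of Section \ref{horiz_vert}.
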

\begin{proof}
It is enough to show this in the case that $\sigma$ is a transposition. So assume that 
$u:= \prod_{r=1}^{k} \left( (a_r,i) , (a_r,j) \right)$
for some $1 \leq i < j \leq n$.
	We then have that
	\[
	1 \otimes u = \prod_{x=1}^{n} \prod_{r=1}^{k} 
	\left( (x,a_r,i) , (x,a_r,j) \right)
	= \prod_{s \in [n] \times \{ a_1, \ldots , a_k \}}
	\left( (s,i) , (s,j) \right)
	\]
	while
	\begin{align*}
		(v \otimes 1)(1 \otimes u)(v^{-1} \otimes 1) = \prod_{x=1}^{n} \prod_{r=1}^{k} 
		\left( (v(x,a_r),i) , (v(x,a_r),j) \right) \\
		= \prod_{s \in [n] \times \{ a_1, \ldots , a_k \}}
		\left( (v(s),i) , (v(s),j) \right).
	\end{align*}
and the result follows.
\end{proof}

Note that this last proposition generalizes the special case 
$a=i$ of Proposition 5.6 of \cite{BC}. While we believe that 
a result that characterizes the permutations that are compatible
with those studied in Proposition \ref{horizontal_same_columns}
is possible, the following one suffices for our purposes.

\begin{proposition}
	\label{horizontal_same_columns_2_cycles}
	Let $1 \leq a_1 < \cdots < a_k \leq n$, $\{ e_1, \ldots , e_s, f_1, \ldots , f_s \} \subseteq [n]$,
	$v \in S([n]^2)$, and 
	\[
	u:= \prod_{r=1}^{k} \left( (a_r,e_1) , \ldots , (a_r,e_s) \right) \left( (a_r,f_1) , \ldots , (a_r,f_s) \right).
	\]	
	Then $v$ is compatible with $u$ if and only if there exist $\sigma \in S(\{ e,f \} \times [n])$ and $t: \{ e,f \} \times [n] \to [s]$ such that
	$$
	v(\alpha_i,x) = 
	((\sigma_1(\alpha,x))_{i+t(\alpha, x)}, \sigma_2(\alpha,x))
	$$
	for all $i \in [s]$ and $(\alpha,x) \in \{ e,f \} \times [n]$ (where indices are modulo $s$, and $(\sigma_1(\alpha,x),
	\sigma_2(\alpha,x)):=\sigma(\alpha,x)$).
\end{proposition}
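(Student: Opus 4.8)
The plan is to translate the compatibility of $v$ with $u$ into a commutation relation in $S([n]^3)$ and then read off the required formula by comparing disjoint cycle decompositions, exactly in the spirit of the proof of Proposition \ref{horizontal_same_columns}. By definition, $v$ is compatible with $u$ precisely when $(u \otimes 1)(1 \otimes v) = (1 \otimes v)(u \otimes 1)$, i.e. when $1 \otimes v$ and $u \otimes 1$ commute, equivalently $(1 \otimes v)(u \otimes 1)(1 \otimes v)^{-1} = u \otimes 1$. Since $u$ fixes every first coordinate, I would first record the disjoint cycle decomposition
\[
u \otimes 1 = \prod_{z=1}^n \prod_{r=1}^k
\bigl( (a_r,e_1,z),\ldots,(a_r,e_s,z) \bigr)
\bigl( (a_r,f_1,z),\ldots,(a_r,f_s,z) \bigr),
\]
whose moved set is $M := \{a_1,\ldots,a_k\} \times (E \cup F) \times [n]$, writing $E = \{e_1,\ldots,e_s\}$, $F = \{f_1,\ldots,f_s\}$ and, for $\alpha \in \{e,f\}$, $\alpha_i$ for $e_i$ or $f_i$. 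I would assume throughout that $s \geq 2$ (for $s = 1$ the permutation $u$ is the identity and the statement degenerates).

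Next I would invoke the standard fact that $1 \otimes v$ commutes with $u \otimes 1$ if and only if $1 \otimes v$ maps each cycle of $u \otimes 1$ to a cycle of $u \otimes 1$ preserving the cyclic successor order (in particular it leaves the moved set $M$ invariant). Conjugating the displayed decomposition by $1 \otimes v$ replaces each entry $(a_r,\alpha_i,z)$ by $(a_r, v(\alpha_i,z))$, so
\[
(1 \otimes v)(u \otimes 1)(1 \otimes v)^{-1}
= \prod_{z=1}^n \prod_{r=1}^k \prod_{\alpha \in \{e,f\}}
\bigl( (a_r, v(\alpha_1,z)),\ldots,(a_r, v(\alpha_s,z)) \bigr),
\]
and $v$ is compatible with $u$ iff this equals $u \otimes 1$. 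I would then match cycles. A cycle on the left coming from $(\alpha,r,z)$ has all first coordinates equal to $a_r$, so (as the $a_r$ are distinct) it must coincide with a cycle $\bigl( (a_r, \alpha'_1,z'),\ldots,(a_r,\alpha'_s,z') \bigr)$ of $u \otimes 1$ with the same $r$. Comparing third coordinates forces $v_2(\alpha_i,z)$ to be independent of $i$ — call it $\sigma_2(\alpha,z)$ — and comparing the two cyclic sequences of second coordinates forces $v_1(\alpha_1,z),\ldots,v_1(\alpha_s,z)$ to be a cyclic rotation of $\alpha'_1,\ldots,\alpha'_s$; that is, there is a label $\sigma_1(\alpha,z) \in \{e,f\}$ (namely $\alpha'$) and an offset $t(\alpha,z) \in [s]$ with $v_1(\alpha_i,z) = (\sigma_1(\alpha,z))_{i + t(\alpha,z)}$ for all $i$. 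Here it is essential that the two $s$-cycles of $u$ on $E$ and on $F$ are separate, so that the image sequence cannot mix $E$ and $F$; this, together with $s \geq 2$, is the step that requires the most care. Assembling the two coordinates gives exactly the asserted identity $v(\alpha_i,x) = ((\sigma_1(\alpha,x))_{i+t(\alpha,x)}, \sigma_2(\alpha,x))$.

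Finally I would verify that $\sigma$ is a bijection of $\{e,f\} \times [n]$ and that the argument reverses. The formula shows that $v$ sends each block $B_{\alpha,z} := \{\alpha_1,\ldots,\alpha_s\} \times \{z\}$ onto the block $B_{\sigma(\alpha,z)}$; since $v \in S([n]^2)$ is a bijection and the $2n$ blocks partition $(E \cup F) \times [n]$, the induced map $\sigma$ on the index set $\{e,f\} \times [n]$ is forced to be a permutation. Conversely, if $v$ satisfies the formula for some permutation $\sigma$ and some $t$, then $v$ permutes the blocks $B_{\alpha,z}$ (hence maps $(E\cup F) \times [n]$ onto itself and its complement onto itself), and on each cycle of $u \otimes 1$ the formula reproduces, via $1 \otimes v$, another cycle of $u \otimes 1$ with the correct successor structure; thus $1 \otimes v$ commutes with $u \otimes 1$ and $v$ is compatible with $u$. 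The main obstacle, as noted, is the careful extraction of the offset $t$ and target label $\sigma_1$ from the successor condition, i.e. ruling out any mixing between the $E$- and $F$-cycles and confirming that the rotation offset is well defined on the whole $s$-cycle.
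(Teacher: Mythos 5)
Your proposal is correct and follows essentially the same route as the paper's proof: write out the disjoint cycle decomposition of $u \otimes 1$, conjugate by $1 \otimes v$, and use uniqueness of the disjoint cycle decomposition to match each conjugated cycle with a cycle of $u \otimes 1$, which forces the constancy of the second coordinate, the target label $\sigma_1$, and the rotation offset $t$, with the converse obtained by substitution and re-indexing via the bijection $\sigma$. Your explicit remarks on $s \geq 2$ and on why $\sigma$ must be a bijection (the block argument) are sensible refinements of details the paper leaves implicit, but they do not change the argument.
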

\begin{proof}
	We have that
	$$
	(u \otimes 1) = \prod_{r=1}^k \prod_{\alpha \in \{ e,f \}} \prod_{x=1}^n 
	((a_r,\alpha_1,x), \ldots , (a_r,\alpha_s,x))  $$
	and hence
	$$
	(1 \otimes v)(u \otimes 1)(1 \otimes v)^{-1} = 
	\prod_{r=1}^k \prod_{\alpha \in \{ e,f \}} \prod_{x=1}^n  
	((a_r,v(\alpha_1,x)), \ldots ,(a_r,v(\alpha_s,x))).
	$$
	So $v$ is compatible with $u$ if and only if
	$$ 
	\prod_{(\alpha,x) \in \{ e,f \} \times [n]} ((a_r,\alpha_1,x),\ldots , (a_r,\alpha_s,x)) = 
	\prod_{(\alpha,x) \in \{ e,f \} \times [n]} 
	((a_r,v(\alpha_1,x)), \ldots , (a_r,v(\alpha_s,x))) 
	$$
	for all $r=1,\ldots,k$. 
	We claim that this happens if and only if there exist $\sigma \in S(\{ e,f \} \times [n])$ and $t: \{ e,f \} \times [n] \to [s]$ such that
	$$
	v(\alpha_i,x) = 
	((\sigma_1(\alpha,x))_{i+t(\alpha, x)}, \sigma_2(\alpha,x))
	$$
	for all $i \in [s]$ and $(\alpha,x) \in \{ e,f \} \times [n]$.
	Indeed, if this last condition holds then
\begin{align*}
& \prod_{(\alpha,x) \in \{ e,f \} \times [n]} 
((a_r,v(\alpha_1,x)), \ldots , (a_r,v(\alpha_s,x))) \\
& = \prod_{(\alpha,x) \in \{ e,f \} \times [n]}   ((a_r,((\sigma_1(\alpha,x))_{1+t(\alpha, x)}, \sigma_2(\alpha,x)),\ldots , (a_r,((\sigma_1(\alpha,x))_{s+t(\alpha, x)}, \sigma_2(\alpha,x))  \\
& = \prod_{(\beta,y) \in \{ e,f \} \times [n]}  ((a_r,\beta_{1+t(\sigma^{-1}(\beta,y))},y), \ldots , (a_r,\beta_{s+t(\sigma^{-1}(\beta,y))},y)) \ 
\end{align*}
for all $r \in [k]$. 
Conversely, if
	$$ 
	\prod_{(\alpha,x) \in \{ e,f \} \times [n]} 
	((a_r,v(\alpha_1,x)), \ldots , (a_r,v(\alpha_s,x)))  =
	\prod_{(\alpha,x) \in \{ e,f \} \times [n]} ((a_r,\alpha_1,x),\ldots , (a_r,\alpha_s,x)) 
	$$
	for all $r \in [k]$ then for any $(\alpha,x) \in \{ e,f \} \times [n]$ there exists $\sigma(\alpha,x) \in \{ e,f \} \times [n]$ such that
\begin{align*}
	((a_r,v(\alpha_1,x)), \ldots , & (a_r,v(\alpha_s,x)))  = \\
 & ((a_r,(\sigma_1(\alpha,x))_1,\sigma_2(\alpha,x)),\ldots , (a_r,(\sigma_1(\alpha,x))_s,\sigma_2(\alpha,x)))
\end{align*}	
	for all $r \in [k]$, and this map $\sigma$ must be a bijection
of $\{ e,f \} \times [n]$. Therefore
	$$
	(v(\alpha_1,x),\ldots , v(\alpha_s,x)) = 
	(((\sigma_1(\alpha,x))_1,\sigma_2(\alpha,x)),\ldots , ((\sigma_1(\alpha,x))_s,\sigma_2(\alpha,x)))
	$$
(as cycles) for all $(\alpha,x) \in \{ e,f \} \times [n]$, so there exists $t(\alpha,x) \in [s]$ such that
	$$
	v(\alpha_i,x) = 
	((\sigma_1(\alpha,x))_{i+t(\alpha,x)},\sigma_2(\alpha,x))
	$$
	for all $i \in [s]$ and $(\alpha,x) \in \{ e,f \} \times [n]$.
\end{proof}

While we will sometimes need the full generality of Proposition \ref{horizontal_same_columns_2_cycles} we use the following
special case of it very often.
\begin{proposition}
	\label{horizontal_same_columns2_cycles}
	Let $1 \leq a_1 < \cdots < a_k \leq n$, $\{ b_1, \ldots , b_s\} \subseteq [n]$,
	$v \in S([n]^2)$, and 
	\[
	u:= \prod_{r=1}^{k} \left( (a_r,b_1) , \ldots , (a_r,b_s) \right).
	\]	
	Then $v$ is compatible with $u$ if and only if there exist $\sigma \in S_n$ and $t: [n] \to [s]$ such that
	$$
	v(b_i,x) = (b_{i+t(x)}, \sigma(x))
	$$
	for all $i \in [s]$ and $x \in [n]$ (where indices are modulo $s$).
\end{proposition}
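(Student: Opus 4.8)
The plan is to run the same cycle-conjugation argument used in the proof of Proposition \ref{horizontal_same_columns_2_cycles}; indeed the present statement is exactly the instance of that proposition in which each row carries a \emph{single} $s$-cycle rather than two, so that the index set $\{e,f\}$ collapses to a single letter $b$ and the group $S(\{e,f\}\times[n])$ reduces to $S([n]) = S_n$. For completeness I would carry it out directly. First I would write
$$
u \otimes 1 = \prod_{r=1}^{k} \prod_{x=1}^{n} \left( (a_r,b_1,x), \ldots , (a_r,b_s,x) \right)
$$
in $S([n]^3)$, a product of pairwise disjoint $s$-cycles. Since $v$ is compatible with $u$ exactly when $(u \otimes 1)(1 \otimes v) = (1 \otimes v)(u \otimes 1)$, i.e. when $(1 \otimes v)(u \otimes 1)(1 \otimes v)^{-1} = u \otimes 1$, I would conjugate, using that conjugation applies $1 \otimes v$ entrywise to each cycle, obtaining
$$
(1 \otimes v)(u \otimes 1)(1 \otimes v)^{-1} = \prod_{r=1}^{k} \prod_{x=1}^{n} \left( (a_r,v(b_1,x)), \ldots , (a_r,v(b_s,x)) \right).
$$

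Next I would observe that throughout both products the first coordinate stays equal to $a_r$, and the $a_r$ are distinct, so the identity $(1 \otimes v)(u \otimes 1)(1 \otimes v)^{-1} = u \otimes 1$ decouples into one identity for each $r$; moreover in each of these the leading coordinate $a_r$ is a mere spectator, so (since $k \geq 1$) the whole condition is equivalent to the single identity
$$
\prod_{x=1}^{n} \left( v(b_1,x), \ldots , v(b_s,x) \right) = \prod_{x=1}^{n} \left( (b_1,x), \ldots , (b_s,x) \right)
$$
in $S([n]^2)$. Here I use that $v$ is a bijection to guarantee that the left-hand side is again a product of $n$ pairwise disjoint genuine $s$-cycles.

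Finally I would compare the two sides as products of disjoint $s$-cycles. Equality forces the multiset of cycles to coincide (each cycle being determined only up to cyclic rotation), so for every $x$ the cycle $(v(b_1,x), \ldots , v(b_s,x))$ equals, after some rotation, a unique cycle $((b_1,\sigma(x)), \ldots , (b_s,\sigma(x)))$ occurring on the right; the assignment $x \mapsto \sigma(x)$ is then a bijection of $[n]$ (the cycles pairing up one-to-one), and the rotation amount is an offset $t(x) \in [s]$. Reading off the entries gives $v(b_i,x) = (b_{i+t(x)}, \sigma(x))$ for all $i$ (indices modulo $s$), which is the asserted condition; the converse is an immediate verification obtained by substituting this form back into the displayed identity. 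The step requiring the most care is this last cycle-matching: one must check that equality of two products of pairwise disjoint, equal-length cycles genuinely yields a bijective pairing of the cycles together with well-defined cyclic offsets, and in particular that $\sigma$ is forced to be a permutation of all of $[n]$ rather than merely a partial injection.
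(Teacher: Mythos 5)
Your proof is correct and is essentially the paper's own argument: the paper gives no separate proof here, presenting the statement as the special case of Proposition \ref{horizontal_same_columns_2_cycles}, whose proof is exactly the conjugation-and-cycle-matching computation you carry out (write $u\otimes 1$ as disjoint $s$-cycles, conjugate by $1\otimes v$, decouple over $r$, and match disjoint cycles up to rotation to extract $\sigma$ and $t$). Your one added care --- noting that the statement is not a \emph{literal} instantiation of the two-cycle proposition (which needs $2s$ distinct column indices) and therefore writing the argument out directly, including the check that $\sigma$ is forced to be a genuine permutation --- is sound and, if anything, slightly tightens the paper's presentation.
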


As an immediate consequence we obtain the following.

\begin{corollary}
	\label{horizontal_in_same_columns}
	Let $1 \leq a_1 < \cdots < a_k \leq n$, $1 \leq i < j \leq n$,
	and 
	\[
	u:= \prod_{r=1}^{k} \left( (a_r,i) , (a_r,j) \right).
	\]	
	Then $u$ is stable of rank $1$ if and only if
	either
	\[
	\{ i,j \} \subseteq \{ a_1, \ldots , a_k \}
	\]
	or
	\[
	\{ i,j \} \cap \{ a_1, \ldots , a_k \} = \emptyset.
	\]
	In particular, there are 
	$
	\binom{n}{k} \Big( \binom{n-k}{2} + \binom{k}{2} \Big)
	$
	such stable permutations of rank 1.
\end{corollary}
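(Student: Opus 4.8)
The plan is to reduce the statement to a self-compatibility condition and then read off the combinatorics from Proposition \ref{horizontal_same_columns}. First I would record that the permutation $u = \prod_{r=1}^k ((a_r,i),(a_r,j))$ is a product of \emph{disjoint} transpositions, since the first coordinates $a_1,\dots,a_k$ are distinct; hence $u$ is an involution, $u = u^{-1}$. This observation will be what allows me to reconcile the two (a priori different) directions of compatibility.

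The key equivalence I want is that $u$ is stable of rank $1$ if and only if $u$ is compatible with itself, i.e. $(u\otimes 1)(1\otimes u) = (1\otimes u)(u\otimes 1)$. For the forward implication I would use the $h=0$ instance of the rank-$1$ stability condition \eqref{stable}, namely $\psi_1(u) = u^{-1}\otimes 1$. Expanding $\psi_1(u) = (1\otimes u^{-1})(u^{-1}\otimes 1)(1\otimes u)$ directly from \eqref{def_psi_k} and rearranging yields $(u^{-1}\otimes 1)(1\otimes u) = (1\otimes u)(u^{-1}\otimes 1)$, which by $u = u^{-1}$ is precisely self-compatibility. For the converse, once $u$ is compatible with itself the two-element set $\{ 1, u \}$ is a bicompatible subgroup of $S([n]^2)$ (compatibility with the identity being automatic), so $u$ is stable of rank $1$ by the remark following the definition of bicompatible subgroup. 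Since $i \neq j$ and $k \geq 1$, $u$ is nontrivial, so the rank is exactly $1$.

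With this equivalence in hand I would apply Proposition \ref{horizontal_same_columns} with $v := u$ and $\sigma := (i,j)$: it asserts that $u$ is compatible with $u$ exactly when $v=u$ leaves $[n]\times\{ a_1,\dots,a_k \}$ invariant. Because $u$ acts as the identity off the rows $a_1,\dots,a_k$ and as the transposition $(i,j)$ in the second coordinate on those rows, this invariance holds if and only if $(i,j)$ maps $\{ a_1,\dots,a_k \}$ into itself; for a transposition this occurs precisely when both $i$ and $j$ belong to $\{ a_1,\dots,a_k \}$ or neither does. That is the claimed dichotomy. The enumeration is then immediate: there are $\binom{n}{k}$ choices for $\{ a_1,\dots,a_k \}$, and for each of them $\binom{k}{2}$ pairs $\{ i,j \}$ contained in it and $\binom{n-k}{2}$ pairs disjoint from it, giving $\binom{n}{k}\big(\binom{n-k}{2}+\binom{k}{2}\big)$ in total.

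I expect the only genuinely delicate point to be the equivalence between rank-$1$ stability and self-compatibility: the bookkeeping in expanding $\psi_1(u)$ must be done carefully, and it is the involution property $u=u^{-1}$ that collapses the compatibility of $u$ with $u^{-1}$ (which is what the $h=0$ stability condition directly produces) to the symmetric self-compatibility of $u$ with $u$. Once that is in place, the set-invariance analysis and the counting are routine.
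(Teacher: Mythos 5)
Your proposal is correct and takes essentially the same route as the paper: the paper's own proof likewise applies Proposition \ref{horizontal_same_columns} with $v=u$ (implicitly invoking the equivalence between rank-$1$ stability and self-compatibility, which it treats as known from \cite{BC}), and then counts exactly as you do. The only difference is that you spell out that equivalence explicitly --- the forward direction from the $h=0$ case of the definition of $\psi_1(u)$, the converse via the bicompatible subgroup $\{1,u\}$ --- a step the paper leaves implicit.
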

\begin{proof}
	The first statement follows immediately from Proposition
	\ref{horizontal_same_columns}.
	For the second one note first that the two cases are disjoint,
	and that there are $\binom{n}{k}$ choices for 
	$\{ a_1, \ldots , a_k \}$.
	Furthermore, in the first case there are $\binom{k}{2}$ possibilities for $\{  i, j \}$, and $\binom{n-k}{2}$ in the second one. 
\end{proof}
\noindent 
So, for example, for $n=4$ and $k=3$, there are 12 such permutations (see also Figure 4).
Together with the previous result for $k=2$ the next one covers 
all the pairs of ``horizontal'' transpositions.
\begin{proposition}
	Let $\{ a_1, a_2 \}, \{ i_1, j_1 \}, \{ i_2, j_2 \} \in \binom{[n]}{2}$
	be such that $| \{ i_1, j_1 \} \cap \{ i_2, j_2 \} | \leq 1$,
	$v \in S([n]^2)$, and 
	\[
	u:= \left( (a_1,i_1) , (a_1,j_1) \right) \left( (a_2,i_2) , (a_2,j_2) \right).
	\]
	Then $u$ is compatible with $v$ if and only if $v$ leaves 
	$[n] \times \{ a_1 \}$ and $[n] \times \{ a_2 \}$ invariant.
\end{proposition}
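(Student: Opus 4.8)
The plan is to reduce the statement to the already-established machinery of Proposition \ref{horizontal_same_columns}. The permutation $u$ here is a product of two ``horizontal'' transpositions, one acting in row $a_1$ (swapping columns $i_1,j_1$) and one acting in row $a_2$ (swapping columns $i_2,j_2$), with the crucial hypothesis $|\{i_1,j_1\}\cap\{i_2,j_2\}|\le 1$. The key observation is that $u$ can be written as a product $u = u_1 u_2$ where $u_1 := ((a_1,i_1),(a_1,j_1))$ and $u_2 := ((a_2,i_2),(a_2,j_2))$ are each of the form covered by Proposition \ref{horizontal_same_columns} (each being a single horizontal transposition, i.e.\ the case $k=1$ with $\sigma$ a transposition). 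By that proposition, $u_1$ is compatible with $v$ if and only if $v$ leaves $[n]\times\{a_1\}$ invariant, and $u_2$ is compatible with $v$ if and only if $v$ leaves $[n]\times\{a_2\}$ invariant.

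First I would establish the easy direction: if $v$ leaves both $[n]\times\{a_1\}$ and $[n]\times\{a_2\}$ invariant, then $v$ is compatible with each of $u_1$ and $u_2$ by Proposition \ref{horizontal_same_columns}. Since compatibility of $v$ with $u_1$ and with $u_2$ means $(u_1\otimes 1)$ and $(u_2\otimes 1)$ each commute with $(1\otimes v)$, their product $(u\otimes 1)=(u_1 u_2\otimes 1)=(u_1\otimes 1)(u_2\otimes 1)$ also commutes with $(1\otimes v)$, so $u$ is compatible with $v$. Here I am reading ``$u$ is compatible with $v$'' in the convention of the definition, namely $(u\otimes 1)(1\otimes v)=(1\otimes v)(u\otimes 1)$; one must be careful to keep the order of the arguments straight, since the statement asks about $u$ compatible with $v$, matching the $(u\otimes 1)$-versus-$(1\otimes v)$ form used in the proof of Proposition \ref{horizontal_same_columns_2_cycles}.

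The harder direction, and the main obstacle, is the converse: assuming $u$ is compatible with $v$, I must deduce that $v$ leaves each of $[n]\times\{a_1\}$ and $[n]\times\{a_2\}$ invariant. The difficulty is that compatibility of $v$ with the product $u_1 u_2$ does not a priori split into compatibility with each factor, so I cannot simply invoke Proposition \ref{horizontal_same_columns} twice. The way around this is to compute $(1\otimes v)(u\otimes 1)(1\otimes v)^{-1}$ explicitly, as in the proofs above. Writing $u\otimes 1 = \prod_{x=1}^n \big((a_1,i_1,x),(a_1,j_1,x)\big)\big((a_2,i_2,x),(a_2,j_2,x)\big)$, conjugation by $1\otimes v$ replaces each transposition $((a,c,x),(a,d,x))$ by $((a,v(c,x)),(a,v(d,x)))$. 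Compatibility forces this conjugated product to equal $u\otimes 1$ itself. Comparing the two products as sets of disjoint transpositions (noting the $a_1$-block and $a_2$-block live on disjoint rows $a_1\ne a_2$ and hence cannot interfere with one another), each transposition on the right must match one on the left.

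To finish, I would exploit the hypothesis $|\{i_1,j_1\}\cap\{i_2,j_2\}|\le 1$. This condition guarantees that, within a fixed first coordinate $a_1$, the pair $\{i_1,j_1\}$ is genuinely determined, preventing the kind of ``cyclic shift'' freedom that the parameter $t$ introduces in Proposition \ref{horizontal_same_columns_2_cycles}; with only $2$-cycles the index $t$ can only be $0$ in a way that pins down $v$. Concretely, matching transpositions in the $a_1$-block shows that $v$ must send $\{(i_1,x),(j_1,x)\}$ to a pair of the form $\{(i_1,x'),(j_1,x')\}$ or $\{(i_2,x'),(j_2,x')\}$; the constraint $|\{i_1,j_1\}\cap\{i_2,j_2\}|\le 1$ rules out the possibility of $v$ mixing the two column-pairs in an inconsistent way, forcing $v$ to preserve the first-coordinate set $\{a_1\}$-fiber structure, i.e.\ to leave $[n]\times\{a_1\}$ invariant, and symmetrically $[n]\times\{a_2\}$ invariant. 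I would carry out this matching argument carefully, treating separately whether $\{a_1\}$ and $\{a_2\}$ coincide (they need not be distinct rows—though if $a_1=a_2$ and the two column-pairs were disjoint the statement reduces to the $k=1$, $s=2$ shape and the conclusion is immediate), and I expect the bookkeeping of which transposition maps to which to be the only genuinely delicate part.
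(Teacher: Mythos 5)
Your proposal contains a genuine error that undermines the hard direction: you have inverted the paper's (asymmetric) compatibility convention. By the paper's definition, ``$u$ is compatible with $v$'' means $(v\otimes 1)(1\otimes u)=(1\otimes u)(v\otimes 1)$, i.e.\ $1\otimes u$ commutes with $v\otimes 1$; so one must conjugate $1\otimes u$ by $v\otimes 1$ and compare with $1\otimes u$. You instead compute $(1\otimes v)(u\otimes 1)(1\otimes v)^{-1}$ and compare with $u\otimes 1$, which is precisely the condition ``$v$ is compatible with $u$'' (the form appearing in Proposition \ref{horizontal_same_columns_2_cycles}). Compatibility is not symmetric --- that is the whole reason the paper introduces the notion ``bicompatible'' --- and for the present $u$ the two conditions are incomparable. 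Concretely, take $n=4$, $a_1=1$, $a_2=2$, $u=((1,3),(1,4))\,((2,2),(2,3))$: the permutation $v=((1,1),(2,1))$ leaves columns $1$ and $2$ invariant (so $u$ is compatible with $v$), yet $u\otimes 1$ and $1\otimes v$ disagree when composed at the point $(2,2,1)$, so $v$ is not compatible with $u$; conversely, $v=1\otimes\tau$ with $\tau(1)\neq 1$ commutes appropriately with $u$ in your sense but violates the column invariance. Accordingly, the matching argument you sketch, carried out with your conjugation, can only yield constraints on how $v$ permutes vertical pairs in rows $i_1,j_1$ and $i_2,j_2$ (the $s=2$ row-pairing condition of Proposition \ref{horizontal_same_columns_2_cycles}), and your final claim that this ``forces $v$ to leave $[n]\times\{a_1\}$ invariant'' is a non sequitur: no condition on rows $i_1,j_1,i_2,j_2$ can control columns $a_1,a_2$.

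The error is repairable, and the repaired argument is essentially the paper's proof. With the correct conjugation one gets $(v\otimes 1)(1\otimes u)(v^{-1}\otimes 1)=\prod_{x=1}^{n}\prod_{r=1}^{2}\left((v(x,a_r),i_r),(v(x,a_r),j_r)\right)$, which must equal $1\otimes u$; matching disjoint transpositions (or, as the paper does, observing that $(v(x,a_2),j_2)$ must then be a non-fixed point of $1\otimes u$), the hypothesis $|\{i_1,j_1\}\cap\{i_2,j_2\}|\leq 1$ guarantees $\{i_1,j_1\}\neq\{i_2,j_2\}$, so the $a_1$-block cannot be matched against the $a_2$-block, forcing $v(x,a_r)\in[n]\times\{a_r\}$. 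Your easy direction --- factoring $u=u_1u_2$, applying Proposition \ref{horizontal_same_columns} to each transposition, and using that compatibility with $v$ is preserved under products in the first argument --- is sound once the convention is straightened out, and is a slightly slicker route than the paper's direct verification; note that this direction needs no intersection hypothesis, just as in the paper. One last small point: since $\{a_1,a_2\}\in\binom{[n]}{2}$, necessarily $a_1\neq a_2$, so the case distinction you anticipate for $a_1=a_2$ does not arise.
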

\begin{proof}
	We have that
	\begin{equation}
		\label{1tensoru}
		1 \otimes u = \prod_{x=1}^{n} \prod_{r=1}^{2} 
		\left( (x,a_r,i_r) , (x,a_r,j_r) \right)
	\end{equation}
	while
	\begin{equation}
		\label{1tensoru_conj}
		(v \otimes 1)(1 \otimes u)(v^{-1} \otimes 1) = \prod_{x=1}^{n} \prod_{r=1}^{2} 
		\left( (v(x,a_r),i_r) , (v(x,a_r),j_r) \right).
	\end{equation}
	Assume first that $\{ i_1, j_1 \} \cap \{ i_2, j_2 \}  = \emptyset$. If $u$ is compatible with $v$ then $\left( v(x,a_2), j_2 \right)$ is 
	not a fixed point of $1 \otimes u$ for all $x \in [n]$.
	Hence, since $j_2 \notin \{ i_1, j_1, i_2 \}$, there is 
	$y \in [n]$ such that $v(x,a_2) = (y,a_2)$ so $v(x,a_2) \in 
	[n] \times \{ a_2 \}$. Hence $v$ leaves $[n] \times \{ a_2 \}$
	invariant. Similarly, $v$ leaves $[n] \times \{ a_1 \}$ 
	invariant. Conversely, if $v$ leaves $[n] \times \{ a_1 \}$
	and $[n] \times \{ a_2 \}$ invariant then there are $\sigma_1, \sigma_2 \in S_n$ such that $v(x,a_i) = (\sigma_i(x),a_i)$
	for all $x \in [n]$ and all $i=1,2$ so the result follows 
	immediately from (\ref{1tensoru}) and (\ref{1tensoru_conj}).
	
	Suppose now that 
	$| \{ i_1, j_1 \} \cap \{ i_2, j_2 \} | = 1$. We may clearly
	assume that $i_1 = i_2$ so $j_1 \neq j_2$. Then  
	$j_2 \notin \{ i_1, j_1, i_2 \}$ and $j_1 \notin \{ i_1, j_2, i_2 \}$ so we conclude as in the previous case.
\end{proof}

We again have the following special case.
\begin{corollary}
	\label{horizontal_not_same_columns}
	Let $\{ a_1, a_2 \}, \{ i_1, j_1 \}, \{ i_2, j_2 \} \in \binom{[n]}{2}$
	be such that $| \{ i_1, j_1 \} \cap \{ i_2, j_2 \} | \leq 1$, and 
	\[
	u:= \left( (a_1,i_1) , (a_1,j_1) \right) \left( (a_2,i_2) , (a_2,j_2) \right).
	\]
	Then $u$ is stable of rank $1$ if and only if 
	\[
	\{ a_1,a_2 \} \cap \{ i_1,j_1,i_2,j_2 \} = \emptyset.
	\]
\end{corollary}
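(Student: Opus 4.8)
The plan is to reduce the statement to the compatibility criterion of the preceding Proposition by exploiting the fact --- used already in the proof of Corollary \ref{horizontal_in_same_columns} --- that a permutation $w \in S([n]^2)$ is stable of rank one precisely when it is compatible with itself. Thus I would simply specialize the preceding Proposition to $v := u$; note that the hypothesis $|\{i_1,j_1\} \cap \{i_2,j_2\}| \le 1$ carries over verbatim and is exactly what licenses that application. This turns the assertion ``$u$ is stable of rank one'' into the combinatorial condition that the involution $u$ leave both fibres $[n] \times \{a_1\}$ and $[n] \times \{a_2\}$ invariant, which I then analyze directly.

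To carry out the analysis, I would fix $r \in \{1,2\}$ and examine invariance of $[n] \times \{a_r\}$ under $u$. Since $u$ is an involution fixing every point except the four points $(a_1,i_1), (a_1,j_1), (a_2,i_2), (a_2,j_2)$, invariance of $[n] \times \{a_r\}$ can fail only if one of these four points lies in $[n] \times \{a_r\}$ while its $u$-image does not. Now a moved point $(a_m,c)$, with $c \in \{i_m,j_m\}$, lies in the fibre $[n] \times \{a_r\}$ exactly when its second coordinate equals $a_r$, i.e. when $c = a_r$; and in that event $u$ sends it to $(a_m,c')$ with $c'$ the other member of $\{i_m,j_m\}$, so $c' \ne a_r$ and the image falls outside the fibre. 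Hence $[n] \times \{a_r\}$ is invariant under $u$ if and only if none of $i_1,j_1,i_2,j_2$ equals $a_r$, that is, iff $a_r \notin \{i_1,j_1,i_2,j_2\}$.

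Running this for $r=1$ and $r=2$ and combining, $u$ leaves both fibres invariant exactly when neither $a_1$ nor $a_2$ lies in $\{i_1,j_1,i_2,j_2\}$, i.e. iff $\{a_1,a_2\} \cap \{i_1,j_1,i_2,j_2\} = \emptyset$. Chaining this with the preceding Proposition (for $v=u$) and the self-compatibility characterization of stability of rank one then delivers the stated equivalence.

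The one place demanding care is the coordinate bookkeeping in the invariance step: the transpositions of $u$ act on points sharing a fixed \emph{first} coordinate $a_m$, whereas the fibres $[n]\times\{a_r\}$ are singled out by the \emph{second} coordinate, so the condition that governs invariance is whether the value $a_r$ appears among the second coordinates $\{i_1,j_1,i_2,j_2\}$ of the moved points rather than among the first coordinates $\{a_1,a_2\}$. Once this is kept straight the argument is a routine specialization; the disjointness of the two transpositions (guaranteed by $a_1 \ne a_2$, since $\{a_1,a_2\}\in\binom{[n]}{2}$) ensures there are no further cases to check.
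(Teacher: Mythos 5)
Your proposal is correct and is essentially the proof the paper intends: the corollary is stated as an immediate special case of the preceding proposition, obtained exactly as you do by invoking the equivalence between stability of rank one and self-compatibility, setting $v:=u$, and checking that $u$ leaves the fibres $[n]\times\{a_1\}$ and $[n]\times\{a_2\}$ invariant precisely when $\{a_1,a_2\}\cap\{i_1,j_1,i_2,j_2\}=\emptyset$. Your coordinate bookkeeping in the invariance step is accurate, so nothing is missing.
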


As a consequence of the previous results we obtain a proof 
of Conjecture 12.2 in \cite{BC} 
(cf. \cite{Pan}).
\begin{theorem} \label{solveconj}
	Let $(a_1,i_1), (a_1,j_1),(a_2,i_2),(a_2,j_2) \in [n]^2$
	be distinct, $a_1 \neq a_2$, and
	\[
	u:= \left( (a_1,i_1) , (a_1,j_1) \right) \left( (a_2,i_2) , (a_2,j_2) \right).
	\]
	Then $u$ is stable of rank $1$ if and only if either
	\begin{equation}
		\label{empty_intersection}
		\{ a_1,a_2 \} \cap \{ i_1,j_1,i_2,j_2 \} = \emptyset,
	\end{equation}
	or
	\begin{equation}
		\label{equal}
		\{ a_1,a_2 \} = \{ i_1,j_1,i_2,j_2 \}.
	\end{equation}
	In particular, there are 
	$
	\binom{n}{2} \Big( \binom{n-2}{2}^2 +1 \Big)
	$
	such permutations.
\end{theorem}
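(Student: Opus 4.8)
The plan is to obtain the theorem as a synthesis of the two special cases already in hand, Corollary \ref{horizontal_in_same_columns} with $k=2$ and Corollary \ref{horizontal_not_same_columns}, which together exhaust every configuration of two horizontal transpositions lying in two distinct rows. Since the four points are distinct and $a_1 \neq a_2$, each column pair $\{i_1,j_1\}$ and $\{i_2,j_2\}$ really has two elements, so the natural dichotomy is on the size of their overlap: either $\{i_1,j_1\} = \{i_2,j_2\}$, or $|\{i_1,j_1\} \cap \{i_2,j_2\}| \leq 1$.

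In the first case, writing $\{i,j\} := \{i_1,j_1\} = \{i_2,j_2\}$, the permutation $u = \prod_{r=1}^{2} ((a_r,i),(a_r,j))$ is exactly of the shape handled by Corollary \ref{horizontal_in_same_columns}, so $u$ is stable of rank $1$ if and only if $\{i,j\} \subseteq \{a_1,a_2\}$ or $\{i,j\} \cap \{a_1,a_2\} = \emptyset$. I would then translate these alternatives: as $\{i,j\}$ and $\{a_1,a_2\}$ are both $2$-element sets, the inclusion is in fact an equality, and since $\{i_1,j_1,i_2,j_2\} = \{i,j\}$ here, it is precisely condition (\ref{equal}); the disjointness alternative is exactly (\ref{empty_intersection}).

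In the second case Corollary \ref{horizontal_not_same_columns} applies verbatim and says $u$ is stable of rank $1$ if and only if (\ref{empty_intersection}) holds; to reconcile this with the claimed ``(\ref{empty_intersection}) or (\ref{equal})'' I would note that (\ref{equal}) is vacuously excluded, because $\{i_1,j_1,i_2,j_2\}$ now has at least three elements and cannot equal the $2$-element set $\{a_1,a_2\}$. Combining the two cases gives the stated characterization.

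For the enumeration I would first check the two conditions are mutually exclusive---their conjunction would force $\{a_1,a_2\} = \emptyset$---and then count each. Fixing the two-element row set (with, say, $a_1 < a_2$, so that the column pair attached to each row is unambiguous) there are $\binom{n}{2}$ choices; under (\ref{empty_intersection}) the two column pairs range independently over the $\binom{n-2}{2}$ two-subsets of $[n] \setminus \{a_1,a_2\}$, giving $\binom{n}{2}\binom{n-2}{2}^2$, whereas under (\ref{equal}) both transpositions are forced to swap the columns $a_1$ and $a_2$, contributing $\binom{n}{2}$; summing gives $\binom{n}{2}(\binom{n-2}{2}^2 + 1)$. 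I expect the only delicate points to be purely bookkeeping: verifying that the inclusion collapses to an equality and that (\ref{equal}) is impossible in the low-overlap case, together with fixing the row ordering so that the count avoids double counting.
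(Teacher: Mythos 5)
Your proposal is correct and follows essentially the same route as the paper's proof: both split on whether $\{i_1,j_1\}=\{i_2,j_2\}$ or $|\{i_1,j_1\}\cap\{i_2,j_2\}|\leq 1$, apply Corollary \ref{horizontal_in_same_columns} (with $k=2$) in the first case and Corollary \ref{horizontal_not_same_columns} in the second, and count exactly as you do (disjointness of the two conditions, $\binom{n}{2}$ row choices, $\binom{n-2}{2}^2$ versus $1$ column-pair choices). The only difference is organizational—you state the equivalence case by case rather than separating the two implications—which changes nothing of substance.
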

\begin{proof}
	Suppose first that $u$ is stable of rank $1$. If
	$| \{ i_1, j_1 \} \cap \{ i_2, j_2 \} | = 2$ (i.e., 
	if $ \{ i_1, j_1 \} = \{ i_2, j_2 \}$) then from Corollary \ref{horizontal_in_same_columns} (with $k=2$) we conclude that
	either (\ref{empty_intersection}) holds or 
	$\{ i_1, j_1 \} \subseteq \{a_1, a_2 \}$ which implies 
	that (\ref{equal}) holds. If 
	$| \{ i_1, j_1 \} \cap \{ i_2, j_2 \} | \leq 1$ then from 
	Corollary \ref{horizontal_not_same_columns} we conclude that 
	(\ref{empty_intersection}) holds.
	
	Conversely, if (\ref{empty_intersection}) holds then from 
	Corollaries \ref{horizontal_in_same_columns}  and
	\ref{horizontal_not_same_columns} we conclude that $u$ is
	stable of rank $1$, while if (\ref{equal}) holds then 
	necessarily $\{ i_1,j_1 \} = \{ i_2,j_2 \}$ so the result
	follows from Corollary \ref{horizontal_in_same_columns}.
	
	For the last statement note first that the two cases given in
	(\ref{empty_intersection}) and (\ref{equal}) are disjoint,
	and that there are $\binom{n}{2}$ choices for 
	$\{ a_1, a_2 \}$.
	Furthermore, in the first case, since $i_1 \neq j_1$,
	and $i_1,j_1 \notin \{ a_1, a_2 \}$, there are $\binom{n-2}{2}$ possibilities for
	$\{  i_1, j_1 \}$, and similarly for $\{ i_2,j_2 \}$. If
	(\ref{equal}) holds then necessarily $\{i_1,j_1\}=\{a_1,a_2\}
	=\{i_2,j_2 \}$. The result follows. 
\end{proof}
\noindent 
So, for example, for $n=4$ there are 12 such permutations (see also Fig. 3).
The following two results follow easily from Propositions \ref{horizontal_same_columns} and \ref{horizontal_same_columns2_cycles}
using some transposition techniques (see \cite[Prop. 5.13]{BC}), and their verifications are 
therefore omitted.
\begin{proposition}
	\label{vertical_same_rows2}
Let $1 \leq a_1 < \cdots < a_k \leq n$, $\{ b_1, \ldots , b_s\} \subseteq [n]$,
	$v \in S([n]^2)$, and 
	\[
	u:= \prod_{r=1}^{k} \left( (b_1,a_r) , \ldots , (b_s,a_r,) \right).
	\]	
Then $u$ is compatible with $v$ if and only if there exist $\sigma \in S_n$ and $t: [n] \to [s]$ such that
$$
v(x,b_i) = (\sigma(x), b_{i+t(x)})
$$
for all $x \in [n]$ and $i \in [s]$ (where indices are modulo $s$).
\end{proposition}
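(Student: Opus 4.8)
The plan is to reduce the statement to its ``horizontal'' counterpart, Proposition~\ref{horizontal_same_columns2_cycles}, via the antitranspose operation $u \mapsto {}^{a}u$; this is the ``transposition technique'' alluded to, and amounts to \cite[Prop.~5.13]{BC}. The key fact I would isolate first is a one-directional refinement of Proposition~\ref{at-bicompatible}, namely that for all $u,v \in S([n]^2)$,
\[
u \text{ is compatible with } v \iff {}^{a}v \text{ is compatible with } {}^{a}u .
\]
To prove this I would write ${}^{a}u = \rho\,u\,\rho$, where $\rho \in S([n]^2)$ is the anti-diagonal reflection $\rho(x,y)=(n+1-y,n+1-x)$ (an involution), and introduce its three-dimensional analogue $\tilde\rho \in S([n]^3)$, $\tilde\rho(x,y,z)=(n+1-z,n+1-y,n+1-x)$. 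A direct check from (\ref{defatr}) gives the conjugation identities $\tilde\rho\,(v\otimes 1)\,\tilde\rho = 1\otimes {}^{a}v$ and $\tilde\rho\,(1\otimes u)\,\tilde\rho = {}^{a}u\otimes 1$. Conjugating the defining equation $(v\otimes 1)(1\otimes u)=(1\otimes u)(v\otimes 1)$ by $\tilde\rho$ then turns it into $({}^{a}u\otimes 1)(1\otimes {}^{a}v)=(1\otimes {}^{a}v)({}^{a}u\otimes 1)$, which is exactly the assertion that ${}^{a}v$ is compatible with ${}^{a}u$. I stress the reversal of roles here: it is precisely what lets me trade the ``$u$ compatible with $v$'' direction of the present statement for the ``$v$ compatible with $u$'' direction governed by Proposition~\ref{horizontal_same_columns2_cycles}.

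Next I would compute ${}^{a}u$ for the vertical permutation $u=\prod_{r=1}^{k}\big((b_1,a_r),\ldots,(b_s,a_r)\big)$. Setting $a'_r:=n+1-a_r$ and $b'_i:=n+1-b_i$, a short evaluation of (\ref{defatr}) shows that ${}^{a}u$ fixes the first coordinate and cycles the second through $b'_1,\ldots,b'_s$ exactly on the rows indexed by the $a'_r$, that is,
\[
{}^{a}u=\prod_{r=1}^{k}\big((a'_r,b'_1),\ldots,(a'_r,b'_s)\big),
\]
which is precisely a horizontal permutation of the shape treated in Proposition~\ref{horizontal_same_columns2_cycles} (the set $\{a'_1,\ldots,a'_k\}$ being unordered, the fact that the $a'_r$ decrease is immaterial). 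Applying that proposition to the pair $({}^{a}v,{}^{a}u)$ produces $\sigma'\in S_n$ and $t':[n]\to[s]$ with ${}^{a}v(b'_i,x)=(b'_{i+t'(x)},\sigma'(x))$ for all $i\in[s]$ and $x\in[n]$.

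Finally I would translate this condition back to $v$. Using ${}^{a}v(p,q)=(n+1-v_2(n+1-q,n+1-p),\,n+1-v_1(n+1-q,n+1-p))$ with $p=b'_i$ (so $n+1-p=b_i$) and $q=x$, the displayed identity becomes, after the substitution $w=n+1-x$,
\[
v(w,b_i)=\big(n+1-\sigma'(n+1-w),\; b_{i+t'(n+1-w)}\big).
\]
Putting $\sigma(w):=n+1-\sigma'(n+1-w)$ --- a permutation of $[n]$, being the conjugate of $\sigma'$ by the involution $w\mapsto n+1-w$ --- and $t(w):=t'(n+1-w)$ yields exactly $v(x,b_i)=(\sigma(x),b_{i+t(x)})$; since every step is reversible, the converse is automatic. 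I expect the only genuine obstacle to be the careful bookkeeping of the two interacting reflections $n+1-(\cdot)$ together with the cyclic index shifts, and the verification that the new data $(\sigma,t)$ really form a permutation of $[n]$ and a map into $[s]$. (Should one prefer to avoid the antitranspose, the same result follows by repeating the computation in the proof of Proposition~\ref{horizontal_same_columns2_cycles} verbatim with the two coordinates interchanged, since here $1\otimes u=\prod_{x,r}\big((x,b_1,a_r),\ldots,(x,b_s,a_r)\big)$ and conjugation by $v\otimes 1$ replaces $(x,b_i)$ by $v(x,b_i)$ slab-by-slab in the last coordinate $a_r$.)
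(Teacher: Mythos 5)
Your proof is correct and takes essentially the same route the paper itself indicates: the paper omits the verification, saying only that the result follows from Propositions \ref{horizontal_same_columns} and \ref{horizontal_same_columns2_cycles} via the transposition (antitranspose) technique of \cite[Prop.~5.13]{BC}, which is exactly the reduction you carry out. Your conjugation identities for $\tilde\rho$, the computation of ${}^{a}u$ as a horizontal permutation, and the back-translation of $(\sigma',t')$ into $(\sigma,t)$ correctly fill in the details the paper leaves to the reader.
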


\begin{proposition}
	\label{vertical_same_rows}
	Let $1 \leq a_1 < \cdots < a_k \leq n$,
	$v \in S([n]^2)$, $\sigma \in S_n$, and $u \in S([n]^2)$ be such that
	\[
	u(x,y):= 
	\begin{cases}
		(\sigma(x),y) 
		\mbox{ if } y \in \{ a_1, \ldots , a_k\}, \\
		(x,y),
		\mbox{ otherwise},
	\end{cases}
	\]	
	for all $(x,y) \in [n]^2$. 
	Then $v$ is compatible with $u$ if and only if
	$v$ leaves $\{ a_1, \ldots , a_k \} \times [n]$ invariant.
\end{proposition}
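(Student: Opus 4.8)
The plan is to mirror the computation in the proof of Proposition \ref{horizontal_same_columns}, but now conjugating $u\otimes 1$ by $1\otimes v$, since the assertion we must analyse is ``$v$ is compatible with $u$'', i.e.\ $(u\otimes 1)(1\otimes v)=(1\otimes v)(u\otimes 1)$. Throughout I assume $\sigma\neq 1$; for $\sigma=1$ the permutation $u$ is the identity, so the biconditional fails, exactly as in Proposition \ref{horizontal_same_columns}, where one tacitly passes to $\sigma$ a transposition. Setting $A:=\{a_1,\dots,a_k\}$, the definition of $u$ gives, for all $(\alpha,\beta,\gamma)\in[n]^3$,
\[
(u\otimes 1)(\alpha,\beta,\gamma)=
\begin{cases}
(\sigma(\alpha),\beta,\gamma), & \beta\in A,\\
(\alpha,\beta,\gamma), & \beta\notin A,
\end{cases}
\]
while $(1\otimes v)(\alpha,\beta,\gamma)=(\alpha,v(\beta,\gamma))$.

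Next I would compute the conjugate $(1\otimes v)(u\otimes 1)(1\otimes v)^{-1}$ pointwise. Writing $v^{-1}(\beta,\gamma)=(\beta',\gamma')$, applying $(1\otimes v)^{-1}$, then $u\otimes 1$, then $1\otimes v$, and using $v(\beta',\gamma')=(\beta,\gamma)$, one finds that this conjugate sends $(\alpha,\beta,\gamma)$ to $(\sigma(\alpha),\beta,\gamma)$ exactly when $\beta'\in A$, and fixes it otherwise. Thus $v$ is compatible with $u$ if and only if this conjugate equals $u\otimes 1$, that is, if and only if the two maps apply $\sigma$ to the first coordinate on the very same set of triples.

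The heart of the argument is then the following equivalence. Since $\sigma\neq 1$, there is some $\alpha_0$ with $\sigma(\alpha_0)\neq\alpha_0$, so on the triple $(\alpha_0,\beta,\gamma)$ the two permutations already differ whenever exactly one of the conditions $\beta\in A$, $\beta'\in A$ holds. Hence compatibility is equivalent to
\[
\beta\in A\iff \beta'\in A\qquad\text{for all }(\beta,\gamma)\in[n]^2 ,
\]
which says precisely that $v^{-1}(A\times[n])=A\times[n]$, equivalently (as $v$ is a bijection) that $v$ leaves $A\times[n]$ invariant; this is the claim. I expect the only delicate point to be this last step: one must argue that agreement of the two permutations \emph{forces} the two ``active column sets'' to coincide, and it is exactly here that $\sigma\neq 1$ is indispensable, a single moving point $\alpha_0$ sufficing to detect any discrepancy. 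Everything else is routine bookkeeping.

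Finally, as the authors indicate, the same conclusion can be obtained with no direct computation by using the antitranspose. A short calculation shows that for $u$ as above one has ${}^{a}u(x,y)=(x,\widetilde\sigma(y))$ when $x\in\widetilde A$ and ${}^{a}u(x,y)=(x,y)$ otherwise, where $\widetilde A=\{n+1-a:a\in A\}$ and $\widetilde\sigma(y)=n+1-\sigma(n+1-y)$; thus ${}^{a}u$ has exactly the ``horizontal'' form treated in Proposition \ref{horizontal_same_columns}. Invoking the directional antitranspose identity of \cite[Prop.\ 5.13]{BC} (the same one underlying Proposition \ref{at-bicompatible}), ``$v$ compatible with $u$'' becomes ``${}^{a}u$ compatible with ${}^{a}v$''; Proposition \ref{horizontal_same_columns} converts this into the invariance of $[n]\times\widetilde A$ under ${}^{a}v=\rho v\rho^{-1}$, where $\rho(x,y)=(n+1-y,n+1-x)$ satisfies $\rho^2=\mathrm{id}$; and since $\rho(A\times[n])=[n]\times\widetilde A$, this is equivalent to $v$ leaving $A\times[n]$ invariant. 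I would present the direct computation as the main proof and relegate this antitranspose route to a remark.
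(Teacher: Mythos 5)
Your proposal is correct, but your main argument takes a genuinely different route from the paper's. The paper never writes out a proof of this proposition: it is declared to follow from Propositions \ref{horizontal_same_columns} and \ref{horizontal_same_columns2_cycles} ``using some transposition techniques (see \cite[Prop. 5.13]{BC})'' --- that is, precisely the antitranspose argument you relegate to a closing remark. Your primary proof is instead a self-contained pointwise computation: writing $(\beta',\gamma')=v^{-1}(\beta,\gamma)$, you show that $(1\otimes v)(u\otimes 1)(1\otimes v)^{-1}$ applies $\sigma$ to the first coordinate exactly when $\beta'\in A$, so equality with $u\otimes 1$ is equivalent to $\beta\in A\iff\beta'\in A$ for all $(\beta,\gamma)$, i.e.\ to $v^{-1}(A\times[n])=A\times[n]$, with the only-if direction detected by a single moving point $\alpha_0$ of $\sigma$. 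This buys two things the paper's sketch does not: it makes explicit that the statement tacitly requires $\sigma\neq 1$ (for $\sigma=1$ one has $u=1$, which is compatible with everything, so the biconditional fails --- the same hidden hypothesis as in Proposition \ref{horizontal_same_columns}, whose proof passes to transpositions), and it avoids any reliance on the horizontal case. Conversely, your antitranspose remark is exactly the paper's intended proof, and you supply the details the paper omits: the identity ${}^{a}u(x,y)=(x,\widetilde\sigma(y))$ for $x\in\widetilde A$ (and ${}^{a}u(x,y)=(x,y)$ otherwise), the order-swapping equivalence ``$v$ compatible with $u$ iff ${}^{a}u$ compatible with ${}^{a}v$'', and the set identification $\rho(A\times[n])=[n]\times\widetilde A$ which converts invariance of $[n]\times\widetilde A$ under ${}^{a}v=\rho v\rho$ into invariance of $A\times[n]$ under $v$. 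Both arguments are sound; either one alone would suffice as a complete proof.
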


\section{Subgroups and compatibility}
\label{Subgroups}
In this section we define some notable subgroups of $S([n]^2)$
which will be useful in the rest of this work, and establish some
of their basic properties.

Let $P \subseteq [n]$. We define a subgroup $R_P
\leq S([n]^2)$ by letting $u \in R_P$ if and only if:
\begin{itemize}
	\item[1)] 
	$u(\{ k \} \times ([n] \setminus P)) = \{ k \} \times ([n] \setminus P)$ for each $k \in P$;
	\item[2)] 
	$u(k,j)=(k,j)$ if either $k \in [n] \setminus P$ or $j \in P$.
\end{itemize}
So, given $u \in S([n]^2)$, $u \in R_P$ if and only if $u$ 
leaves point-wise invariant all the rows whose index is not in
$P$ and all columns whose index is in $P$, and leaves invariant 
all the rows indexed by elements of $P$.
\medskip 

We also define a subgroup $\check{R}_P
\leq S([n]^2)$ by letting $u \in \check{R}_P$ if and only if:
\begin{itemize}
	\item[1)] there is  $\sigma \in S([n] \setminus P)$ such that
	$u(k,j)=(k,\sigma(j))$ 
	for all $k,j \in [n] \setminus P$;
	\item[2)] 
	$u(k,j)=(k,j)$ if either $k \in P$ or $j \in P$.
\end{itemize}  
So, given $u$ as above, $u \in \check{R}_P$, if and only if 
there is  $\sigma \in S([n] \setminus P)$ such that 
$u(k,j)= (1 \otimes \sigma)(k,j)$
for all $(k,j) \in ([n] \setminus P) \times ([n] \setminus P)$,
and $u$ leaves point-wise invariant all rows and columns whose index is in $P$. Note that $\check{R}_P$ can also be obtained by performing the $(i_1)$-immersion of the $(i_2)$-immersion ... of the $(i_p)$-immersion of all the elements of 
$\{ 1 \} \otimes S([n] \setminus P)$ where $\{  i_1, \ldots , i_p \} =P$ (we refer the reader to \cite[Sec. 7]{BC} for details
about immersion).  

Analogously, we define subgroups $C_P$ and $\check{C}_P$ using columns.
Note that all elements in $\check{R}_P$ (resp., $\check{C}_P$) commute with every element of $R_P$ (resp., $C_P$), that
$R_P$ and $C_P$ are each isomorphic to $(S_{n-|P|})^{|P|}$, 
and that 
$\check{R}_P$ and $\check{C}_P$ are each isomorphic
to $S_{n-|P|}$.
We then have the following result.
\begin{theorem} \label{Bicomp1}
	\label{R_P}
	For each $P \subseteq [n]$ the subgroup generated by 
	$\check{R}_P \cup R_P$ is a bicompatible subgroup of $S([n]^2)$
	isomorphic to $(S_{n-|P|})^{|P|+1}$. Similarly for  
	$\check{C}_P \cup C_P$.
\end{theorem}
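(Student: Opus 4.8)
We need to show that for each $P\subseteq[n]$ the subgroup $\langle\check R_P\cup R_P\rangle$ is bicompatible and isomorphic to $(S_{n-|P|})^{|P|+1}$, with the analogous statement for columns following by symmetry. Let me think about the structure.

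Let $m=n-|P|$ and $Q=[n]\setminus P$. Elements of $R_P$ permute, for each fixed row index $k\in P$, the entries of that row lying in columns $Q$; so $R_P$ acts independently on each of the $|P|$ rows indexed by $P$, giving $R_P\cong(S_m)^{|P|}$. Meanwhile $\check R_P$ applies a single $\sigma\in S(Q)$ simultaneously to all rows indexed by $Q$ (on the columns in $Q$), giving $\check R_P\cong S_m$. Crucially the supports are disjoint: $R_P$ moves points $(k,j)$ with $k\in P,\ j\in Q$, while $\check R_P$ moves points $(k,j)$ with $k\in Q,\ j\in Q$. So the two subgroups commute pointwise and intersect trivially, whence $\langle\check R_P\cup R_P\rangle=\check R_P\times R_P\cong(S_m)^{|P|+1}$. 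That disposes of the isomorphism claim; it's essentially bookkeeping on supports.

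**Proof of the claim (bicompatibility).** The plan is to verify that any generator of one type is bicompatible with any generator of the other, and then invoke that bicompatibility is preserved under taking products inside a commuting family. Let me lay out the four cases.

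$(R_P$ with $R_P$, and $\check R_P$ with $\check R_P$.) Since each subgroup is abelian-of-product-type and lives inside row operations of a uniform kind, both subgroups fall squarely under Proposition~\ref{horizontal_same_columns} (or its refinements): an element of $R_P$ permutes the columns $Q$ within each row $a_r\in P$, so by Proposition~\ref{horizontal_same_columns} it is compatible with $v$ exactly when $v$ leaves $[n]\times Q$ invariant. I would check that every element of $R_P$ and every element of $\check R_P$ leaves $[n]\times Q$ invariant (indeed leaves $[n]\times P$ pointwise fixed and stabilizes $[n]\times Q$), which yields mutual compatibility within and across the two subgroups for the ``horizontal-with-horizontal'' direction. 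For the reverse direction I use Proposition~\ref{horizontal_same_columns2_cycles}, exhibiting explicit $\sigma$ and $t$: because the permutations in the opposite subgroup act by the ``same rule in every relevant row,'' the required $\sigma\in S_n$ exists and $t$ can be taken constant.

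**The main obstacle.** The delicate case is $R_P$ compatible-with $\check R_P$ and vice versa, since here a genuine cross-check is needed rather than symmetry. I expect the crux to be verifying the hypotheses of Proposition~\ref{horizontal_same_columns2_cycles} for a multi-cycle element of $\check R_P$ against an element of $R_P$: one must produce the bijection $\sigma$ and the shift function $t$ witnessing compatibility. The key structural fact making this work is support-disjointness together with the observation that $\check R_P$ applies one fixed column-permutation uniformly across all rows in $Q$, so the ``parallel transport'' required by Proposition~\ref{horizontal_same_columns2_cycles} is trivial (no shift, $t\equiv 0$, and $\sigma$ read off from the $R_P$-element's row action). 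Once each generator pair is handled, bicompatibility of the whole subgroup follows from the final remark in the proof of Proposition~\ref{isogroups}: if $u_{i_1},\dots,u_{i_k}$ are pairwise bicompatible then so is any product $u_{i_1}\cdots u_{i_k}$, which extends pairwise bicompatibility of generators to bicompatibility of the generated subgroup. Finally, the ``$\check C_P\cup C_P$'' statement follows either by the same argument with rows and columns interchanged, or formally by applying the antitranspose and Proposition~\ref{at-bicompatible}.
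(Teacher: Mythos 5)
Your proof is correct, but it takes a genuinely different route from the paper's. The paper never invokes the criteria of Section~\ref{horiz_vert}: it takes two \emph{arbitrary} elements $u,v$ of the subgroup generated by $\check{R}_P\cup R_P$ and verifies $(1\otimes u)(v\otimes 1)(a,b,c)=(v\otimes 1)(1\otimes u)(a,b,c)$ by a direct case analysis ($c\in P$; $c\notin P$ and $b\in P$; $c\notin P$ and $b\notin P$), using only that every element of the subgroup leaves each row invariant, fixes each column indexed by $P$ pointwise, and acts by one fixed permutation on the rows outside $P$; the isomorphism type is essentially left to the remarks preceding the theorem. You instead reduce to generators, verify compatibility of ordered generator pairs via Propositions~\ref{horizontal_same_columns} and~\ref{horizontal_same_columns2_cycles}, and extend to the whole subgroup by closure of compatibility under products in each argument (the centralizer-type argument behind Proposition~\ref{isogroups}); you also make the isomorphism claim explicit via support-disjointness. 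Both routes work; the paper's computation is self-contained and treats arbitrary elements at one stroke, while yours is modular and reuses established machinery (and in fact Proposition~\ref{horizontal_same_columns} alone suffices, since both kinds of generators have the uniform-row form, so one can always place the generator in the first argument of the compatibility relation).

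One imprecision to fix in your write-up: Proposition~\ref{horizontal_same_columns} requires the \emph{same} $\sigma$ on all the rows in question, whereas a general element of $R_P$ permutes different rows of $P$ by different permutations, so it does not satisfy the hypothesis; you must take single-row elements as generators of $R_P$ (and single parallel cycles as generators of $\check{R}_P$ where you invoke Proposition~\ref{horizontal_same_columns2_cycles}). With that choice your checks go through: a single-row generator on row $a\in P$ is compatible with $v$ if and only if $v$ leaves column $a$ invariant (note the criterion produces invariance of $[n]\times\{a\}$, i.e.\ of columns indexed by the rows of action, not of $[n]\times([n]\setminus P)$ as you wrote, though for bijections the two invariance conditions you conflate are complementary and hence equivalent), and this holds for every element of the subgroup because all of them fix the columns in $P$ pointwise.
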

\begin{proof}
	Note first that all the elements in the subgroup generated by $\check{R}_P \cup R_P$ leave each row invariant, and fix each
	column in $P$ pointwise. Let $u,v$ be two such permutations
	and $(a,b,c) \in [n]^3$. By what has just been observed the
	first two coordinates of
	\[
	(1 \otimes u) (v \otimes 1)(a,b,c)
	\]
	and 
	\[
	(v \otimes 1) (1 \otimes u)(a,b,c)
	\]
	are equal to $v(a,b)$. If $c \in P$ then $u(x,c)=(x,c)$
	for all $x \in [n]$ so $(1 \otimes u) (v \otimes 1)(a,b,c)=
	(v(a,b),c)=(v \otimes 1) (1 \otimes u)(a,b,c)$. So assume that
	$c \notin P$. If $b \in P$ then 
	$(1 \otimes u) (v \otimes 1)(a,b,c)=(a,u(b,c))=
	(v \otimes 1) (1 \otimes u)(a,b,c)$, since $u(b,c)$ is in row $b$
	and $v$ fixes the $b$-th column point-wise. Finally, if $b \notin P$ then there is $d \notin P$ such that $v(a,b)=(a,d)$. Hence,
	\[
	(1 \otimes u) (v \otimes 1)(a,b,c)=(1 \otimes u)(a,d,c)
	=(a,d,\sigma(c))
	\]
	while
	\[
	(v \otimes 1) (1 \otimes u)(a,b,c)=(v \otimes 1)(a,b,\sigma(c))=
	(a,d,\sigma(c)).
	\]
	This shows that any two elements of $\check{R}_P \cup R_P$ are
	bicompatible, so the result follows.
\end{proof}

For $i \in [n]$ we find it convenient to let
$R_i:=R_{\{ i \}}$ and define similarly $C_i$, $\check{R}_i$,
and $\check{C}_i$.  We also define elements $\check{r}_{i,k}, \check{c}_{i,k} \in S([n]^2)$, 
for $k \in [n-1] \setminus \{ i \}$, by letting
\[
\check{r}_{i,k} := 
\begin{cases}
	\prod_{j \in [n] \setminus \{ i \} }((j,k),(j,k+1)), 
	\mbox{ if } k \neq i-1, \\
	\prod_{j \in [n] \setminus \{ i \} }((j,i-1),(j,i+1)),
	\mbox{ if } k = i-1,
\end{cases}
\]
and 
\[
\check{c}_{i,k} := 
\begin{cases}
	\prod_{j \in [n] \setminus \{ i \} }((k,j),(k+1,j)), 
	\mbox{ if } k \neq i-1, \\
	\prod_{j \in [n] \setminus \{ i \} }((j,i-1),(i+1,j)),
	\mbox{ if } k = i-1.
\end{cases}
\]
so that 
$
\check{R}_i := \langle \{ \check{r}_{i,k} : k \in [n-1] \setminus \{ i \}   \}  \rangle
$
and similarly for $\check{C}_i$.

\bigskip

For 
$i,j \in [n]$, $i \neq j$ we let
\[
R_{i,j} :=\{u \in R_i \ | \ u(i,j) = (i,j)\} \ , \quad C_{i,j} =\{u \in C_i \ | \ u(j,i) = (j,i)\} \ . 
\]

The elements of these subgroups possess various compatibility 
properties.
\begin{lemma}\label{compdiffR}
	Let $i,j \in [n]$ and $u \in R_{i,j}$, $v \in R_{j,i}$. Then $u$ and $v$ are bicompatible.
	Similarly for $C_{i,j}$ and $C_{j,i}$.
\end{lemma}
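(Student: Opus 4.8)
The plan is to reduce both compatibility assertions to Proposition \ref{horizontal_same_columns} (for the $R$ case) and Proposition \ref{vertical_same_rows} (for the $C$ case), each of which characterises when a permutation supported on a single row (resp.\ column) is compatible with an arbitrary element of $S([n]^2)$ in terms of an explicit invariance condition. The whole lemma should then come out of applying such a characterisation in the two directions.

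First I would unwind the definitions. An element $u \in R_{i,j}$ is supported on the single row $i$: there is a permutation $\sigma \in S_n$ fixing both $i$ and $j$ with $u(i,y)=(i,\sigma(y))$ for all $y$, while $u(x,y)=(x,y)$ for $x \neq i$ (the extra condition $u(i,j)=(i,j)$ being exactly $\sigma(j)=j$). This is precisely the shape required in Proposition \ref{horizontal_same_columns} with $k=1$ and $a_1=i$. Symmetrically, any $v \in R_{j,i}$ satisfies $v(j,y)=(j,\rho(y))$ for some $\rho \in S_n$ fixing $i$ and $j$, and fixes every row other than $j$; this fits Proposition \ref{horizontal_same_columns} with $k=1$ and $a_1=j$.

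Next I would apply the proposition in each direction. By Proposition \ref{horizontal_same_columns} (with $u$ as the special permutation), $u$ is compatible with $v$ if and only if $v$ leaves $[n]\times\{i\}$ invariant; since $v$ fixes every point outside row $j$ and $v(j,i)=(j,i)$ by the defining condition of $R_{j,i}$, the set $[n]\times\{i\}$ is pointwise fixed by $v$, hence invariant, so $u$ is compatible with $v$. Interchanging the roles of $u$ and $v$ (now taking $v$ as the special permutation), the same proposition gives that $v$ is compatible with $u$ if and only if $u$ leaves $[n]\times\{j\}$ invariant, and this holds because $u$ fixes every point outside row $i$ while $u(i,j)=(i,j)$. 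Thus $u$ and $v$ are bicompatible. The column statement follows in the same way, replacing rows by columns and Proposition \ref{horizontal_same_columns} by Proposition \ref{vertical_same_rows}: an element of $C_{i,j}$ is supported on column $i$ with $u(j,i)=(j,i)$, an element of $C_{j,i}$ is supported on column $j$ with $v(i,j)=(i,j)$, and the two invariance conditions to verify are that $u$ leaves $\{j\}\times[n]$ invariant and $v$ leaves $\{i\}\times[n]$ invariant.

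I do not expect a serious obstacle here; the one point demanding care is to notice that the extra fixed-point conditions distinguishing $R_{i,j}$ from $R_i$ (namely $u(i,j)=(i,j)$ and $v(j,i)=(j,i)$) are exactly what force the relevant transverse row or column to be invariant. For a general element of $R_i$ the set $[n]\times\{j\}$ need not be invariant, so the sharpened subgroups $R_{i,j}$ and $R_{j,i}$ are genuinely needed, and the statement is tight in this respect. One should also keep track of which index governs which invariant set when invoking the proposition in the two directions, since the invariant set controlling ``$u$ compatible with $v$'' is determined by the row that $u$ permutes, while the invariance itself is a condition imposed on $v$.
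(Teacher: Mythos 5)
Your argument is correct, but it takes a genuinely different route from the paper's. The paper proves the lemma from scratch: after noting that, by the symmetry of the hypotheses under interchanging $i$ and $j$, it suffices to show that $u$ is compatible with $v$, it verifies $(v \otimes 1)(1 \otimes u) = (1 \otimes u)(v \otimes 1)$ pointwise on $[n]^3$, in the four cases determined by whether $a=j$ and whether $b=i$, and leaves the column case as similar. You instead observe that elements of $R_{i,j}$ and $R_{j,i}$ are exactly of the special shape of Proposition \ref{horizontal_same_columns} with $k=1$ (and $\sigma$ fixing $i$ and $j$), so both compatibility claims reduce to the invariance criterion there; the fixed-point conditions $v(j,i)=(j,i)$ and $u(i,j)=(i,j)$ are precisely what make column $i$ pointwise fixed by $v$ and column $j$ pointwise fixed by $u$. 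Your treatment of the column case via Proposition \ref{vertical_same_rows}, with the compatibility directions kept straight, is likewise correct, and there is no circularity, since both propositions are established earlier and independently of this lemma. What each approach buys: yours is shorter and makes structurally transparent why the sharpened subgroups $R_{i,j}$, $R_{j,i}$ (rather than $R_i$, $R_j$) are needed, as you point out; the paper's computation is self-contained and uses nothing beyond the definitions. One minor precision: you quote the propositions as equivalences but only ever use their ``if'' halves (invariance implies compatibility), which is just as well, since the ``only if'' halves degenerate when the special permutation is the identity ($\sigma=1$ is compatible with everything); your proof is unaffected by this edge case.
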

\begin{proof}
	It is clearly enough to show that $u$ is compatible with $v$. Let $a,b,c \in [n]$.
	If $a \neq j$ and $b \neq i$ then we have that
	\begin{align*}
		(v \otimes 1) (1 \otimes u)(a,b,c)  & =  (v \otimes 1) (a,b,c) = (a,b,c) =
		(1 \otimes u)(a,b,c) \\ & = (1 \otimes u)(v \otimes 1)(a,b,c). 
	\end{align*}
	If $a=j$ and $b \neq i$ then $v(j,b) \neq (j,i)$ so $v(j,b)_2 \neq i$ and so we
	obtain that 
	\[
	(1 \otimes u) (v \otimes 1) (j,b,c) = (1 \otimes u) (v(j,b),c)=(v(j,b),c).
	\]
	while
	\[
	(v \otimes 1) (1 \otimes u)(j,b,c) = (v \otimes 1)(j,b,c)=(v(j,b),c).
	\]
	If $a \neq j$ and $b=i$ then we have 
	\begin{align*}
		(v \otimes 1) (1 \otimes u)(a,i,c)  & =  (v \otimes 1) (a,u(i,c)) =   
		(a,u(i,c))= (1 \otimes u)(a,i,c) \\ & = (1 \otimes u)(v \otimes 1)(a,i,c). 
	\end{align*}
	Finally, if $a=j$ and $b=i$ then, since $u(i,c)_1=i$ and $v(j,i)=(j,i)$,
	we conclude that
	\begin{align*}
		(v \otimes 1) (1 \otimes u)(j,i,c)  & =  (v \otimes 1) (j,u(i,c)) = (j,u(i,c)) =   
		(1 \otimes u)(j,i,c) \\ & = (1 \otimes u)(v(j,i),c) = (1 \otimes u)(v \otimes 1)(j,i,c). 
	\end{align*}
\end{proof}

For $i \in [n]$ we let 
\[
S_i([n]) := \{ u \in S_n  : u(i)=i \},
\]
namely the stabilizer subgroup ${\rm Stab}(i)$ of $S_n$,
and for $i,j \in [n]$, $i \neq j$ 
\[
S_{i,j}:= \{u \in S_n \ | \ u(i)=i, \ u(j) = j\} = S_i([n]) \cap S_j([n]) = S_{j,i}\ , 
\] 
namely the stabilizer of $\{i,j\}$ in $S_n$.
Of course, these groups are isomorphic to $S_{n-1}$ and $S_{n-2}$,
respectively.

For $\sigma \in S_i([n])$ we let 
\[
u_\sigma(a,b)= 
\begin{cases}
	(a,b), \mbox{ if } b \neq i, \\
	(\sigma(a),b), \mbox{ if } b=i,
\end{cases}
\]
for all $a,b \in [n]$ (so $u_\sigma \in C_i$), and define 
$v_\sigma \in R_i$ similarly. 

\begin{proposition}
	\label{comp_iff_comm}
	Let $i \in [n]$ and $\sigma, \tau \in S_i([n])$. Then $u_\sigma$ is compatible with 
	$v_\tau$ if and only if $\sigma \tau = \tau \sigma$ in $S_n$.
\end{proposition}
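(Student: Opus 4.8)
The plan is to verify the defining identity for compatibility, namely $(v_\tau \otimes 1)(1 \otimes u_\sigma) = (1 \otimes u_\sigma)(v_\tau \otimes 1)$ in $S([n]^3)$, by a direct evaluation on an arbitrary triple $(a,b,c) \in [n]^3$. First I would record the explicit actions: $u_\sigma \in C_i$ satisfies $u_\sigma(a,b) = (\sigma(a),b)$ when $b = i$ and $u_\sigma(a,b) = (a,b)$ otherwise, while $v_\tau \in R_i$ satisfies $v_\tau(a,b) = (a,\tau(b))$ when $a = i$ and $v_\tau(a,b) = (a,b)$ otherwise. Consequently $1 \otimes u_\sigma$ alters the middle coordinate (via $\sigma$) only when the last coordinate equals $i$, and $v_\tau \otimes 1$ alters the middle coordinate (via $\tau$) only when the first coordinate equals $i$; crucially, $1 \otimes u_\sigma$ never changes the first coordinate and $v_\tau \otimes 1$ never changes the last, so each map preserves the other's \emph{gate}.

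The key steps are then a four-way case split according to whether $a = i$ and whether $c = i$. In the three cases where at least one of $a,c$ differs from $i$, at most one of the two factors acts nontrivially on the middle coordinate while the other restricts to the identity there, and since the gating conditions are untouched (by the observation above) the two composites manifestly agree, producing $(a,b,c)$, $(a,\sigma(b),i)$, or $(i,\tau(b),c)$ as appropriate. This reduces the whole statement to the single remaining case $a = c = i$.

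The heart of the argument, and the only place where genuine content appears, is precisely this case $a = c = i$. Evaluating both orders on $(i,b,i)$ gives $(v_\tau \otimes 1)(1 \otimes u_\sigma)(i,b,i) = (i,\tau\sigma(b),i)$ and $(1 \otimes u_\sigma)(v_\tau \otimes 1)(i,b,i) = (i,\sigma\tau(b),i)$, since both gates are now open and the two permutations act successively on the shared middle coordinate. Hence the two composites coincide on all of $[n]^3$ if and only if $\tau\sigma(b) = \sigma\tau(b)$ for every $b \in [n]$, that is, if and only if $\sigma\tau = \tau\sigma$ in $S_n$, which is the claim. I do not anticipate a real obstacle here: the argument is an elementary bookkeeping of coordinates, and the main thing to get right is the convention fixing which coordinate and which gate belong to $u_\sigma$ versus $v_\tau$. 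One could alternatively extract the result from the general compatibility criteria of Proposition \ref{horizontal_same_columns} and its column analogue, but the direct computation is cleaner.
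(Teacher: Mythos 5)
Your proof is correct and follows essentially the same route as the paper's: both evaluate $(v_\tau \otimes 1)(1 \otimes u_\sigma)$ and $(1 \otimes u_\sigma)(v_\tau \otimes 1)$ pointwise on $[n]^3$, dispose of the cases where the first or third coordinate differs from $i$, and reduce the statement to the computation on $(i,b,i)$, where the two orders yield $\tau\sigma(b)$ and $\sigma\tau(b)$ respectively. The only difference is that you spell out the ``easy'' cases via the gate-preservation observation, which the paper leaves as a one-line check.
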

\begin{proof}
	Let $ x,y,z \in [n]$. If either $x \neq i$ or $z \neq i$ then it is easy
	to check that $(v_\tau \otimes 1) (1 \otimes u_\sigma)(x,y,z) = 
	(1 \otimes u_\sigma)(v_\tau \otimes 1)(x,y,z)$. On the other hand we have
	that $(v_\tau \otimes 1) (1 \otimes u_\sigma)(i,y,i) = (i, \tau(\sigma(y)),i)$
	and $(1 \otimes u_\sigma) (v_\tau \otimes 1)(i,y,i) = (i, \sigma(\tau(y)),i)$
	so the result follows.
\end{proof}

The following compatibility results only hold for $n=4$.
They partly explain why certain subgroups that we find later in this work are probably not part of more general families.

\begin{lemma}\label{colrow1}
	Let $i,j \in [4]$ $i \neq j$, and $u \in C_{i,j}$, $v \in R_{j,i}$. Then $u$ is compatible with $v$.
\end{lemma}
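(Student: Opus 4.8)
The plan is to reduce the statement to a single explicit verification by exploiting the very small size of the two subgroups when $n=4$. Setting $\{k,l\} := [4] \setminus \{i,j\}$, any $u \in C_{i,j}$ leaves column $i$ invariant while fixing $(i,i)$ and $(j,i)$, so it can only permute the two remaining entries $(k,i),(l,i)$ of that column; hence $C_{i,j} \cong S_2$ and either $u = 1$ or $u = ((k,i),(l,i))$. Symmetrically, $R_{j,i} \cong S_2$ and either $v = 1$ or $v = ((j,k),(j,l))$. Since compatibility is immediate when either factor is the identity, it will suffice to treat the single nontrivial pair $u = ((k,i),(l,i))$, $v = ((j,k),(j,l))$.

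For that pair I would first expand each tensor factor into disjoint transpositions in $S([4]^3)$. From $(1 \otimes u)(a,b,c) = (a,u(b,c))$ and $(v \otimes 1)(a,b,c) = (v(a,b),c)$ one obtains
\[
1 \otimes u = \prod_{a=1}^{4} \big( (a,k,i),(a,l,i) \big), \qquad
v \otimes 1 = \prod_{c=1}^{4} \big( (j,k,c),(j,l,c) \big).
\]
The crucial observation is that these two decompositions overlap in exactly one factor: the $a=j$ term of the first product and the $c=i$ term of the second both equal $\tau := ((j,k,i),(j,l,i))$. Every remaining factor of the first product has first coordinate different from $j$, and every remaining factor of the second has third coordinate different from $i$, so all of them have pairwise disjoint supports and are disjoint from $\tau$ as well.

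To conclude I would write $1 \otimes u = \tau A$ and $v \otimes 1 = \tau B$, where $A$ and $B$ collect the factors with $a \neq j$ and $c \neq i$ respectively, and where $\tau, A, B$ have pairwise disjoint supports. Then
\[
(v \otimes 1)(1 \otimes u) = \tau B \tau A = \tau^2 B A = B A, \qquad
(1 \otimes u)(v \otimes 1) = \tau A \tau B = \tau^2 A B = A B,
\]
using that $\tau$ commutes with both $A$ and $B$ and that $\tau^2 = 1$; since $A$ and $B$ are disjoint we have $AB = BA$, giving the desired equality. I do not expect a genuine obstacle here beyond correctly isolating the single shared transposition $\tau$ and checking disjointness of the rest. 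It is worth stressing that this coincidence is exactly what makes the result special to $n=4$: for larger $n$ the nontrivial elements of $C_{i,j}$ and $R_{j,i}$ need not be supported on the same pair $\{k,l\}$, the clean identity between the two overlapping transpositions fails, and compatibility can break down.
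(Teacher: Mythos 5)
Your proof is correct, and its first half coincides with the paper's: for $n=4$ both $C_{i,j}$ and $R_{j,i}$ have order two, so after dismissing the identity cases only the single pair $u=((k,i),(l,i))$, $v=((j,k),(j,l))$ needs to be handled. Where you genuinely differ is in how the commutation $(v\otimes 1)(1\otimes u)=(1\otimes u)(v\otimes 1)$ is verified for this pair. The paper argues pointwise: it notes the equality is easy at every triple $(x,y,z)$ with $x\neq j$ or $z\neq i$, and then checks the remaining points $(j,y,i)$ directly. You instead expand $1\otimes u$ and $v\otimes 1$ into products of disjoint transpositions of $S([4]^3)$, observe that the two expansions share exactly one factor $\tau=((j,k,i),(j,l,i))$ and are otherwise supported on disjoint sets, and conclude from $\tau^2=1$ and commutation of disjoint permutations that both composites equal $AB=BA$. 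Your route is more structural: it isolates the coincidence responsible for the commutation rather than verifying it value by value, and it makes transparent why the lemma is special to $n=4$ --- for $n\geq 5$ the nontrivial elements of $C_{i,j}$ and $R_{j,i}$ need not be supported over the same pair of indices $\{k,l\}$, and the paper's $n=8$ counterexamples confirm that compatibility can then fail. The only cost relative to the paper's version is a slightly longer setup; both arguments are equally elementary.
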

\begin{proof}
	Let $ x,y,z \in [4]$. If either $z \neq i$ or $x \neq j$ then it is easy
	to check that $(v \otimes 1) (1 \otimes u)(x,y,z) = 
	(1 \otimes u)(v \otimes 1)(x,y,z)$.
	Let $\{ a,b \} := [4] \setminus \{ i,j \}$.
	Since $u$ and $v$ are such that $u(i,i)=(i,i)$, $v(j,j)=(j,j)$, and 
	$u(j,i)=(j,i)=v(j,i)$ then either one of them is the identity or they both
	act as the transposition $(a,b)$ on the $i$-th column and $j$-th row, respectively,
	and as the identity everywhere else. In the first cases the result is trivial,
	in the last one it is easy to check that $(v \otimes 1) (1 \otimes u)(j,y,i) = 
	(1 \otimes u)(v \otimes 1)(j,y,i)$ for all $y \in [4]$.
\end{proof}

\begin{corollary} \label{colrow2}
	Let $i,j \in [4]$, $i \neq j$, and $u \in C_{i,j}$, $v \in R_{i,j}$. Then $u$ is compatible with $v$.
\end{corollary}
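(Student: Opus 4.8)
The plan is to exploit the smallness of the groups $C_{i,j}$ and $R_{i,j}$ when $n=4$ and reduce the claim to a single explicit compatibility, which can then be dispatched by one of the criteria proved earlier. Put $\{a,b\} := [4] \setminus \{i,j\}$. First I would describe the two groups concretely: an element $u \in C_{i,j}$ lies in $C_i$ and fixes both $(i,i)$ and $(j,i)$, so it can only permute the entries of column $i$ in rows other than $i$ and $j$; since only rows $a$ and $b$ remain, $u$ is either the identity or the transposition $((a,i),(b,i))$. Symmetrically, $v \in R_{i,j}$ is either the identity or $((i,a),(i,b))$. If either permutation is the identity the relation $(v \otimes 1)(1 \otimes u) = (1 \otimes u)(v \otimes 1)$ is trivially true, so the whole statement collapses to showing that $u = ((a,i),(b,i))$ is compatible with $v = ((i,a),(i,b))$.

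To finish I would appeal to Proposition \ref{vertical_same_rows2}. The point is that $u = ((a,i),(b,i))$ is precisely of the form considered there, with $k=1$, $a_1 = i$, $s = 2$, $b_1 = a$ and $b_2 = b$ (that is, $u$ is a single vertical $2$-cycle in column $i$). Hence $u$ is compatible with $v$ if and only if there are $\sigma \in S_4$ and $t \colon [4] \to [2]$ with $v(x, b_\ell) = (\sigma(x), b_{\ell + t(x)})$ for all $x \in [4]$ and $\ell \in \{1,2\}$ (indices modulo $2$). I would then exhibit such data: take $\sigma = 1$, $t(i) = 1$, and $t(x) = 2$ for $x \neq i$. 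Indeed, for $x \neq i$ we have $v(x,a)=(x,a)$ and $v(x,b)=(x,b)$, which is exactly $t(x)=2$; while for $x=i$ we have $v(i,a)=(i,b)$ and $v(i,b)=(i,a)$, which is exactly $t(i)=1$. This verifies the criterion and hence the compatibility.

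An alternative, entirely self-contained route mirrors the proof of Lemma \ref{colrow1}: since $u$ fixes every second coordinate and $v$ fixes every first coordinate, evaluating $(v \otimes 1)(1 \otimes u)$ and $(1 \otimes u)(v \otimes 1)$ on a triple $(x,y,z)$ makes the first and third output coordinates agree automatically, and the equality reduces to the single scalar identity $v(x, u(y,z)_1)_2 = u(v(x,y)_2, z)_1$, which one checks case by case according to whether $x=i$ or not and whether $z=i$ or not.

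The only genuine difficulty is conceptual, not computational: one should recognize that the result is truly a feature of $n=4$. Here the degrees of freedom in $C_{i,j}$ and $R_{i,j}$ shrink to a lone transposition each, so the delicate configuration in which the distinguished column of $u$ and the distinguished row of $v$ carry the \emph{same} index $i$ can be settled directly; for larger $n$ one cannot simply transport this situation to that of Lemma \ref{colrow1}, since any conjugation respecting the tensor factorization (of the form $w \otimes w$ with $w \in S_n$) moves the distinguished column and row by the same $w$ and thus cannot turn the ``different index'' case of Lemma \ref{colrow1} into the ``same index'' case treated here.
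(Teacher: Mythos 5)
Your proof is correct, but it reaches the conclusion through a different key lemma than the paper. Both arguments begin identically: for $n=4$ the groups $C_{i,j}$ and $R_{i,j}$ each consist of the identity and a single transposition, so everything reduces to the compatibility of $u=((a,i),(b,i))$ with $v=((i,a),(i,b))$. At that point the paper observes that $u=u_\sigma$ and $v=v_\sigma$ for one and the \emph{same} permutation $\sigma=(a,b)\in S_i([4])$ and invokes Proposition \ref{comp_iff_comm} (cited there as a corollary): $u_\sigma$ is compatible with $v_\tau$ if and only if $\sigma\tau=\tau\sigma$, which is automatic when $\tau=\sigma$. You instead apply Proposition \ref{vertical_same_rows2} with $k=1$, $a_1=i$, $(b_1,b_2)=(a,b)$; your witness data $\sigma=1$, $t(i)=1$, $t(x)=2$ for $x\neq i$ does satisfy the displayed condition, and the direction of compatibility matches (the vertical permutation occupies the $1\otimes u$ slot in both the corollary and that proposition), so the criterion applies. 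The trade-off is mainly conceptual: the paper's route isolates \emph{why} the statement holds --- the column and row permutations are induced by the same element of $S_i([4])$, and such pairs are compatible precisely when the inducing permutations commute, a criterion valid for every $n$, the hypothesis $n=4$ serving only to force the two inducing permutations to coincide --- whereas your route is an equally short, fully explicit verification of a structural criterion. Your side remarks are also sound: the reduction of the compatibility identity to the scalar equation $v(x,u(y,z)_1)_2=u(v(x,y)_2,z)_1$ is exactly right since $u$ preserves second coordinates and $v$ preserves first coordinates, and your observation that conjugation by $w\otimes w$ preserves compatibility while carrying $(C_{i,j},R_{i,j})$ to $(C_{w(i),w(j)},R_{w(i),w(j)})$ correctly explains why this corollary cannot be transported to the configuration of Lemma \ref{colrow1}; the paper's $n=8$ counterexamples immediately following these results confirm that the statement is genuinely special to small $n$.
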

\begin{proof}
	Let $\{ a,b \} := [4] \setminus \{ i,j \}$.
	By our hypotheses either 
	$u$ or $v$ is the identity or $u=u_\sigma$ and $v=v_\sigma$ where 
	$\sigma:=(a,b)$. The result then follows from Corollary \ref{comp_iff_comm}.
\end{proof}

Note that the previous results do not hold, in general, if $n \geq 5$.
For example, if $n=8$, $i=5$, $j=2$, $u=((1,5),(4,5))((3,5),(6,5))$, and
$v=((2,3),(2,4))((2,6),(2,8))$ then $(v \otimes 1) (1 \otimes u)(2,3,5)=
(2,8,5) \neq (2,1,5)= (1 \otimes u) (v \otimes 1)(2,3,5)$. Similarly,
if $n=8$, $i=3$, $j=8$, $u=((2,3),(6,3))((4,3),(5,3))$, and
$v=((3,1),(3,6))((3,2),(3,5))$ then $(v \otimes 1) (1 \otimes u)(3,2,3)=
(3,1,3) \neq (3,4,3)= (1 \otimes u) (v \otimes 1)(3,2,3)$.

\bigskip 

For $m \in \NN$ let $C \in \binom{[2m]}{m}$,
and $\prec$ be a total order on $[2m]$.
Let $B := [2m] \setminus C$. 
We define two elements
$c_0,r_0 \in S([2m]^2)$ by
\[
c_0:=\prod_{y \in C} ((c_1,y), \ldots , (c_m,y)),
\]
and
\[
r_0:=\prod_{x \in B} ((x,b_1), \ldots , (x,b_m)),
\]
where $\{ c_1, \ldots , c_m \} := C$,
$\{ b_1, \ldots , b_m \} := B$, $c_1 \prec \cdots \prec c_m$, and
$b_1 \prec \cdots \prec b_m$,
We let 
$N(B,C)_\prec$ be the subgroup of $S([2m]^2)$ generated by
$c_0$, $r_0$, and $S(C \times B)$,
where this last group is embedded in $S([2m]^2)$ in the natural
way.
\begin{theorem} \label{NBCprec}
Let $m \in \NN$, $C \in \binom{[2m]}{m}$, 
$B := [2m] \setminus C$, and 
$\prec$ be a total order of $[2m]$. Then 
$N(B,C)_\prec$ is a bicompatible subgroup of $S([2m]^2)$
isomorphic to $(S_{m^2}) \times (\ZZ_{m})^{\times 2}$. 
\end{theorem}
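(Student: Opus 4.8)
The plan is to separate the two claims---the isomorphism type and bicompatibility---and to exploit throughout that the three families of generators act on disjoint rectangles of $[2m]^2$. Writing $\supp(g)$ for the set of points moved by $g \in S([2m]^2)$, I would first record the elementary facts $\supp(c_0) \subseteq C \times C$, $\supp(r_0) \subseteq B \times B$, and $\supp(w) \subseteq C \times B$ for every $w \in S(C \times B)$, while the rectangle $B \times C$ is fixed by all generators. Since $B$ and $C$ partition $[2m]$, the rectangles $C \times C$, $B \times B$ and $C \times B$ are pairwise disjoint; hence the subgroups $\langle c_0 \rangle$, $\langle r_0 \rangle$ and $S(C \times B)$ have disjoint supports, so they pairwise commute and pairwise intersect trivially. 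Consequently $N(B,C)_\prec$ is their internal direct product. As $c_0$ and $r_0$ are each a product of $m$ disjoint $m$-cycles, they have order $m$, so $\langle c_0 \rangle \cong \langle r_0 \rangle \cong \ZZ_m$; and $S(C \times B) \cong S_{m^2}$ since $|C \times B| = m^2$. This gives $N(B,C)_\prec \cong S_{m^2} \times (\ZZ_m)^{\times 2}$.

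For bicompatibility I would first reduce to generators. The maps $g \mapsto g \otimes 1$ and $g \mapsto 1 \otimes g$ are homomorphisms $S([2m]^2) \to S([2m]^3)$, and the assertion that a subgroup $S$ is bicompatible is exactly that the two images commute elementwise in $S([2m]^3)$. Since these images are generated by the images of a generating set of $S$, it suffices to verify that each ordered pair of generators of $N(B,C)_\prec$ is bicompatible. Next I would observe the structural point that $c_0$ is a permutation of the form treated in Proposition \ref{vertical_same_rows}: it permutes the rows of each column indexed by $C$ by the single $m$-cycle $(c_1, c_2, \ldots, c_m)$, and fixes everything else. Dually, $r_0$ is of the form of Proposition \ref{horizontal_same_columns}, permuting the columns of each row indexed by $B$ by the $m$-cycle $(b_1, b_2, \ldots, b_m)$.

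With this in hand the required compatibilities split into two groups. By Proposition \ref{vertical_same_rows}, a permutation $v$ is compatible with $c_0$ if and only if $v$ leaves $C \times [2m]$ invariant, and by Proposition \ref{horizontal_same_columns}, $r_0$ is compatible with $v$ if and only if $v$ leaves $[2m] \times B$ invariant. A glance at the supports shows that each generator ($c_0$, $r_0$ and every $w$) leaves both $C \times [2m]$ and $[2m] \times B$ invariant, which settles at once ``$v$ compatible with $c_0$'' and ``$r_0$ compatible with $v$'' for every generator $v$. The four remaining directions---``$c_0$ compatible with $r_0$'', ``$c_0$ compatible with $w$'', ``$w$ compatible with $r_0$'' and ``$w$ compatible with $w'$''---I would dispatch with a short support lemma: if $\supp(u) \subseteq I_u \times J_u$ and $\supp(v) \subseteq I_v \times J_v$ with $J_v \cap I_u = \emptyset$, then $(v \otimes 1)$ and $(1 \otimes u)$ commute, because on a triple $(x,y,z)$ the only coordinate touched by both actions is the middle one, which $1 \otimes u$ keeps inside $I_u$ while $v \otimes 1$ moves it only when it lies in $J_v$; disjointness then prevents any interference. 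In each of the four pairs the relevant intersection is $B \cap C = \emptyset$ (the shared middle index would have to lie simultaneously in $B$ and in $C$), so they commute.

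The step I expect to demand the most attention is the treatment of the full symmetric group $S(C \times B)$. The results of Section \ref{horiz_vert} characterize compatibility only with the structured permutations $c_0$ and $r_0$, and say nothing about an arbitrary $w \in S(C \times B)$; consequently the pairs ``$c_0$ compatible with $w$'', ``$w$ compatible with $r_0$'' and ``$w$ compatible with $w'$'' cannot be read off from those results and must instead be extracted from the disjoint-support lemma above. Once every pair of generators is shown bicompatible, the reduction shows that $N(B,C)_\prec$ is a bicompatible subgroup; all of its elements are then automatically stable of rank $1$, as recorded after the definition of bicompatible subgroup (and as also follows from Proposition \ref{isogroups}).
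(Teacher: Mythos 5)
Your proof is correct and shares its skeleton with the paper's: both arguments reduce bicompatibility to ordered pairs of generators, both obtain ``$v$ compatible with $c_0$'' for every generator $v$ from Proposition \ref{vertical_same_rows} (invariance of $C \times [2m]$), and both obtain ``$r_0$ compatible with $v$'' from Proposition \ref{horizontal_same_columns} (invariance of $[2m] \times B$). Where you genuinely diverge is on the four remaining ordered pairs: the paper settles ``$c_0$ compatible with $r_0$'' and ``$c_0$ compatible with $w$'' by Proposition \ref{vertical_same_rows2} (taking $\sigma=1$ and $t \equiv 0$), ``$w$ compatible with $r_0$'' by Proposition \ref{horizontal_same_columns2_cycles}, and ``$w$ compatible with $w'$'' by citing \cite[Prop.~5.15]{BC}, whereas you prove a single elementary disjoint-support lemma (if the row-projection of $\supp(u)$ is disjoint from the column-projection of $\supp(v)$, then $1 \otimes u$ and $v \otimes 1$ commute) and apply it uniformly to all four pairs via $B \cap C = \emptyset$. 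Your lemma is correct --- the case analysis on the middle coordinate goes through because supports are invariant sets, so $1 \otimes u$ keeps the middle coordinate inside $I_u$ and $v \otimes 1$ keeps it inside $J_v$ --- and what it buys is self-containedness: in particular it eliminates the only external ingredient of the paper's proof, the appeal to \cite[Prop.~5.15]{BC} for pairs inside $S(C \times B)$. One inaccuracy in your commentary, harmless for the argument: Propositions \ref{vertical_same_rows2} and \ref{horizontal_same_columns2_cycles} do characterize exactly with which $v$ the permutations $c_0$ and $r_0$ are compatible, so ``$c_0$ compatible with $w$'' and ``$w$ compatible with $r_0$'' can be read off from Section \ref{horiz_vert}, and that is precisely how the paper proceeds; only ``$w$ compatible with $w'$'' truly requires something beyond those results. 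Finally, your explicit treatment of the isomorphism type (pairwise disjoint supports in $C \times C$, $B \times B$, $C \times B$ giving the internal direct product $\ZZ_m \times \ZZ_m \times S_{m^2}$, with $c_0$ and $r_0$ of order $m$ as products of $m$ disjoint $m$-cycles) is correct and usefully fills in what the paper compresses into ``the conclusion is now clear.''
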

\begin{proof}
We begin by showing that every element in $N(B,C)_\prec$ is compatible with $c_0$. By Proposition \ref{vertical_same_rows}
an element $v \in S([2m]^2)$ is compatible with $c_0$ if and 
only if $v$ leaves $C \times [2m]$ invariant, and this is easily checked for all the elements of $N(B,C)_\prec$. In particular,
$c_0$ is (bi)compatible with itself (i.e., it is stable of rank $1$).

We now show that $c_0$ is compatible with every other element in $N(B,C)_\prec$. By Proposition \ref{vertical_same_rows2}
$c_0$ is compatible with an element $v \in S([2m]^2)$ if and
only if there are $\sigma \in S_{2m}$ and $t: [2m] \rightarrow [m]$ such that 
\[
v(x,c_i)=(\sigma(x),c_{i+t(x)})
\] 
for all $x \in [2m]$ and all $i \in [m]$ (where indices are modulo
$m$). This condition is satisfied for all the elements in
$\langle S(B,C), r_0 \rangle$
by taking $t(x):=0$ for all $x \in [2m]$ and $\sigma:=1$, since all these
elements leave fixed each column in $C$. 
This is already enough to deduce that $c_0$ is compatible with every element in $N(B,C)_\prec$.

Similarly, we can check that $r_0$ is bicompatible with any element in $N(B,C)_\prec$.
Indeed, by Proposition \ref{horizontal_same_columns}, $r_0$ is compatible with some $v \in S([2m]^2)$ if and only if $v$ leaves $[2m] \times B$ invariant,
and this is easily verified for all $v \in N(B,C)_\prec$. 
Conversely, one can use Proposition \ref{horizontal_same_columns2_cycles} (with $\sigma=1$ and $t(x)=0$ for all $x \in [2m]$) to check that all the elements $S(C \times B)$ are compatible with $r_0$, and the claim readily follows.

Finally, one can easily see from \cite[Proposition 5.15]{BC} that any two elements in $S(C \times B)$ are bicompatible.
The conclusion is now clear.
\end{proof}

\section{Quadratic subgroups} \label{Quadraticsub}
In this section, using the results obtained in the previous 
ones, we give a detailed and explicit description of the
subgroups of ${\rm Out}(\O_4)$ which are maximal in $\pi(\lambda^{-1}(\P_4^2))$,
so the ones that are tabulated in \cite[Table 5]{AJS18} 
(reproduced here as Table 1 for the reader's convenience).
Our results show that some of them are special cases of
subgroups that exist in ${\rm Out}(\O_n)$ for any $n \geq 3$, while
others seem to be specific to the $n=4$ case.

\begin{table}[ht] \label{Table1}
	\caption{Maximal shift space automorphism groups at level 2 in  $\O_4$}
	\centering
	\begin{tabular}{c c c c}
		\hline\hline
		Case & Order & Group & Multiplicity  \\ [0.5ex] 
		\hline
		1& 128 & 
		$\big( {\mathbb Z}_2 \times {\mathbb Z}_2 \times {\mathbb Z}_2 \times {\mathbb Z}_2 \times {\mathbb Z}_2 \times {\mathbb Z}_2\big)\rtimes {\mathbb Z}_2$ & 3 \\
		2& 96 & ${\mathbb Z}_2 \times {\mathbb Z}_2 \times S_4 $ & 6 \\
		3& 64 & ${\mathbb Z}_2 \times {\mathbb Z}_2 \times {\mathbb Z}_2 \times D_8$ & 12 \\
		4 & 64 & ${\mathbb Z}_2 \times \big( ({\mathbb Z}_2 \times {\mathbb Z}_2 \times {\mathbb Z}_2 \times {\mathbb Z}_2) \rtimes {\mathbb Z}_2 \big)$ & 6 \\
		5 & 54 & $({\mathbb Z}_3 \times {\mathbb Z}_3 \times {\mathbb Z}_3) \rtimes {\mathbb Z}_2 $ & 4 \\ 
		6 & 36 & $S_3 \times S_3$ & 8 \\
		7 & 24 & $S_4$ & 7 \\  [1ex]
		\hline
	\end{tabular}
	\label{table:nonlin}
\end{table}

\subsection{The first line}

We keep the notation as in the previous section. So $i,j \in [4]$, $i \neq j$, and 
$\{h,k \} := [4] \setminus \{ i,j \}$.
Let $u_0,v_0,w_0,z_0,u'_0,v'_0,w'_0,z'_0,\sigma_0,\sigma'_0$ be the generators of $R_{i,j},R_{j,i},C_{i,j},C_{j,i},$ $R_{h,k},R_{k,h},C_{h,k},C_{k,h},S_{h,k},S_{i,j}$, respectively. 
So 
\[
u_0=((i,h),(i,k)), v_0=((j,h),(j,k)), w_0=((h,i),(k,i)), z_0=((h,j),(k,j))
\]
\[
u'_0=((h,i),(h,j)), v'_0=((k,i),(k,j)),
w'_0=((i,h),(j,h)), z'_0=((i,k),(j,k))
\]
\[
\sigma_0=(i,j), \sigma'_0=(h,k).
\]
\noindent 
Note that $u_0,v_0 \in R_{\{ i,j \}}$, $w_0,z_0 \in C_{\{ i,j \}}$,
$u'_0,v'_0 \in R_{\{ h,k \}}$, and $w'_0,z'_0 \in C_{\{ h,k \}}$.
Also define $\omega_0:=(i,h)(j,k) \in S_4$.
We let $H_{i,j}$ be the subgroup of ${\rm Aut}(\O_4)$ generated by 
$$
\{ \lambda_{u_0} \lambda_{v_0}, \lambda_{w_0} \lambda_{z_0}, \lambda_{u'_0} \lambda_{v'_0}, \lambda_{w'_0} \lambda_{z'_0}, \lambda_{\sigma_0 \otimes 1}, 
\lambda_{\sigma'_0 \otimes 1},\lambda_{\omega_0 \otimes 1} \}
$$
(so $H_{i,j}=H_{j,i}=H_{h,k}=H_{k,h}$). Observe that, by Theorem \ref{R_P} (or Lemma
\ref{compdiffR}), $\lambda_{u_0} \lambda_{v_0}=\lambda_{u_0 v_0}$,
$\lambda_{w_0} \lambda_{z_0}=\lambda_{w_0 z_0}$,
$\lambda_{u'_0} \lambda_{v'_0}=\lambda_{u'_0 v'_0}$, and
$\lambda_{w'_0} \lambda_{z'_0}=\lambda_{w'_0 z'_0}$.
For the reader's benefit we include a graphical representation of the seven permutations $u_0 v_0$, $w_0 z_0$, $u'_0 v'_0$, $w'_0 z'_0$, $\sigma_0 \otimes 1$, $\sigma'_0 \otimes 1$
and $\omega_0 \otimes 1$ in $S([4]^2)$ for $i=1$, $j=3$,
$h=2$, and $k=4$.
\[  \beginpicture
\setcoordinatesystem units <0.25cm,0.25cm>
\setplotarea x from -5 to 20, y from 2 to 5

\setlinear

\plot 0 0 0 4 /
\plot 1 0 1 4 /
\plot 2 0 2 4 /
\plot 3 0 3 4 /
\plot 4 0 4 4 /

\plot 0 0 4 0 /
\plot 0 1 4 1 /
\plot 0 2 4 2 /
\plot 0 3 4 3 /
\plot 0 4 4 4 /

\plot 1.5 1.5 3.5 1.5 /
\plot 1.5 3.5 3.5 3.5 /

\plot 6 0 6 4 /
\plot 7 0 7 4 /
\plot 8 0 8 4 /
\plot 9 0 9 4 /
\plot 10 0 10 4 /

\plot 6 0 10 0 /
\plot 6 1 10 1 /
\plot 6 2 10 2 /
\plot 6 3 10 3 /
\plot 6 4 10 4 /

\plot 6.5 0.5 6.5 2.5 /
\plot 8.5 0.5 8.5 2.5 /

\plot 12 0 12 4 /
\plot 13 0 13 4 /
\plot 14 0 14 4 /
\plot 15 0 15 4 /
\plot 16 0 16 4 /

\plot 12 0 16 0 /
\plot 12 1 16 1 /
\plot 12 2 16 2 /
\plot 12 3 16 3 /
\plot 12 4 16 4 /

\plot  12.5 0.5 14.5 0.5 /
\plot  12.5 2.5 14.5 2.5 /

\plot 18 0 18 4 /
\plot 19 0 19 4 /
\plot 20 0 20 4 /
\plot 21 0 21 4 /
\plot 22 0 22 4 /

\plot 18 0 22 0 /
\plot 18 1 22 1 /
\plot 18 2 22 2 /
\plot 18 3 22 3 /
\plot 18 4 22 4 /

\plot 19.5 1.5 19.5 3.5 /
\plot 21.5 1.5 21.5 3.5 /

\plot 24 0 24 4 /
\plot 25 0 25 4 /
\plot 26 0 26 4 /
\plot 27 0 27 4 /
\plot 28 0 28 4 /

\plot 24 0 28 0 /
\plot 24 1 28 1 /
\plot 24 2 28 2 /
\plot 24 3 28 3 /
\plot 24 4 28 4 /

\plot 24.5 1.5 24.5 3.5 /
\plot 25.5 1.5 25.5 3.5 /
\plot 26.5 1.5 26.5 3.5 /
\plot 27.5 1.5 27.5 3.5 /

\plot 30 0 30 4 /
\plot 31 0 31 4 /
\plot 32 0 32 4 /
\plot 33 0 33 4 /
\plot 34 0 34 4 /

\plot 30 0 34 0 /
\plot 30 1 34 1 /
\plot 30 2 34 2 /
\plot 30 3 34 3 /
\plot 30 4 34 4 /

\plot 30.5 0.5 30.5 2.5 /
\plot 31.5 0.5 31.5 2.5 /
\plot 32.5 0.5 32.5 2.5 /
\plot 33.5 0.5 33.5 2.5 /

\plot 36 0 36 4 /
\plot 37 0 37 4 /
\plot 38 0 38 4 /
\plot 39 0 39 4 /
\plot 40 0 40 4 /

\plot 36 0 40 0 /
\plot 36 1 40 1 /
\plot 36 2 40 2 /
\plot 36 3 40 3 /
\plot 36 4 40 4 /

\plot 36.5 0.5 36.5 1.5 /
\plot 36.5 2.5 36.5 3.5 /
\plot 37.5 0.5 37.5 1.5 /
\plot 37.5 2.5 37.5 3.5 /
\plot 38.5 0.5 38.5 1.5 /
\plot 38.5 2.5 38.5 3.5 /
\plot 39.5 0.5 39.5 1.5 /
\plot 39.5 2.5 39.5 3.5 /

\put {The seven permutations in $S([4]^2)$ indexing the generators of $H_{i,j}$} at 20 -2
\put {for $i=1, j=3, h=2$, and $k=4$.} at 20 -4

\endpicture \]

We leave to the reader to check that if we had replaced $\omega_0$ with the other possible choice $\omega'_0:=(i,k)(j,h) = \sigma_0 \omega_0 \sigma_0  = \sigma'_0 \omega_0 \sigma'_0 \in S_4$ 
(which corresponds to switching $h$ and $k$)
in the definition of $H_{ij}$ then this group would remain unchanged.
\begin{theorem}
	For $i,j \in [4]$, $i \neq j$, $H_{i,j}$
	is contained in $\lambda^{-1}(\P_4^2)$ and is isomorphic to $\ZZ_2^{\times 6} \rtimes \ZZ_2$.
	The kernel of the canonical projection $\pi : H_{i,j} \rightarrow {\rm Out}(\O_4)$
	is trivial and the 3 groups $\pi(H_{i,j})$ are mutually distinct.
\end{theorem}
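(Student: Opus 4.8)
The plan is to prove the three assertions in sequence: first that $H_{i,j} \subseteq \lambda^{-1}(\P_4^2)$ together with the group isomorphism, then that $\pi$ restricted to $H_{i,j}$ is injective, and finally that the three conjugacy-type groups $\pi(H_{i,j})$ are distinct as subgroups of ${\rm Out}(\O_4)$. For the first part I would invoke Proposition \ref{isogroups}: it suffices to check that the seven permutations $u_0v_0$, $w_0z_0$, $u'_0v'_0$, $w'_0z'_0$, $\sigma_0 \otimes 1$, $\sigma'_0 \otimes 1$, $\omega_0 \otimes 1$ are pairwise bicompatible and stable of rank $1$. Most of these pairings follow from the results already established: Lemma \ref{compdiffR} handles $u_0v_0$ with $w_0z_0$-type interactions within the $R$ and $C$ families, Theorem \ref{R_P} (with $P=\{i,j\}$ and $P=\{h,k\}$) covers the row/column subgroups, and Lemma \ref{colrow1} and Corollary \ref{colrow2} (which hold precisely for $n=4$) handle the mixed row-column compatibilities. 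The permutations of the form $\tau \otimes 1$ with $\tau \in S_4$ are automatically stable of rank $1$, and their bicompatibility with the others can be verified directly from the definition, noting that $\sigma_0, \sigma'_0, \omega_0$ permute the index set $\{i,j,h,k\}$ in a way that respects the block structure.

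Once bicompatibility is established, Proposition \ref{isogroups} gives that $H_{i,j} \subseteq \lambda^{-1}(\P_4^2)$ and that $H_{i,j}$ is isomorphic to the subgroup $G$ of $S([4]^2)$ generated by the seven permutations. I would then identify $G$ abstractly: the four commuting involutions $u_0v_0, w_0z_0, u'_0v'_0, w'_0z'_0$ together with $\sigma_0 \otimes 1$ and $\sigma'_0 \otimes 1$ generate an elementary abelian $\ZZ_2^{\times 6}$ (one must verify these six involutions are independent and mutually commute, which follows from the fact that they act on disjoint or compatibly-overlapping cells of the grid), while $\omega_0 \otimes 1$ is an involution that conjugates this group by swapping the roles of $i \leftrightarrow h$ and $j \leftrightarrow k$, hence acts as a nontrivial order-$2$ automorphism permuting the generators. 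This yields the semidirect product structure $\ZZ_2^{\times 6} \rtimes \ZZ_2$, matching the $128$ of Case $1$ in Table \ref{Table1}.

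For injectivity of $\pi$ on $H_{i,j}$, the key tool is the final remark of the Preliminaries: for $u, v \in \P_4^2$ one has $\pi(\lambda_u) = \pi(\lambda_v)$ if and only if $v = z\, u\, \varphi(z)^*$ for some $z \in \P_4^1 \simeq S_4$, i.e. $v = (z \otimes 1)\, u\, (1 \otimes z^{-1})$ as permutations of $[4]^2$. To show $\ker(\pi|_{H_{i,j}})$ is trivial it is enough to show that no nonidentity element $g$ of $G$ has the form $(z \otimes 1)(1 \otimes z^{-1})$ for some $z \in S_4$; equivalently, that the only element of $G$ lying in the ``coboundary'' set $\{(z\otimes 1)(1\otimes z^{-1}) : z \in S_4\}$ is the identity. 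Since each such coboundary permutation is determined by a single $z \in S_4$ and acts by $(a,b) \mapsto (z(a), z^{-1}(b))$, I would compare this rigid action against the explicit cycle structures of the generators of $G$ and their products, checking that the only coincidence forces $z = 1$. This is the step I expect to be the main obstacle, since it requires understanding the full list of elements of $G$ (or at least enough invariants, such as which rows and columns are moved) well enough to rule out the $23$ nontrivial coboundaries.

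Finally, to see that the three groups $\pi(H_{i,j})$ are mutually distinct as the unordered pair $\{i,j\}$ ranges over its three essentially different values (recall $H_{i,j} = H_{j,i} = H_{h,k} = H_{k,h}$, so there are exactly three such groups corresponding to the three ways of splitting $[4]$ into two pairs), I would extract a combinatorial invariant of $\pi(H_{i,j})$ that distinguishes the partition $\{\{i,j\},\{h,k\}\}$. A natural choice is the set of rows and columns of $[4]^2$ that are left pointwise fixed, or the set of fixed points, of the elements of $H_{i,j}$, which is intrinsic to the partition; since $\pi$ is injective on $H_{i,j}$ these invariants descend to $\pi(H_{i,j})$. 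Because the three partitions of $\{1,2,3,4\}$ into two pairs are genuinely different, the associated invariants differ, and hence the three image groups are distinct.
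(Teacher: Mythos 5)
Your first step contains the fatal gap: the seven permutations are \emph{not} pairwise bicompatible, so Proposition \ref{isogroups} cannot be applied to the full generating set. Concretely, $\omega_0 \otimes 1$ is not compatible with $u_0v_0$: by Proposition \ref{horizontal_same_columns2_cycles}, any $v$ compatible with $u_0v_0=((i,h),(i,k))\,((j,h),(j,k))$ must map the points of rows $h,k$ back into rows $\{h,k\}$, whereas $(\omega_0\otimes 1)(h,x)=(i,x)$; equivalently, $(u_0v_0\otimes 1)$ and $1\otimes(\omega_0\otimes 1)$ fail to commute in $S([4]^3)$, since applied to $(i,i,z)$ in the two orders they give $(i,k,z)$ and $(i,h,z)$. (Compatibility does hold in the opposite direction, but bicompatibility fails, and similarly for $w_0z_0$, $u'_0v'_0$, $w'_0z'_0$.) As a result your identification of $H_{i,j}$ with the subgroup $G\leq S([4]^2)$ generated by the seven permutations is false --- and so is your description of $G$: inside $S([4]^2)$, conjugation by $\omega_0\otimes 1$ does \emph{not} permute the six involutions (it sends $u_0v_0$ to the permutation swapping columns $h,k$ within rows $h,k$, which is not among the $128$ products $\alpha\beta\gamma\delta\mu\nu(\omega_0\otimes1)^{\epsilon}$), so $|G|>128$ and $G\not\simeq\ZZ_2^{\times 6}\rtimes\ZZ_2$. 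The semidirect product is a phenomenon in ${\rm Aut}(\O_4)$, not in $S([4]^2)$: the paper applies Proposition \ref{isogroups} only to the six commuting, mutually bicompatible involutions (getting $\ZZ_2^{\times 6}$), and then treats $\lambda_{\omega_0\otimes 1}=\lambda_{\omega_0}$ separately via the composition rules, $\lambda_{\omega_0}\lambda_v=\lambda_{(\omega_0\otimes\omega_0)v(1\otimes\omega_0)}$ and $\lambda_u\lambda_{\omega_0}=\lambda_{u(\omega_0\otimes1)}$, so that conjugation by $\lambda_{\omega_0}$ acts on the permutation indices by the \emph{diagonal} $\omega_0\otimes\omega_0$, which does swap $u_0v_0\leftrightarrow u'_0v'_0$, $w_0z_0\leftrightarrow w'_0z'_0$, $\sigma_0\leftrightarrow\sigma'_0$. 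Without these relations neither the containment in $\lambda^{-1}(\P_4^2)$, nor the isomorphism type, nor even your kernel argument (which presupposes that every element of $H_{i,j}$ has the form $\lambda_{\alpha\beta\gamma\delta\mu\nu\eta}$) gets off the ground; once the relations are in place, the kernel argument you sketch is essentially the paper's.

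Second gap: in the distinctness step, the invariant you propose does not descend to ${\rm Out}(\O_4)$. Injectivity of $\pi|_{H_{i,j}}$ gives an abstract isomorphism $H_{i,j}\simeq\pi(H_{i,j})$, but it does not make fixed rows, fixed columns, or fixed-point sets of chosen permutation representatives into invariants of the image subgroup. An equality $\pi(H_{i,j})=\pi(H_{i,\ell})$ would only assert that each element of one group agrees with an element of the other up to the twisted conjugation $u\mapsto (p\otimes 1)\,u\,(1\otimes p^{-1})$ for some $p\in S_4$ depending on the element, and this operation destroys exactly those invariants (for instance $\lambda_{u\otimes 1}$ and $\lambda_{1\otimes u}$ define the same class in ${\rm Out}(\O_4)$, yet $u\otimes 1$ and $1\otimes u$ fix completely different rows and columns). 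Eliminating this unknown $p$ is the actual content of the paper's argument: it exhibits a witness class $[\lambda_{(\ell,h)\otimes1}]\in\pi(H_{1,m})$, supposes it lies in $\pi(H_{1,\ell})$, writes out the resulting identity $\alpha\beta\gamma\delta\mu\nu\eta=(p\,(\ell,h))\otimes p^{-1}$, and forces $p=1$ by evaluating at suitable points before reaching a contradiction. Your proposal skips this step entirely, so the distinctness claim is unproven as written.
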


\begin{proof}
	It is easy to check that the six elements $u_0 v_0$, $w_0 z_0$, $u'_0 v'_0$, $w'_0z'_0$, $\sigma'_0 \otimes 1$, and $\sigma_0 \otimes 1$ in $\P_4^2$
	commute with each other. 
	We now show that they are all bicompatible.
Indeed, by Propositions \ref{horizontal_same_columns} and
\ref{horizontal_same_columns2_cycles}, a permutation is bicompatible with 
$u_0 v_0$ (resp. $u'_0 v'_0$) if and only if it leaves $[n] \times \{  i,j \}$ (resp. $[n] \times \{  h,k \}$) 
invariant and permutes the elements of $\{ h,k \} \times [n]$
(resp.,  $\{ i,j \} \times [n]$) 
in the way prescribed by Proposition \ref{horizontal_same_columns2_cycles} and this is easily checked
for all these permutations. Similarly, a permutation is 
bicompatible with $w_0 z_0$ (resp., $w'_0 z'_0$) if and only if 
it leaves $\{ i,j \} \times [n]$ (resp., $\{ h,k \}
\times [n]$) invariant and permutes the elements of 
$[n] \times \{ h,k \}$ (resp., $[n] \times \{ i,j \}$) in the appropriate way, which is also easily checked. Finally, by 
the same results, a 
permutation is bicompatible with $\sigma'_0 \otimes 1$ if
and only if it leaves $[4] \times [4]$ invariant (which is a 
vacuous condition) and permutes the elements of $[n] \times \{
 h,k \}$ in the appropriate way which is the case for 
 $\sigma_0 \otimes 1$.

	This, by Proposition \ref{isogroups}, shows that the group generated by $\lambda_{u_0 v_0}$, $\lambda_{w_0 z_0}$, $\lambda_{u'_0 v'_0}$, $\lambda_{w'_0 z'_0}$, $\lambda_{\sigma'_0 \otimes 1}$, and $\lambda_{\sigma_0 \otimes 1}$ in ${\rm Aut}(\O_4)$ is contained in $\lambda^{-1}(\P_4^2)$ and is isomorphic to the group generated by
	$u_0 v_0$, $w_0 z_0$, $u'_0 v'_0$, $w'_0z'_0$, $\sigma'_0 \otimes 1$, and $\sigma_0 \otimes 1$ in $S([4]^2)$. This last group is easily seen to be isomorphic to
	$(\ZZ_2)^{\times 6}$. Indeed, let $(\alpha, \beta, \gamma, \delta, \mu, \nu) \in (\langle u_0 v_0 \rangle, \langle w_0 z_0 \rangle,\langle u'_0 v'_0 \rangle,\langle w'_0 z'_0 \rangle,\langle \sigma'_0 \otimes 1 \rangle,\langle \sigma_0 \otimes 1 \rangle)$ be such that $\alpha \beta \gamma \delta \mu \nu =1$ in $S([4]^2)$.
	Then $(\mu \nu) (a,a)= (\mu \nu \alpha \beta \gamma \delta) (a,a)=(a,a)$ for 
	all $a \in [4]$ which implies that $\mu = \nu =1$. Furthermore, $\alpha, \beta$ and $\gamma$ all leave the $i$-th row of $[4]^2$ invariant, so $\delta=1$. Similarly, 
	by considering the $i$-th column, we conclude that $\gamma=1$ , which in turn implies 
	that $\alpha=\beta=1$.
	\medskip
	
	Now, $\omega_0 \, \sigma_0 \, \omega_0 = \sigma'_0$ in $S_4$ so 
$$ \lambda_{\omega_0 \otimes 1} \lambda_{\sigma_0 \otimes 1} = \lambda_{\omega_0 \sigma_0} =
	\lambda_{\sigma'_0 \omega_0} = \lambda_{\sigma'_0 \otimes 1} \lambda_{\omega_0 \otimes 1}.
$$
Similarly 
	$(\omega_0 \otimes \omega_0) \; u_0 v_0 \; (\omega_0 \otimes \omega_0) =
	u'_0 v'_0$ 
	in $S([4]^2)$, so 
\begin{align*}
	\lambda_{\omega_0 \otimes 1} \lambda_{u_0 v_0} & = 
	\lambda_{\omega_0} \lambda_{u_0 v_0} = \lambda_{(\omega_0 \otimes \omega_0) u_0 v_0 (1 \otimes \omega_0)} =  \lambda_{u'_0 v'_0 ( \omega_0 \otimes 1)} \\
	& = \lambda_{u'_0 v'_0} \lambda_{\omega_0}
	=  \lambda_{u'_0 v'_0} \lambda_{\omega_0 \otimes 1}
\end{align*}
and $(\omega_0 \otimes \omega_0) \; w_0 z_0 \; ( \omega_0 \otimes \omega_0) =
	w'_0 z'_0$ 
	in $S([4]^2)$ so 
\begin{align*}
\lambda_{\omega_0 \otimes 1} \lambda_{w_0 z_0} & = 
\lambda_{\omega_0} \lambda_{w_0 z_0} =
\lambda_{( \omega_0 \otimes \omega_0 ) 
w_0 z_0 (1 \otimes \omega_0 )} 
=  \lambda_{w'_0 z'_0 ( \omega_0  \otimes 1)} \\
 & = \lambda_{w'_0 z'_0} \lambda_{\omega_0} 
= \lambda_{w'_0 z'_0} \lambda_{\omega_0 \otimes 1}.
\end{align*}
That $H_{i,j}$
is contained in $\lambda^{-1}(\P_4^2)$ and is isomorphic to $\ZZ_2^{\times 6} \rtimes \ZZ_2$ readily follows from these relations and the facts that $\lambda_{\omega_0}$ is an
involution and 
$\lambda_{\omega_0} \notin \langle \lambda_{u_0 v_0}, \lambda_{w_0 z_0}, \lambda_{u'_0 v'_0}, \lambda_{w'_0 z'_0}, \lambda_{\sigma'_0 \otimes 1},$ $\lambda_{\sigma_0 \otimes 1} \rangle $. Indeed, let $(\alpha, \beta, \gamma, \delta, \mu, \nu)$ as above 
	be such that $\lambda_{\alpha} \lambda_{\beta} \lambda_{\gamma} \lambda_{\delta} \lambda_{\mu} \lambda_{\nu} = \lambda_{\omega_0}$. Then 
	$\lambda_{\alpha \beta \gamma \delta \mu \nu} =\lambda_{\omega_0}$ and so
	$\alpha \beta \gamma \delta \mu \nu = \omega_0 \otimes 1$. But the permutation on the 
	LHS of this last equation leaves invariant the union of the $i$th and $j$-th rows of
	$[4]^2$, while $\omega_0 \otimes 1$ does not.
	
	\medskip
	We now show that the intersection of $H_{ij}$ with the kernel of the canonical projection onto ${\rm Out}(\O_4)$ is trivial. Let $(\alpha, \beta, \gamma, \delta, \mu, \nu)$ as above, $\eta \in \langle \omega_0 \otimes 1 \rangle$, and $p \in S_4$ be such 
	that $\lambda_{\alpha} \lambda_{\beta} \lambda_{\gamma} \lambda_{\delta} \lambda_{\mu} \lambda_{\nu} \lambda_{\eta} = \lambda_{p \otimes p^{-1}}$. Then 
	$\lambda_{\alpha \beta \gamma \delta \mu \nu \eta} = \lambda_{p \otimes p^{-1}}$
	which implies that $\alpha \beta \gamma \delta \mu \nu \eta =p \otimes p^{-1}$.
	Hence $\eta^{-1} \nu^{-1} \mu ^{-1} (\alpha \beta \gamma \delta)^{-1} = p ^{-1} \otimes p$ so $(\eta^{-1} \nu^{-1} \mu ^{-1})(a,a) = (p^{-1}(a),p(a))$ for all 
	$a \in [4]$ which shows that $p=1$, since $\eta, \nu$, and $\mu$ don't change the second coordinate.
	From the above discussion it follows at once that $\alpha$, $\beta$, $\gamma$, $\delta$, $\mu$, $\nu$ and $\eta$ are all trivial.

	\medskip
	It remains to be shown that the images of the three groups $H_{12}, H_{13}, H_{1,4}$ are distinct in ${\rm Out}(\O_4)$. Let $\ell, m \in [2,4]$, $\ell  < m$, 
	and $\{ h \} := [4] \setminus \{ 1,\ell ,m \}$.
	Then $[\lambda_{(\ell,h) \otimes 1}] \in \pi(H_{1,m})$. 
	We claim that $[\lambda_{(\ell,h) \otimes 1}] \notin 
	\pi(H_{1,\ell})$. Suppose not, then there are  
	$(\alpha, \beta, \gamma, \delta, \mu, \nu, \eta) \in (\langle u_0 v_0 \rangle, \langle w_0 z_0 \rangle,\langle u'_0 v'_0 \rangle,\langle w'_0 z'_0 \rangle,\langle \sigma'_0 \otimes 1 \rangle,\langle \sigma_0 \otimes 1 \rangle, \langle \omega_0 \otimes 1 \rangle)$ (where the elements on the RHS of this last equation are here taken in $H_{1,\ell}$ so
	$\alpha \in \langle  ((1,h),(1,m))((\ell,h),(\ell,m)) \rangle$,
	$\beta \in \langle  ((h,1),(m,1))((h,\ell),(m,\ell)) \rangle$,
	$\gamma \in \langle  ((h,1),(h,\ell))((m,1),(m,\ell)) \rangle$,
	$\delta \in \langle  ((1,h),(\ell,h))((1,m),(\ell,m)) \rangle$,
	$\mu \in \langle  (h,m) \otimes 1 \rangle$,
	$\nu \in \langle  (1,\ell) \otimes 1 \rangle$, and
	$\eta \in \langle  (1,h)(\ell,m) \otimes 1 \rangle$)  
	such that $[\lambda_{\alpha \beta \gamma \delta \mu \nu
		\eta}]=[\lambda_{(\ell,h) \otimes 1}]$. Hence there is $p \in S_4$ such 
	that $\alpha \beta \gamma \delta \mu \nu
	\eta= (p (\ell,h))\otimes p^{-1}$. Let $a \in [4]$ and $(b,a) := (\eta^{-1}
	\nu^{-1} \mu^{-1})(a,a)$. Then 
	$$
	(a,a)=\alpha \beta \gamma \delta \mu \nu
	\eta (b,a)= ((p (\ell,h))\otimes p^{-1})(b,a)=(p(\ell,h)(b),p^{-1}(a))
	$$
	so $p=1$. Hence $\alpha \beta \gamma \delta \mu \nu
	\eta= (\ell,h)\otimes 1$ and therefore 
	$\alpha \beta \gamma \delta = ((\ell,h) \otimes 1) \eta^{-1} \nu^{-1} \mu^{-1} $.
	In particular 
	$$
	(a,a)=(\alpha \beta \gamma \delta)(a,a)=((\ell,h) \otimes 1)\eta^{-1} \nu^{-1} \mu^{-1}(a,a)
	$$ 
	for all $a \in [4]$. 
	This implies that $(\ell,h) = \tilde{\mu} \tilde{\nu} \tilde{\eta}$ where 
	$\tilde{\mu} \in \langle (h,m) \rangle$, $\tilde{\nu} \in \langle (1,\ell) \rangle$,
	and $\tilde{\eta} \in \langle (1,h)(\ell,m) \rangle$. 
	But $(\ell,h)$ leaves $1$ fixed so $\tilde{\nu}=1$ and
 $\tilde{\eta} = 1$ so $(\ell,h) \in \langle (h,m) \rangle$ which is a contradiction.
	Hence $[\lambda_{(\ell,h) \otimes 1}] \notin 
	\pi(H_{1,\ell})$ and so $\pi(H_{1,\ell}) \neq \pi(H_{1,m})$.
\end{proof}

Note that the three groups $H_{1,2},H_{1,3},H_{1,4}$ are conjugate 
in $\rm Aut(\O_4)$ by Bogolubov automorphisms. Indeed, 
we have that 
$(H_{1,\ell})^{\lambda_{(\ell,m)}} = H_{1,m}$ for all $2 \leq \ell <m \leq 4$ since the indices of the corresponding generators in $S([4]^2)$ 
are conjugated by $(\ell,m) \otimes (\ell,m)$.
\medskip

\subsection{The second line}

We keep some notation as in the previous section. So $i,j \in [4]$, $i \neq j$, 
$\{h,k \} := [4] \setminus \{ i,j \}$, and
\[
u_0=((i,h),(i,k)), \, v_0=((j,h),(j,k)), \, w'_0=((i,h),(j,h)).
\]
We also let
\[
x_0=((i,i),(j,i)) ((i,j),(j,j)), \; y_0=((h,h),(h,k)) ((k,h),(k,k)).
\]
Let $K_{i,j}$ be the subgroup of ${\rm Aut}(\O_4)$ generated by 
$$
\{ \lambda_{u_0}, \lambda_{v_0}, \lambda_{w'_0}, \lambda_{x_0}, \lambda_{y_0} \}.
$$
Note that $K_{i,j}=K_{j,i}$. 
For the reader's convenience we include a graphical representation of the five permutations $u_0$, $w'_0$, $v_0$, $x_0$, $y_0$ in $S([4]^2)$ for $i=1$, $j=2$,
$h=3$, and $k=4$.
	\[  \beginpicture
		
	\setcoordinatesystem units <0.25cm,0.25cm>
	\setplotarea x from 10 to 20, y from 2 to 5
	
	\setlinear
	
	\plot 0 0 0 4 /
	\plot 1 0 1 4 /
	\plot 2 0 2 4 /
	\plot 3 0 3 4 /
	\plot 4 0 4 4 /
	
	\plot 0 0 4 0 /
	\plot 0 1 4 1 /
	\plot 0 2 4 2 /
	\plot 0 3 4 3 /
	\plot 0 4 4 4 /
	
	\plot 2.5 3.5 3.5 3.5 /
	
	\plot 6 0 6 4 /
	\plot 7 0 7 4 /
	\plot 8 0 8 4 /
	\plot 9 0 9 4 /
	\plot 10 0 10 4 /
	
	\plot 6 0 10 0 /
	\plot 6 1 10 1 /
	\plot 6 2 10 2 /
	\plot 6 3 10 3 /
	\plot 6 4 10 4 /
	
	\plot 8.5 2.5 8.5 3.5 /
	
	\plot 12 0 12 4 /
	\plot 13 0 13 4 /
	\plot 14 0 14 4 /
	\plot 15 0 15 4 /
	\plot 16 0 16 4 /
	
	\plot 12 0 16 0 /
	\plot 12 1 16 1 /
	\plot 12 2 16 2 /
	\plot 12 3 16 3 /
	\plot 12 4 16 4 /
		
	\plot  14.5 2.5 15.5 2.5 /
	
	\plot 18 0 18 4 /
	\plot 19 0 19 4 /
	\plot 20 0 20 4 /
	\plot 21 0 21 4 /
	\plot 22 0 22 4 /
	
	\plot 18 0 22 0 /
	\plot 18 1 22 1 /
	\plot 18 2 22 2 /
	\plot 18 3 22 3 /
	\plot 18 4 22 4 /
		
	\plot 18.5 2.5 18.5 3.5 /
	\plot 19.5 2.5 19.5 3.5 /
	
	\plot 24 0 24 4 /
	\plot 25 0 25 4 /
	\plot 26 0 26 4 /
	\plot 27 0 27 4 /
	\plot 28 0 28 4 /
	
	\plot 24 0 28 0 /
	\plot 24 1 28 1 /
	\plot 24 2 28 2 /
	\plot 24 3 28 3 /
	\plot 24 4 28 4 /
		
	\plot 26.5 0.5 27.5 0.5 /
	\plot 26.5 1.5 27.5 1.5 /
		
	\put {The five permutations in $S([4]^2)$ indexing the generators of $K_{i,j}$} at 15 -2
	\put{for $i=1, j=2, h=3, k=4$} at 15 -4
	
	\endpicture \]

\begin{theorem}
	\label{S4xZ2xZ2alt}
	For $i,j \in [4]$, $i \neq j$, $K_{i,j}$
	is contained in $\lambda^{-1}(\P_4^2)$ and is isomorphic to $S_4 \times \ZZ_2^{\times 2}$. The kernel of the canonical projection $\pi : K_{i,j} \rightarrow {\rm Out}(\O_4)$
	is trivial and the 6 groups $\pi(K_{i,j})$ are mutually distinct for 
	$1 \leq i < j \leq 4$.
\end{theorem}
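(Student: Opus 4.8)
The plan is to realize $K_{i,j}$ as an isomorphic copy of the permutation group $G_{i,j}:=\langle u_0,v_0,w'_0,x_0,y_0\rangle\le S([4]^2)$ and then read everything off from the combinatorics of $G_{i,j}$. The starting observation is that the generators act on three pairwise disjoint ``blocks'' of cells: the transpositions $u_0,v_0,w'_0$ are supported on $\{i,j\}\times\{h,k\}$, while $x_0$ is supported on $\{i,j\}\times\{i,j\}$ and $y_0$ on $\{h,k\}\times\{h,k\}$. Writing $A=(i,h),B=(i,k),C=(j,h),D=(j,k)$, the three transpositions are $(A,B),(C,D),(A,C)$, which generate the full symmetric group on $\{A,B,C,D\}$, and $x_0,y_0$ are commuting involutions whose supports are disjoint from this block and from each other. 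Hence, once bicompatibility is known, $G_{i,j}\cong S_4\times\ZZ_2\times\ZZ_2$, of order $96$.

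The first genuine task, and where most of the routine work lies, is to check that the five generators are pairwise bicompatible; I would dispatch the ten pairs using Section \ref{horiz_vert}. The pair $(u_0,v_0)$ (two horizontal transpositions in distinct rows) is handled by Lemma \ref{compdiffR}. Each other pair consists of members that are of ``horizontal'' type ($u_0,v_0,y_0$, as in Propositions \ref{horizontal_same_columns} and \ref{horizontal_same_columns2_cycles}) or ``vertical'' type ($w'_0,x_0$, as in Propositions \ref{vertical_same_rows} and \ref{vertical_same_rows2}). For two horizontal members both compatibilities follow from Proposition \ref{horizontal_same_columns}, for the two vertical members both follow from Proposition \ref{vertical_same_rows}, and for a mixed pair the two directions follow by applying Propositions \ref{horizontal_same_columns} and \ref{horizontal_same_columns2_cycles} to its horizontal member; in every case the invariance (resp. periodicity) condition required by the relevant proposition holds precisely because the three blocks above lie in disjoint sets of rows and columns, so each generator fixes setwise the rows and columns used by the others. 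With pairwise bicompatibility established, Proposition \ref{isogroups} yields at once that $K_{i,j}\subseteq\lambda^{-1}(\P_4^2)$ and that $\lambda_g\mapsto g$ is an isomorphism $K_{i,j}\xrightarrow{\sim}G_{i,j}\cong S_4\times\ZZ_2^{\times 2}$.

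For the triviality of $\ker(\pi|_{K_{i,j}})$ I would use the criterion recorded in Section \ref{prelim}: for $g\in\P_4^2$ the class $\pi(\lambda_g)$ is trivial iff $g=z\,\varphi(z)^{*}$ for some $z\in\P_4^1=S_4$, i.e. iff $g=w\otimes w^{-1}$ (the permutation $(a,b)\mapsto(w(a),w^{-1}(b))$) for some $w\in S_4$. The key point is that every generator of $G_{i,j}$ fixes each of the cells $(h,i),(h,j),(k,i),(k,j)$, so every $g\in G_{i,j}$ fixes $\{h,k\}\times\{i,j\}$ pointwise. If such a $g$ equals $w\otimes w^{-1}$, then fixing $(h,i),(h,j)$ forces $w(h)=h$, fixing $(k,i),(k,j)$ forces $w(k)=k$, and the second coordinates force $w(i)=i,\ w(j)=j$; hence $w=1$ and $g=1$. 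Thus the kernel is trivial and $\pi(K_{i,j})\cong S_4\times\ZZ_2^{\times 2}$.

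Finally, for distinctness of the six groups I would separate two cases. If $\{i,j\}$ and $\{i',j'\}$ lie in different partitions of $[4]$ into two pairs, I distinguish them by Bogolubov content: combining the criterion above with the fixed-cell argument, a product permutation $p\otimes q$ lies in $G_{i,j}$ only if $p\in\{1,(i,j)\}$ and $q\in\{1,(h,k)\}$, so the Bogolubov classes contained in $\pi(K_{i,j})$ are exactly $\{[\lambda_\sigma]:\sigma\in V_{\{i,j\}}\}$ with $V_{\{i,j\}}=\{1,(i,j),(h,k),(i,j)(h,k)\}$ (each class is realized, since $(i,j)\otimes1,\ 1\otimes(h,k)\in G_{i,j}$); as $\pi$ is injective on Bogolubov automorphisms, the set $V_{\{i,j\}}$, and hence the partition $\{\{i,j\},\{h,k\}\}$, is recovered from $\pi(K_{i,j})$. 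The only remaining case is a complementary pair, i.e. separating $\pi(K_{i,j})$ from $\pi(K_{h,k})$, for which the Bogolubov content coincides. Here I would use the central involution $[\lambda_{x_0}]$ together with the twisted-conjugation invariant ``$g$ permutes the columns of $[4]^2$ as blocks'' (the second coordinate of $g(a,b)$ depends only on $b$), which is preserved under $g\mapsto(z\otimes1)g(1\otimes z^{-1})$ and under which the column permutation of $(z\otimes1)x_0(1\otimes z^{-1})$ equals $z^{-1}$. Since $x_0$ is column-block with trivial column permutation, any $g\in G_{h,k}$ representing $[\lambda_{x_0}]$ must be column-block; but by block-disjointness the column-block elements of $G_{h,k}$ all preserve each column setwise (the only generator that could swap columns $i,j$, the analogue of $y_0$, destroys the column-block property), forcing $z=1$ and hence $g=x_0$, which contradicts $x_0\notin G_{h,k}$ (as $x_0$ moves $\{i,j\}\times\{i,j\}$). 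Therefore $[\lambda_{x_0}]\in\pi(K_{i,j})\setminus\pi(K_{h,k})$.

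I expect the two main obstacles to be the bookkeeping of the ten bicompatibility checks (keeping the two compatibility directions and the correct proposition straight for each pair) and the complementary-pair case of distinctness, where the naive invariants (Bogolubov content and cycle type) both fail and one must isolate the column-block invariant surviving the twisted conjugation $g\mapsto(z\otimes1)g(1\otimes z^{-1})$.
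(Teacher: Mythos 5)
Your construction of the group, the pairwise bicompatibility checks, the kernel argument, and the ``different partition'' half of the distinctness argument are all correct. The route differs mildly from the paper's: the paper obtains all ten bicompatibilities at once by recognizing $\langle u_0,v_0,w'_0,x_0,y_0\rangle$ as the group $N(B,C)_\prec$ of Theorem \ref{NBCprec} (with $m=2$, $C=\{i,j\}$, $B=\{h,k\}$), while you rederive this special case by hand from Section \ref{horiz_vert}; your justifying phrase ``the blocks lie in disjoint sets of rows and columns'' is not literally true (the blocks of $u_0$ and $y_0$ share columns $h$ and $k$), but the invariance conditions you actually need do all hold, so this is only loose wording. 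Your kernel argument coincides with the paper's, and your Bogolubov-content invariant for distinct partitions is a clean repackaging of the paper's element-chasing.

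The genuine gap is in the complementary-pair case $\pi(K_{i,j})$ versus $\pi(K_{h,k})$. Your claim that ``the column-block elements of $G_{h,k}$ all preserve each column setwise'' is false: the element
\[
1\otimes(i,j)\;=\;\bigl((h,i),(h,j)\bigr)\,\bigl((k,i),(k,j)\bigr)\cdot\bigl((i,i),(i,j)\bigr)\,\bigl((j,i),(j,j)\bigr)
\]
belongs to $N_{h,k}$ (your $G_{h,k}$) --- the first two transpositions are block permutations of $\{h,k\}\times\{i,j\}$, and the last factor is the $N_{h,k}$-analogue of $y_0$ --- and it is column-block with column permutation $(i,j)\neq 1$. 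Your parenthetical reasoning fails because it tests the generators one at a time: the $y_0$-analogue alone is indeed not column-block, but its product with a block permutation that also interchanges columns $i$ and $j$ is. Consequently ``forcing $z=1$'' is unjustified; your invariant only yields $z\in\{1,(i,j)\}$. The gap is easy to close: if $z=(i,j)$, then $g=(z\otimes 1)\,x_0\,(1\otimes z^{-1})$ maps $(i,h)$ to $(j,h)$ (since $z^{-1}$ fixes $h$, $x_0$ fixes $(i,h)$, and $z\otimes 1$ moves it to row $j$), whereas every element of $N_{h,k}$ fixes the block $\{i,j\}\times\{h,k\}$ pointwise --- a contradiction, just as in your $z=1$ case. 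For comparison, the paper avoids this bifurcation by conjugating $u_0$ rather than $x_0$ and using the diagonal as the invariant: every element of $N_{h,k}$ maps each diagonal cell into its own row or its own column, which forces $z(a)=a$ for all $a\neq i$ and hence $z=1$ outright.
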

\begin{proof}
	It is clear that the subgroup $N_{i,j}$ of $S([4]^2)$ generated by $u_0, v_0, w'_0$, $x_0$, and $y_0$ is isomorphic to $S_4 \times \ZZ_2 \times \ZZ_2$. Furthermore, 
	by Theorem \ref{NBCprec} (with $n=2m=4$), all the elements in $N_{i,j}$ are bicompatible with each other.
	Therefore, by Proposition \ref{isogroups}, $K_{i,j}$ is contained in $\lambda^{-1}(\P_4^2)$ and is isomorphic to $S_4\times \ZZ_2 \times \ZZ_2$. 
	
	Let now $\lambda_v \in K_{i,j} \cap ker(\pi)$. Then there are
	$u \in S_4$, $\alpha \in \langle \{ u_0,v_0,w'_0 \} \rangle$,
	$\beta \in \langle \{ x_0 \} \rangle$ and $\gamma \in \langle \{ y_0 \} \rangle$ such that $\lambda_{u \otimes u^{-1}} =
	\lambda_v = \lambda_{\alpha} \lambda_{\beta} \lambda_{\gamma} =
	\lambda_{\alpha \beta \gamma}$. 
	Hence $u \otimes u^{-1} = \alpha \beta \gamma$. 
	Let $a \in \{ h,k \}$ and $b \in \{ i,j \}$. Then, since
	$(a,b)$ is a fixed point of $\alpha$, $\beta$, and $\gamma$, we
	have that 
	\[
	(u(a), u^{-1}(b))= (\alpha \beta \gamma)(a,b)= (a,b)
	\]
	and hence $u(a)=a$ and $u^{-1}(b)=b$. So $u=1$, which in turn implies that $v=1$.
	Therefore $K_{i,j} \cap ker(\pi) = \{ id \}$.
	
We now show that the six subgroups $\pi(K_{i,j})$ are all 
distinct for $1 \leq i < j \leq 4$. Suppose first that $\pi(K_{i,j}) = \pi(K_{i,\ell})$, where $\ell \in \{ h,k \}$.
Let $\{ m,n \} := [4] \setminus \{ i, \ell \}$.
It is easy to see that $(i,j) \otimes 1 \in N_{i,j}$, so
$\lambda_{(i,j) \otimes 1} \in K_{i,j}$ and hence 
	$\pi(\lambda_{(i,j) \otimes 1}) \in \pi(K_{i,j}) = \pi(K_{i,\ell})$. Hence there is 
	$\tau \in N_{i,\ell}$ such that 
$
	 \pi(\lambda_{(i,j) \otimes 1}) = \pi(\lambda_{\tau})
$
	and so there is $z \in S_4$ such that 
	\[
	(i,j) \otimes 1 = (z \otimes 1) \tau (1 \otimes z^{-1})
	\]
	which implies that $z^{-1} (i,j) \otimes z = \tau$. Let
$b \in \{ i, \ell \}$, then $\tau (m,b) = (m,b)$. But the
second coordinate of $(z^{-1} (i,j) \otimes z) (m,b)$ is
$z(b)$ so we conclude that $z(b)=b$ if $b \in \{ i, \ell \}$.
Now, $j \in \{ m,n \}$, so we may assume that $m \neq j$.
Then $(z^{-1} (i,j) \otimes z) (m,m) = (z^{-1}(m), z(m))$. But
$\tau$ keeps elements of the diagonal in either the same row
or the same column so we conclude that $m$ is a fixed point of $z$ and so that $z=1$. Therefore $(i,j) \otimes 1 = \tau$. But 
$((i,j) \otimes 1)(i,i) = (j,i)$ while the first coordinate
of $\tau (i,i)$ is, by our definition of $K_{i, \ell}$, either
$i$ or $\ell$.
This shows that $\pi(K_{i,j})$ and $\pi(K_{a,b})$,
are distinct if $| \{i,j \} \cap \{ a,b \}|=1$.
	
	There remains to show that $\pi(K_{i,j}) \neq \pi(K_{h,k})$.
	Suppose not, then $\pi(\lambda_{u_0}) $ $\in \pi(K_{h,k})$. 
	Hence reasoning as above we see that there are 
	$\tau \in N_{h,k}$ and $z \in S_4$ such that $(z^{-1} \otimes 1) ((i,h),(i,k)) (1 \otimes z) = \tau$.
Let $a \in [4] \setminus \{ i \}$, then 
$((z^{-1} \otimes 1) ((i,h),(i,k)) (1 \otimes z))(a,a)=(z^{-1}(a),z(a))$ but, as observed in the previous 
paragraph, all the elements of $N_{h,k}$ map elements of the diagonal in either the same row or column, so $a$ is a fixed 
point of $z$ and hence $z=1$.
Therefore $((i,h),(i,k)) = \tau$, which is a 
contradiction since any element of $N_{h,k}$ leaves column $k$
invariant.
\end{proof}

Note that if one defines $K'_{i,j}$ to be the subgroup of ${\rm Aut}(\O_4)$ generated by 
$$
\{ \lambda_{u_0}, \lambda_{v_0}, \lambda_{w'_0}, \lambda_{(i,j) \otimes 1}, \lambda_{(h,k) \otimes 1} \},
$$
then one can show that $\pi(K_{i,j}) = \pi(K'_{i,j})$.
Furthermore, one can check that the last two automorphisms generate a copy of
$\ZZ_2 \times \ZZ_2$  which acts on the copy of $S_4$ generated by 
the first three in such a way so that $K'_{i,j} \simeq \pi(K'_{i,j}) \simeq S_4 \rtimes (\ZZ_2 \times \ZZ_2)$.

\subsection{The third line}

We keep notation as in the previous sections so $i,j \in [4]$,
 $i<j$, $\{ h,k \}_< := [4] \setminus 
\{ i,j \}$
(so $\{ h,k \} := [4] \setminus \{ i,j \}$ and $h<k$) and
$$ 
w'_0 = ((i,h),(j,h)), \ x_0 = ((i,i),(j,i))((i,j),(j,j)) , \ 
w_0 = ((h,i),(k,i)) $$
$$ 
y_0 = ((h,h),(h,k))((k,h),(k,k)), \ u'_0 = ((h,i),(h,j)), \ v'_0 = ((k,i),(k,j)),
$$
$$
z_0 = ((h,j),(k,j)), \ v_0 = ((j,h),(j,k)).
$$
We also define
\[
c_0 := ((i,h),(i,k),(j,h),(j,k)), \,
c'_0 := ((i,h),(j,k),(i,k),(j,h)),
\] 
$\widetilde{M}_{i,j}$ to be the subgroup of $S([4]^2)$ generated by 
\[
\{ c_0, \, w'_0, \, x_0, \, y_0, \, u'_0 v'_0 \}
\]
and $M_{i,j}$ to be the subgroup of ${\rm Aut}(\O_4)$ generated by $\lambda_g$, $g \in \tilde{M}_{i,j}$.
Similarly we define
$\widetilde{M}'_{i,j}$ to be the subgroup of $S([4]^2)$
generated by 
$$
\{  c'_0 , \, v_0, \, x_0, \, y_0 , \, w_0 z_0 \} , 
$$
and $M'_{i,j}$ to be the subgroup of ${\rm Aut}(\O_4)$ generated by $\lambda_g$, $g \in \widetilde{M}'_{i,j}$.

Note that we could have defined in the same way $\widetilde{M}_{i,j}$ and $\widetilde{M'}_{i,j}$ for all $i,j$ with $i \neq j$, and that then $\widetilde{M}_{i,j} = \widetilde{M}_{j,i}$ and $\widetilde{M'}_{i,j} = \widetilde{M'}_{j,i}$.
A graphical representation of the generators of the groups 
$\widetilde{M}_{1,2}$ and $\widetilde{M'}_{1,2}$ is 
depicted here below.

	\[  \beginpicture
	
	\setcoordinatesystem units <0.25cm,0.25cm>
	\setplotarea x from 10 to 20, y from 2 to 5
	
	\setlinear
	
	\plot 0 0 0 4 /
	\plot 1 0 1 4 /
	\plot 2 0 2 4 /
	\plot 3 0 3 4 /
	\plot 4 0 4 4 /
	
	\plot 0 0 4 0 /
	\plot 0 1 4 1 /
	\plot 0 2 4 2 /
	\plot 0 3 4 3 /
	\plot 0 4 4 4 /

	\arrow <0.15cm> [0.1,0.5] from 2.5 3.5 to 3.5 3.5 
	\plot 2.5 2.5 3.5 2.5 /
	\plot 2.5 2.5 3.5 3.5 /
	\plot 2.5 3.5 3.5 2.5 /
	
	\plot 6 0 6 4 /
	\plot 7 0 7 4 /
	\plot 8 0 8 4 /
	\plot 9 0 9 4 /
	\plot 10 0 10 4 /
	
	\plot 6 0 10 0 /
	\plot 6 1 10 1 /
	\plot 6 2 10 2 /
	\plot 6 3 10 3 /
	\plot 6 4 10 4 /
		
	\plot 8.5 2.5 8.5 3.5 /
	\plot 12 0 12 4 /
	\plot 13 0 13 4 /
	\plot 14 0 14 4 /
	\plot 15 0 15 4 /
	\plot 16 0 16 4 /
	
	\plot 12 0 16 0 /
	\plot 12 1 16 1 /
	\plot 12 2 16 2 /
	\plot 12 3 16 3 /
	\plot 12 4 16 4 /
		
	\plot  12.5 2.5 12.5 3.5 /
	\plot  13.5 2.5 13.5 3.5 /
	
	\plot 18 0 18 4 /
	\plot 19 0 19 4 /
	\plot 20 0 20 4 /
	\plot 21 0 21 4 /
	\plot 22 0 22 4 /
	
	\plot 18 0 22 0 /
	\plot 18 1 22 1 /
	\plot 18 2 22 2 /
	\plot 18 3 22 3 /
	\plot 18 4 22 4 /
	
	\plot 20.5 0.5 21.5 0.5 /
	\plot 20.5 1.5 21.5 1.5 /
	
	\plot 24 0 24 4 /
	\plot 25 0 25 4 /
	\plot 26 0 26 4 /
	\plot 27 0 27 4 /
	\plot 28 0 28 4 /
	
	\plot 24 0 28 0 /
	\plot 24 1 28 1 /
	\plot 24 2 28 2 /
	\plot 24 3 28 3 /
	\plot 24 4 28 4 /
		
	\plot 24.5 0.5 25.5 0.5 /
	\plot 24.5 1.5 25.5 1.5 /
	
	\put {The five permutations in $S([4]^2)$ indexing the generators of $M_{i,j}$} at 15 -2
	\put{for $i=1, j=2, h=3, k=4$} at 15 -4
	
	\endpicture \]

\[  \beginpicture

\setcoordinatesystem units <0.25cm,0.25cm>
\setplotarea x from 10 to 20, y from 2 to 5

\setlinear

\plot 0 0 0 4 /
\plot 1 0 1 4 /
\plot 2 0 2 4 /
\plot 3 0 3 4 /
\plot 4 0 4 4 /

\plot 0 0 4 0 /
\plot 0 1 4 1 /
\plot 0 2 4 2 /
\plot 0 3 4 3 /
\plot 0 4 4 4 /

\arrow <0.15cm> [0.1,0.5] from 3.5 2.5 to 3.5 3.5 
\plot 3.5 3.5 2.5 2.5 /
\plot 2.5 2.5 2.5 3.5 /
\plot 2.5 3.5 3.5 2.5 /

\plot 6 0 6 4 /
\plot 7 0 7 4 /
\plot 8 0 8 4 /
\plot 9 0 9 4 /
\plot 10 0 10 4 /

\plot 6 0 10 0 /
\plot 6 1 10 1 /
\plot 6 2 10 2 /
\plot 6 3 10 3 /
\plot 6 4 10 4 /

\plot 8.5 2.5 9.5 2.5 /

\plot 12 0 12 4 /
\plot 13 0 13 4 /
\plot 14 0 14 4 /
\plot 15 0 15 4 /
\plot 16 0 16 4 /

\plot 12 0 16 0 /
\plot 12 1 16 1 /
\plot 12 2 16 2 /
\plot 12 3 16 3 /
\plot 12 4 16 4 /

\plot  12.5 2.5 12.5 3.5 /
\plot  13.5 2.5 13.5 3.5 /

\plot 18 0 18 4 /
\plot 19 0 19 4 /
\plot 20 0 20 4 /
\plot 21 0 21 4 /
\plot 22 0 22 4 /

\plot 18 0 22 0 /
\plot 18 1 22 1 /
\plot 18 2 22 2 /
\plot 18 3 22 3 /
\plot 18 4 22 4 /

\plot 20.5 0.5 21.5 0.5 /
\plot 20.5 1.5 21.5 1.5 /

\plot 24 0 24 4 /
\plot 25 0 25 4 /
\plot 26 0 26 4 /
\plot 27 0 27 4 /
\plot 28 0 28 4 /

\plot 24 0 28 0 /
\plot 24 1 28 1 /
\plot 24 2 28 2 /
\plot 24 3 28 3 /
\plot 24 4 28 4 /

\plot 24.5 0.5 24.5 1.5 /
\plot 25.5 0.5 25.5 1.5 /

\put {The five permutations in $S([4]^2)$ indexing the generators of $M'_{i,j}$} at 15 -2
\put{for $i=1, j=2, h=3, k=4$} at 15 -4

\endpicture \]

\begin{remark}
	Note that $\widetilde{M}_{i,j}$ leaves the following subsets of $[4]^2$ invariant:
	\begin{itemize}
		\item $\{i,j\} \times \{i,j\}$, $\{i,j\} \times \{h,k\}$;
		\item $\{h\} \times \{i,j\}$, $\{h\} \times \{h,k\}$;
		\item$\{k\} \times \{i,j\}$, $\{k\} \times \{h,k\}$.
	\end{itemize}
	In particular, $\widetilde{M}_{i,j}$ leaves row $h$ and row $k$ invariant, as well as the union of columns $i$ and $j$.
	Similarly, $\widetilde{M}'_{i,j}$ leaves the following subsets of $[4]^2$ invariant:
	\begin{itemize}
		\item $\{i,j\} \times \{h,k\}$, $\{h,k\} \times \{h,k\}$;
		\item $\{h,k\} \times \{i\}$, $\{h,k\} \times \{j\}$;
		\item$\{i,j\} \times \{i\}$, $\{i,j\} \times \{j\}$,
	\end{itemize}
	so $\widetilde{M}'_{i,j}$ leaves column $i$ and column $j$ invariant, as well as the union of rows $h$ and $k$.
\end{remark}

We denote by $D_8$ the dihedral group of order $8$ (the group of symmetries of the square).

\begin{theorem}
	\label{D4xZ2xZ2xZ2}
	For $i,j\in [4]$, $i<j$, $M_{i,j}$ and $M'_{i,j}$ 
	are contained in $\lambda^{-1}(\P_4^2)$ and are isomorphic to $D_8 \times \ZZ_2^{3}$.
	The kernels of the canonical projections $\pi : M_{i,j} \rightarrow {\rm Out}(\O_4)$ and 
	$\pi : M'_{i,j} \rightarrow {\rm Out}(\O_4)$
	are trivial and the 12 groups $\pi(M_{i,j})$ and $\pi(M'_{i,j})$ are mutually distinct for $i,j \in [4]$,
	$i<j$.
\end{theorem}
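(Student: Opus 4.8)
The plan is to prove the four assertions in turn, carrying out the argument in detail for $M_{i,j}$ and treating $M'_{i,j}$ by the identical method; the two families play symmetric roles under the antitransposition of Proposition \ref{at-bicompatible}, which one can invoke to transfer bicompatibility. First I would show $M_{i,j}\subseteq\lambda^{-1}(\P_4^2)$ and $M_{i,j}\cong\widetilde{M}_{i,j}$ via Proposition \ref{isogroups}, for which it suffices to produce a generating set of $\widetilde{M}_{i,j}$ made of pairwise bicompatible rank-one permutations. Since the generator $c_0$ is a $4$-cycle and is not of the standard horizontal/vertical shape, I would replace it, without changing $\langle c_0,w'_0\rangle$, by the horizontal double transposition $r:=((i,h),(i,k))((j,h),(j,k))$: one checks $c_0=w'_0 r$, so $\langle r,w'_0\rangle=\langle c_0,w'_0\rangle$. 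The set $\{r,w'_0,x_0,y_0,u'_0v'_0\}$ then consists entirely of horizontal and vertical permutations, and each pair is bicompatible by Propositions \ref{horizontal_same_columns}, \ref{horizontal_same_columns2_cycles}, \ref{vertical_same_rows2}, \ref{vertical_same_rows}; the required invariance conditions hold because any two of these generators act on disjoint rows/columns or respect the block decomposition recorded in the Remark preceding the statement. The abstract isomorphism type then follows from that decomposition: $c_0,w'_0$ act faithfully on the block $\{i,j\}\times\{h,k\}$ as a $4$-cycle and a transposition of two opposite points, so $\langle c_0,w'_0\rangle\cong D_8$, while $x_0$, $y_0$, $u'_0v'_0$ are commuting involutions supported on the three remaining blocks. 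Hence $\widetilde{M}_{i,j}\cong D_8\times\ZZ_2^{3}$.

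For the triviality of the kernel I would use that $\lambda_v$ is inner exactly when $v=p\otimes p^{-1}$ for some $p\in S_4$ (the case of the Proposition characterizing $\pi(\lambda_u)=\pi(\lambda_v)$ with trivial numerator). If $v\in\widetilde{M}_{i,j}$ has this form, the block-invariance of the Remark forces $p(h)=h$, $p(k)=k$, and $p(\{i,j\})=\{i,j\}$, so $p\in\{1,(i,j)\}$; the value $p=(i,j)$ is excluded because $(i,j)\otimes(i,j)$ restricts on $\{i,j\}\times\{i,j\}$ to a permutation outside $\{1,x_0\}$, the only elements of $\widetilde{M}_{i,j}$ supported there. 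Thus $p=1$ and $v=1$. The same computation, now locating $p$ through invariance of the columns $i,j$ and the block $\{h,k\}\times\{h,k\}$, disposes of $M'_{i,j}$.

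The distinctness of the twelve images is what I expect to be the main obstacle. I would begin with a clean reduction: $\pi$ is injective on quasi-free automorphisms (two Bogolubov classes coincide only if the underlying permutations do, since $\sigma'\otimes1=(p\otimes1)(\sigma\otimes1)(1\otimes p^{-1})$ forces $p=1$), and a short analysis of which product permutations $\rho\otimes p$ lie in $\widetilde{M}_{i,j}$ shows that the quasi-free classes inside $\pi(M_{i,j})$ are precisely those of $\sigma\in\{1,(i,j),(h,k),(i,j)(h,k)\}$, the Klein four-group of the partition $\{i,j\}\mid\{h,k\}$; the same holds for $\pi(M'_{i,j})$. This recovers the partition and splits the twelve groups into three families of four.

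Within one family the four groups $M_A,M_B,M'_A,M'_B$ (with $A,B$ the two parts) share the same quasi-free subgroup, and they are pairwise related by grid transposition (which sends $\widetilde{M}_A$ to $\widetilde{M}'_B$) and by a relabeling $q\otimes q$ that swaps $A$ and $B$; consequently no invariant that is symmetric under such relabelings — cycle types of $u\cdot\mathrm{(flip)}$, counts of rows/columns mapped into a single row/column — can separate them, and one is forced to exploit the asymmetry of the equivalence $v=(q\otimes1)u(1\otimes q^{-1})$, which relabels only output rows and input columns while fixing input rows and output columns. Concretely, for those elements sending each input row bijectively onto the columns one gets, for every input row $a$, a permutation $\sigma^u_a\in S_4$ with $\sigma^v_a=\sigma^u_a\,q^{-1}$; the ratios $\sigma^u_a(\sigma^u_{a'})^{-1}$ are genuine Out-invariants, are not preserved under $q\otimes q$, and detect the placement of the non-abelian $D_8$-block (on $A\times B$ for $M_A$, on $B\times A$ for $M_B$), thereby tying it to the quasi-free class $(i,j)$ rather than $(h,k)$. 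I would finish each remaining comparison by the test-element technique of the previous theorems: assume $g=(q\otimes1)g'(1\otimes q^{-1})$ for a characteristic $g$ in one group and $g'$ in the other, solve for $q$ using these invariants together with the diagonal behaviour of the generators, and reach a contradiction with the block structure. The careful bookkeeping over all pairs, and the need to use the precise asymmetric form of the inner equivalence rather than any symmetric invariant, is the delicate heart of the proof.
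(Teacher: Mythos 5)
Your handling of the first three assertions is correct, and in places cleaner than the paper's. Replacing $c_0$ by $r=((i,h),(i,k))((j,h),(j,k))$ (indeed $c_0=w'_0r$, so $\langle r,w'_0\rangle=\langle c_0,w'_0\rangle$) makes all five generators horizontal or vertical, so pairwise bicompatibility follows from the Section \ref{horiz_vert} propositions alone, whereas the paper must also invoke \cite[Prop.~5.15]{BC} for the self-compatibility of the $4$-cycle $c_0$. Your kernel argument via block invariance is a sound variant of the paper's evaluation at diagonal points. And your observation that the Bogolubov classes contained in $\pi(M_{i,j})$ (and likewise in $\pi(M'_{i,j})$) form exactly the Klein group $V=\{1,(i,j),(h,k),(i,j)(h,k)\}$, a set that is an invariant of the subgroup of ${\rm Out}(\O_4)$, is correct and disposes of all comparisons between groups attached to \emph{different} partitions of $[4]$ more uniformly than the paper's test-element arguments.

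The genuine gap sits exactly where you place ``the delicate heart'': the comparisons inside one partition family ($\pi(M_{i,j})$ vs.\ $\pi(M_{h,k})$, $\pi(M_{i,j})$ vs.\ $\pi(M'_{i,j})$, etc.) are never carried out, and the invariant you propose for them provably cannot separate these groups. Writing $\sigma^u_a$ for the column permutation $b\mapsto u(a,b)_2$, every $u\in\widetilde{M}_{i,j}$ has $\sigma^u_i=\sigma^u_j\in\{1,(h,k)\}$ (check it on $c_0$ and $w'_0$; the dihedral block sends edges of the square to edges, never to same-column pairs, and the property propagates to products since the other generators act vertically on rows $i,j$ or avoid them) and $\sigma^u_h=\sigma^u_k\in V$, while every $u\in\widetilde{M}_{h,k}$ has $\sigma^u_h=\sigma^u_k\in\{1,(i,j)\}$ and $\sigma^u_i=\sigma^u_j\in V$. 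Hence both groups realize the identical ratio profiles: rows $i,j$ agree, rows $h,k$ agree, and the ratio runs over all of $V$. The ratios are therefore blind to the placement of the $D_8$ block, contrary to your claim that they ``detect'' it. What remains in your plan is ``finish by the test-element technique,'' but that is precisely the substance of the paper's proof and it is not routine: one must choose $w'_0$ (resp.\ $v_0$) as test element, force $q=1$ from fixed points together with row/column invariance, and then prove the nontrivial fact that $w'_0\notin\widetilde{M}'_{c,d}$ for all $c<d$ --- in particular $w'_0\notin\widetilde{M}'_{i,j}$, which requires recognizing that the elements of $\widetilde{M}'_{i,j}$ supported on $\{i,j\}\times\{h,k\}$ form the \emph{other} dihedral group of that square (the symmetries of the square with vertices ordered $(i,h),(j,k),(i,k),(j,h)$), which does not contain $w'_0$. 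Without this, distinctness is established only across partitions, not within them, so the last assertion of the theorem remains unproved.
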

\begin{proof}	
	It is easy to check that $\widetilde{M}_{i,j}$
	is isomorphic to 
	$D_8 \times \ZZ_2 \times \ZZ_2 \times \ZZ_2$. 
That each one of $w'_0$, $x_0$, $y_0$, and $u'_0 v'_0$ is
bicompatible with all the other generators follows easily from 
Propositions \ref{horizontal_same_columns},
\ref{horizontal_same_columns2_cycles},
\ref{vertical_same_rows2}, and \ref{vertical_same_rows}. 
That $c_0$ is (bi)compatible with itself (i.e., that it is
stable of rank 1) follows from \cite[Prop. 5.15]{BC}.
Therefore, by Proposition \ref{isogroups}, $M_{i,j}$ 
	is contained in $\lambda^{-1}(\P_4^2)$  and is isomorphic to
	$D_8 \times \ZZ_2 \times \ZZ_2 \times \ZZ_2$.
	
	Next we show that none of the elements of $M_{i,j}$ different from the identity is inner. Indeed, it is clear that every element of $\widetilde{M}_{i,j}$ can be written in the form $uvwzt$ where 
	$u \in \langle \{ u'_0 v'_0 \} \rangle$,
	$v \in \langle \{ x_0 \} \rangle$,
	$w \in \langle \{ y_0 \} \rangle$,
	$z \in \langle \{ c_0 \} \rangle$,
	and $t \in \langle \{ w'_0 \} \rangle$. 
	Suppose that $uvwzt = \sigma \otimes \sigma^{-1}$ for some $\sigma \in S_n$, with $u,v,w,z$, and $t$ as above. Then we have that
	\[
	(\sigma(i),\sigma^{-1}(i))=(\sigma \otimes \sigma^{-1})(i,i)=(uvwzt)(i,i)=\begin{cases}
		(i,i), \mbox{ if } v=1, \\
		(j,i), \mbox{ if } v \neq 1,
	\end{cases}
	\]
	so $v=1$ and $\sigma(i)=i$. Therefore,
	$(\sigma(j),\sigma^{-1}(j))=(\sigma \otimes \sigma^{-1})(j,j)=(uwzt)$ $(j,j)=(j,j)$ so $\sigma(j)=j$, and also
	\[
	(\sigma(h),\sigma^{-1}(h))=(\sigma \otimes \sigma^{-1})(h,h)=(uwzt)(h,h)=
	\begin{cases}
		(h,h), \mbox{ if } w=1, \\
		(h,k), \mbox{ if } w \neq 1,
	\end{cases}
	\]
	so $w=1$ and $\sigma(h)=h$. Hence $\sigma=1$ and $uzt=1$ which
	implies that $u=z=t=1$, as claimed.
	
Noting that the permutations which
	index the generators of $M'_{i,j}$ are the antitransposed of
	those which index the generators of $M_{i,j}$, using 
Proposition \ref{at-bicompatible}, 
and the fact that $^a (\sigma \otimes \sigma^{-1})=(w_0 \sigma^{-1} w_0 \otimes w_0 \sigma w_0)
	= (w_0 \sigma^{-1} w_0) \otimes (w_0 \sigma^{-1} w_0)^{-1}$
	for all $\sigma \in S_n$, where $w_0=n \ldots 3 2 1$ is the longest permutation is $S_n$ (see \ref{defatr}) shows that the corresponding statements 
	also hold for $M'_{i,j}$.
	
	\medskip
	Finally, we prove that the images in ${\rm Out}(\O_4)$  of the twelve groups $M_{i,j}$, $M'_{i,j}$, $i,j \in [4]$, $i<j$, are different from each other. 
	
By definition $\pi(\lambda_{w'_0}) \in \pi(M_{i,j})$. 
	Suppose first that 
	$\pi(\lambda_{w'_0}) \in \pi(M_{i,b})$ where $b \neq j$. Let $\{a,j\} :=
	[4] \setminus \{ i,b\}$. Then there are $\sigma \in S_4$ and $g \in \widetilde{M}_{i,b}$ such that ${w'_0}= (\sigma \otimes 1) g 
	(1 \otimes \sigma^{-1})$. Since every element of $\widetilde{M}_{i,b}$ 
	leaves row $j$ invariant and $(j,j)$ is a fixed point of ${w'_0}$ 
	we deduce that
	$(\sigma \otimes 1) g (1 \otimes \sigma^{-1})(j,j)$ is
	in row $\sigma(j)$ and therefore that $\sigma(j)=j$. 
	Similarly we conclude that $\sigma(a)=a$. Furthermore, if
	$\sigma=(i,b)$ then we have that 
	$(\sigma \otimes 1) g (1 \otimes \sigma^{-1})(i,i)
	=((\sigma \otimes 1) g )(i,b)$ which is in column $b$,
	contradicting the fact that $(i,i)$ is a fixed point of ${w'_0}$.
	Therefore $\sigma=1$ so ${w'_0}=g$ and hence ${w'_0} \in \widetilde{M}_{i,b}$ which is a contradiction since all the elements of $\widetilde{M}_{i,b}$ leave row $j$ invariant. This shows that
	$\pi(M_{i,j}) \neq \pi(M_{i,b})$ if $b \neq j$.
	Similarly $\pi(M_{i,j}) \neq \pi(M_{a,j})$ if $a \neq i$.
	
	Suppose now that $\pi(\lambda_{w'_0}) \in \pi(M_{h,k})$. Then, as above, there are $\sigma \in S_4$ and $g \in \widetilde{M}_{h,k}$ such that $w'_0= (\sigma \otimes 1) g (1 \otimes \sigma^{-1})$. Since $\widetilde{M}_{h,k}$
	leaves rows $i$ and $j$ invariant and $(i,i)$ and $(j,j)$ are
	fixed points of $w'_0$ we conclude as above that $\sigma(i)=i$ and
	$\sigma(j)=j$. Finally, if $\sigma=(h,k)$ then 
	$(\sigma \otimes 1) g (1 \otimes \sigma^{-1})(k,k)
	=((\sigma \otimes 1) g)(k,h)$ is in column $h$
	which contradicts the fact that $(k,k)$ is a fixed 
	point of $w'_0$. So $\sigma=1$ and hence $w'_0 \in \widetilde{M}_{h,k}$ which is
	impossible since $\widetilde{M}_{h,k}$ leaves rows $i$ and $j$ invariant.
	This shows that $\pi(M_{i,j}) \neq \pi(M_{h,k})$. Therefore the six groups $\pi(M_{i,j})$ are all distinct for $i,j \in [4]$, $i<j$. 
	
	Similarly, $\pi(\lambda_{v_0}) \in \pi(M'_{i,j})$. Suppose first that 
	$\pi(\lambda_{v_0}) \in \pi(M'_{i,b})$ where $b \neq j$. Let $\{a,j\} :=
	[4] \setminus \{ i,b\}$. Then there are $\sigma \in S_4$ and $g \in \widetilde{M}'_{i,b}$ such that $v_0= (\sigma \otimes 1) g 
	(1 \otimes \sigma^{-1})$. Since every element of $\widetilde{M}'_{i,b}$ 
	leaves columns $i$ and $b$ invariant and $(i,i)$ and $(b,b)$
	are fixed points of $v_0$ reasoning as above we deduce that
	if $\sigma(i) \neq i$ then $(\sigma \otimes 1) g (1 \otimes \sigma^{-1})(i,i)$ lies in a column $\neq i$, which is a
	contradiction, and similarly
	for $(b,b)$. So $\sigma(i)=i$ and $\sigma(b)=b$. If $\sigma=(a,j)$
	then $(\sigma \otimes 1) g 
	(1 \otimes \sigma^{-1})(a,a)=((\sigma \otimes 1) g)(a,j)$
	is in row $j$ (because every element of $\widetilde{M}'_{i,b}$
	maps $(a,j)$ in row $a$) and this is a contradiction since $(a,a)$ is a 
	fixed point of $v_0$. Hence $\sigma=1$ and so $v_0 \in \widetilde{M}'_{i,b}$
	but every element of $\widetilde{M}'_{i,b}$ leaves column $b$ invariant
	and $b \in \{ h,k \}$ so either column $h$ or column $k$ are
	left invariant by $v_0$ which is a contradiction. This shows that
	$\pi(M'_{i,j}) \neq \pi(M'_{i,b})$ if $b \neq j$. In the same way
	one shows that  $\pi(M'_{i,j}) \neq \pi(M'_{a,j})$ if $a \neq i$.
	Finally, suppose that $\pi(\lambda_{v_0}) \in \pi(M'_{h,k})$, so that there 
	are $\sigma \in S_4$ and $g \in \widetilde{M}'_{h,k}$ such that $(\sigma^{-1} \otimes 1) v_0 (1 \otimes \sigma)= g$. 
Since $v_0$ leaves every row invariant we conclude that 
$(\sigma^{-1} \otimes 1) v_0 (1 \otimes \sigma)$ maps row $x$ to
row $\sigma^{-1}(x)$ for all $x \in [4]$. But every element of 
$\widetilde{M}'_{h,k}$ leaves $\{ h,k \} \times [4]$ invariant
and hence also $\{ i,j \} \times [4]$. Therefore $\sigma(\{ h,k \})=\{ h,k \}$ and  $\sigma(\{ i,j \})=\{ i,j \}$. If $\sigma(h)=k$ then we would have that $\sigma(k)=h$ and therefore that
\[
g(h,k)=(\sigma^{-1} \otimes 1) v_0 (1 \otimes \sigma)(h,k)=(k,h)
\] 
which is a contradiction since no element of $\widetilde{M}'_{h,k}$
has this property. Therefore $\sigma(h)=h$ and $\sigma(k)=k$.
Similarly one shows that $\sigma(i)=i$ and $\sigma(j)=j$.
So $\sigma=1$. 
Hence $v_0 \in \widetilde{M}'_{h,k}$,
	which is impossible. This shows that the 6 subgroups $\pi(M'_{i,j})$ are all distinct. 
		
	We now show that $\pi(M_{i,j}) \neq \pi(M'_{a,b})$ for all
	$i, j, a,b \in [4]$, $i \neq j$, $a \neq b$. 
	Suppose that $\pi(\lambda_{w'_0}) \in \pi(M'_{a,b})$, i.e. there are $g \in \widetilde{M}'_{a,b}$ and $\sigma \in S_4$ such that $w'_0 = (\sigma \otimes 1) g (1 \otimes \sigma^{-1})$. Since $w'_0$ and $g$ both preserve column $a$ and column $b$, we conclude that $\sigma(a)=a$ and $\sigma(b)=b$. Therefore $\sigma = 1$ or $\sigma = (\alpha,\beta)$ where $\{ \alpha, \beta \}:=[4] \setminus \{ a,b \}$. However, if $\sigma=(\alpha,\beta)$ 
	then, since every element of $\widetilde{M'}_{a,b}$ 
maps $(\alpha, \beta)$ to row $\alpha$, we have that
	 $(\alpha,\alpha) = w'_0(\alpha,\alpha) = (\sigma \otimes 1) g (1 \otimes \sigma^{-1})(\alpha,\alpha) = (\sigma \otimes 1) g (\alpha,\beta) = (\sigma \otimes 1)(\alpha,*) = (\beta,*)$ (where $*$ is some 
	element of $[4]$). This shows that $\sigma=1$. So $w'_0 \in \widetilde{M}'_{a,b}$ and this is impossible since 
	$w'_0 \notin \bigcup_{1 \leq c<d \leq 4} \widetilde{M}'_{c,d}$.
	This is easy to show, the only case requiring a bit of attention being that $w'_0 \notin \widetilde{M}'_{i,j}$. If $w'_0 \in \widetilde{M}'_{i,j}$ then
	$w'_0$ must lie in the subgroup of $\widetilde{M}'_{i,j}$ generated by $c'_0$ 
and $v_0$. 
But this is impossible, noticing that this subgroup is isomorphic to the group of symmetries of the square with vertices $(i,h)$,$(j,k)$,$(i,k)$ and $(j,h)$ ordered clockwise, and $w'_0$ is not a symmetry of this square. 
\end{proof}

\subsection{The fourth line}

We keep notation as in the previous sections so $i,j \in [4]$,
$i<j$, $\{ h,k \}_< := [4] \setminus \{ i,j \}$, and
$$ 
u_0=((i,h),(i,k)), \ v_0 = ((j,h),(j,k)),
$$
$$
w_0 = ((h,i),(k,i)), \ z_0 = ((h,j),(k,j)).
$$
For $i,j \in [4]$, $i \neq j$, we let $G_{i,j}$ be the group generated by 
\[
\{ \lambda_{u_0}, \lambda_{v_0}, \lambda_{w_0}, \lambda_{z_0}, 
\lambda_{(h,k) \otimes 1}, \lambda_{(i,j) \otimes 1} \}.
\]
in ${\rm Aut}(\O_4)$ (so $G_{i,j}=G_{j,i}$, and $G_{i,j}$ is
the subgroup generated by $\lambda(R_{i,j}) \cup \lambda(R_{j,i}) 
\cup \lambda(C_{i,j}) \cup \lambda(C_{j,i}) \cup  \lambda(S_{i,j})
\cup \lambda(S_{h,k})$).
For the reader's convenience we include a graphical representation of the six permutations $u_0$, $v_0$, $w_0$, $z_0$, 
$(h,k) \otimes 1$, and $(i,j) \otimes 1$ in $S([4]^2)$ for $i=1$
and $j=2$.
	\[  \beginpicture
		
	\setcoordinatesystem units <0.25cm,0.25cm>
	\setplotarea x from 10 to 20, y from 2 to 5
	
	\setlinear
	
	\plot 0 0 0 4 /
	\plot 1 0 1 4 /
	\plot 2 0 2 4 /
	\plot 3 0 3 4 /
	\plot 4 0 4 4 /
	
	\plot 0 0 4 0 /
	\plot 0 1 4 1 /
	\plot 0 2 4 2 /
	\plot 0 3 4 3 /
	\plot 0 4 4 4 /
		
	\plot 2.5 3.5 3.5 3.5 /
	\plot 6 0 6 4 /
	\plot 7 0 7 4 /
	\plot 8 0 8 4 /
	\plot 9 0 9 4 /
	\plot 10 0 10 4 /
	
	\plot 6 0 10 0 /
	\plot 6 1 10 1 /
	\plot 6 2 10 2 /
	\plot 6 3 10 3 /
	\plot 6 4 10 4 /
		
	\plot 8.5 2.5 9.5 2.5 /
	\plot 12 0 12 4 /
	\plot 13 0 13 4 /
	\plot 14 0 14 4 /
	\plot 15 0 15 4 /
	\plot 16 0 16 4 /
	
	\plot 12 0 16 0 /
	\plot 12 1 16 1 /
	\plot 12 2 16 2 /
	\plot 12 3 16 3 /
	\plot 12 4 16 4 /
		
	\plot  12.5 0.5 12.5 1.5 /
	
	\plot 18 0 18 4 /
	\plot 19 0 19 4 /
	\plot 20 0 20 4 /
	\plot 21 0 21 4 /
	\plot 22 0 22 4 /
	
	\plot 18 0 22 0 /
	\plot 18 1 22 1 /
	\plot 18 2 22 2 /
	\plot 18 3 22 3 /
	\plot 18 4 22 4 /
		
	\plot  19.5 0.5 19.5 1.5 /	
	
	\plot 24 0 24 4 /
	\plot 25 0 25 4 /
	\plot 26 0 26 4 /
	\plot 27 0 27 4 /
	\plot 28 0 28 4 /
	
	\plot 24 0 28 0 /
	\plot 24 1 28 1 /
	\plot 24 2 28 2 /
	\plot 24 3 28 3 /
	\plot 24 4 28 4 /
		
	\plot 24.5 0.5 24.5 1.5 /
	\plot 25.5 0.5 25.5 1.5 /
	\plot 26.5 0.5 26.5 1.5 /
	\plot 27.5 0.5 27.5 1.5 /
	
	\plot 30 0 30 4 /
	\plot 31 0 31 4 /
	\plot 32 0 32 4 /
	\plot 33 0 33 4 /
	\plot 34 0 34 4 /
	
	\plot 30 0 34 0 /
	\plot 30 1 34 1 /
	\plot 30 2 34 2 /
	\plot 30 3 34 3 /
	\plot 30 4 34 4 /
	
	\plot 30.5 2.5 30.5 3.5 /
	\plot 31.5 2.5 31.5 3.5 /
	\plot 32.5 2.5 32.5 3.5 /
	\plot 33.5 2.5 33.5 3.5 /
	
	\put {The six permutations in $S([4]^2)$ indexing the generators of $G_{i,j}$} at 17 -2
	\put{for $i=1, j=2$} at 17 -4
	\endpicture \]

\begin{theorem}
	For $i,j \in [4]$, $i \neq j$, $G_{i,j}$
	is contained in $\lambda^{-1}(\P_4^2)$ and is isomorphic to $\ZZ_2 \times \big((\ZZ_2^{\times 4}) \rtimes \ZZ_2\big)$.
	The kernel of the canonical projection $\pi : G_{i,j} \rightarrow {\rm Out}(\O_4)$
	is trivial and the 6 groups $\pi(G_{i,j})$ are mutually distinct.
\end{theorem}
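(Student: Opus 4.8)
The plan is to follow the template of the three preceding theorems. Set $p:=(h,k)\otimes 1$, $q:=(i,j)\otimes 1$, and let $\widetilde G_{i,j}:=\langle u_0,v_0,w_0,z_0,p,q\rangle\le S([4]^2)$ be the underlying permutation group. First I would check that these six permutations are pairwise bicompatible. The pairs $(u_0,v_0)$ and $(w_0,z_0)$ are covered by Lemma \ref{compdiffR}, since $u_0\in R_{i,j}$, $v_0\in R_{j,i}$, $w_0\in C_{i,j}$ and $z_0\in C_{j,i}$. For the four mixed pairs among $u_0,v_0,w_0,z_0$, one direction of compatibility comes from Lemma \ref{colrow1} and Corollary \ref{colrow2}, and the reverse direction from Proposition \ref{horizontal_same_columns} (using that $w_0$ and $z_0$ leave every column invariant). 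For the pairs involving $p$ or $q$, one direction of each compatibility is automatic because the relevant tensor factors act on disjoint coordinates of $[4]^3$, and the remaining ones follow from Proposition \ref{vertical_same_rows2}. With bicompatibility in hand, Proposition \ref{isogroups} yields $G_{i,j}\subseteq\lambda^{-1}(\P_4^2)$ and $G_{i,j}\cong\widetilde G_{i,j}$.

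To pin down the isomorphism type, put $A:=\langle u_0,v_0,w_0,z_0\rangle$. These four involutions have pairwise disjoint supports, contained in the eight off-diagonal points $(i,h),(i,k),(j,h),(j,k),(h,i),(k,i),(h,j),(k,j)$, so $A\cong\ZZ_2^{\times4}$. A short conjugation computation shows that $p$ commutes with each of $u_0,v_0,w_0,z_0$ and with $q$, whereas $q$ fixes $w_0,z_0$ and interchanges $u_0$ and $v_0$; moreover $q\notin A$ (it moves $(i,i)$, which $A$ fixes) and $p\notin\langle A,q\rangle$ (it moves $(h,h)$, which $\langle A,q\rangle$ fixes). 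Hence $\widetilde G_{i,j}=\langle p\rangle\times\big(A\rtimes\langle q\rangle\big)\cong\ZZ_2\times\big(\ZZ_2^{\times4}\rtimes\ZZ_2\big)$, as required.

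The remaining assertions all rest on one observation: the only elements of $\widetilde G_{i,j}$ of tensor form are $\tau\otimes 1$ with $\tau\in V_{i,j}:=\{1,(i,j),(h,k),(i,j)(h,k)\}$. Indeed, every element of $\widetilde G_{i,j}$ is $a\,q^{\delta}p^{\epsilon}$ with $a\in A$ and $q^{\delta}p^{\epsilon}=\tau'\otimes 1$ for $\tau':=(i,j)^{\delta}(h,k)^{\epsilon}$; if this equals $\alpha\otimes\beta$ then (as $\tau'$ is an involution) $a=(\alpha\tau')\otimes\beta$, and since $a\in A$ fixes the diagonal pointwise we get $\beta=1$ and $\alpha=\tau'\in V_{i,j}$. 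Now recall that $\lambda_g$ is inner exactly when $g=\sigma\otimes\sigma^{-1}$ for some $\sigma\in S_4$ (take $u=g$, $v=1$ in the criterion $v=z\,u\,\varphi(z)^{*}$). Triviality of $\ker(\pi|_{G_{i,j}})$ is then immediate: $\sigma\otimes\sigma^{-1}\in\widetilde G_{i,j}$ forces $\sigma^{-1}=1$, hence $g=1$. The same observation gives the invariant $\{\tau\in S_4:\pi(\lambda_\tau)\in\pi(G_{i,j})\}=V_{i,j}$, because $\pi(\lambda_\tau)\in\pi(G_{i,j})$ amounts to $(\sigma^{-1}\tau)\otimes\sigma\in\widetilde G_{i,j}$ for some $\sigma$, forcing $\sigma=1$ and $\tau\in V_{i,j}$. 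The three groups $V_{i,j}$ attached to the three partitions of $[4]$ into two pairs are distinct, so $\pi(G_{i,j})\neq\pi(G_{a,b})$ whenever $\{i,j\}$ and $\{a,b\}$ yield different partitions, i.e.\ in all cases except the complementary one $\{a,b\}=\{h,k\}$.

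For the complementary pair I would show $\pi(\lambda_{w_0})\notin\pi(G_{h,k})$. If $g:=(\sigma^{-1}\otimes 1)\,w_0\,(1\otimes\sigma)$ lay in $\widetilde G_{h,k}$, then, since $w_0$ fixes every column setwise, $g$ would send column $d$ to column $\sigma(d)$; as $\widetilde G_{h,k}$ fixes columns $h$ and $k$ setwise, this forces $\sigma(h)=h$, $\sigma(k)=k$, so $\sigma\in\{1,(i,j)\}$. If $\sigma=1$ then $g=w_0$, but on the $\widetilde G_{h,k}$-invariant set $\{h,k\}\times\{i,j\}$ the group $\widetilde G_{h,k}$ induces a dihedral group of order $8$ in which $w_0=\big((h,i),(k,i)\big)$ acts as a transposition that does not occur — a contradiction; if $\sigma=(i,j)$ then $g(i,i)=(j,j)$, whereas every element of $\widetilde G_{h,k}$ sends $(i,i)$ to $(i,i)$ or $(j,i)$, again impossible. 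Thus all six groups are distinct. I expect this complementary case to be the main obstacle: $\pi(G_{i,j})$ and $\pi(G_{h,k})$ share the same partition and hence the same Bogolubov invariant $V_{i,j}=V_{h,k}$, so they can only be separated through the finer row/column asymmetry, which requires the twisted-conjugation bookkeeping above; by contrast the containment, the isomorphism type, and the kernel computation are routine once bicompatibility and the tensor-form observation are established.
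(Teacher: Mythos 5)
There is a genuine error at the foundation of your argument: the six permutations $u_0,v_0,w_0,z_0,p,q$ are \emph{not} pairwise bicompatible, so Proposition \ref{isogroups} cannot be invoked. Concretely, $q=(i,j)\otimes 1$ is not compatible with $w_0=((h,i),(k,i))$: evaluating at $(h,i,z)$,
\[
(w_0\otimes 1)(1\otimes q)(h,i,z)=(w_0(h,j),z)=(h,j,z),
\qquad
(1\otimes q)(w_0\otimes 1)(h,i,z)=(1\otimes q)(k,i,z)=(k,j,z).
\]
In fact Proposition \ref{vertical_same_rows2}, which you cite for exactly these pairs, proves the opposite of what you claim: writing $q=\prod_{a\in[4]}((i,a),(j,a))$, compatibility of $q$ with $w_0$ would require a single $\sigma\in S_4$ satisfying both $w_0(h,i)=(\sigma(h),\cdot)$ and $w_0(h,j)=(\sigma(h),\cdot)$, forcing $\sigma(h)=k$ and $\sigma(h)=h$ simultaneously. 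The same failure occurs for the pair $(q,z_0)$. (The pairs among $u_0,v_0,w_0,z_0$, and all pairs involving $p$, are fine, and your treatment of them via Lemmas \ref{compdiffR}, \ref{colrow1} and Corollary \ref{colrow2} is a legitimate alternative to the paper's citations.)

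This is not a repairable detail, because the conclusion you draw from it is false. As you yourself compute, conjugation by $q$ in $S([4]^2)$ swaps $u_0\leftrightarrow v_0$ but \emph{fixes} $w_0$ and $z_0$, so $\widetilde G_{i,j}\cong D_8\times\ZZ_2^{3}$ (this is the isomorphism type of the third-line groups $\widetilde M_{i,j}$; its center has order $16$). In ${\rm Aut}(\O_4)$, however, the composition rule gives $\lambda_{q}\lambda_{u_0}=\lambda_{v_0}\lambda_{q}$ \emph{and} $\lambda_{q}\lambda_{w_0}=\lambda_{z_0}\lambda_{q}$, i.e.\ conjugation by $\lambda_q$ swaps \emph{both} pairs, and $G_{i,j}\cong\ZZ_2\times(\ZZ_2^{\times 4}\rtimes\ZZ_2)$ with this double-swap action (center of order $8$). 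So $G_{i,j}\not\cong\widetilde G_{i,j}$, your claimed transfer of the whole problem to $\widetilde G_{i,j}$ collapses, and the mismatch between the two conjugation actions is precisely the signature of the failed bicompatibility. The paper avoids this by asserting bicompatibility only among $u_0,v_0,w_0,z_0$ (and for $p$ with everything), and by handling $q$ through the relations $\lambda_{(i,j)\otimes 1}\lambda_{u_0}=\lambda_{v_0}\lambda_{(i,j)\otimes 1}$, etc., proved directly in ${\rm Aut}(\O_4)$. Parts of your later analysis are sound and salvageable --- the ``only tensor factors in $\widetilde G_{i,j}$ are $\tau\otimes 1$ with $\tau\in V_{i,j}$'' observation, and the separation of the complementary pair via the $D_8$-action on $\{h,k\}\times\{i,j\}$ --- but they must rest on the normal form $G_{i,j}=\{\lambda_{a(\tau\otimes 1)}:a\in\langle u_0,v_0,w_0,z_0\rangle,\ \tau\in V_{i,j}\}$, which has to be derived from the ${\rm Aut}$-relations as in the paper, not from Proposition \ref{isogroups}.
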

\begin{proof}
It is clear that the subgroup of $S([4]^2)$ generated
by $\{ u_0, v_0, w_0, z_0 \}$ is isomorphic to
$\ZZ_2^{\times 4}$. Furthermore, it is easy to check that
all these generatore are bicompatible with each other by 
Propositions \ref{horizontal_same_columns},
\ref{horizontal_same_columns2_cycles},
\ref{vertical_same_rows2}, and \ref{vertical_same_rows}.
Therefore, by Proposition \ref{isogroups}, the group generated by 
$ \{ \lambda_{u_0}, \lambda_{v_0}, \lambda_{w_0}, \lambda_{z_0}
\}$ in ${\rm Aut}(\O_4)$
is contained in $\lambda^{-1}(\P_4^2)$ and is also isomorphic to 
$\ZZ_2^{\times 4}$.

Furthermore, it is easy to check that 
$$
\lambda_{(i,j) \otimes 1} \, \lambda_{u_0} = 
\lambda_{v_0} \, \lambda_{(i,j)\otimes 1}, \; \;
\lambda_{(i,j) \otimes 1} \, \lambda_{w_0} = 
\lambda_{z_0} \, \lambda_{(i,j)\otimes 1}
$$
and thus
$$
\lambda_{(i,j) \otimes 1} \, \lambda_{v_0} = 
\lambda_{u_0} \, \lambda_{(i,j)\otimes 1}, \; \;
\lambda_{(i,j) \otimes 1} \, \lambda_{z_0} = 
\lambda_{w_0} \, \lambda_{(i,j)\otimes 1}
$$ 
as well. It readily follows that
the group generated by 
$ \{ \lambda_{u_0}, \lambda_{v_0}, \lambda_{w_0}, \lambda_{z_0},
\lambda_{(i,j) \otimes 1} \}$
in ${\rm Aut}(\O_4)$
 is isomorphic to $\big(\ZZ_2^{\times 4} \big) \rtimes \ZZ_2$.
(The action of $\ZZ_2$ switches the two horizontal generators with each other and similarly for the vertical ones.)

Finally, one can similarly verify that $(h,k) \otimes 1$ 
commutes 
(and is bicompatible) with $u_0, v_0, w_0, z_0$ and 
$(i,j) \otimes 1$, hence $G_{i,j}$ is contained in $\lambda^{-1}(\P_4^2)$, and is isomorphic to 
$\ZZ_2 \times \big((\ZZ_2^{\times 4}) \rtimes \ZZ_2\big)$.

\smallskip
By the above, every element in $G_{i,j}$ can be written as $\lambda_{uvwz (\sigma \sigma' \otimes 1)}$
for some $u \in \langle u_0 \rangle$, $v \in \langle v_0 \rangle$, $w \in \langle w_0 \rangle$, $z \in \langle z_0 \rangle$, $\sigma \in \langle (i,j) \rangle$, and $\sigma' \in \langle (h,k) \rangle$.
We are now ready to show that the image of $G_{i,j}$ in ${\rm Out(\O_4)}$ is isomorphic to $G_{i,j}$ itself.
To this aim, it is enough to verify that none of the nontrivial the elements in $G_{i,j}$ is inner, i.e. of the form $\lambda_{p \otimes p^{-1}}$ for some $p \in S_4 \setminus \{ 1 \}$.
Let $u,v,w,z,\sigma,\sigma'$ and $p$ be as above and such that 
$uvwz (\sigma \sigma' \otimes 1) = p \otimes p^{-1}$. If $\sigma \neq 1$ then 
we have that 
$(i,i)=uvwz (\sigma \sigma' \otimes 1)(j,i)=(p(j),p^{-1}(i))$, which is impossible.
Similarly if $\sigma' \neq 1$ then 
$(k,k)=uvwz (\sigma \sigma' \otimes 1)(h,k)=(p(h),p^{-1}(k))$. On the other hand, 
if $\sigma=\sigma'=1$ then we have that 
$(a,a)=uvwz(a,a)=(p(a),p^{-1}(a))$ for all $a \in [4]$ so $p=1$, which is also
a contradiction.

\smallskip
It remains to show that the six subgroups of ${\rm Out}(\O_4)$ associated to different subsets of $[4]$ of size $2$ are mutually distinct.
Let $\{ a,b \} \in \binom{[4]}{2}$ be such that $\{ a,b \} \neq \{ i,j \}$.

Suppose first that $|\{ a,b \} \cap \{ i,j \}|=1$, say that $i=a$. 
 Then 
$\pi(\lambda_{(i,b) \otimes 1}) \in \pi(G_{i,b})$. We claim that $\pi(\lambda_{(i,b) \otimes 1}) \notin \pi(G_{i,j})$. Indeed, suppose the contrary. Then there are 
$u \in \langle u_0 \rangle$, $v \in \langle v_0 \rangle$, $w \in \langle w_0 \rangle$, $z \in \langle z_0 \rangle$, $\sigma \in \langle (i,j) \rangle$, and $\sigma' \in \langle (h,k) \rangle$,  such that 
$\pi(\lambda_{(i,b) \otimes 1}) = 
\pi(\lambda_{uvwz (\sigma \sigma' \otimes 1)})$. Hence there is $p \in S_4$ such that $uvwz ({\sigma \sigma' \otimes 1})=(p \otimes 1) ((i,b) \otimes 1) (1 \otimes p^{-1})=(p \, (i,b)) \otimes p^{-1}$. 
Let 
$\{ \ell \}:= [4] \setminus \{ i,j,b \}$. If $\sigma' \neq 1$  then $(\sigma'(\ell),\ell)=(uvwz (\sigma \otimes 1) (\sigma' \otimes 1))(\ell,\ell)=(p(\ell),p^{-1}(\ell))$ since 
$\{\sigma'(\ell),\ell \}=\{ h,k \}$, which is a contradiction. On the other hand, if $\sigma' = 1$ then $(b,b)=(uvwz (\sigma \otimes 1))(b,b)=(p(i),p^{-1}(b))$ 
which is again a contradiction.

Suppose now that $\{ a,b \} \cap \{ i,j \}= \emptyset$, say $a=h$ and $b=k$. 
Then $\pi(\lambda_{((k,i),(k,j))})$ $\in \pi(G_{a,b})$. We claim that $\pi(\lambda_{((k,i),(k,j))}) \notin \pi(G_{i,j})$. Indeed, suppose the contrary. Then reasoning as above we conclude that there are 
$u \in \langle u_0 \rangle$, $v \in \langle v_0 \rangle$, $w \in \langle w_0 \rangle$, $z \in \langle z_0 \rangle$, $\sigma \in \langle (i,j) \rangle$, $\sigma' \in \langle (h,k) \rangle$, 
and $p \in S_4$ such that 
$uvwz ({\sigma \otimes 1}) ({\sigma' \otimes 1})=(p \otimes 1) ((k,i),(k,j)) (1 \otimes p^{-1})$. This easily implies that 
$p \neq 1$. 
Note that $(p \otimes 1) ((k,i),(k,j)) (1 \otimes p^{-1})$
maps row $x$ in row $p(x)$ for all $x \in [4]$, so 
$p(\{ i,j \})=\{ i,j \}$ and $p(\{ h,k \})=\{ h,k \}$.
Suppose first that 
either $\sigma'=1$ and $w=1$, or $\sigma' \neq 1$ and $w \neq 1$. Then $(h,i)$
and $(k,i)$ are both fixed points of
$uvwz ({\sigma \otimes 1}) ({\sigma' \otimes 1})$ 
so $p(h)=h$ and $p(k)=k$, and hence $p=(i,j)$. 
But then $(p \otimes 1) ((k,i),(k,j)) (1 \otimes p^{-1})(h,i)=(k,j)$ which is again a contradiction.
Finally, assume that 
either $\sigma' \neq 1$ and $w=1$, or $\sigma' = 1$ and $w \neq 1$. Then 
$uvwz ({\sigma \otimes 1}) ({\sigma' \otimes 1})$ maps $(h,i)$ to $(k,i)$ and $(k,i)$ to $(h,i)$ 
so $p(h)=k$ and $p(k)=h$.
But then $p(i)=j$ (else $(p \otimes 1) ((k,i),(k,j)) (1 \otimes p^{-1})(k,i)=(h,j)$ which is impossible) so $(p \otimes 1) ((k,i),(k,j)) (1 \otimes p^{-1})(h,i)=(k,j)$ which is also a contradiction.
\end{proof}

\subsection{The fifth line}

Let $i \in [n]$ and $(b_1, \ldots , b_r) \in S_n$ be an $r$-cycle
such that $i \notin \{ b_1, \ldots , b_r \}$ (in particular, $2 \leq r \leq n-1$). 
We define $a,b,c \in S([n]^2)$ by
$
a:=((i,b_1), \ldots , (i,b_r))
$,
$
b:=((b_1,i), \ldots , (b_r,i))
$,
\[
c:=\prod_{x \neq i} ((x,b_1), \ldots , (x,b_r)),
\]
and $t:=(b_1,b_r) (b_2,b_{r-1}) \cdots$ (so $t \in S_n$ and $t$ 
is the product of $\lfloor r/2 \rfloor$ $2$-cycles).
We define $L_{i}(b_1, \ldots , b_r)$ to be the subgroup of ${\rm Aut}(\O_n)$ generated by 
$$
\{ \lambda_{a}, \lambda_{b}, \lambda_{c}, \lambda_{t \otimes 1} \}.
$$
Note that $t$ depends on a choice of writing the $r$-cycle
$(b_1, \ldots , b_r)$. However, it can be checked that if we
pick another way of writing the same cycle and take the
corresponding $t$ then the images in ${\rm Out}(\O_n)$
of the two resulting subgroups are the same (see Remark \ref{t=t'}).

\begin{theorem}
	\label{ZrxZrxZrsdZ2}
For $i\in [n]$, and $(b_1, \ldots , b_r) \in S_n$, 
$i \notin \{ b_1, \ldots , b_r \}$, we have that 
$L_{i}(b_1, \ldots , b_r)$ is contained in $\lambda^{-1}(\P_n^2)$, 
and is isomorphic to $(\ZZ_{r}^{ \times 3}) \rtimes \ZZ_2$.
Furthermore,
if $r \geq 3$, the kernel of the canonical projection $\pi : 
L_{i}(b_1, \ldots , b_r) \rightarrow {\rm Out}(\O_n)$
is trivial.
\end{theorem}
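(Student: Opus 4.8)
The plan is to split the four generators into two groups handled by different tools. The three permutations $a,b,c$ are genuine elements of $S([n]^2)$ and I will show they are pairwise bicompatible, so that Proposition \ref{isogroups} applies to them directly; the generator $t\otimes 1$, by contrast, is a first-order (Bogolubov) element, and I will note that it is \emph{not} bicompatible with $a$ (the cyclic-shift condition of Proposition \ref{horizontal_same_columns2_cycles} would force a shift depending on the row index), so it must be treated separately through the conjugation action of $\lambda_t=\lambda_{t\otimes 1}$. The group $L_i(b_1,\ldots,b_r)$ will then emerge as a semidirect product $\langle\lambda_a,\lambda_b,\lambda_c\rangle\rtimes\langle\lambda_t\rangle$.

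First I would record that $a,b,c$ have pairwise disjoint supports, namely $\{i\}\times\{b_1,\ldots,b_r\}$, $\{b_1,\ldots,b_r\}\times\{i\}$, and $([n]\setminus\{i\})\times\{b_1,\ldots,b_r\}$, and that each has order $r$, so $\langle a,b,c\rangle\cong\ZZ_r^{\times 3}$. To apply Proposition \ref{isogroups} I must check each is stable of rank $1$ and that the three are pairwise bicompatible. Compatibility with $a$ and with $c$ is governed by Propositions \ref{horizontal_same_columns} and \ref{horizontal_same_columns2_cycles}: a permutation is compatible with $a$ (resp.\ $c$) iff it leaves column $i$ invariant, and a permutation $v$ is compatible \emph{with} $a$ or $c$ iff $v(b_\ell,x)=(b_{\ell+t'(x)},\sigma(x))$; compatibility with $b$ is governed by Proposition \ref{vertical_same_rows} and requires leaving row $i$ invariant. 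Each of these is immediate for the other two generators (e.g.\ $b$ preserves column $i$ and fixes all other columns, $c$ fixes row $i$ pointwise, and $b$ meets the shift condition with $\sigma=1$, $t'(i)=1$ and $t'=0$ elsewhere), and each generator is compatible with itself, hence stable of rank $1$. Proposition \ref{isogroups} then gives $H:=\langle\lambda_a,\lambda_b,\lambda_c\rangle\subseteq\lambda^{-1}(\P_n^2)$ with $H\cong\langle a,b,c\rangle\cong\ZZ_r^{\times 3}$.

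Next I would bring in $\lambda_t$. Using (\ref{product}) together with $\lambda_u=\lambda_{u\otimes 1}$ and $\varphi(u)=1\otimes u$ --- exactly the manipulation carried out in the proof of the first-line theorem --- one obtains, for $\sigma\in S_n$ and $v\in\P_n^2$, the conjugation formula $\lambda_\sigma\lambda_v\lambda_\sigma^{-1}=\lambda_{(\sigma\otimes\sigma)\,v\,(\sigma\otimes\sigma)^{-1}}$. Since $t$ fixes $i$ and sends $b_j\mapsto b_{r+1-j}$, conjugation of the cycle $a$ by $t\otimes t$ reverses it, and similarly for $b$ and $c$; hence $\lambda_t\lambda_a\lambda_t^{-1}=\lambda_{a^{-1}}$, $\lambda_t\lambda_b\lambda_t^{-1}=\lambda_{b^{-1}}$, $\lambda_t\lambda_c\lambda_t^{-1}=\lambda_{c^{-1}}$, so $\lambda_t$ normalizes $H$ and acts on $\ZZ_r^{\times 3}$ by global inversion. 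Because $\lambda$ is injective on unitaries, $\lambda_t\in H$ would force $t\otimes 1\in\langle a,b,c\rangle$, which fails since $t\otimes 1$ moves points $(b_1,y)$ with $y\notin\{i,b_1,\ldots,b_r\}$ that $a,b,c$ all fix; moreover $\lambda_t^2=\lambda_{t^2}={\rm id}$. Thus $L_i=H\rtimes\langle\lambda_t\rangle\cong\ZZ_r^{\times 3}\rtimes\ZZ_2$, and since every element is $\lambda_w$ or $\lambda_{wt}$ with $w\in\langle a,b,c\rangle\subseteq\P_n^2$ and $wt\in\P_n^2$, the whole group lies in $\lambda^{-1}(\P_n^2)$.

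Finally, for the kernel when $r\geq 3$, I would use that an element of $\lambda^{-1}(\P_n^2)$ is inner iff it equals $\lambda_{p\otimes p^{-1}}$ for some $p\in S_n$ (the Proposition characterizing $\pi(\lambda_u)=\pi(\lambda_v)$ with $u=1$). Writing $L_i=H\sqcup H\lambda_t$ and $w=a^\alpha b^\beta c^\gamma$, I treat the two cosets. If $\lambda_w=\lambda_{p\otimes p^{-1}}$ then $w=p\otimes p^{-1}$; evaluating at $(b_1,b_1)$ forces $\gamma=0$, and then evaluating at the points $(b_\ell,b_m)$, $(i,i)$, and any off-support diagonal point forces $p$ to fix all of $[n]$, whence $p=1$, $w=1$, and the element is the identity. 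If instead $\lambda_w\lambda_t=\lambda_{w(t\otimes 1)}=\lambda_{p\otimes p^{-1}}$, then comparing $w(t\otimes 1)$ with $p\otimes p^{-1}$ at the points $(b_\ell,i)$ and $(i,b_m)$ yields the two requirements $p(b_\ell)=b_{r+1-\ell+\beta}$ and $p(b_\ell)=b_{\ell-\alpha}$, forcing $2\ell\equiv r+1+\alpha+\beta\ ({\rm mod}\ r)$ for every $\ell$; this is impossible once $r\geq 3$, so the non-identity coset contains no inner automorphism and $\ker\pi$ is trivial. I expect this last step to be the main obstacle: it is essential here to use the $(\sigma\otimes\sigma)$-conjugation rather than $(\sigma\otimes 1)$ so that $\lambda_t$ genuinely inverts all three factors, and the hypothesis $r\geq 3$ enters precisely through the parity obstruction $2\ell\equiv\text{const}\ ({\rm mod}\ r)$.
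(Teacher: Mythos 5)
Your proof follows the same architecture as the paper's for the structural part: pairwise bicompatibility of $a,b,c$ via Propositions \ref{horizontal_same_columns}, \ref{horizontal_same_columns2_cycles}, \ref{vertical_same_rows2}/\ref{vertical_same_rows}, Proposition \ref{isogroups} to get $H\cong\ZZ_r^{\times 3}$ inside $\lambda^{-1}(\P_n^2)$, and then the composition rules to handle $\lambda_{t\otimes 1}$; your conjugation formula $\lambda_t\lambda_v\lambda_t^{-1}=\lambda_{(t\otimes t)v(t\otimes t)^{-1}}$ is a repackaging of the paper's relation $\lambda_{t\otimes 1}\lambda_x=\lambda_{x^{-1}}\lambda_{t\otimes 1}$, both resting on $(t\otimes t)x(t\otimes t)=x^{-1}$. (One wording slip: "a permutation is compatible with $a$ iff it leaves column $i$ invariant" states the wrong direction --- Proposition \ref{horizontal_same_columns} characterizes when $a$ is compatible with $v$; your subsequent use of the cyclic-shift condition shows you know the distinction, so this is cosmetic.) Your kernel argument, on the other hand, is genuinely different from the paper's and is correct: the paper assumes $uvwz=\sigma\otimes\sigma^{-1}$, shows $u\neq 1$ would force $\sigma=(b_r,\ldots,b_1)$, and gets a row contradiction at the points $(b_2,b_k)$; you instead split along the two cosets of $H$ and, on the nontrivial coset, extract from $(b_\ell,i)$ and $(i,b_m)$ the congruence $2\ell\equiv r+1+\alpha+\beta \pmod r$ for all $\ell$, which fails exactly when $r\geq 3$. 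This makes the role of the hypothesis $r\geq 3$ completely transparent, and it is consistent with the paper's remark that for $r=2$ the element $(t\otimes 1)ac$ is inner.

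There is, however, one genuine gap: your justification that $\lambda_t\notin H$, which you need for $H\cap\langle\lambda_t\rangle=\{1\}$, hence for $|L_i|=2r^3$ and for the semidirect product structure. You argue that $t\otimes 1$ moves points $(b_1,y)$ with $y\notin\{i,b_1,\ldots,b_r\}$ that $a,b,c$ all fix; but such a $y$ exists only when $n\geq r+2$. In the boundary case $r=n-1$, which the statement allows and which is precisely the case $n=4$, $r=3$ that the paper needs for its fifth-line theorem, the set $\{i,b_1,\ldots,b_r\}$ exhausts $[n]$ and your argument is vacuous. The repair is immediate: evaluate at $(b_1,b_1)$ instead. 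Every element $a^\alpha b^\beta c^\gamma$ of $\langle a,b,c\rangle$ keeps $(b_1,b_1)$ in row $b_1$ (only $b$ moves points between rows, and only inside column $i$, while $b_1\neq i$), whereas $(t\otimes 1)(b_1,b_1)=(b_r,b_1)$ lies in row $b_r\neq b_1$. With this one-line fix your proof is complete; note that the paper itself only asserts $t\otimes 1\notin\widetilde{L}_i(b_1,\ldots,b_r)$ without argument, so you were right to supply one --- it just has to be one that survives the case $n=r+1$.
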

\begin{proof}
It is clear that the subgroup $\widetilde{L}_{i}(b_1, \ldots , b_r)$ of $S([n]^2)$ generated by $\{ a,b,c \}$ is isomorphic to $\ZZ_r^{\times 3}$. 
	
	Furthermore, these 3 generators
	are mutually bicompatible. 
	More specifically, by Proposition \ref{horizontal_same_columns}, 
	$a$ is compatible with a permutation $v \in S([n]^2)$ if and only if $v$ leaves $[n] \times ([n] \setminus \{ i \})$
	invariant and this is easily seen to be the case for all 
	these three
	permutations. On the other hand, by Proposition \ref{horizontal_same_columns2_cycles},
a permutation $v \in S([n]^2)$ is compatible with $a$
if and only if it permutes the elements of $\{ b_1, \ldots , b_r\}
	\times [n]$ in the way prescribed by Proposition 
	\ref{horizontal_same_columns2_cycles}, and this holds
	by taking $\sigma=1$, $t(i)=1$,
	and $t(x)=0$ if $x \neq i$ if $v=b$, and 
$\sigma=(b_1, \ldots , b_r)$ and $t(x)=0$ 
	for all $x \in [n]$ if $v=c$.
	Finally, by Proposition \ref{vertical_same_rows2},
$b$ is compatible with itself by taking $\sigma=1$ and $t(x)=0$
for all $x \in [n]$, and 
with $c$ by taking $\sigma=1$ and $t(x):=1$ for all $x \in [n] \setminus \{ i \}$, and $t(i)=0$, 
while the compatibility of $c$ with $b$ and with itself follows easily from Proposition \ref{horizontal_same_columns}.

	Therefore, by Proposition \ref{isogroups}, the subgroup  generated by $\lambda_{a}$, $\lambda_{b}$ and $\lambda_{c}$
	in ${\rm Aut}(\O_4)$ 
	is contained in $\lambda^{-1}(\P_4^2)$  and is isomorphic to $\ZZ_r^{\times 3}$.
	
	Next, we claim that 
	\[
	\lambda_{t \otimes 1} \lambda_x = \lambda^{-1}_x \lambda_{t \otimes 1}
	\]
	for $x \in \{a,b,c\}$
	(notice that $\lambda_{x^{-1}} = \lambda^{-1}_x$ as $x$ is compatible with itself).
	Indeed, it is not difficult to check that $(t \otimes t)x(t \otimes t) = x^{-1}$ and thus, by the composition rules of endomorphisms,
	$$
	\lambda_{t \otimes 1} \lambda_x = \lambda_{(t \otimes 1)(1 \otimes t)x (1 \otimes t)} = \lambda_{(t \otimes t)x (t \otimes t)(t \otimes 1)} \\
	= \lambda_{x^{-1}(t \otimes 1)} = \lambda_{x^{-1}}\lambda_{t \otimes 1} \ . 
	$$
	
	As a consequence, each element in $L_i(b_1,\ldots,b_r)$ can be written in the form $\lambda_u \lambda_v \lambda_w \lambda_z = \lambda_{uvwz}$,
	with $u \in \langle a \rangle$, $v \in \langle b \rangle$, $w \in \langle c \rangle$, $z \in \langle t \otimes 1 \rangle$,
	so that
	$L_i (b_1,\ldots,b_r)$ has $2 r^3$ elements (notice that $t \otimes 1$ does not belong to $\widetilde{L}_{i}(b_1, \ldots , b_r)$) and it has the structure of a semidirect product $(\ZZ_r \times \ZZ_r \times \ZZ_r) \rtimes \ZZ_2$.

\medskip	
Next we show that for $r \geq 3$ none of the elements of $L_i(b_1,\ldots,b_r)$
different from the identity is inner. Suppose that there are 
$u,v,w$, and $z$ as in the previous paragraph such that $uvwz = \sigma \otimes \sigma^{-1}$ for some $\sigma \in S_n$. Let $x \in
[n] \setminus \{ i \}$. If $u \neq 1$ then we have that
\[
(i,(b_1, \ldots , b_r)(x))=(uvwz)(i,x)=
(\sigma \otimes \sigma^{-1})(i,x)=(\sigma(i),\sigma^{-1}(x))
\]
from which we conclude that $\sigma=(b_r, \ldots , b_1)$.
Let now $k \in [r]$. Then $(\sigma \otimes \sigma^{-1})(b_2,b_k)$ is in row $b_1$ while $(uvwz)(b_2,b_k)$ is in row $b_2$ if $z=1$
and in row $b_{r-1}$ if $z \neq 1$, a contradiction. Hence $u=1$. Then we similarly have that
$(i,x)=(uvwz)(i,x)=(\sigma(i),\sigma^{-1}(x))$
from which we deduce that $\sigma=1$, which in turn easily implies 
that $u=v=w=z=1$. 
\end{proof}

Note that, for $r=2$, $t  = (b_1,b_2)$ and $ac = 1 \otimes (b_1,b_2)$ so that $(t \otimes 1)ac = (b_1,b_2) \otimes (b_1,b_2)$ is inner.
\medskip

Regarding the distinctness of the subgroups studied in the 
previous theorem, we have the following result. 
\begin{theorem}
	\label{ZrxZrxZrsdZ2_distinct}
For $i,j\in [n]$, and $(b_1, \ldots , b_r),(c_1, \ldots , c_s) \in S_n$, $i \notin \{ b_1, \ldots , b_r \}$, $j \notin \{ c_1, \ldots , c_s \}$, we have that $\pi(L_i(b_1, \ldots , b_r)) \neq \pi(L_j(c_1, \ldots , c_s))$ if either $r,s \geq 3$ and $r \neq s$, or $i \in \{ c_1, \ldots , c_s \}$.
\end{theorem}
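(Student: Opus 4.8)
The plan is to handle the two sufficient conditions separately, the first by a cardinality count and the second by tracking a single well-chosen generator.

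For the first condition ($r,s\ge 3$ and $r\neq s$) I would argue purely by orders. By Theorem \ref{ZrxZrxZrsdZ2} the projection $\pi$ is injective on each of $L_i(b_1,\ldots,b_r)$ and $L_j(c_1,\ldots,c_s)$ when $r,s\ge 3$, so $|\pi(L_i(b_1,\ldots,b_r))|=2r^3$ and $|\pi(L_j(c_1,\ldots,c_s))|=2s^3$. Since $r\neq s$ forces $2r^3\neq 2s^3$, the two images are subgroups of ${\rm Out}(\O_n)$ of different orders, hence distinct.

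For the second condition ($i\in C:=\{c_1,\ldots,c_s\}$) I first note that, together with the standing hypothesis $j\notin C$, this gives $i\neq j$. Write $a',b',c',t'$ for the analogues of $a,b,c,t$ built from $(c_1,\ldots,c_s)$; by the proof of Theorem \ref{ZrxZrxZrsdZ2} every element of $L_j(c_1,\ldots,c_s)$ is $\lambda_g$ with $g=a'^{\alpha}b'^{\beta}c'^{\gamma}(t'\otimes 1)^{\delta}$. I would argue by contradiction: assume $\pi(L_i(b_1,\ldots,b_r))=\pi(L_j(c_1,\ldots,c_s))$ and track the image of the single generator $b=((b_1,i),\ldots,(b_r,i))$, which acts only inside column $i$. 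Then $\pi(\lambda_b)$ lies in $\pi(L_j(c_1,\ldots,c_s))$, so there are $z\in S_n$ and $g$ as above with $b=(z\otimes 1)\,g\,(1\otimes z^{-1})$, i.e. $b_2(x,y)=g_2(x,z^{-1}(y))$ for all $x,y$.

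The first key step is to pin down $z$. Since $b$ fixes every column setwise, the relation $b_2(x,y)=g_2(x,z^{-1}(y))$ forces $g_2(x,w)=z(w)$ for all $x,w$; thus $g$ maps each column to a column, with column-permutation exactly $z$. Inspecting $g=a'^{\alpha}b'^{\beta}c'^{\gamma}(t'\otimes 1)^{\delta}$, the factors $b'^{\beta}$ and $(t'\otimes 1)^{\delta}$ fix every column setwise, while $a'^{\alpha}$ and $c'^{\gamma}$ move the columns of $C$; comparing the column shift seen in row $j$ (produced by $a'^{\alpha}$) with that seen in the other rows (produced by $c'^{\gamma}$) shows that block-preservation forces $\gamma\equiv\alpha\pmod s$, whence $z=\rho^{\alpha}$ with $\rho:=(c_1,\ldots,c_s)$. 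The decisive step is then a direct computation of $(z\otimes 1)\,g\,(1\otimes z^{-1})$ restricted to a column $c_\ell\in C$: carrying the three maps through — the factor $b'^{\beta}$ never acts, as the relevant columns lie in $C$ and are $\neq j$ — one finds that this restriction acts on rows by the single permutation $\rho^{\alpha}t'^{\delta}$, \emph{independently of which} $c_\ell\in C$ is chosen. But $b$ acts on column $i$ as the $r$-cycle $(b_1,\ldots,b_r)$ and fixes every other column pointwise. Since $i\in C$ and $|C|=s\ge 2$, matching on column $i$ gives $\rho^{\alpha}t'^{\delta}=(b_1,\ldots,b_r)$, while matching on any $c_\ell\in C\setminus\{i\}$ gives $\rho^{\alpha}t'^{\delta}=1$; as $(b_1,\ldots,b_r)\neq 1$ this is a contradiction, so $\pi(\lambda_b)\notin\pi(L_j(c_1,\ldots,c_s))$ and the two images differ.

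I expect the main obstacle to be the determination of $z$: it is tempting, but wrong, to conclude $z=1$ directly, whereas the admissible relabelings are exactly the powers of the $s$-cycle $\rho$, which rotate precisely the columns indexed by $C$. Recognizing this is where the hypothesis $i\in C$ becomes essential — it places column $i$ among the columns that the computation forces to transform by one and the same row-permutation $\rho^{\alpha}t'^{\delta}$, in direct conflict with the special cyclic action of $b$ on that column. Were $i\notin C$, column $i$ would be treated separately and no such conflict would arise, which is why $i\neq j$ alone does not suffice.
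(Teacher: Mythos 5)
Your proposal is correct. The first statement is handled exactly as in the paper: both arguments simply compare the orders $2r^{3}\neq 2s^{3}$ supplied by Theorem \ref{ZrxZrxZrsdZ2}. For the second statement, however, you take a genuinely different route. The paper tracks a generator of $L_j(c_1,\ldots,c_s)$, namely $a'=((j,c_1),\ldots,(j,c_s))$, and shows its class cannot lie in $\pi(L_i(b_1,\ldots,b_r))$: writing $a'=(\sigma\otimes 1)\,uvwz\,(1\otimes\sigma^{-1})$ with $u,v,w,z$ coming from the generators of $L_i$, evaluation at $(i,i)$ forces $\sigma(i)=i$ (all four factors preserve row $i$), and then evaluation at $(j,i)$ gives an immediate contradiction, since the right-hand side keeps $(j,i)$ in column $i$ (the factors also preserve column $i$) while $a'$ moves $(j,i)$ out of column $i$ precisely because $i\in\{c_1,\ldots,c_s\}$. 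You instead track the generator $b=((b_1,i),\ldots,(b_r,i))$ of $L_i$ and show its class cannot lie in $\pi(L_j)$; this reverses the containment direction and costs you a harder conjugator analysis. In your setting the admissible conjugators $z$ are not forced to be trivial or to fix a single relevant point: they form the cyclic group generated by $\rho=(c_1,\ldots,c_s)$, and you correctly rule them all out by showing (using that no point of $[n]^2$ ever crosses into or out of row $j$ under the generators of $L_j$, whence $\gamma\equiv\alpha \pmod s$ and uniformity of the column action) that the conjugate acts by one and the same row permutation $\rho^{\alpha}t'^{\delta}$ on \emph{every} column indexed by $C$, whereas $b$ acts by a nontrivial $r$-cycle on column $i$ and trivially on the other columns of $C$; here $s\geq 2$ and $i\in C$ are exactly what produce the clash. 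Both proofs are single-generator tracking arguments; the paper's choice of generator makes the conjugator rigid and the contradiction one line, while yours requires determining the full group of admissible conjugators but, in exchange, makes explicit the structural reason why the hypothesis $i\in\{c_1,\ldots,c_s\}$, rather than merely $i\neq j$, is needed.
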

\begin{proof}
The first statement follows immediately from the fact that,
by Theorem \ref{ZrxZrxZrsdZ2}, 
$|\pi(L_i(b_1, \ldots , b_r))|=2 r^3$ if $r \geq 3$.
We show that the images in ${\rm Out}(\O_n)$  of the  groups $L_i(b_1, \ldots , b_r)$ and $L_j(c_1, \ldots , c_s)$ 
are different if
$i \in \{ c_1, \ldots , c_s \}$. Note that this implies that
$i \neq j$. Suppose not. Then $\pi(\lambda_{((j,c_1), \ldots 
	,(j,c_s))}) \in \pi(L_i(b_1, \ldots , b_r))$.
Hence, as observed in the proof of the last result, there are 
$u \in \langle a \rangle$, $v \in \langle b \rangle$, $w \in \langle c \rangle$, and $z \in \langle t \otimes 1 \rangle$
such that $\pi(\lambda_{((j,c_1), \ldots ,(j,c_s))}) = \pi (\lambda_{uvwz})$.
Therefore there is $\sigma \in S_4$ such that 
\begin{equation}
	\label{equal_in_Out}
	((j,c_1), \ldots ,(j,c_s)) = (\sigma \otimes 1) \, uvwz \, (1 \otimes \sigma^{-1}).
\end{equation}
Applying both members of this equation to $(i,i)$
and observing that $u,v,w$, and $z$ all preserve the $i$-th row we conclude that $i=\sigma(i)$. Hence, since $u,v,w$, and $z$ all preserve the $i$-th column, we see that the permutation on the
RHS of (\ref{equal_in_Out}) maps $(j,i)$ to column $i$ while the
one on the LHS maps $(j,i)$ not to the $i$-th column (because $i \in\{ c_1, \ldots , c_s \}$). This proves
that $\pi(L_i(b_1, \ldots , b_r)) \neq \pi(L_j(c_1, \ldots , c_s))$, as desired.
\end{proof}

\begin{remark}
	\label{t=t'}
Let $i$, $(b_1, \ldots , b_r)$, and
$a,b,c,t$ have the same meaning as above,  
$t':=(b_1,b_{r+2})(b_3,b_{r})(b_4,b_{r-1}) \cdots$, 
$L$ be the subgroup of ${\rm Aut}(\O_n)$ generated by 
$
\{ \lambda_{a}, \lambda_{b}, \lambda_{c}, \lambda_{t \otimes 1} \},
$
and $L'$ be the one generated by $\{ \lambda_{a}, \lambda_{b},
 \lambda_{c}, \lambda_{t' \otimes 1} \}$.
We will prove that $\pi(L)=\pi(L')$. It is enough to show
that $\pi(\lambda_{t' \otimes 1}) \in \pi(L)$. Indeed,
we have that 
\begin{align*}
	\pi(\lambda_{t' \otimes 1}) & =
	\pi(\lambda_{1 \otimes t'})=
	\pi(\lambda_{1 \otimes (b_1, \ldots b_r) t
	(b_r, \ldots , b_1)})=
	\pi(\lambda_{1 \otimes (b_1, \ldots b_r)} \lambda_{1 \otimes
t} \lambda_{1 \otimes (b_1, \ldots , b_r)^{-1}}) \\
	& = \pi(\lambda_{ca}) \, 
	\pi(\lambda_{1 \otimes t}) \pi(\lambda_{(ca)^{-1}})=
	\pi(\lambda_{c}) \pi(\lambda_{a})
	\, \pi(\lambda_{t \otimes 1}) \pi((\lambda_a)^{-1})
	\pi((\lambda_c)^{-1})
\end{align*}
(where we have used the facts that $\pi(\lambda_{1 \otimes u})=\pi(\lambda_{u \otimes 1})$, that $1 \otimes u$ and $1 \otimes v$ are bicompatible, for any $u,v \in S_n$, that $ac=(b_1, \ldots , b_r)$, and that $a$ and $c$ are bicompatible)
and the elements at the end of this chain of equalities
are all in $\pi(L)$.
\end{remark}

\bigskip 

We are now ready to discuss the fifth line, by applying the above discussion to the case $n=4$ and $r=3$.
Let $i \in [4]$, and $L_{i}:=L_i(h,k,l)$ where $(h,k,l) \in S_4$ is any $3$-cycle such that 
$\{ h,k,l \} := [4] \setminus \{ i \}$ (note that, since 
$(h,k,l)=(h,l,k)^{-1}$, $L_i(h,k,l)$ only depends on $i$, and
on $t$, cfr. Remark \ref{t=t'}).
For the reader's convenience we include a graphical representation of the four permutations indexing the four generators of $L_1(3,4,2)$. 
\[  
\beginpicture

\setcoordinatesystem units <0.25cm,0.25cm>
\setplotarea x from 10 to 20, y from 2 to 5

\setlinear

\plot 0 0 0 4 /
\plot 1 0 1 4 /
\plot 2 0 2 4 /
\plot 3 0 3 4 /
\plot 4 0 4 4 /

\plot 0 0 4 0 /
\plot 0 1 4 1 /
\plot 0 2 4 2 /
\plot 0 3 4 3 /
\plot 0 4 4 4 /

\arrow <0.15cm> [0.1,0.5] from 2.6 3.4 to 3.5 3.4 
\arrow <0.15cm> [0.1,0.5] from 1.5 3.4 to 2.4 3.4 

\setquadratic
\plot 3.5 3.6 2.5 3.8 1.5 3.6 /

\arrow <0.235cm> [0.2,0.6] from 2.30 3.8 to 2.20 3.8

\setlinear 

\plot 6 0 6 4 /
\plot 7 0 7 4 /
\plot 8 0 8 4 /
\plot 9 0 9 4 /
\plot 10 0 10 4 /

\plot 6 0 10 0 /
\plot 6 1 10 1 /
\plot 6 2 10 2 /
\plot 6 3 10 3 /
\plot 6 4 10 4 /

\arrow <0.15cm> [0.1,0.5] from 6.6 1.4 to 6.6 0.5 
\arrow <0.15cm> [0.1,0.5] from 6.6 2.5 to 6.6 1.6 

\setquadratic
\plot 6.4 0.5 6.2 1.5 6.4 2.5 /

\arrow <0.235cm> [0.2,0.6] from 6.2 1.8 to 6.2 1.9

\setlinear

\plot 12 0 12 4 /
\plot 13 0 13 4 /
\plot 14 0 14 4 /
\plot 15 0 15 4 /
\plot 16 0 16 4 /

\plot 12 0 16 0 /
\plot 12 1 16 1 /
\plot 12 2 16 2 /
\plot 12 3 16 3 /
\plot 12 4 16 4 /

\arrow <0.15cm> [0.1,0.5] from 14.6 2.4 to 15.5 2.4 
\arrow <0.15cm> [0.1,0.5] from 13.5 2.4 to 14.4 2.4 

\arrow <0.15cm> [0.1,0.5] from 14.6 1.4 to 15.5 1.4 
\arrow <0.15cm> [0.1,0.5] from  13.5 1.4 to 14.4 1.4 

\arrow <0.15cm> [0.1,0.5] from  14.6 0.4 to 15.5 0.4
\arrow <0.15cm> [0.1,0.5] from 13.5 0.4 to 14.4 0.4 

\setquadratic
\plot 15.5 2.6 14.5 2.8 13.5 2.6 /
\arrow <0.235cm> [0.2,0.6] from 14.4 2.8 to 14.2 2.8

\plot 15.5 1.6 14.5 1.8 13.5 1.6 /
\arrow <0.235cm> [0.2,0.6] from 14.4 1.8 to 14.2 1.8

\plot 15.5 0.6 14.5 0.8 13.5 0.6 /
\arrow <0.235cm> [0.2,0.6] from 14.4 0.8 to 14.2 0.8

\setlinear

\plot 18 0 18 4 /
\plot 19 0 19 4 /
\plot 20 0 20 4 /
\plot 21 0 21 4 /
\plot 22 0 22 4 /

\plot 18 0 22 0 /
\plot 18 1 22 1 /
\plot 18 2 22 2 /
\plot 18 3 22 3 /
\plot 18 4 22 4 /

\plot 18.5 1.5 18.5 2.5 /
\plot 19.5 1.5 19.5 2.5 /
\plot 20.5 1.5 20.5 2.5 /
\plot 21.5 1.5 21.5 2.5 /

\put {The four permutations in $S([4]^2)$ indexing the generators of $L_1(3,4,2)$} at 12 -2

\endpicture 
\]

\begin{theorem}
\label{Z3xZ3xZ3sdZ2}
	For $i\in [4]$, $L_{i}$
	is contained in $\lambda^{-1}(\P_4^2)$ and is isomorphic to $(\ZZ_3^{ \times 3}) \rtimes \ZZ_2$.
	The kernel of the canonical projection $\pi : L_{i} \rightarrow {\rm Out}(\O_4)$
	is trivial and the 4 groups $\pi(L_{i})$ are mutually distinct for $i \in [4]$.
\end{theorem}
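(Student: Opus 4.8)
The plan is to obtain the whole statement as the $n=4$, $r=3$ specialization of the two general results just proved, namely Theorem \ref{ZrxZrxZrsdZ2} and Theorem \ref{ZrxZrxZrsdZ2_distinct}, so that no new computation is required. Indeed, $L_i = L_i(h,k,l)$ with $\{h,k,l\} = [4] \setminus \{i\}$ is precisely an instance of $L_i(b_1,\ldots,b_r)$ with $r = 3$ and $i \notin \{b_1,b_2,b_3\}$. Applying Theorem \ref{ZrxZrxZrsdZ2} therefore gives at once that $L_i \subseteq \lambda^{-1}(\P_4^2)$ and that $L_i \simeq (\ZZ_3^{\times 3}) \rtimes \ZZ_2$; and since $r = 3 \geq 3$, the same theorem yields that the kernel of $\pi : L_i \to {\rm Out}(\O_4)$ is trivial, whence $\pi(L_i) \simeq L_i$.

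For the mutual distinctness of the four groups $\pi(L_i)$, $i \in [4]$, I would invoke Theorem \ref{ZrxZrxZrsdZ2_distinct}. Fix distinct $i, j \in [4]$ and write $L_j = L_j(c_1,c_2,c_3)$ with $\{c_1,c_2,c_3\} = [4] \setminus \{j\}$. Because $i \neq j$ we have $i \in [4] \setminus \{j\} = \{c_1,c_2,c_3\}$, so the hypothesis ``$i \in \{c_1,\ldots,c_s\}$'' of Theorem \ref{ZrxZrxZrsdZ2_distinct} holds (the complementary hypotheses $i \notin \{b_1,b_2,b_3\}$ and $j \notin \{c_1,c_2,c_3\}$ being immediate from the construction of $L_i$ and $L_j$). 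That theorem then gives $\pi(L_i) \neq \pi(L_j)$, and as $i,j$ range over all distinct pairs the four images are seen to be pairwise distinct.

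The one preliminary point I would record is that $\pi(L_i)$ is well defined, i.e.\ independent of the $3$-cycle chosen to present $L_i$ on $[4] \setminus \{i\}$; the two possible cycles are mutually inverse and so yield the same subgroups $\langle a \rangle, \langle b \rangle, \langle c \rangle$ but a priori different involutions $t$, and Remark \ref{t=t'} shows that these different choices of $t$ produce the same image in ${\rm Out}(\O_4)$. With this in hand there is no genuine obstacle left: all the substantive content---the pairwise bicompatibility of $a, b, c$, the semidirect-product structure arising from the relation $(t \otimes t)\,x\,(t \otimes t) = x^{-1}$, and the argument excluding nontrivial inner elements---has already been discharged in the proofs of the two cited theorems, and the present proof merely verifies that their hypotheses are met when $n=4$ and $r=3$.
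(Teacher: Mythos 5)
Your proposal is correct and is essentially the paper's own proof: the paper likewise obtains the whole statement as the $n=4$, $r=3$ specialization of Theorem \ref{ZrxZrxZrsdZ2} and Theorem \ref{ZrxZrxZrsdZ2_distinct}, with distinctness following exactly from your observation that $i \in [4]\setminus\{j\}$ whenever $j \neq i$. Your preliminary remark on well-definedness (via Remark \ref{t=t'}) matches the paper's comment immediately preceding the theorem, so nothing is missing.
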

\begin{proof}
It is an immediate consequence of Theorem \ref{ZrxZrxZrsdZ2} and Theorem \ref{ZrxZrxZrsdZ2_distinct}
(since $i \in [4]\setminus \{j\}$ for all $j \neq i$).
\end{proof}

\subsection{The sixth line}

Throughout this subsection, we assume that $n \geq 3$.
Recall from section 3 that for $i \in [n]$ we let $R_i:=R_{\{ i \}}$ and define similarly $C_i$, $\check{R}_i$,
and $\check{C}_i$ (so $R_i$ consists of all the permutations of $S([n]^2)$
that only permute the elements in row $i$ that are not in 
column $i$, while those of $\check{R}_i$ fix pointwise row $i$,
 and permute all the other rows in the same way
but fixing all the elements in column $i$,
and similarly for $C$'s).
We find it convenient to let
$$
\lambda(S) := \{ \lambda_u : u \in S\},
$$
for any $S \subseteq S([n]^2)$. We let $Q_i$ be the subgroup
of ${\rm Aut}(\O_n)$ generated by 
\[
\lambda(R_i \cup \check{R}_i)
\]
and define $Q'_i$ similarly using $C$'s.
\begin{theorem}
For $i \in [n]$, $Q_i$ is contained in $\lambda^{-1}(\P_n^2)$, 
is isomorphic to $(S_{n-1})^{\times 2}$, and the kernel of the canonical projection $\pi : Q_{i} \rightarrow {\rm Out}(\O_n)$
is trivial.
Similarly for $Q'_i$. The $2n$ subgroups $\pi(Q_i)$ and
$\pi(Q'_i)$ are all distinct .
\end{theorem}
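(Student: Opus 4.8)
The first two assertions follow essentially at once from the machinery already in place. Write $\widetilde{Q}_i := \langle R_i \cup \check{R}_i \rangle$ and $\widetilde{Q}'_i := \langle C_i \cup \check{C}_i \rangle$ in $S([n]^2)$. By Theorem \ref{R_P} (with $P=\{i\}$) the group $\widetilde{Q}_i$ is a bicompatible subgroup of $S([n]^2)$ isomorphic to $(S_{n-1})^{\times 2}$, and likewise for $\widetilde{Q}'_i$. Since $\widetilde{Q}_i$ is generated by the finite bicompatible family $R_i \cup \check{R}_i$, Proposition \ref{isogroups} immediately yields that $Q_i = \langle \lambda(R_i \cup \check{R}_i)\rangle$ is contained in $\lambda^{-1}(\P_n^2)$ and that $\lambda_g \mapsto g$ is an isomorphism of $Q_i$ onto $\widetilde{Q}_i \cong (S_{n-1})^{\times 2}$; the same argument handles $Q'_i$.

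For the triviality of the kernel I use the criterion recorded after the first Proposition: for $u,v \in \P_n^2$ one has $\pi(\lambda_u) = \pi(\lambda_v)$ if and only if $v = (\sigma \otimes 1)\, u\, (1 \otimes \sigma^{-1})$ for some $\sigma \in S_n$; in particular $\lambda_g$ is inner exactly when $g = \sigma \otimes \sigma^{-1}$. The structural observation is that every element of $\widetilde{Q}_i$ preserves each row of $[n]^2$ (both $R_i$ and $\check{R}_i$ do), whereas $\sigma \otimes \sigma^{-1}$ sends row $x$ to row $\sigma(x)$. Hence if $\lambda_g \in Q_i$ is inner with $g = \sigma \otimes \sigma^{-1}$, then $\sigma = 1$ and $g$ is the identity, so $\lambda_g = {\rm id}$; this gives $Q_i \cap \ker\pi = \{{\rm id}\}$. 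For $Q'_i$ one argues identically, using that its elements preserve each column (so $\sigma^{-1}=1$).

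The real content is the distinctness of the $2n$ images, and throughout I use the same criterion together with two facts: every $g \in \widetilde{Q}_i$ preserves each row and fixes column $i$ pointwise, while every $g \in \widetilde{Q}'_i$ preserves each column and fixes row $i$ pointwise; moreover $\widetilde{Q}_i$ acts on all rows $\neq i$ by one common column-permutation, dually for $\widetilde{Q}'_i$. As a marker I take the transposition $a := ((i,p),(i,q)) \in R_i \subseteq \widetilde{Q}_i$ for $p \neq q$ in $[n]\setminus\{i\}$ (which exist since $n \geq 3$), so that $\pi(\lambda_a) \in \pi(Q_i)$. To separate $\pi(Q_i)$ from $\pi(Q_j)$ with $j \neq i$, I suppose $\pi(\lambda_a) \in \pi(Q_j)$, write $a = (\sigma\otimes 1)\, g\, (1\otimes\sigma^{-1})$ with $g \in \widetilde{Q}_j$, and compare rows: the right-hand side sends row $x$ to row $\sigma(x)$ while $a$ fixes every row, forcing $\sigma = 1$ and hence $a = g \in \widetilde{Q}_j$. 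A final comparison then produces a contradiction: either $a$ moves an element of column $j$ (impossible, as $\widetilde{Q}_j$ fixes column $j$ pointwise), or, evaluating on a row $m \neq i,j$ (which exists because $n \geq 3$), the common action of $\widetilde{Q}_j$ on rows $\neq j$ must be trivial, contradicting that $a$ acts nontrivially on row $i$.

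The cross case $\pi(Q_i) \neq \pi(Q'_j)$ is where one must be slightly more careful, and I regard it as the main obstacle, since the row-marker $a$ is now tested against a group of column operations, so the row comparison no longer pins down $\sigma$ directly. Supposing $\pi(\lambda_a) \in \pi(Q'_j)$ and writing $a = (\sigma\otimes 1)\, g\, (1\otimes\sigma^{-1})$ with $g \in \widetilde{Q}'_j$, I instead compare columns on the rows $x \neq i$, where $a$ is the identity: since $g$ preserves columns, the right-hand side sends $(x,b)$ into column $\sigma^{-1}(b)$, forcing $\sigma^{-1}(b) = b$ for all $b$, i.e.\ $\sigma = 1$, whence $a = g \in \widetilde{Q}'_j$. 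But $a$ sends $(i,p)$ into column $q \neq p$, so $a$ does not preserve column $p$, contradicting that every element of $\widetilde{Q}'_j$ preserves each column. The remaining case $\pi(Q'_i) \neq \pi(Q'_j)$ for $i \neq j$ is the exact column-dual of the first distinctness argument, with the marker $((p,i),(q,i)) \in C_i$. Assembling the three cases shows that the $2n$ subgroups $\pi(Q_i)$ and $\pi(Q'_i)$ are pairwise distinct.
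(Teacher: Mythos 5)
Your proposal is correct and follows essentially the same route as the paper: Theorem \ref{R_P} plus Proposition \ref{isogroups} for the structure, the conjugation criterion $v=(\sigma\otimes 1)u(1\otimes\sigma^{-1})$ combined with row- (resp.\ column-) preservation to kill the kernel and to force $\sigma=1$ in the distinctness arguments, and then a membership contradiction in $\widetilde{Q}_j$ or $\widetilde{Q}'_j$. The only cosmetic difference is that the paper chooses its marker transposition $((i,j),(i,a))$ so that it visibly moves column $j$, making the final contradiction one-step, whereas your generic marker $((i,p),(i,q))$ requires the short two-case analysis you supply (which is also valid, using that $\widetilde{Q}_j$ acts on all rows $\neq j$ by a single common column permutation).
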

\begin{proof}
We find it convenient to let $\widetilde{Q}_i$ be the
subgroup of $S([n]^2)$ generated by $R_i \cup \check{R}_i$.
The first two statements readily follow from Theorem \ref{R_P} (with $P=\{ i \}$) and Proposition \ref{isogroups}. Furthermore,
if $u \in R_i$ and $v \in \check{R}_i$ are such that 
$\pi(\lambda_{uv})=1$ (it is clear that every element of 
$\widetilde{Q}_i$ is of the form $uv$ for such $u$ and $v$)
then there is $p \in S_n$ such that
$(p \otimes 1) u v (1 \otimes p^{-1})=1$. But all the elements 
of $R_i$ and $\check{R}_i$ leave every row invariant so 
$(p \otimes 1) u v (1 \otimes p^{-1})$ maps row $x$ to row $p(x)$ for all $x \in [n]$, therefore $p=1$. Hence $uv=1$ and this 
implies that $u=v=1$.

We now show that the groups $\pi(Q_i)$ are all
distinct. Suppose that $\pi(Q_i)=\pi(Q_j)$ for some $1 \leq i<j \leq n$. Let $a \in [n] \setminus \{ i,j \}$, and $u:=((i,j),(i,a))$. Then $\pi(\lambda_u) \in \pi(Q_i)$ so 
there is $v \in \widetilde{Q}_j$ such that $\pi(\lambda_u)=\pi(\lambda_v)$. Hence there is 
$p \in S_n$ such that 
$(p \otimes 1) u (1 \otimes p^{-1})=v$. But, as observed in the 
previous paragraph, $(p \otimes 1) u (1 \otimes p^{-1})$ maps
row $x$ to row $p(x)$ for all $x \in [n]$, so $p=1$. 
Therefore $u \in \widetilde{Q}_j$ which is a contradiction since 
every element of $\widetilde{Q}_j$ leaves column $j$ fixed.
Similarly for the groups $\pi(Q'_i)$.

Last, we show that $\pi(Q_i) \neq \pi(Q'_j)$ for all $i,j \in [n]$. Suppose not. Let $a$ and $u$ be as in the previous paragraph. Then $\pi(\lambda_u) \in \pi(Q'_j)$ so reasoning 
as above we conclude that 
there are $v \in \widetilde{Q'}_j$ and $p \in S_n$ such that 
$(p \otimes 1) u (1 \otimes p^{-1})=v$. Let $r \in [n] \setminus \{ i \}$ and $b \in [n]$. Then we have that 
$$
(p(r),b)=(p \otimes 1) u (1 \otimes p^{-1})(r,p(b))=v(r,p(b)).
$$
But every element of $\widetilde{Q'}_j$ leaves every column invariant, so $v(r,p(b))$ is in column $p(b)$. Hence $p(b)=b$
and so $p=1$. 
Therefore $u \in \widetilde{Q'}_j$ which impossible for the same reason.
\end{proof}

Keeping the notation as in the previous proof, we remark that $\widetilde{Q}_i \cap \widetilde{Q}_j = 1 \otimes S_{i,j}$, for all $1 \leq i<j \leq n$, as can be easily checked.

\medskip
For $n=4$ the multiplicity is in agreement with the one computed in  \cite[Table 5]{AJS18}.
According to \cite{AJS18}, for $n=4$ all these subgroups are maximal in $\{[\lambda_u] \in {\rm Out}(\O_4) \ | \ u \in \P_4^2\}$.

\subsection{The seventh line}
By \cite[Table 5]{AJS18} there are 7 maximal subgroups in the image of $\lambda^{-1}(\P_4^2)$ in ${\rm Out}(\O_4)$ that are isomorphic to $S_4$.
It is very easy to spot one of them, namely the image of permutative Bogolubov automorphisms in ${\rm Out}(\O_4)$. The remaining six groups are more difficult to describe, but can be constructed by an in-depth analysis of the {\it bicompatible 
subgroups} isomorphic to $S_4$ in $S([4]^2)$. 
This result is actually a special case of a general study of bicompatible subgroups, which is presented in the next section. It is also worth to
stress that for each of these 7 subgroups one can find a set of Coxeter generators associated to permutations of cycle-type $(2,2,2,2)$.

Let $a,b \in \{ 2,3,4 \}$, $a \neq b$, and $\{c\}:=\{ 2,3,4 \}
\setminus \{ a,b \}$. We define 
$$
s_1:=((1,1),(1,b)) \, ((b,1),(b,b)) \,((1,a),(1,c)) \,((b,a)),(b,c))
$$
$$
s_2:=((1,b),(a,b)) \,((b,b),(c,b)) \,((1,c),(a,c)) \,((b,c),(c,c)) 
$$
$$
s_3:=((a,1),(a,b)) \,((c,1),(c,b)) \,((a,a),(a,c)) \,((c,a),(c,c))
$$
(so each $s_i \in S([4]^2)$ is the product of four $2$-cycles).
We let $T_{a,b}$ be the subgroup of ${\rm Aut}(\O_4)$ generated by 
$$
\{ \lambda_{s_1}, \lambda_{s_2}, \lambda_{s_3} \},
$$
and $\widetilde{T}_{a,b}$ the subgroup of $S([4]^2)$ 
generated by $s_1$, $s_2$, and $s_3$. Note that each 
$\widetilde{T}_{a,b}$ is isomorphic to $S_4$.
For the reader's convenience we include a graphical representation of the three permutations $s_1$, $s_2$, and $s_3$ for $a=2$ and
$b=4$ (see also Figure 1 for another graphical representation
of the six groups $\widetilde{T}_{a,b}$ that should be 
self-explanatory). Note that $\widetilde{T}_{a,b}$ leaves 
the union of rows $1$ and $a$, and of columns $1$ and $b$, invariant.

\[  \beginpicture

\setcoordinatesystem units <0.25cm,0.25cm>
\setplotarea x from 10 to 20, y from 2 to 5

\setlinear

\plot 0 0 0 4 /
\plot 1 0 1 4 /
\plot 2 0 2 4 /
\plot 3 0 3 4 /
\plot 4 0 4 4 /

\plot 0 0 4 0 /
\plot 0 1 4 1 /
\plot 0 2 4 2 /
\plot 0 3 4 3 /
\plot 0 4 4 4 /

\plot 0.5 0.4  3.5 0.4 /
\plot 1.5 0.6 2.5 0.6 /
\plot 0.5 3.6  3.5 3.6 /
\plot 1.5 3.4 2.5 3.4 /

\setlinear 

\plot 6 0 6 4 /
\plot 7 0 7 4 /
\plot 8 0 8 4 /
\plot 9 0 9 4 /
\plot 10 0 10 4 /

\plot 6 0 10 0 /
\plot 6 1 10 1 /
\plot 6 2 10 2 /
\plot 6 3 10 3 /
\plot 6 4 10 4 /

\plot 8.5 0.5  8.5 1.5 /
\plot 9.5 0.5 9.5 1.5 /
\plot 8.5 2.5  8.5 3.5 /
\plot 9.5 2.5 9.5 3.5 /

\plot 12 0 12 4 /
\plot 13 0 13 4 /
\plot 14 0 14 4 /
\plot 15 0 15 4 /
\plot 16 0 16 4 /

\plot 12 0 16 0 /
\plot 12 1 16 1 /
\plot 12 2 16 2 /
\plot 12 3 16 3 /
\plot 12 4 16 4 /

\plot 12.5 1.4  15.5 1.4 /
\plot 13.5 1.6 14.5 1.6 /
\plot 12.5 2.6  15.5 2.6 /
\plot 13.5 2.4 14.5 2.4 /

\put {The three permutations $s_1$, $s_2$, and $s_3$ for 
$a=2$ and $b=4$} at 8 -2

\endpicture \]

\begin{theorem}
	\label{S4}
For $a,b \in \{ 2,3,4 \}$, $a \neq b$, $T_{a,b}$
is contained in $\lambda^{-1}(\P_4^2)$ and is isomorphic to 
$S_4$. The kernel of the canonical projection $\pi : T_{a,b} \rightarrow {\rm Out}(\O_4)$
is trivial and the 6 groups $\pi(T_{a,b})$ are mutually distinct.
\end{theorem}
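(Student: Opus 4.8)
The plan is to derive the first two assertions directly from Proposition~\ref{isogroups}, and then to settle mutual distinctness by hand, exploiting the row- and column-band structure recorded just before the statement.

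\medskip
\noindent\textbf{Containment and isomorphism.} First I would record the shapes of the generators: $s_1(x,y)=(x,\tau(y))$ for $x\in\{1,b\}$ and $s_1(x,y)=(x,y)$ otherwise, where $\tau:=(1\,b)(a\,c)$; $s_3$ has the same form with $\{1,b\}$ replaced by $\{a,c\}$; and $s_2(x,y)=(\rho(x),y)$ for $y\in\{b,c\}$ with $s_2(x,y)=(x,y)$ otherwise, where $\rho:=(1\,a)(b\,c)$. Thus $s_1,s_3$ are of the form treated in Proposition~\ref{horizontal_same_columns} and $s_2$ of the form in Proposition~\ref{vertical_same_rows}. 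Each generator leaves the relevant band invariant ($s_1,s_3$ leave $[4]\times\{1,b\}$ and $[4]\times\{a,c\}$ invariant, and $s_2$ fixes each of its columns), so self-compatibility (i.e.\ stability of rank $1$) and pairwise compatibility follow at once from those propositions, and bicompatibility of all three pairs is then routine. Since $\widetilde T_{a,b}\cong S_4$, as already noted, Proposition~\ref{isogroups} gives $T_{a,b}\subseteq\lambda^{-1}(\P_4^2)$ with $T_{a,b}\cong\widetilde T_{a,b}\cong S_4$.

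\medskip
\noindent\textbf{Kernel.} Recall that for $u,v\in\P_4^2$ one has $\pi(\lambda_u)=\pi(\lambda_v)$ iff $v=(p\otimes 1)\,u\,(1\otimes p^{-1})$ for some $p\in S_4$; in particular $\lambda_v$ is inner iff $v=p\otimes p^{-1}$. If such a $v$ lies in $\widetilde T_{a,b}$, then, since every element of $\widetilde T_{a,b}$ leaves $\{1,a\}\times[4]$ and $[4]\times\{1,b\}$ invariant, and $(p\otimes p^{-1})$ carries these respectively to $p(\{1,a\})\times[4]$ and $[4]\times p^{-1}(\{1,b\})$, we get $p(\{1,a\})=\{1,a\}$ and $p(\{1,b\})=\{1,b\}$. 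As $a\neq b$ this forces $p(1)=1$ and hence $p=1$, so the kernel is trivial and $\pi(T_{a,b})\cong S_4$.

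\medskip
\noindent\textbf{Distinctness.} Suppose $\pi(T_{a,b})=\pi(T_{a',b'})$. Taking the horizontal generator $s_1$, there are $g\in\widetilde T_{a',b'}$ and $p\in S_4$ with $g=(p^{-1}\otimes 1)\,s_1\,(1\otimes p)$, and from the form of $s_1$ one computes $g(x,y)=(p^{-1}(x),p(y))$ for $x\in\{a,c\}$ and $g(x,y)=(p^{-1}(x),\tau(p(y)))$ for $x\in\{1,b\}$. Invariance of the column band $[4]\times\{1,b'\}$ under $g$ forces $p(\{1,b'\})=\{1,b'\}$ (from rows $a,c$) and then $\tau(\{1,b'\})=\{1,b'\}$ (from rows $1,b$); since $\tau(1)=b\neq 1$ this yields $b'=b$. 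If moreover $a\neq a'$, then $a'=c$, so $\widetilde T_{a',b'}=\widetilde T_{c,b}$, whose row band $\{1,c\}\times[4]$ must be preserved. Applying the same recipe to the mixed generator $s_2$, with $h=(q^{-1}\otimes 1)\,s_2\,(1\otimes q)\in\widetilde T_{c,b}$, I find $h(x,y)=(q^{-1}(x),q(y))$ when $q(y)\in\{1,a\}$ and $h(x,y)=(q^{-1}(\rho(x)),q(y))$ when $q(y)\in\{b,c\}$. Preservation of $\{1,c\}\times[4]$ then forces simultaneously $q(\{1,c\})=\{1,c\}$ (from columns with $q(y)\in\{1,a\}$) and $q(\{1,c\})=\rho(\{1,c\})=\{a,b\}$ (from columns with $q(y)\in\{b,c\}$), which is impossible. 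Hence $(a,b)=(a',b')$, and the six groups $\pi(T_{a,b})$ are mutually distinct.

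\medskip
\noindent\textbf{Main obstacle.} The containment and isomorphism are routine given the earlier machinery; the substance is in distinctness. The difficulty is that inner equivalence acts by the \emph{asymmetric} conjugation $(p\otimes 1)(\cdot)(1\otimes p^{-1})$, so one must carefully track how the row- and column-band partitions transform under it. The key subtlety is that the purely horizontal generators $s_1,s_3$ only detect the column bands and hence recover only $b$; recovering $a$ demands the partially vertical generator $s_2$, whose mixed behaviour produces the two incompatible constraints on $q$ that close the case $b=b'$, $a\neq a'$.
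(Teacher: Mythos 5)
Your kernel and distinctness arguments are sound, but the first part has a genuine gap, and it sits exactly at the step the paper itself flags as the remaining difficulty. Compatibility is an \emph{asymmetric} relation, and the two propositions you invoke each control only one direction of it: Proposition~\ref{horizontal_same_columns} characterizes when a horizontal permutation ($s_1$ or $s_3$) is compatible with an arbitrary $v$, while Proposition~\ref{vertical_same_rows} characterizes when an arbitrary $v$ is compatible with the vertical permutation $s_2$. Together these give ``$s_1$ and $s_3$ are compatible with everything'' and ``everything is compatible with $s_2$'', but they say nothing about the two remaining ordered pairs, namely that $s_2$ is compatible with $s_1$ and that $s_2$ is compatible with $s_3$, i.e.\ the identities $(s_1\otimes 1)(1\otimes s_2)=(1\otimes s_2)(s_1\otimes 1)$ and $(s_3\otimes 1)(1\otimes s_2)=(1\otimes s_2)(s_3\otimes 1)$. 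These are needed to satisfy the hypothesis of Proposition~\ref{isogroups}, so your claim that ``pairwise compatibility follows at once from those propositions, and bicompatibility of all three pairs is then routine'' is false as stated: for the pairs $\{s_1,s_2\}$ and $\{s_2,s_3\}$, one direction is exactly what is left unproved. The paper closes this step with the finer Proposition~\ref{horizontal_same_columns_2_cycles} (two $2$-cycles per row), applied with $t\equiv 0$ and $\sigma=((e,b),(f,b))((e,c),(f,c))$. The gap is easily fillable---either that way, or directly from your normal forms: the two compositions can differ only in the middle coordinate, where one reads $\tau\rho$ and the other $\rho\tau$, and $\tau=(1\,b)(a\,c)$, $\rho=(1\,a)(b\,c)$ commute, being double transpositions in $S_4$---but some such argument must actually be given.

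The remaining two parts are correct and take routes genuinely different from the paper's. For the kernel, you use invariance of the bands $\{1,a\}\times[4]$ and $[4]\times\{1,b\}$ to force $p(\{1,a\})=\{1,a\}$ and $p(\{1,b\})=\{1,b\}$, hence $p=1$; the paper instead observes that a nontrivial inner element $\sigma\otimes\sigma^{-1}$ would produce a common edge of the graphs $G_R$ and $G_C$ of Section~\ref{bicomp_subgs}, whose edge sets are disjoint for these six subgroups. For distinctness, your scheme---conjugating $s_1$ forces $b'=b$ via column-band invariance, then conjugating $s_2$ yields the incompatible constraints $q(\{1,c\})=\{1,c\}$ and $q(\{1,c\})=\rho(\{1,c\})=\{a,b\}$, excluding $a'=c$---is uniform and covers all cases, whereas the paper verifies two representative cases and declares the others similar. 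Once the two missing compatibilities are supplied, your proof is a valid, and in places tidier, alternative to the paper's.
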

\begin{proof}
One can check that the three Coxeter generators $s_1$, $s_2$ and $s_3$ are mutually bicompatible for all $a$ and $b$ as in the statement.
Indeed, by Proposition \ref{horizontal_same_columns}, $s_1$ is compatible with itself, $s_2$ and $s_3$ 
(since all the $s_i$'s leave $[4] \times \{1,b\}$ invariant).
Similarly, $s_3$ is compatible with $s_1$, $s_2$ and itself 
(since all the $s_i$'s leave $[4] \times \{a,c\}$ invariant).
Also, $s_2$ is compatible with itself by Proposition \ref{vertical_same_rows} (since $s_2$ leaves $\{b,c\}\times [4])$ invariant).
It remains to show that $s_2$ is compatible with both $s_1$ and with $s_3$.
In order to see this, 
we apply Proposition \ref{horizontal_same_columns_2_cycles}
with $t(\alpha,x) = 0$ for all $\alpha \in \{e,f\}$ and $x \in [4]$, and $\sigma = ((e,c),(f,c))((e,b),(f,b)) $.
Therefore, the first claim of the theorem follows immediately from Proposition \ref{isogroups}. 
The remaining two claims are consequences of the next two propositions.
\end{proof}

See the paragraph preceeding Lemma \ref{samerow} for some relevant definition.
\begin{proposition}
The kernel of the canonical projection from any of the
subgroups in Figure 1 to ${\rm Out} (\O_4)$ is trivial.
\end{proposition}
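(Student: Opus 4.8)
The plan is to reduce the statement to a short combinatorial check by first recalling the standard innerness criterion in this setting. For $g\in\P_4^2$ the automorphism $\lambda_g$ is inner exactly when $\pi(\lambda_g)=\pi(\lambda_{{\rm id}})$, and by the Proposition at the beginning of Section~\ref{prelim} the conjugating unitary must then lie in $\P_4^1\simeq S_4$; writing it as a permutation $p$, one has ${\rm ad}(p)=\lambda_{p\varphi(p^*)}$, and $p\varphi(p^*)$ corresponds to $(p\otimes 1)(1\otimes p^{-1})=p\otimes p^{-1}$. Thus $\lambda_g$ is inner if and only if $g=p\otimes p^{-1}$ for some $p\in S_4$. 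Since $\lambda$ restricts to an isomorphism $\widetilde{T}_{a,b}\to T_{a,b}$ by Proposition~\ref{isogroups}, it suffices to prove that the only element of $\widetilde{T}_{a,b}$ of the form $p\otimes p^{-1}$ is the identity, uniformly over the six subgroups depicted in Figure 1.

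So I would suppose $g=p\otimes p^{-1}\in\widetilde{T}_{a,b}$ and extract constraints on $p$ from the invariance recorded just before Theorem~\ref{S4}, namely that $\widetilde{T}_{a,b}$ leaves the union of rows $1$ and $a$ invariant and the union of columns $1$ and $b$ invariant. Applying $g$ to $\{1,a\}\times[4]$ yields $p(\{1,a\})\times[4]$, so invariance forces $p(\{1,a\})=\{1,a\}$; applying it to $[4]\times\{1,b\}$ yields $[4]\times p^{-1}(\{1,b\})$, so invariance forces $p(\{1,b\})=\{1,b\}$. Since $p$ stabilises both $\{1,a\}$ and $\{1,b\}$ setwise, it stabilises their intersection $\{1,a\}\cap\{1,b\}=\{1\}$ (using $a\neq b$ and $a,b\neq 1$), so $p(1)=1$. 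Then $p(\{1,a\})=\{1,a\}$ together with $p(1)=1$ gives $p(a)=a$, and likewise $p(b)=b$; as $1,a,b$ exhaust three of the four points of $[4]$, the remaining point $c$ is fixed too, so $p=1$ and $g={\rm id}$.

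Since every step is forced, I do not expect a genuine obstacle: the argument is uniform in $a,b$ and therefore covers all subgroups in Figure 1 at once. The only points requiring care are to state the innerness criterion correctly (inner elements of $\lambda^{-1}(\P_4^2)$ are precisely those indexed by permutations $p\otimes p^{-1}$) and to invoke the already-established invariance of $\widetilde{T}_{a,b}$ rather than re-deriving it from the explicit generators $s_1,s_2,s_3$.
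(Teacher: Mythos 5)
Your proof is correct. The innerness criterion you set up (inner elements of $\lambda^{-1}(\P_4^2)$ are exactly those indexed by $p\otimes p^{-1}$, $p\in S_4$) is the same one the paper uses, but from there the two arguments diverge. The paper invokes the graph machinery of Section \ref{bicomp_subgs}: to a bicompatible subgroup $V$ it attaches the graphs $G_R$ and $G_C$, observes that a nontrivial $\sigma\otimes\sigma^{-1}\in V$ with $\sigma(a)=b$ forces $\{a,b\}$ to be an edge of \emph{both} $G_R$ and $G_C$, and then checks from Figure 1 that $E_R\cap E_C=\emptyset$ for all six subgroups — a contradiction. (Note this is a forward reference: the graphs are only defined in the section after the proposition.) You instead work entirely with material already available in Section \ref{Quadraticsub}: the invariance of $\{1,a\}\times[4]$ and $[4]\times\{1,b\}$ recorded just before Theorem \ref{S4}, from which $p\otimes p^{-1}\in\widetilde{T}_{a,b}$ forces $p$ to stabilize both $\{1,a\}$ and $\{1,b\}$ setwise, hence to fix $1$, then $a$, $b$, and the last point, so $p=1$. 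Both arguments rest on the same structural fact — rows mix only within the blocks $\{1,a\},\{b,c\}$ while columns mix only within $\{1,b\},\{a,c\}$, and a tensor $p\otimes p^{-1}$ cannot respect both transverse block structures nontrivially — but your version is more elementary and self-contained (no dependence on the $G_R$, $G_C$ formalism), while the paper's version is shorter given that machinery and illustrates a more general principle: triviality of the kernel holds for \emph{any} bicompatible subgroup whose two graphs have disjoint edge sets.
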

\begin{proof}
	It is enough to show that there is no element of the form
$\sigma \otimes \sigma^{-1}$, with $\sigma \in S_4 \setminus \{ 1 \}$, in any of the subgroups in Figure 1. Suppose that there is.
Then there are $a,b \in [4]$ such that $\sigma(a)=b$. Therefore,
$\{ a,b \}$ is and edge of both $G_R$ and $G_C$. But it is easy to 
check that $G_R \cap G_C = \emptyset$ for all of the six subgroups
depicted in Figure 1. 
\end{proof}

\begin{proposition}
	The images under the canonical projection of the six
	subgroups in Figure 1 to ${\rm Out} (\O_4)$ are distinct.
\end{proposition}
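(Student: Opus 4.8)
The plan is to show that if $\pi(T_{a,b})=\pi(T_{a',b'})$ then $(a,b)=(a',b')$. The main tool is the criterion recalled in Section~\ref{prelim}: for $u,v\in\P_4^2$ one has $\pi(\lambda_u)=\pi(\lambda_v)$ if and only if $v=(\sigma\otimes1)\,u\,(1\otimes\sigma^{-1})$ for some $\sigma\in S_4$. Thus, assuming $\pi(T_{a,b})=\pi(T_{a',b'})$, for each generator $g$ of $\widetilde{T}_{a,b}$ there are $\sigma\in S_4$ and $h\in\widetilde{T}_{a',b'}$ with $g=(\sigma\otimes1)\,h\,(1\otimes\sigma^{-1})$. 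I will read off the two perfect matchings $G_R(a,b)=\{\{1,a\},\{b,c\}\}$ and $G_C(a,b)=\{\{1,b\},\{a,c\}\}$ of $[4]$ recording which rows (resp.\ columns) a permutation mixes (these are the graphs $G_R,G_C$ from the paragraph preceding Lemma~\ref{samerow}, with $G_R\cap G_C=\emptyset$), and force them to coincide for the two groups. Since the block of $G_R(a,b)$ containing $1$ is $\{1,a\}$ and that of $G_C(a,b)$ is $\{1,b\}$, this yields $a=a'$ and $b=b'$.

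The decisive observation is that in $g=(\sigma\otimes1)\,h\,(1\otimes\sigma^{-1})$ the left factor $(\sigma\otimes1)$ relabels only rows while the right factor $(1\otimes\sigma^{-1})$ relabels only columns. Consequently the set of columns reachable from a column $y$ under $g$ equals the set reachable from $\sigma^{-1}(y)$ under $h$, with no outer $\sigma$ applied, so that the $G_C$-block of $y$ equals the $G_C(a',b')$-block of $\sigma^{-1}(y)$. I will apply this to $g=s_1$, which leaves every row invariant and realises the full column matching $G_C(a,b)$ (indeed $s_1$ mixes exactly the column pairs $\{1,b\}$ and $\{a,c\}$). Writing $s_1=(\sigma\otimes1)\,h\,(1\otimes\sigma^{-1})$ and evaluating the column–reachability identity at the column $1$, the left-hand block is $\{1,b\}$ and the right-hand one is a block of $G_C(a',b')$ containing $1$, hence $\{1,b'\}$; therefore $b=b'$.

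To recover $a=a'$ I would like the symmetric statement for rows, but here the outer factor $(\sigma\otimes1)$ \emph{does} act on rows, so reading row–reachability of $g$ directly only shows that the two row matchings are $\sigma$-conjugate, which is insufficient: the six ordered pairs of distinct matchings of $[4]$ form a single $S_4$-orbit (through the surjection $S_4\to S_3$). The remedy is to pass to the inverse: from $s_2=(\sigma\otimes1)\,h\,(1\otimes\sigma^{-1})$ one gets $s_2^{-1}=(1\otimes\sigma)\,h^{-1}\,(\sigma^{-1}\otimes1)$, in which the row coordinate is acted on only by the \emph{right} factor $(\sigma^{-1}\otimes1)$. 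Hence the rows reachable from $x$ under $s_2^{-1}$ coincide with those reachable from $\sigma^{-1}(x)$ under $h^{-1}$, again with no outer $\sigma$. Applying this to $s_2$, which leaves every column invariant and realises the full row matching $G_R(a,b)$, and evaluating at the row $1$, forces the $G_R(a',b')$-block containing $1$, namely $\{1,a'\}$, to equal $\{1,a\}$; thus $a=a'$.

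Putting the two steps together gives $(a,b)=(a',b')$, so the six subgroups $\pi(T_{a,b})$ are pairwise distinct. The step I expect to be the main obstacle is precisely the row case: because conjugation twists rows by $\sigma$ and columns by $\sigma^{-1}$, the naive matching argument degenerates, and one must notice that switching to $s_2^{-1}$ moves the $\sigma$-twist off the row coordinate, so that the literal row $1$ — the common vertex distinguishing $a$ inside $G_R(a,b)$ — is preserved. Verifying the elementary facts that $s_1$ fixes all rows while realising $G_C(a,b)$ and that $s_2$ fixes all columns while realising $G_R(a,b)$ is a routine inspection of the explicit cycle forms.
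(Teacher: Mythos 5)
Your proposal is correct. The key computations check out: for $g=(\sigma\otimes 1)\,h\,(1\otimes\sigma^{-1})$ one has $g(x,y)_2=h(x,\sigma^{-1}(y))_2$, so output columns are untwisted, and inverting gives $g^{-1}=(1\otimes\sigma)\,h^{-1}\,(\sigma^{-1}\otimes 1)$, so output rows of the inverse are untwisted; combined with the facts that every element of $\widetilde{T}_{a',b'}$ preserves the column partition $\{\{1,b'\},\{a',c'\}\}$ and the row partition $\{\{1,a'\},\{b',c'\}\}$, that $s_1$ (which fixes every row) sends column $1$ onto exactly $\{1,b\}$, and that the involution $s_2$ (which fixes every column) sends row $1$ onto exactly $\{1,a\}$, this yields $b=b'$ and then $a=a'$. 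The paper reaches the contradiction by a genuinely different mechanism: starting from the same relation $g=(\sigma\otimes 1)\,h\,(1\otimes\sigma^{-1})$, it pins down $\sigma$ itself rather than bypassing it. Since the chosen generator fixes every row, its conjugate maps row $i$ to row $\sigma(i)$, so $\sigma$ must preserve the row blocks of the target group; the block-swapping candidates for $\sigma$ are then excluded by an orbit argument (for instance, $(1,2)$ and $(2,1)$ lie in different orbits of the target group), forcing $\sigma=1$ and hence the impossible conclusion that the generator itself lies in the other group. That argument is carried out pair by pair, with the remaining cases declared similar and omitted, whereas your invariant --- the block of the row (respectively column) matching containing the literal index $1$ --- treats all six groups uniformly in one stroke; the price is the one extra observation, your inverse trick, needed to transfer the untwisted-output phenomenon from columns to rows. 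Both proofs rest on the same two ingredients, namely the criterion $\pi(\lambda_u)=\pi(\lambda_v)$ if and only if $v=(\sigma\otimes 1)\,u\,(1\otimes\sigma^{-1})$ for some $\sigma\in S_4$, and the invariance of the row/column partitions of the groups $\widetilde{T}_{a,b}$, but your way of extracting the conclusion is cleaner and avoids both the determination of $\sigma$ and the case analysis.
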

\begin{proof}
Let $s_1^{(1)}$ be the first Coxeter generator of 
$\widetilde{T}_{\{ 1,2 \}, \{ 1,4 \}}$. So
\[
s_1^{(1)} = ((1,1),(1,4))((1,2),(1,3))
((4,1),(4,4))((4,2),(4,3)).
\]
Suppose that there is a $\sigma \in S_4$ such that
$(\sigma \otimes 1) s_1^{(1)} (1 \otimes \sigma^{-1}) \in
\widetilde{T}_{\{ 1,2 \}, \{ 1,3 \}}$.
Since $s_1^{(1)}$ leaves all rows invariant we have that 
$(\sigma \otimes 1) s_1^{(1)} (1 \otimes \sigma^{-1})$ maps
row $i$ to row $\sigma(i)$ for all $i \in [4]$. Since all the
elements of $\widetilde{T}_{\{ 1,2 \}, \{ 1,3 \}}$ leave the union of 
rows $1$ and $2$ invariant we deduce that $\sigma(\{1,2\})=\{1,2\}$
and $\sigma(\{3,4\})=\{ 3,4 \}$. If $\sigma(1)=2$ then
$\sigma(2)=1$ and so 
\[
(\sigma \otimes 1) s_1^{(1)} (1 \otimes \sigma^{-1})(2,1)=(1,2)
\]
which is a contradiction since no element of 
$\widetilde{T}_{\{ 1,2 \}, \{ 1,3 \}}$ has this property ($(1,2)$ and $(2,1)$
are in different orbits of $\widetilde{T}_{\{ 1,2 \}, \{ 1,3 \}}$).
So $\sigma(1)=1$ and $\sigma(2)=2$. Similarly, if $\sigma(3)=4$
then $\sigma(4)=3$ and then 
\[
(\sigma \otimes 1) s_1^{(1)} (1 \otimes \sigma^{-1})(3,4)=(4,3)
\]
which is again a contradiction. Hence $\sigma=1$ so 
$s_1^{(1)} \in \widetilde{T}_{\{ 1,2 \}, \{ 1,3 \}}$ which is also impossible.
This shows that the images of $\widetilde{T}_{\{ 1,2 \}, \{ 1,3 \}}$ and
$\widetilde{T}_{\{ 1,2 \}, \{ 1,4 \}}$ in ${\rm Out} (\O_4)$ are distinct.

Let $s_1^{(4)}$ be the first Coxeter generator of 
$\widetilde{T}_{\{ 1,4 \}, \{ 1,2 \}}$, which is also an element of
$\widetilde{T}_{\{ 1,3 \}, \{ 1,2 \}}$. So
\[
s_1^{(1)} = ((1,1),(1,2))((1,3),(1,4))
((2,1),(2,2))((2,3),(2,4)).
\]
Suppose that there is a $\sigma \in S_4$ such that
$(\sigma \otimes 1) s_1^{(4)} (1 \otimes \sigma^{-1}) \in
\widetilde{T}_{\{ 1,4 \}, \{ 1,3 \}}$.
Since $s_1^{(4)}$ leaves all rows invariant we have that 
$(\sigma \otimes 1) s_1^{(4)} (1 \otimes \sigma^{-1})$ maps
row $i$ to row $\sigma(i)$ for all $i \in [4]$. Since all the
elements of $\widetilde{T}_{\{ 1,4 \}, \{ 1,3 \}}$ leave the union of 
rows $1$ and $4$ invariant we deduce that $\sigma(\{1,4\})=\{1,4\}$
and $\sigma(\{ 2,3 \})=\{ 2,3 \}$. If $\sigma(2)=3$ then
$\sigma(3)=2$ and so 
\[
(\sigma \otimes 1) s_1^{(4)} (1 \otimes \sigma^{-1})(3,2)=(2,3)
\]
which is a contradiction since no element of any of these six
subgroups has this property ($(3,2)$ and $(2,3)$
are in different orbits, as are $(1,4)$ and $(4,1)$).
So $\sigma(2)=2$ and $\sigma(3)=3$. Similarly, if $\sigma(1)=4$
then $\sigma(4)=1$ and then 
\[
(\sigma \otimes 1) s_1^{(4)} (1 \otimes \sigma^{-1})(4,1)=(1,4)
\]
which is again a contradiction. Hence $\sigma=1$ so 
$s_1^{(4)} \in \widetilde{T}_{\{ 1,4 \}, \{ 1,3 \}}$ which is also impossible.
This shows that the images of $\widetilde{T}_{\{ 1,h \}, \{ 1,2 \}}$ and
$\widetilde{T}_{\{ 1,4 \}, \{ 1,3 \}}$ in ${\rm Out} (\O_4)$ are distinct
for any $h \in \{ 3,4 \}$.

The other cases are all similar and are therefore omitted.
\end{proof}

\[  \beginpicture

\setcoordinatesystem units <0.5cm,0.5cm>
\setplotarea x from 7 to 8, y from 4 to 7

\setlinear

\plot -10 10 -10 6 /
\plot -9 10 -9 6 /
\plot -8 10 -8 6 /
\plot -7 10 -7 6 /
\plot -6 10 -6 6 /

\plot -6 6 -10  6 /
\plot -6 7 -10 7 /
\plot -6 8 -10 8 /
\plot -6 9 -10 9 /
\plot -6 10 -10 10 /

\put {\rotatebox{270}{1}} at -9.5 6.5
\put {\rotatebox{270}{2}} at -6.5 6.5
\put {\rotatebox{270}{3}} at -6.5 7.5
\put {\rotatebox{270}{4}} at -9.5 7.5

\color{green} 
\put {\rotatebox{90}{1}} at -8.5 9.5
\put {\rotatebox{90}{2}} at -7.5 9.5
\put {\rotatebox{90}{3}} at -7.5 8.5
\put {\rotatebox{90}{4}} at -8.5 8.5

\color{red} 
\put {1} at -9.5 9.5
\put {2} at -6.5 9.5
\put {3} at -6.5 8.5
\put {4} at -9.5 8.5

\color{blue} 
\put {\rotatebox{180}{1}} at -8.5 6.5
\put {\rotatebox{180}{2}} at -7.5 6.5
\put {\rotatebox{180}{3}} at -7.5 7.5
\put {\rotatebox{180}{4}} at -8.5 7.5

\color{black}
\plot 0 10 0 6 /
\plot 1 10 1 6 /
\plot 2 10 2 6 /
\plot 3 10 3 6 /
\plot 4 10 4 6 /

\plot 4 6 0 6 /
\plot 4 7 0 7 /
\plot 4 8 0 8 /
\plot 4 9 0 9 /
\plot 4 10 0 10 /

\color{black}
\put {\rotatebox{270}{4}} at 0.5 6.5
\color{blue}
\put {\rotatebox{180}{4}} at 1.5 6.5
\color{black}
\put {\rotatebox{270}{3}} at 2.5 6.5
\color{blue}
\put {\rotatebox{180}{3}} at 3.5 6.5

\color{black} 
\put {\rotatebox{270}{1}} at 0.5 7.5
\color{blue}
\put {\rotatebox{180}{1}} at 1.5 7.5
\color{black}
\put {\rotatebox{270}{2}} at 2.5 7.5
\color{blue}
\put {\rotatebox{180}{2}} at 3.5 7.5

\color{red} 
\put {4} at 0.5 8.5
\color{green} 
\put {\rotatebox{90}{4}} at 1.5 8.5
\color{red} 
\put {3} at 2.5 8.5
\color{green} 
\put {\rotatebox{90}{3}} at 3.5 8.5

\color{red} 
\put {1} at 0.5 9.5
\color{green}
\put {\rotatebox{90}{1}} at 1.5 9.5
\color{red}
\put {2} at 2.5 9.5
\color{green} 
\put {\rotatebox{90}{2}} at 3.5 9.5

\color{black} 

\plot 10 10 10 6 /
\plot 11 10 11 6 /
\plot 12 10 12 6 /
\plot 13 10 13 6 /
\plot 14 10 14 6 /

\plot 14 6 10 6 /
\plot 14 7 10 7 /
\plot 14 8 10 8 /
\plot 14 9 10 9 /
\plot 14 10 10 10 /

\color{black} 
\put {\rotatebox{270}{1}} at 10.5 6.5
\color{blue}
\put {\rotatebox{180}{2}} at 11.5 6.5
\put {1} at 12.5 6.5
\color{black}
\put {\rotatebox{270}{2}} at 13.5 6.5

\color{red} 
\put {4} at 10.5 7.5
\color{green}
\put {\rotatebox{90}{3}} at 11.5 7.5
\put {\rotatebox{90}{4}} at 12.5 7.5
\color{red}
\put {3} at 13.5 7.5

\color{black} 
\put {\rotatebox{270}{4}} at 10.5 8.5
\color{blue} 
\put {\rotatebox{180}{3}} at 11.5 8.5 
\put {\rotatebox{180}{4}} at 12.5 8.5
\color{black} 
\put {\rotatebox{270}{3}} at 13.5 8.5

\color{red} 
\put {1} at 10.5 9.5
\color{green}
\put {\rotatebox{90}{2}} at 11.5 9.5
\put {\rotatebox{90}{1}} at 12.5 9.5
\color{red} 
\put {2} at 13.5 9.5

\color{black}
 
\plot -10 4 -10 0 /
\plot -9 4 -9 0 /
\plot -8 4 -8 0 /
\plot -7 4 -7 0 /
\plot -6 4 -6 0 /

\plot -6 0 -10 0 /
\plot -6 1 -10 1 /
\plot -6 2 -10 2 /
\plot -6 3 -10 3 /
\plot -6 4 -10 4 /

\color{red} 
\put {4} at -9.5 0.5
\put {3} at -8.5 0.5
\color{green}
\put {\rotatebox{90}{3}} at -7.5 0.5
\put {\rotatebox{90}{4}} at -6.5 0.5

\color{black} 
\put {\rotatebox{270}{4}} at -9.5 1.5
\put {\rotatebox{270}{3}} at -8.5 1.5
\color{blue}
\put {\rotatebox{180}{3}} at -7.5 1.5
\put {\rotatebox{180}{4}} at -6.5 1.5

\color{black} 
\put {\rotatebox{270}{1}} at -9.5 2.5 
\put {\rotatebox{270}{2}} at -8.5 2.5
\color{blue} 
\put {\rotatebox{180}{2}} at -7.5 2.5
\put {\rotatebox{180}{1}} at -6.5 2.5

\color{red} 
\put {1} at -9.5 3.5
\put {2} at -8.5 3.5
\color{green}
\put {\rotatebox{90}{2}} at -7.5 3.5
\put {\rotatebox{90}{1}} at -6.5 3.5

\color{black}

\plot 0 4 0 0 /
\plot 1 4 1 0 /
\plot 2 4 2 0 /
\plot 3 4 3 0 /
\plot 4 4 4 0 /

\plot 4 0 0 0 /
\plot 4 1 0 1 /
\plot 4 2 0 2 /
\plot 4 3 0 3 /
\plot 4 4 0 4 /

\color{red} 
\put {4} at 0.5 0.5
\color{green} 
\put {\rotatebox{90}{3}} at 1.5 0.5
\color{red}
\put {3} at 2.5 0.5
\color{green} 
\put {\rotatebox{90}{4}} at 3.5 0.5

\color{black} 
\put {\rotatebox{270}{1}} at 0.5 1.5
\color{blue} 
\put {\rotatebox{180}{2}} at 1.5 1.5
\color{black}
\put {\rotatebox{270}{2}} at 2.5 1.5
\color{blue} 
\put {\rotatebox{180}{1}} at 3.5 1.5

\color{black} 
\put {\rotatebox{270}{4}} at 0.5 2.5 
\color{blue} 
\put {\rotatebox{180}{3}} at 1.5 2.5
\color{black} 
\put {\rotatebox{270}{3}} at 2.5 2.5
\color{blue} 
\put {\rotatebox{180}{4}} at 3.5 2.5

\color{red} 
\put {1} at 0.5 3.5
\color{green} 
\put {\rotatebox{90}{2}} at 1.5 3.5
\color{red}
\put {2} at 2.5 3.5
\color{green} 
\put {\rotatebox{90}{1}} at 3.5 3.5

\color{black} 

\plot 10 4 10 0 /
\plot 11 4 11 0 /
\plot 12 4 12 0 /
\plot 13 4 13 0 /
\plot 14 4 14 0 /

\plot 14 0 10 0 /
\plot 14 1 10 1 /
\plot 14 2 10 2 /
\plot 14 3 10 3 /
\plot 14 4 10 4 /

\color{black} 
\put {\rotatebox{270}{4}} at 10.5 0.5
\put {\rotatebox{270}{3}} at 11.5 0.5
\color{blue}
\put {\rotatebox{180}{4}} at 12.5 0.5
\put {\rotatebox{180}{3}} at 13.5 0.5

\color{red} 
\put {4} at 10.5 1.5
\put {3} at 11.5 1.5
\color{green}
\put {\rotatebox{90}{4}} at 12.5 1.5
\put {\rotatebox{90}{3}} at 13.5 1.5

\color{black} 
\put {\rotatebox{270}{1}} at 10.5 2.5 
\put {\rotatebox{270}{2}} at 11.5 2.5
\color{blue} 
\put {\rotatebox{180}{1}} at 12.5 2.5
\put {\rotatebox{180}{2}} at 13.5 2.5

\color{red} 
\put {1} at 10.5 3.5
\put {2} at 11.5 3.5
\color{green}
\put {\rotatebox{90}{1}} at 12.5 3.5
\put {\rotatebox{90}{2}} at 13.5 3.5

\color{black}
 \put {The six bicompatible subgroups of $S([4]^2)$ isomorphic to $S_4$} at 2 -2.5
 
 \put{\bf{Fig.1}} at 2, -4

\endpicture \]

\section{Bicompatible subgroups and embeddings}
\label{bicomp_subgs}
In this next to last section we derive a series of results
about bicompatible subgroups and then we apply them to
the classification of all the bicompatible subgroups of 
$S([4]^2)$ that are isomorphic to $S_4$. We do this because 
we feel that 
they may be of independent interest, and because they show
how some of the subgroups presented and studied in Section 5
were found. The results in the first subsection hold for 
any $n$, while those in the second subsection are specific
to $n=4$.

\subsection{Bicompatible subgroups}

Let $V$ be a bicompatible subgroup of $S([n]^2)$. We attach two
graphs to $V$ as follows. We let $G_R :=([n],E_R)$ where, for any
$a,b \in [n]$, $\{ a,b \} \in E_R$ if and only if
there are $x,y \in [n]$ and $v \in V$ such that $v(a,x)=(b,y)$.
Thus, there is an edge in $G_R$ between $a$ and $b$ if and only 
if there is some element of $V$ that maps some element in row
$a$ to some element in row $b$. Similarly, we define
$G_C :=([n],E_C)$ by letting $\{ a,b \} \in E_C$ if and only if
there is some element of $V$ that maps some element in column
$a$ to some element in column $b$.

\begin{lemma} \label{samerow}
	Let $a,b \in [n]$ be in the same connected component of
	$G_R$, $c \in [n]$, and $v \in V$. Then
	$v(c,a)$ and $v(c,b)$ are in the same row.
\end{lemma}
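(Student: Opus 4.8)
The plan is to distill bicompatibility into a single scalar identity about the row-coordinate of the elements of $V$, and then to propagate that identity along a path in $G_R$ joining $a$ to $b$.

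First I would write out the compatibility equation in coordinates. For $u,v \in S([n]^2)$, abbreviating $u(y,z)=(u_1(y,z),u_2(y,z))$ and $v(x,y)=(v_1(x,y),v_2(x,y))$, evaluating both sides of $(v\otimes 1)(1\otimes u)=(1\otimes u)(v\otimes 1)$ at an arbitrary $(x,y,z)\in[n]^3$ and comparing the three coordinates shows that $u$ is compatible with $v$ if and only if
\begin{align*}
v_1(x,u_1(y,z)) &= v_1(x,y), \\
v_2(x,u_1(y,z)) &= u_1(v_2(x,y),z), \\
u_2(y,z) &= u_2(v_2(x,y),z)
\end{align*}
for all $x,y,z\in[n]$. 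The only relation I will use is the first one, which says that the row $v_1(x,y)$ into which $v$ sends a point of row $x$ is unchanged when its column $y$ is replaced by any column of the form $u_1(y,z)$, that is, by any row that some $u\in V$ reaches from row $y$.

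Next I would treat the case of a single edge. If $\{a,b\}\in E_R$ then by definition there are $w\in V$ and $x_0,y_0\in[n]$ with $w(a,x_0)=(b,y_0)$, so $w_1(a,x_0)=b$. Since $V$ is bicompatible, $w$ is compatible with every $v\in V$, so the first identity above (with $u:=w$) gives $v_1(x,w_1(y,z))=v_1(x,y)$ for all $x,y,z$. Specializing $x:=c$, $y:=a$, $z:=x_0$ yields $v_1(c,b)=v_1(c,a)$; that is, $v(c,a)$ and $v(c,b)$ lie in the same row for every $c\in[n]$ and every $v\in V$.

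Finally, if $a$ and $b$ lie in the same connected component of $G_R$, I would fix a path $a=a_0,a_1,\dots,a_m=b$ with $\{a_\ell,a_{\ell+1}\}\in E_R$ and apply the edge case to each step, obtaining $v_1(c,a_\ell)=v_1(c,a_{\ell+1})$ and hence, by transitivity, $v_1(c,a)=v_1(c,b)$, which is the claim. The only delicate point is the bookkeeping in the first step: one must track which tensor factor carries the row index and invoke compatibility in the correct order (here the witness $w$, not $v$, must sit in the ``inner'' position $1\otimes\,\cdot\,$), and it is exactly at this point that bicompatibility, rather than one-sided compatibility, is needed.
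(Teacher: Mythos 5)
Your proof is correct and takes essentially the same route as the paper's: both reduce to a single edge $\{a,b\}\in E_R$, invoke the compatibility relation $(v\otimes 1)(1\otimes u)=(1\otimes u)(v\otimes 1)$ with the edge's witness placed in the $1\otimes u$ slot, and conclude by comparing first coordinates. Your expansion of compatibility into three scalar identities is merely a more explicit write-up of the very computation the paper performs on the specific triples $(c,a,y)$ and $(c,b,x)$.
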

\begin{proof}
	We may assume that $\{ a,b \} \in E_R$. Then there are 
	$x,y \in [n]$ and $u \in V$ such that $u(b,x)=(a,y)$.
	Since $V$ is bicompatible we therefore have that
	\[
	(v \otimes 1)(c,a,y)= (v \otimes 1)(1 \otimes u)(c,b,x)=
	(1 \otimes u)(v \otimes 1)(c,b,x).
	\]
	Hence the first coordinates of $(v \otimes 1)(c,a,y)$
	and $(v \otimes 1)(c,b,x)$ are the same.
\end{proof}

\noindent
There is an analogous result for $G_C$ and columns.
\begin{lemma}
	\label{samecol}
	Let $a,b \in [n]$ be in the same connected component of
	$G_C$, $r \in [n]$, and $v \in V$. Then
	$v(a,r)$ and $v(b,r)$ are in the same column.
\end{lemma}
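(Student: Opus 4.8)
The plan is to transpose the proof of Lemma~\ref{samerow}, interchanging rows with columns and invoking the other half of the bicompatibility hypothesis.

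First I would reduce to the case in which $\{a,b\}$ is an edge of $G_C$. Since lying in the same column is an equivalence relation, hence transitive, if $a=a_0,a_1,\ldots,a_m=b$ is a path in $G_C$ then establishing the statement for each consecutive pair $\{a_{\ell-1},a_\ell\}$ and chaining the resulting equalities of columns yields it for the pair $\{a,b\}$. So it suffices to treat a single edge.

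Assume then that $\{a,b\}\in E_C$. By the definition of $E_C$ there are $x,y\in[n]$ and $u\in V$ with $u(x,a)=(y,b)$. Now I would use that $v$ is compatible with $u$, i.e. $(u\otimes 1)(1\otimes v)=(1\otimes v)(u\otimes 1)$ in $S([n]^3)$, and evaluate both sides at the triple $(x,a,r)$. On one side, $(1\otimes v)(u\otimes 1)(x,a,r)=(1\otimes v)(y,b,r)=(y,v(b,r))$, whose last coordinate is the second coordinate (column) of $v(b,r)$. On the other side, $(u\otimes 1)(1\otimes v)(x,a,r)=(u\otimes 1)(x,v(a,r))$, whose last coordinate is the second coordinate of $v(a,r)$. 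Comparing the third coordinates of these two equal elements of $[n]^3$ shows that $v(a,r)$ and $v(b,r)$ have the same second coordinate, i.e. lie in the same column.

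Because this is just the row-column transpose of Lemma~\ref{samerow}, I expect no genuine obstacle; the only subtlety is to select the correct instance of bicompatibility, namely the one in which $u$ acts on the first two tensor factors and $v$ on the last two, so that the comparison isolates the column coordinate rather than the row coordinate.
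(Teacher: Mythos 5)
Your proposal is correct and is essentially the paper's own argument: the paper omits the proof of Lemma~\ref{samecol}, noting only that it is the analogue of Lemma~\ref{samerow}, and your proof is exactly that transposed argument (reduce to an edge $\{a,b\}\in E_C$, pick $u\in V$ with $u(x,a)=(y,b)$, and compare third coordinates in $(u\otimes 1)(1\otimes v)(x,a,r)=(1\otimes v)(u\otimes 1)(x,a,r)$). You also correctly identify which instance of bicompatibility is needed, namely $(u\otimes 1)(1\otimes v)=(1\otimes v)(u\otimes 1)$, mirroring the paper's use of the other identity in the row case.
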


As a consequence of the the last two lemmas we obtain the following result which shows that, on each connected component of both $G_R$ and $G_C$, each element of $V$ acts as a tensor product.
\begin{corollary}
	\label{conn_comp_of_both}
	Let $A \subseteq [n]$ be a connected 
	component of both $G_R$ and $G_C$, and $v \in V$.
	Then there are $\tau_v, \sigma_v \in S(A)$ such that
	\[
	v(x,y)=(\sigma_v(x),\tau_v(y))
	\]
	for all $x,y \in A$
\end{corollary}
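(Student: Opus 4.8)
The plan is to extract from Lemmas \ref{samerow} and \ref{samecol} that the first coordinate of $v(x,y)$ is independent of $y$ and the second coordinate is independent of $x$, once $x,y$ are restricted to $A$. Concretely, fix $x \in A$; since $A$ is a single connected component of $G_R$, any two $y, y' \in A$ lie in the same $G_R$-component, so Lemma \ref{samerow} (with $c = x$) gives $v_1(x,y) = v_1(x,y')$. Hence $v_1(x,y)$ depends only on $x$, and I would define $\sigma_v(x) := v_1(x,y)$ for any $y \in A$. Symmetrically, fixing $y \in A$ and applying Lemma \ref{samecol} (with $r = y$) across the $G_C$-component $A$ shows that $v_2(x,y)$ depends only on $y$, so I set $\tau_v(y) := v_2(x,y)$. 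Together these immediately yield $v(x,y) = (\sigma_v(x), \tau_v(y))$ for all $x,y \in A$.

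Next I would check that $\sigma_v$ and $\tau_v$ actually take values in $A$. For $x \in A$ pick any $y \in A$: then $v$ sends $(x,y)$, which lies in row $x$, to $v(x,y)$, which lies in row $\sigma_v(x)$. By the very definition of $G_R$ this forces $\{x, \sigma_v(x)\} \in E_R$ whenever $x \neq \sigma_v(x)$, so in any case $\sigma_v(x)$ lies in the same $G_R$-component as $x$, namely $A$. The same argument with $G_C$ shows $\tau_v(y) \in A$. Thus $\sigma_v, \tau_v$ are well-defined maps $A \to A$.

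Finally, for bijectivity I would use injectivity of $v$ together with finiteness of $A$. If $\sigma_v(x_1) = \sigma_v(x_2)$ for $x_1, x_2 \in A$, then for any fixed $y \in A$ we get $v(x_1,y) = (\sigma_v(x_1),\tau_v(y)) = (\sigma_v(x_2),\tau_v(y)) = v(x_2,y)$, whence $x_1 = x_2$ since $v$ is injective; thus $\sigma_v$ is an injection of the finite set $A$ into itself and hence a bijection, and an identical argument handles $\tau_v$. The one point deserving care — and the only genuine obstacle — is ensuring that the values of $\sigma_v$ and $\tau_v$ remain inside $A$ rather than merely in $[n]$; this is precisely where the hypothesis that $A$ is a connected component (i.e. a maximal connected set) of \emph{both} $G_R$ and $G_C$, rather than of just one of them, enters. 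Everything else is a routine consequence of the two preceding lemmas.
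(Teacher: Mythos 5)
Your proof is correct and takes essentially the same route as the paper's: both define $\sigma_v$ and $\tau_v$ by projecting $v$ onto its coordinates (using Lemmas \ref{samerow} and \ref{samecol} for the coordinate independence) and deduce bijectivity from the injectivity of $v$ on the finite set $A$. The only difference is that you make explicit two points the paper leaves implicit, namely that the value of each coordinate is independent of the auxiliary variable and that $\sigma_v$, $\tau_v$ actually take values in $A$ because $A$ is a maximal connected set in both graphs.
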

\begin{proof}
	Let $a \in A$. We define a map $\sigma_v : A \rightarrow A$ by letting
	\[
	\sigma_v(x) := v(x,a)_1
	\]  
	(the first coordinate of $v(x,a)$) for all $x \in A$.
	Then $\sigma_v$ is injective since, if $\sigma_v(x)=\sigma_v(y)$
	for some $x,y \in A$ then $v(x,a)$ and $v(y,a)$ are in the same 
	row. But, by Lemma \ref{samecol}, 
	$v(x,a)$ and $v(y,a)$ are in the same
	column. So $v(x,a)=v(y,a)$ and hence $(x,a)=(y,a)$. Hence
	$\sigma_v$ is injective and therefore a permutation. 
	Similarly for $\tau_v$.
\end{proof}

The next lemma shows that isolated vertices in one of the two 
graphs often force other vertices to be isolated as well.
\begin{lemma} \label{isoconn}
	Let $t \in [n]$ be an isolated vertex of $G_R$. Then the connected
	component of $t$ in $G_C$ contains only isolated vertices of $G_R$.
\end{lemma}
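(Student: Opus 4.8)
The plan is to reduce the statement to a single propagation step along edges of $G_C$ and then induct. First I would restate the hypothesis concretely: $t$ is isolated in $G_R$ if and only if no element of $V$ moves a cell of row $t$ to a different row, i.e. $u_1(t,y)=t$ for all $u\in V$ and all $y\in[n]$ (equivalently, $V$ leaves $\{t\}\times[n]$ setwise invariant). More generally, a vertex $b\in[n]$ is isolated in $G_R$ precisely when $u_1(b,c)=b$ for every $u\in V$ and every $c\in[n]$. Since each connected component of $G_C$ is path-connected, it suffices to prove the single-edge claim: \emph{if $b$ is isolated in $G_R$ and $\{b,b'\}\in E_C$, then $b'$ is isolated in $G_R$}; the lemma then follows by induction along a $G_C$-path from $t$ to any vertex $s$ of its component.

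For the single-edge claim I would argue in the spirit of the proofs of Lemmas~\ref{samerow} and \ref{samecol}, but extracting a \emph{mixed} coordinate from the compatibility identity rather than a pure row- or column-statement. Since $\{b,b'\}\in E_C$, fix $v\in V$ and $a\in[n]$ with $v_2(a,b)=b'$. Now let $u\in V$ and $c\in[n]$ be arbitrary; because $V$ is bicompatible, $u$ is compatible with $v$, so $(v\otimes 1)(1\otimes u)=(1\otimes u)(v\otimes 1)$. Applying both sides to $(a,b,c)\in[n]^3$ and comparing the second coordinate yields the identity
\[
v_2\big(a,\,u_1(b,c)\big)=u_1\big(v_2(a,b),\,c\big).
\]
The right-hand side equals $u_1(b',c)$. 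Since $b$ is isolated in $G_R$ we have $u_1(b,c)=b$, so the left-hand side collapses to $v_2(a,b)=b'$. Hence $u_1(b',c)=b'$, and as $u\in V$ and $c\in[n]$ were arbitrary this says exactly that $V$ leaves row $b'$ invariant, i.e. $b'$ is isolated in $G_R$.

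Finally, given any $s$ in the $G_C$-component of $t$, I would choose a path $t=x_0,x_1,\ldots,x_k=s$ in $G_C$; starting from the hypothesis that $x_0=t$ is isolated in $G_R$ and applying the single-edge claim to each consecutive adjacent pair, I conclude by induction that every $x_i$, and in particular $s$, is isolated in $G_R$.

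The main obstacle is recognizing \emph{which} consequence of bicompatibility to use. The pure statements of Lemmas~\ref{samerow} and \ref{samecol} (that $V$ preserves the columns indexed by the $G_C$-component and that row-$t$ images stay in row $t$) show only that row $s$ maps to cells lying one per column and never into row $t$; they do not by themselves pin down the row of $v(s,\cdot)$. The decisive input is the mixed second-coordinate identity above, which couples the row-action of $u$ at $b$ with the column-action of $v$, and which combined with the rigidity $u_1(b,c)=b$ forces the row-rigidity of every $G_C$-neighbour. I would also carefully note the two small points used throughout: that one only needs $u$ compatible with the single fixed $v$ (so $v,a$ stay fixed while $u$ ranges over all of $V$), and the elementary reformulation of ``isolated in $G_R$'' as $u_1(b,\cdot)\equiv b$.
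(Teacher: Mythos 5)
Your proof is correct and takes essentially the same route as the paper's: both reduce to a single edge of $G_C$ (with induction along a path), fix a witness $w$ of that edge acting via $w\otimes 1$ on the first two coordinates, apply the bicompatibility identity to a triple whose middle entry is the isolated row index (which simultaneously serves as the column index of the witness's preimage), and use row-invariance of the isolated row to force every element of $V$ to preserve the neighbouring row. Your ``mixed second-coordinate identity'' is just the coordinate-wise restatement of the paper's chain of equalities, with the roles of the letters $u$ and $v$ interchanged.
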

\begin{proof}
	By our hypotheses, and the definition of $G_R$, $V$ leaves row $t$ 
	invariant. Let $s \in [n]$ be in the same connected component as $t$ in $G_C$. We may assume that $\{ s,t \} \in E_C$. Then there 
	are $x,y \in [n]$ and $u \in V$ such that $u(x,t)=(y,s)$. We wish
	to show that $V$ leaves row $s$ invariant. Let $z \in [n]$ and
	$v \in V$. Since $V$ is bicompatible we have that
	\begin{align*}
		(1 \otimes v)(y,s,z) & =(1 \otimes v)(u \otimes 1)(x,t,z)
		=(u \otimes 1)(1 \otimes v)(x,t,z) \\
		& =(u \otimes 1)(x,t,z')=(y,s,z')
	\end{align*}
	for some $z' \in [n]$, so $v(s,z)=(s,z')$, as claimed.
\end{proof}

A similar statement holds if one interchanges ``$R$'' and 
``$C$''.
We also note the following consequence of Lemma \ref{samerow}.
\begin{lemma}
	\label{connected->complete}
	If $G_R$ is connected, then	$G_R$ is a complete graph.
\end{lemma}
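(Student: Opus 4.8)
The plan is to show that, once $G_R$ is connected, every element of $V$ must permute the rows of $[n]^2$ as whole blocks; completeness of $G_R$ then follows immediately from the group structure of $V$.

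First I would exploit Lemma \ref{samerow} in a global way. Fix $v \in V$ and $c \in [n]$. Since $G_R$ is connected, any two column indices $a,b \in [n]$ lie in the same connected component of $G_R$, so Lemma \ref{samerow} guarantees that $v(c,a)$ and $v(c,b)$ lie in the same row. Letting $a,b$ range over all of $[n]$, this shows that the entire row $\{c\} \times [n]$ is mapped by $v$ into a single row. As $v$ is a bijection and both a row and its image have exactly $n$ elements, $v$ must map $\{c\} \times [n]$ \emph{onto} a full row $\{\rho_v(c)\} \times [n]$ for some $\rho_v(c) \in [n]$. Doing this for every $c$ produces a permutation $\rho_v \in S_n$ with $v(\{c\} \times [n]) = \{\rho_v(c)\} \times [n]$ for all $c$, and one checks at once that $v \mapsto \rho_v$ is a homomorphism from $V$ into $S_n$.

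With this in hand the definition of $G_R$ simplifies: since each $v$ sends whole rows to whole rows, one has $\{a,b\} \in E_R$ if and only if there exists $v \in V$ with $\rho_v(a)=b$. I would then argue that the connected components of $G_R$ coincide with the orbits of $\Gamma := \{\rho_v : v \in V\} \leq S_n$ acting on $[n]$. Indeed, if $a$ and $b$ lie in the same component there is a path $a = c_0, c_1, \ldots, c_m = b$ of edges, and choosing $v_i \in V$ with $\rho_{v_i}(c_i)=c_{i+1}$ and composing the $v_i$ inside the group $V$ yields a single element $w = v_{m-1} \cdots v_0 \in V$ with $\rho_w(a) = \rho_{v_{m-1}} \circ \cdots \circ \rho_{v_0}(a) = b$. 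Hence $\{a,b\} \in E_R$ \emph{directly}, so each connected component is in fact a complete subgraph. In particular, if $G_R$ is connected it consists of a single component and is therefore complete.

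The main obstacle --- really the crux --- is the first step: recognizing that Lemma \ref{samerow}, although stated for a single pair of column indices, upgrades under the connectivity hypothesis to the much stronger statement that every $v \in V$ permutes entire rows. Once this block structure is established, the remainder is a routine use of the fact that $V$ (and hence its image $\Gamma$) is a group, which lets any path of edges be collapsed to a single edge.
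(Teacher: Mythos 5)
Your proof is correct and follows essentially the same route as the paper's: both use Lemma \ref{samerow} together with connectivity to show that each $v \in V$ permutes whole rows, giving a row permutation $\sigma_v$ (your $\rho_v$), and then exploit the group structure of $V$ to compose witnesses. The only cosmetic difference is that you collapse an entire path at once via the orbit/homomorphism language, whereas the paper records the same mechanism as transitivity of the edge relation (two adjacent edges compose, via $uw \in V$, to a single edge).
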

\begin{proof}
	Let $v \in V$. Then, by Lemma \ref{samerow},
	there is $\sigma_v \in S_n$ such that 
	$v(a,b)$ is in row $\sigma_v(a)$ for all $(a,b) \in [n]^2$.
	If $\{a,b\} \in E_R$, there are $g,h \in [n]$ and $w \in V$ such that $w(a,g)=(b,h)$, so that $\sigma_w(a) = b$.
	Similarly, if $\{b,c\} \in E_R$, there is $u \in V$ such that $\sigma_u(b) = c$.
	Hence the first component of $(uw)(a,g) = u(b,h)$ is $\sigma_u(b) = c$ and $\{a,c\} \in E_R$.
\end{proof}

There is of course an analogous statement for $G_C$.

\medskip 
Assume now that $G_R$ is a complete graph.
Let $v \in V$. Then, by Lemma \ref{samerow},
there is $\sigma_v \in S_n$ such that 
$v(a,b)$ is in row $\sigma_v(a)$ for all $(a,b) \in [n]^2$.
We point out the following simple observation.
\begin{proposition}
	\label{homsigma}
	Suppose that $G_R$ is a complete graph. Then
	the map $\sigma: V \to S_n$ mapping $v$ to $\sigma_v$ is a group homomorphism.
\end{proposition}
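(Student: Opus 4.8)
The plan is to read off the homomorphism property directly from the defining relation of $\sigma_v$, exploiting nothing more than the fact that rows are tracked by first coordinates. Recall that, since $G_R$ is complete (hence connected), Lemma \ref{samerow} guarantees that for each $v \in V$ the row of $v(a,b)$ depends only on the first coordinate $a$; this common row is by definition $\sigma_v(a)$, so that $v(a,b)$ lies in row $\sigma_v(a)$ for \emph{every} $b \in [n]$. The one fact I intend to use is precisely this independence of the second coordinate, which lets me propagate rows through a composition without ever computing second coordinates.

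Concretely, I would fix $v,w \in V$ and an arbitrary pair $(a,b) \in [n]^2$, and compute the row of $(vw)(a,b) = v(w(a,b))$ in two ways. Writing $w(a,b) = (\sigma_w(a), c)$ with $c := w(a,b)_2$, where the first coordinate is $\sigma_w(a)$ by the defining property of $\sigma_w$, the point $w(a,b)$ lies in row $\sigma_w(a)$; applying $v$ therefore sends it into row $\sigma_v(\sigma_w(a))$ by the defining property of $\sigma_v$. On the other hand, $(vw)(a,b)$ lies in row $\sigma_{vw}(a)$ by the defining property of $\sigma_{vw}$. Comparing the two computations gives $\sigma_{vw}(a) = \sigma_v(\sigma_w(a))$ for every $a \in [n]$, which is exactly $\sigma_{vw} = \sigma_v \sigma_w$ in $S_n$. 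As $v,w$ were arbitrary, $\sigma$ is a group homomorphism.

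I do not expect a serious obstacle here: the substance of the argument has already been absorbed into the construction of the permutations $\sigma_v$ (via Lemma \ref{samerow} together with the completeness of $G_R$), and what remains is the purely formal observation that ``the row of the image'' is multiplicative under composition. The only points deserving a moment's care are that $\sigma_v$ is genuinely a member of $S_n$ (it is injective because $v$ is a bijection of $[n]^2$ sending the $n$ elements of each row into a single row, so distinct source rows must have distinct image rows) and that the group operation on $V \leq S([n]^2)$ is ordinary function composition, with $(vw)(x) = v(w(x))$, so that the rows compose in the order displayed above rather than the reverse.
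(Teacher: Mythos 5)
Your proof is correct and follows essentially the same route as the paper's: the paper's argument is precisely the one-line chain $\sigma_{vw}(a) = (vw)(a,b)_1 = v(w(a,b))_1 = \sigma_v(w(a,b)_1) = \sigma_v(\sigma_w(a))$, which is what you carry out in words, tracking rows through the composition. Your added remarks on injectivity of $\sigma_v$ and on the composition convention are fine but are already subsumed in the paper's setup, where $\sigma_v \in S_n$ is established (via Lemma \ref{samerow} and completeness of $G_R$) before the proposition is stated.
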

\begin{proof}
	Let $u,w \in V$. Then, for every $a,b \in [n]$, 
	$$
	\sigma_{uw}(a) = (uw)(a,b)_1 = u(w(a,b))_1 = \sigma_u(w(a,b)_1) = \sigma_u(\sigma_w(a)). 
	$$
\end{proof}
There is, again, an analogous statement for $G_C$.

\smallskip
We conclude with a further result that gives yet more 
constraints on the elements of a bicompatible subgroup of
$S([n]^2)$.
\begin{proposition} 
	\label{change_col}
	Let $a,b \in [n]$ be in the same connected component of
	$G_R$, $c \in [n]$, and $v \in V$ . Then
	$v(c,a)_2=a$ if and only if $v(c,b)_2=b$. 
	Similarly, if
	$a$ and $b$ are in the same connected component of $G_C$ 
	then $v(a,c)_1=a$ if and only if $v(b,c)_1=b$.
\end{proposition}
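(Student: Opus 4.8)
The plan is to prove the first (row) statement in full and then obtain the column statement by the mirror-image argument. First I would reduce to the case in which $\{a,b\}$ is an edge of $G_R$: since $a$ and $b$ lie in the same connected component, there is a path $a=a_0,a_1,\dots,a_m=b$ with $\{a_\ell,a_{\ell+1}\}\in E_R$ for every $\ell$, so it suffices to prove that $v(c,a_\ell)_2=a_\ell$ if and only if $v(c,a_{\ell+1})_2=a_{\ell+1}$ and then chain these biconditionals along the path. (Here one uses that the edge relation is symmetric, which holds because $V$ is a subgroup: if $w(a,x')=(b,y')$ then $w^{-1}(b,y')=(a,x')$.)

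So assume $\{a,b\}\in E_R$ and pick $x,y\in[n]$ and $u\in V$ with $u(b,x)=(a,y)$. By Lemma \ref{samerow}, $v(c,a)$ and $v(c,b)$ lie in the same row; write $v(c,a)=(d,p)$ and $v(c,b)=(d,q)$. Applying the bicompatibility identity $(v\otimes 1)(1\otimes u)=(1\otimes u)(v\otimes 1)$ to the triple $(c,b,x)$ and computing both sides gives $(v(c,a),y)=(1\otimes u)(v(c,b),x)$, that is $(d,p,y)=(d,u(q,x))$, and hence $u(q,x)=(p,y)$. Now the two equalities $u(b,x)=(a,y)$ and $u(q,x)=(p,y)$, together with the injectivity of $u$, yield the desired equivalence: if $p=a$ then $u(q,x)=(a,y)=u(b,x)$ forces $q=b$, while if $q=b$ then $u(q,x)=u(b,x)=(a,y)$ forces $p=a$. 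Since $v(c,a)_2=p$ and $v(c,b)_2=q$, this is exactly $v(c,a)_2=a\iff v(c,b)_2=b$.

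For the column statement I would run the symmetric argument: reduce to $\{a,b\}\in E_C$, use Lemma \ref{samecol} to write $v(a,c)=(p,e)$ and $v(b,c)=(q,e)$, choose $u\in V$ with $u(x,b)=(y,a)$, and apply the bicompatibility identity in the form $(u\otimes 1)(1\otimes v)=(1\otimes v)(u\otimes 1)$ to the triple $(x,b,c)$ to obtain $u(x,q)=(y,p)$, whence $p=a\iff q=b$ by injectivity of $u$. I expect the only real subtlety to be the bookkeeping: feeding the correct triple into the bicompatibility relation so that the coordinate one wishes to control lands in the slot acted on by $u$, and making sure the edge relations are symmetric so that the path-chaining in the reduction step is legitimate. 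Neither point is deep, so once the edge case is set up correctly the proposition follows directly.
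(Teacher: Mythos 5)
Your proof is correct and takes essentially the same approach as the paper's: both reduce to the case $\{a,b\}\in E_R$ and apply the bicompatibility identity $(v\otimes 1)(1\otimes u)=(1\otimes u)(v\otimes 1)$ to a triple whose first coordinate is $c$, with $u$ an edge witness for $\{a,b\}$. The only cosmetic difference is that you extract both directions of the equivalence at once from injectivity of $u$ (via the derived identity $u(q,x)=(p,y)$), whereas the paper proves one implication directly and obtains the converse from the symmetry of the roles of $a$ and $b$.
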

\begin{proof}
	We only prove the first statement, the other one being
	entirely analogous.
	We may assume that $\{ a,b \} \in E_R$. Then there are 
	$x,y \in [n]$ and $u \in V$ such that $u(a,x)=(b,y)$.
	Since $V$ is bicompatible we therefore have that
	\begin{align*}
		(v \otimes 1)(c,b,y) & = (v \otimes 1)(1 \otimes u)(c,a,x)=
		(1 \otimes u)(v \otimes 1)(c,a,x) \\ 
		& =(1 \otimes u)(d,a,x)=(d,b,y)
	\end{align*}
	for some $d \in [n]$. So $v(c,b)$ is in column $b$.
\end{proof}

\subsection{Bicompatible embeddings of $S_4$ in $S([4]^2)$}

In this subsection, by looking at the possible 
configurations of $G_R$ and $G_C$, we classify all the bicompatible subgroups of $S([4]^2)$ that are isomorphic to $S_4$, and
see that these are exactly the 4 subgroups of $N_{i,j}$ 
that appeared in the discussion of the second line
(so 24 in all), the 
groups $S_4 \otimes 1$ and $1 \otimes S_4$,  
and the 6 groups $\widetilde{T}_{a,b}$.

\begin{theorem}
Let $V$ be a subgroup of $S([4]^2)$. Then the following conditions are equivalent:
\begin{itemize}
\item[i)] $V$ is bicompatible and isomorphic to $S_4$;
\item[ii)] $V$ is one of the following groups:
\begin{itemize}
\item $V$ is a copy of $S_4$ inside any of the groups $N_{i,j}$ defined in the proof of Theorem \ref{S4xZ2xZ2alt} (24 copies in total); 
\item $V = S_4 \otimes 1$, or $V = 1 \otimes S_4$;
\item $V$ is one of the 6 copies of $S_4$ described in Theorem \ref{S4}.
\end{itemize}
\end{itemize}	
\end{theorem}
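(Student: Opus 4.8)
The implication (ii) $\Rightarrow$ (i) is the routine direction: one only has to observe that each group on the list is bicompatible and isomorphic to $S_4$. For a copy of $S_4$ inside some $N_{i,j}$ this is immediate, since $N_{i,j}$ is bicompatible by Theorem \ref{NBCprec} and every subgroup of a bicompatible subgroup is again bicompatible, the isomorphism type being the one recorded in the proof of Theorem \ref{S4xZ2xZ2alt}. For $S_4 \otimes 1$ and $1 \otimes S_4$ bicompatibility is a one–line check, the two tensor legs acting on disjoint coordinates (this is the fact already used in Remark \ref{t=t'} that $1 \otimes u$ and $1 \otimes v$ are always bicompatible). For the six groups $\widetilde{T}_{a,b}$ both assertions are part of Theorem \ref{S4}, whose proof shows the three Coxeter generators to be mutually bicompatible, whence the whole subgroup is bicompatible by Proposition \ref{isogroups}.

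The content is (i) $\Rightarrow$ (ii). Let $V \le S([4]^2)$ be bicompatible with $V \cong S_4$, and attach to it the graphs $G_R$ and $G_C$. Applying Lemma \ref{connected->complete} (and its column analogue) to each connected component separately, every component of $G_R$ and of $G_C$ is a clique, so $G_R$ and $G_C$ are recorded by set partitions $\Pi_R,\Pi_C$ of $[4]$. The plan is to run a finite sweep over the pairs $(\Pi_R,\Pi_C)$, using the results of Section \ref{bicomp_subgs} both to discard the spurious shapes and to pin down $V$ in the surviving ones. Three tools drive the analysis: (a) by Lemma \ref{isoconn} the set of isolated vertices of $G_R$ is a union of blocks of $\Pi_C$, and symmetrically; (b) when a set $A$ is a block of \emph{both} partitions, Corollary \ref{conn_comp_of_both} and Proposition \ref{homsigma} produce a homomorphism $V \to S(A)\times S(A)$ recording the tensor action on $A\times A$; and (c) the purely abstract remark that $S_4$ contains an element of order $4$, so $V$ cannot be realized inside any group of exponent dividing $6$.

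Running the sweep: any shape with a block of size $3$ is impossible. Indeed (a) forces the configuration to respect a common splitting $\{x,y,z\}\sqcup\{w\}$ of the index set on both sides, so every $v\in V$ preserves the four regions cut out by this splitting (the $3\times 3$ block, the two $3$–cell peripheral strips, and the fixed cell) and acts on each with order dividing $6$ — using (b) on the $3\times3$ block — whence $V$ would have exponent dividing $6$, contradicting (c). Next, if $G_R=K_4$ then the row homomorphism of Proposition \ref{homsigma} has transitive image, and the only transitive subgroup of $S_4$ that is a quotient of $S_4$ is $S_4$ itself, so $\sigma\colon V\xrightarrow{\ \sim\ }S_4$; (a) then rules out $\Pi_C\in\{3+1,\,2+1+1\}$, while for $\Pi_C\in\{2+2,\,K_4\}$ Lemma \ref{samecol} together with transitivity forces each $v$ to be a genuine tensor $\sigma_v\otimes\pi_v$, and bicompatibility of tensors requires the column factor to centralize all of $S_4$, which is impossible unless it is trivial; thus $G_C=\varnothing$ and, reading off Proposition \ref{change_col}, $V=S_4\otimes 1$. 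Symmetrically $G_C=K_4$ gives $V=1\otimes S_4$. What survives is exactly the two shapes $(2+1+1,\,2+1+1)$ with complementary nontrivial blocks $C$ (rows), $B$ (columns), and $(2+2,\,2+2)$. In the first, compatibility forces $V$ to act as the full $S(C\times B)\cong S_4$ on the $2\times2$ block $C\times B$, and the action on the peripheral regions $C\times C$ and $B\times B$ is free only along the $x_0$– and $y_0$–directions of Theorem \ref{S4xZ2xZ2alt}; this yields precisely the four copies of $S_4$ inside the corresponding $N_{i,j}$, and running over the six ordered choices of $(C,B)$ gives $6\cdot 4=24$ groups. In the second, the two matchings must be distinct (equal matchings force, via (b) on each common block, a row– and column–action valued in groups of order $2$, so $V$ cannot be $S_4$), and each of the six ordered pairs of distinct perfect matchings of $K_4$ is realized by exactly one group, namely a $\widetilde{T}_{a,b}$. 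In total $2+24+6=32$ groups, as claimed.

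The hard part is the constructive half of the last paragraph. In the $(2+1+1)$ and $(2+2)$ shapes the pair $(G_R,G_C)$ does \emph{not} determine $V$: one must reconstruct the entire permutation of $[4]^2$ from its action on the active $2\times2$ block, propagating it to the peripheral regions by repeated use of Proposition \ref{change_col} and of bicompatibility, and then prove that the freedom in this reconstruction is exactly a $\ZZ_2\times\ZZ_2$ (respectively, rigid), that every resulting completion is bicompatible, and that each one coincides with a group already on the list — thereby producing the exact counts $24$ and $6$. Establishing this reconstruction, and checking that no extra bicompatible completions slip through, is where the real work lies; by contrast the exclusion of the $3$–block shapes, of $(K_4,K_4)$ and of $(K_4,2+2)$ is clean, resting only on tools (a), (b), (c) above.
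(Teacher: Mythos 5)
Your skeleton is essentially the paper's: both arguments run the classification through the graphs $G_R$, $G_C$, use Lemma \ref{isoconn}, Corollary \ref{conn_comp_of_both}, Proposition \ref{homsigma} and exponent arguments to eliminate the degenerate shapes, and arrive at the same three families; your direction (ii)$\Rightarrow$(i) is also the paper's. But what you have written is a strategy, not a proof, and the missing part is exactly where the theorem lives: the only two shapes that produce groups are the two you leave open. For $(2+1+1,2+1+1)$ with complementary blocks, the paper's case A3i shows, via Lemma \ref{samecol} and the invariance of the isolated rows and columns, that $v$ restricted to $C\times C$ and to $B\times B$ can only be the identity or the double transpositions $x_0$, $y_0$, so that $V\leq N_{i,j}\cong S_4\times\ZZ_2^{\times 2}$, whence $V$ is one of the four graph subgroups. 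For $(2+2,2+2)$ with distinct matchings, the paper's case B2b carries out a cell-by-cell reconstruction using Lemmas \ref{samerow}, \ref{samecol} and Proposition \ref{change_col}: starting from the four possible values of $v(x,x)$ it enumerates \emph{all} permutations consistent with that shape, obtaining exactly the $24$ elements of Fig.~2, i.e.\ $\widetilde{T}_{a,b}$; this enumeration is the bulk of the entire proof. Your closing admission that ``this is where the real work lies'' concedes precisely this point: without the reconstruction, and the check that no extra bicompatible completions slip through, the implication (i)$\Rightarrow$(ii) is unproved.

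There are also two flaws in the part you do argue. First, in the $(K_4,2+2)$ case the claim that ``Lemma \ref{samecol} together with transitivity forces each $v$ to be a genuine tensor'' is unjustified: writing $v(a,b)=(\sigma_v(a),\mu_{v,a}(b))$ (legitimate since $G_R$ is complete), Lemma \ref{samecol} only makes $a\mapsto\mu_{v,a}$ constant on the $E_C$-blocks, and non-tensor ``block'' elements such as $v_1$ with $v_1(x,x)=(y,y)$, $v_1(x,z)=(y,t)$ satisfy every constraint you invoke; eliminating them is the content of the paper's case B1iib (Proposition \ref{change_col}, bicompatibility of the Coxeter preimages, and braid relations). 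The gap is repairable: expanding the compatibility identity for row-fibered maps shows bicompatibility forces $[\mu_{v,a},\sigma_u]=1$ for all $u,v,a$, and surjectivity of $\sigma$ (your own transitive-quotient remark, which is a nice shortcut past the paper's $\ker\sigma$ analysis) then gives $\mu_{v,a}\in Z(S_4)=\{1\}$, hence $V=S_4\otimes 1$ for \emph{any} $\Pi_C$ --- but that computation appears neither in your text nor in the lemmas you cite. Second, your sweep is not exhaustive: the shapes $(2+2,2+1+1)$, $(2+1+1,2+1+1)$ with equal (rather than complementary) nontrivial blocks, and an empty $G_R$ against a nonempty disconnected $G_C$ are never discussed; each dies by an exponent argument (the paper's cases A2, A3ii, A5), but note that some of them only bound the exponent by $4$, so you also need the order-$3$ counterpart of your tool (c), not just the stated order-$4$ one. (Minor: ``every component of $G_R$ is a clique'' is not what Lemma \ref{connected->complete} asserts --- it is stated for $G_R$ connected --- though nothing in your sweep actually uses the clique property.)
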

\begin{proof}
Assume that $V$ is bicompatible and isomorphic to $S_4$.
We distinguish two main cases, according as to whether 
$G_R \cup G_C$ is connected or not (where $G_R \cup G_C :=
([4], E_R \cup E_C)$).

\bigskip
\noindent
{\bf A): $G_R \cup G_C$ is disconnected.}

This implies that both $G_R$ and $G_C$ are disconnected. 
Therefore each of $G_R$ and $G_C$ has either 2, 3, or 4 
connected components. We distinguish these resulting 9 cases.
However, due to the symmetry between ``R" and ``C", we only treat
6 of them.

\medskip
\noindent
{\bf A1): $G_R$ and $G_C$ both have 2 connected components.}

Here again we have to consider two separate possibilities.

\smallskip 
\noindent 
{\bf A1i): $G_R$ and $G_C$ have no isolated vertices.}
Let $E_1:=\{ x,y \}$ and $E_2:=\{ z,t \}$ be the connected components 
of $G_R$, so
 $E_R = \{ \{ x,y \},$ $\{ z,t \} \}$. Therefore, since 
 $G_R \cup G_C$ is
disconnected, and $G_C$ has two connected components, $G_C=G_R$.
This implies that $V$ has at least 4 orbits, namely 
$E_1 \times E_1$, $E_1 \times E_2$,
$E_2 \times E_1$, and $E_2 \times E_2$.

Let $v \in V$.
Then, by Corollary \ref {conn_comp_of_both}, there are 
$\sigma , \tau \in S(E_1)$ such that $v=\sigma \otimes \tau$
in $(E_1)^{2}$ and hence $v^2(a,b)=(a,b)$ for all 
$a,b \in E_1$. Similarly, $v^2(a,b)=(a,b)$ for all 
$a,b \in E_2$. Furthermore, by Lemma \ref{samerow},
$v(c,z)$ and $v(c,t)$ are in the same row for $c \in 
E_1$, and by Lemma 
\ref{samecol} $v(x,c)$ and $v(y,c)$ are in the same column if $c \in E_2$. 
Therefore, $v^2$ restricted to $E_1 \times E_2$ preserves both rows and columns.
Thus $v^2=Id$ on $E_1 \times E_2$. Similarly for $E_2 \times E_1$.
Hence $v^2=1$. Since this holds for any 
$v \in V$, this contradicts our hypothesis i). 

\smallskip
\noindent 
{\bf A1ii): $G_R$ or $G_C$ have isolated vertices.}
Let $t$ be the isolated vertex of $G_R$, so that $E := \{x,y,z\}$ is the other connected component of $G_R$.
Then $t$ is isolated in $G_C$ (otherwise $G_R \cup G_C$ would be connected) so $E$ is also a connected component of $G_C$. Thus
$V$ has at least 4 orbits, namely 
$E \times E$, $E \times \{ t \}$,
$\{ t \} \times E$, and $\{ t \} \times \{ t \}$.

Let $v \in V$.
Then, by Corollary \ref {conn_comp_of_both}, there are 
$\sigma , \tau \in S(E)$ such that $v=\sigma \otimes \tau$
in $E^{2}$ and hence $v^6(a,b)=(a,b)$ for all 
$a,b \in E$. Furthermore, since $|E \times \{ t \}|=
|\{ t \} \times E|=3$ we have that $v^6=Id$ on $E \times \{ t \}$
and on $\{ t \} \times E$. Since $v(t,t)=(t,t)$ we conclude that 
$v^6=1$. This holds for any 
$v \in V$, and so contradicts our hypothesis i). 

{\bf A2): $G_R$ has 3 connected components and $G_C$ has 2 connected components.}

Let $x$ and $y$ be the two isolated vertices of $G_R$, and $E = \{ z,t \}$. 
Then it is easy to see, by Lemma \ref{isoconn} and the running hypothesis A),
that  $E_C = \{\{x,y\}, \{z,t\}\}$. 
Thus $V$ has at least 6 orbits $\{x\} \times \{x,y\}$, $\{x\} \times E$, $\{y\} \times \{x,y\}$, $\{y\} \times E$,
$E \times \{x,y\}$, $E \times E$.
By Corollary \ref{conn_comp_of_both}, there exist $\sigma, \tau \in S(E)$ such that $v = \sigma \otimes \tau$
on $E^2$ for every $v \in V$, and thus $v^2 = Id$ on $E^2$.
By Lemma \ref{samecol}, $v(z,x)$ and $v(t,x)$ lie in the same column, and hence $v^2$ restricted to $E \times \{x,y\}$, leaves columns invariant.
Therefore, $v^4 = Id$ on $E \times \{x,y\}$. It readily follows that $v^4 = 1$ for all $v \in V$, contradicting hypothesis i).

\medskip
{\bf A3): $G_R$ has 3 connected components and $G_C$ has 3 connected components.}

Let $z$ and $t$ be the two isolated vertices of $G_R$. By Lemma \ref{isoconn}, either both $x$ and $y$ are isolated in $G_C$, or none of them.

\smallskip
\noindent 
{\bf A3i): $x$ and $y$ are isolated in $G_C$.}
Then $E = \{z,d\}$ is the only edge of $G_C$. We have at least 9 orbits $\{t\} \times \{x\}$, $\{t\} \times \{y\}$, $\{z\} \times \{x\}$, $\{z\} \times \{y\}$, $\{x,y\} \times \{x\}$,
$\{x,y\} \times \{y\}$, $\{x,y\} \times E$, $\{z\} \times E$, $\{t\} \times E$.
By Lemma \ref{samecol}, $v(z,z)$ and $v(t,z)$ are in the same column, therefore $v$ restricted to $E \times E$ is either the identity or the double transposition
$((z,z),(z,t))((t,z),(t,t))$ (recall that row $z$ and row $t$ are invariant by our assumption A3)).
Similarly, $v$ restricted to $\{x,y\} \times \{x,y\}$ is either the identity or the double transposition $((x,x),(y,x))((x,y),(y,y))$.
On the other hand, we have no restriction on the square $\{x,y\} \times \{z,t\}$. 
It follows that $V$ is a subgroup of $K_{x,y}$, see Theorem \ref{S4xZ2xZ2alt}.
(Indeed there are 4 such subgroups, and $24=6 \cdot 4$ in total because of the choices for $x$ and $y$.)

\smallskip
\noindent 
{\bf A3ii): neither $x$ nor $y$ are isolated in $G_C$.} Here, $G_R = G_C$. There are at least 9 orbits 
$\{z\} \times \{z\}$, $\{z\} \times \{t\}$, $\{t\} \times \{z\}$, $\{t\} \times \{t\}$, $\{z\} \times E$, $\{t\} \times E$, $E \times \{z\}$, $E \times \{t\}$, $E \times E$, 
where $E = \{x,y\}$. By Corollary \ref{conn_comp_of_both}, there exist $\sigma , \tau \in S(E)$ such that $v=\sigma \otimes \tau$ in $E^{2}$.
Hence $v^2 = 1$ on $E \times E$. It readily follows that $v^2 = 1$ for all $v \in V$, which contradicts hypothesis i).

\smallskip
\noindent 
{\bf A4): $G_R$ has 3 connected components and $G_C$ has 4 connected components.}

Then $V$ has at least 12 orbits of sizes either $1$ or $2$,
so $V$ is not isomorphic to $S_4$.

\smallskip
\noindent 
{\bf A5): $G_R$ has 4 connected components and $G_C$ has 2 connected components.}

We then have to distinguish two possibilities.

\medskip
\noindent 
{\bf A5i): $G_C$ has an isolated vertex.}
We may then assume that $x$ is the isolated vertex of $G_C$
so that $E:=\{ y,z,t \}$ is the other connected component.
Therefore $V$ has at least 8 orbits, namely, $\{ x \} \times E$,
$\{ y \} \times E$, $\{ z \} \times E$, $\{ t \} \times E$, 
$\{ (z,x) \}$, $\{ (t,x) \}$, $\{ (x,x) \}$, and $\{ (y,x) \}$.
Then, since the sizes of these orbits are all $ \leq 3$, 
we have that $v^6=1$ for all  $v \in V$, so $V$ is not 
isomorphic to $S_4$.

\smallskip
\noindent 
{\bf A5ii): $G_C$ has no isolated vertices.}
We may assume that $E_C= \{ \{ x,y \}, \{ z,t \} \}$.
Then $V$ has at least 8 orbits, all of sizes $\leq 2$,
so $v^2=1$ for all $v \in V$, so $V$ is not 
isomorphic to $S_4$.

\smallskip
\noindent 
{\bf A6): $G_R$ has 4 connected components as well as $G_C$.}

The only possible case here is clearly $V = \{e\}$.

\bigskip
\noindent
{\bf B): $G_R \cup G_C$ is connected.}

We distinguish here two possibilities.

\medskip \noindent
{\bf B1): Either $G_R$ or $G_C$ are connected.}

We treat the case $G_R$ connected, the other one being 
entirely symmetrical. Then, by Lemma \ref{connected->complete},
$G_R$ is a complete graph and so Proposition \ref{homsigma} applies.
We now have two cases to consider.

\smallskip
\noindent 
{\bf B1i): $G_C$ has isolated vertices.}
Then, by Lemma \ref{isoconn}, $G_C$ is empty, and this, by
what has just been observed, implies that $v = \sigma_v \otimes 1$.
Therefore $V$ is a subgroup of $S_4 \otimes \{ 1 \}$, so
$V=S_4 \otimes \{ 1 \}$, as desired.

\smallskip
\noindent 
{\bf B1ii): $G_C$ has no isolated vertices.}
We distinguish two further subcases.

\smallskip
\noindent 
{\bf B1iia): $G_C$ is connected.}
Then, by Corollary \ref{conn_comp_of_both}, there is $\tau_v \in 
S_4$ such that $v = \sigma_v \otimes \tau_v$. 
Let 
\[
V_1 := \{  \sigma_v : v \in V \}, \; \;
V_2 := \{  \tau_v : v \in V \}.
\]
Let $w \in V$. Since $V$ is bicompatible, $v$ and $w$ are 
bicompatible, and this is easily seen to imply that
$\sigma_v \tau_w = \tau_w \sigma_v$. Therefore any element of
$V_1$ commutes with any element of $V_2$. Since $V \simeq S_4$
there are $v,w \in V$ such that $vw \neq wv$. Hence, either
$\sigma_v \sigma_w \neq \sigma_w \sigma_v$ or
$\tau_v \tau_w \neq \tau_w \tau_v$. Say 
$\sigma_v \sigma_w \neq \sigma_w \sigma_v$. Let
$H := \langle \sigma_v, \sigma_w \rangle $. Then $H \leq S_4$
and $H$ is not abelian. Therefore $H$ is isomorphic to either
$S_3$, or to the dihedral group of order $8$, $D_8$, or to
the alternating group $A_4$, or to $S_4$. Since $V_2$ commutes
with every element of $H$ we have that $V_2 \subseteq C_{S_4}(H)$
(the centralizer of $H$ in $S_4$). But, since $G_C$ is not empty, 
$V_2 \neq \{ 1 \}$. On the other hand, it is well known, and 
easy to see, that, $C_{S_4}(S_3)=C_{S_4}(S_4)=C_{S_4}(A_4)=\{ 1 \}$,
while $C_{S_4}(D_8) = \{ 1,g \}$, where $g \in S_4$ is a double
transposition. Hence $V_2 = \{ 1,g \}$. Therefore,
every element of $V_1$ commutes with $g$. But it easy to check 
that there are exactly $8$ elements in $S_4$ that commute with
a double transposition, so $|V_1| \leq 8$. Hence $|V| \leq |V_1| \cdot |V_2| \leq 16$, which contradicts our hypothesis i).
Similarly if $\tau_v \tau_w \neq \tau_w \tau_v$.

\medskip
\noindent 
{\bf B1iib): $G_C$ is not connected.}
Then $E_C$ is either $\{\{x,y\}, \{z,t\}\}$, or $\{\{x,z\},$ $\{y,t\}\}$ or $\{\{x,t\}, \{y,z\}\}$.
Let $v \in V$. Recall that, by Lemma \ref{samerow},
there is $\sigma_v \in S_4$ such that 
$v(a,b)$ is in row $\sigma_v(a)$ for all $(a,b) \in [4]^2$. 
Without loss of generality we may assume that $E_C = \{\{x,y\}, \{z,t\}\}$. Hence $V$ has at least two orbits, namely 
$[4] \times \{ x,y \}$, and $[4] \times \{ z,t \}$.
By Proposition \ref{homsigma}
the map $\sigma: V \to S_4$ mapping $v$ to $\sigma_v$ is a group homomorphism. 
It follows that $\ker\sigma$ is a normal subgroup of $S_4$, namely
$$
\ker\sigma \in \{\{e\}, S_4, A_4, V_4\} \ , 
$$
where $V_4 = \{e, (1,2)(3,4), (1,3)(2,4), (1,4)(2,3)\}$.
But $G_R$ is complete, hence for any $a,b \in [4]$ there are
$c,d \in [4]$ and $u \in V$ such that $u(a,c)=(b,d)$ and 
therefore $\sigma_u(a)=b$. In particular, this shows that
$|\sigma(V)| \geq 4$ and so implies that 
$\ker\sigma \in \{\{e\}, V_4\}$. 

Assume that $\ker \sigma =
\{ e \}$. Then $\sigma$ is an isomorphism between $V$ and $S_4$.
In particular, there are $v_i \in V$, for $i=1,2,3$, such that
$\sigma_{v_1}=(x,y)$, $\sigma_{v_2}=(y,z)$, and $\sigma_{v_3}=(z,t)$. 
In particular, the $v_i$'s are involutions. 

Since $v_1$ maps row $x$ to row $y$,
and $E_C = \{\{x,y\}, \{z,t\} \}$ we have that 
$v_1(x,x) \in \{ (y,x), (y,y) \}$. Similarly, since $v_1$ leaves
row $z$ invariant, $v_1(z,x) \in \{ (z,x), (z,y) \}$. If $v_1(x,x)=(y,x)$ then,
by Proposition \ref{change_col}, $v_1(x,b)=(y,b)$ for all $b 
\in [4]$, and therefore also $v_1(y,b)=(x,b)$ for all $b \in [4]$.
If $v_1(x,x)=(y,y)$ then,
by Proposition \ref{change_col}, $v(x,y)=(y,x)$, $v(x,z)=(y,t)$,
and $v(x,t)=(y,z)$ and therefore also, since $v_1$ is an involution, $v_1(y,y)=(x,x)$, $v_1(y,x)=(x,y)$, $v_1(y,t)=(x,z)$,
and $v_1(y,z)=(x,t)$. Similarly, we conclude that if $v_1(z,x)=
(z,x)$ then $v_1(z,b)=(z,b)$ for all $b \in [4]$, and hence
by Lemma \ref{samecol}, also $v_1(t,b)=(t,b)$ for all $b \in [4]$.
Finally, if $v_1(z,x)=(z,y)$ then by the same reasonings we 
conclude that $v_1(z,y)=(z,x)$, $v_1(z,z)=(z,t)$, and $v_1(z,t)=(z,z)$, and that $v_1(t,x)=(t,y)$, $v_1(t,y)=(t,x)$, $v_1(t,z)=(t,t)$, and $v_1(t,t)=(t,z)$. Hence 
there are only four possibilities for $v_1$ (combining the two cases for lines $x$, $y$ with those two for lines $z$, $t$), including $(x,y) \otimes 1$.

Similarly, there are four possibilities for $v_3$ (combining two cases for lines $x$, $y$ with other two for lines $z$, $t$), including $(z,t) \otimes 1$.

Finally, $v_2(y,x) \in \{ (z,x), (z,y) \}$. 
If $v_2(y,x)=(z,x)$, by Proposition \ref{change_col}, $v_2(y,b) = (z,b)$ for all $b \in [4]$ and, $v_2$ being an involution, $v_2(z,b) = (y,b)$ for all $b \in [4]$.
Moreover, by Lemma \ref{samecol}, $v_2(x,b) = (x,b)$ and $v_2(t,b)=(t,b)$, for all $b \in [4]$, i.e. $v_2 = (y,z) \otimes 1$.
If $v_2(y,x) = (z,y)$, then by Proposition \ref{change_col}, $v_2(y,y) = (z,x)$, $v_2 (y,z) = (z,t)$ and $v_2(y,t) = (z,z)$ and also 
$v_2(z,y)=(y,x)$, $v_2(z,x) = (y,y)$, $v_2 (z,t) = (y,z)$ and $v_2(z,z) = (y,t)$.
Moreover, by Lemma \ref{samecol}, $v_2(a,x) = (a,y)$, $v_2(a,y) = (a,x)$, $v_2(a,z) = (a,t)$ and $v_2(a,t) = (a,z)$, for $a \in \{x,t\}$.

Now, it is not difficult to check that each one of the two possible choices for $v_2$ just obtained is bicompatible only 
with $(x,y) \otimes 1$ (among the 4 possible $v_1$ as above).
Picking $v_2 = (y,z) \otimes1$ (and accordingly $v_1 = (x,y) \otimes 1$) would eventually lead, by similar arguments, to the unique possibility $v_3=(z,t) \otimes 1$, 
giving $V = S_4 \otimes 1$, which does not match with the running assumption that $G_C$ is nonempty.
However, the other possible choice for $v_2$ does not work either, as it does not satisfy the braid relation with $v_1 = (x,y) \otimes 1$. Therefore $\ker \sigma \neq \{ e \}$.

\medskip
Hence $\ker\sigma = V_2$.
Then it follows that $\sigma(V) \simeq V/V_2  \simeq S_3$.
But $\sigma(V) \leq S_4 = S(\{x,y,z,t\})$ and the only subgroups of $S_4$ which are isomorphic to $S_3$ are ${\rm Stab}(\{a\})$ for some $a \in \{x,y,z,t\} = [4]$.
But this would imply that $a$ is an isolated vertex of $G_R$, contradicting our assumption.

\medskip
\medskip \noindent
{\bf B2): Both $G_R$ and $G_C$ are disconnected.}
\medskip

\noindent 
{\bf B2a): Either $G_R$ or $G_C$ has isolated vertices.}
We may assume that $G_R$ has isolated vertices, the other
case being entirely analogous.
Then, since $G_R \cup G_C$ is connected by our running 
hypothesis B), no vertex can be isolated in both 
$G_R$ and $G_C$ so $G_R$ has at most two isolated vertices
(if $G_R$ had four isolated vertices then $G_C \cup G_R = G_C$
so $G_C$ would be connected).
Suppose that $G_R$ has two isolated vertices. We may assume 
that $E_R = \{  \{ z,t \}\}$. Then, by Lemma \ref{isoconn},
$\{ x,z \}, \{ x,t \}, \{ y,z \}, \{ y,t \} \notin E_C$ so
$G_R \cup G_C$ is not connected, a contradiction. Therefore
$G_R$ has only one isolated vertex. Say that $x$ is this 
unique isolated vertex. Then, by Lemma \ref{isoconn}, 
$\{ x \}$ is a connected component of $G_C$, but this is
impossible, as already observed above, because $G_R \cup G_C$ is
connected. 
\medskip

\noindent 
{\bf B2b): Both $G_R$ and $G_C$ have no isolated vertices.}
Then $G_R$ and $G_C$ each have two connected components of
size $2$. We may assume that $E_R = \{ \{ x,y \}, \{ z,t \}\}$. 
Then, since $G_R \cup G_C$ is connected, we may further assume 
that $E_C = \{ \{ x,t \}, \{ z,y \}\}$. Therefore $V$ has at 
least four orbits, each contained in either $\{ x,y \} \times 
\{ x,t \}$, or $ \{ x,y \} \times \{ z,y \}$, or $ \{ z,t \} \times \{ x,t \}$, or $ \{ z,t \} \times \{ z,y \}$.
Let $v \in V$. Then $v(x,x) \in \{ x,y \} \times 
\{ x,t \}$. We distinguish these four cases.

\medskip
\noindent 
{\bf $\alpha$): $v(x,x)=(x,x)$.}
Then, by Lemma \ref{samerow} (with $c=a=x$ and $b=y$)
$v(x,y)$ is in row $x$. Furthermore, by Proposition
\ref{change_col} (with $c=a=x$ and $b=y$) $v(x,y)$ is in column
$y$, so $v(x,y)=(x,y)$. Similarly, using Lemma \ref{samecol} and
Proposition \ref{change_col}, we conclude that
$v(t,x)=(t,x)$ and $v(t,y)=(t,y)$. Therefore $v(x,z) \in \{ (x,z),(y,y), (y,z) \}$.
We discuss these three further subcases separately.

\smallskip\noindent
{\bf $\alpha1)$: $v(x,z)=(x,z).$}
Then by Lemmas \ref{samerow} and \ref{samecol}, and 
Proposition \ref{change_col}, we conclude that $v$ 
leaves fixed (pointwise) rows $x$ and $t$. In particular,
$v(y,x) \in \{ (y,x), (y,t) \}$. If $v(y,x)=(y,x)$ then,
using similarly Lemmas \ref{samerow} and \ref{samecol}, and 
Proposition \ref{change_col}, we conclude that $v=1$.
If $v(y,x)=(y,t)$ we analogously conclude that 
$$v=((y,x),(y,t))((y,y),(y,z))((z,x),(z,t))((z,y),(z,z)) . $$

\smallskip\noindent
{\bf $\alpha2)$: $v(x,z)=(y,z).$}
Then, by arguments analogous to the ones in the previous case,
we conclude that $v(x,t)=(y,t)$, $v(t,z)=(z,z)$, and $v(t,t)=
(z,t)$. Hence $v(y,z) \in \{ (x,z), (y,y) \}$. 
If $v(y,z)=(x,z)$ then we conclude that 
$$v=((x,z),(y,z))((x,t),(y,t))((t,z),(z,z))((t,t),(z,t)) . $$
If $v(y,z)=(y,y)$
then we conclude that 
\begin{align*}
v=((y,x),(x,t),(y,t)) ((y,y),(x,z),(y,z)) ((z,x),(t,t),(z,t)) ((z,y),(t,z),(z,z)).
\end{align*}

\smallskip\noindent
{\bf $\alpha3)$: $v(x,z)=(y,y).$}
Then, by arguments similar to those presented above,
we conclude that $v(x,t)=(y,x)$, $v(t,z)=(z,y)$, and $v(t,t)=
(y,x)$. Hence $v(y,y) \in \{ (x,z), (y,z) \}$.
If $v(y,y)=(x,z)$ then we conclude that 
$$v=((x,z),(y,y)) ((x,t),(y,x))((z,x),(t,t))((z,y),(t,z)).$$
If $v(y,y)=(y,z)$
then we conclude that 
$$v=((y,x),(y,t),(x,t)) ((y,y),(y,z),(x,z)) ((z,x),(z,t),(t,t))((z,y),(z,z),(t,z)).$$

All in all, we have thus found 6 possible $v$'s in case $\alpha)$.

\medskip

\bigskip 
\medskip
\noindent 
{\bf $\beta$): $v(x,x)=(y,x)$.}
By arguing similarly to the above, $v(y,x) \in \{(y,t),(x,x),$ 
$(x,t)\}$.
We discuss these three further subcases separately.

\smallskip\noindent
{\bf $\beta1)$: $v(y,x)=(y,t).$}
Then $v(y,t) \in \{(x,t),(x,x)\}$.
If $v(y,t)=(x,t)$ we conclude that 
\begin{align*}
v& =((y,x),(y,t),(x,t),(x,x))((y,y),(y,z),(x,z),(x,y))((z,x),(z,t),(t,t),(t,x)) \\
& \qquad \times ((z,y),(z,z),(t,z),(t,y)).
\end{align*}
If $v(y,t)=(x,x)$ we conclude that 
\begin{align*}
v=((y,x),(y,t),(x,x))((y,y),(y,z),(x,y))((z,x),(z,t),(t,x))((z,y),(z,t),(t,x)).
\end{align*}

\smallskip\noindent
{\bf $\beta2)$: $v(y,x)=(x,x).$} 
Then $v(x,z) \in \{(y,z),(x,z)\}$.
If $v(x,z)=(y,z)$ we conclude that 
$$v=((x,y)(z,t)) \otimes 1 . $$
If $v(x,z)=(x,z)$ we conclude that 
$$v= ((y,x),(x,x))((y,y),(x,y))((z,x),(t,x))((z,y),(t,y)) . $$

\smallskip\noindent
{\bf $\beta3)$: $v(y,x)=(x,t).$}
Then $v(x,t) \in \{(x,x),(y,t)\}$.
If $v(x,t) = (x,x)$ we conclude that 
\begin{align*}
v=((y,x),(x,t),(x,x))((y,y),(x,z),(x,y)) ((z,x),(t,t),(t,x)) ((t,y),(z,y),(t,z)).
\end{align*}
If $v(x,t) = (y,t)$ we conclude that 
\begin{align*}
v &=((y,x),(x,t),(y,t),(x,x))((y,y),(x,z),(y,z),(x,y))((z,x),(t,t),(z,t),(t,x))\\
& \qquad \times ((z,y),(t,z),(z,z),(t,y)).
\end{align*}

So again, we have found 6 possible $v$'s in case $\beta)$.

\medskip
\noindent 
{\bf $\gamma$): $v(x,x)=(x,t)$.}
Then in the same way as in the two previous cases we conclude that
$v(x,t) \in \{ (x,x),(y,t),(y,x) \}$ and we obtain that either
$$v=1 \otimes ((x,t)(y,z)) , $$ or 
$$v=((x,x),(x,t))((x,y),(x,z)) ((t,x),(t,t))((t,y),(t,z))$$ 
(if $v(x,t)=(x,x)$), or
\begin{align*}
v & =((x,x),(x,t),(y,t),(y,x))((x,y),(x,z),(y,z),(y,y))
((t,x),(t,t),(z,t),(z,x)) \\
& \qquad \times ((t,y),(t,z),(z,z),(z,y)) , 
\end{align*}
or 
$$v=((x,x),(x,t),(y,t))((x,y),(x,z),(y,z))
((t,x),(t,t),(z,t))((t,y),(t,z),(z,z))$$ 
(if $v(x,t)=(y,t)$), or
\begin{align*}
v & =((x,x),(x,t),(y,x),(y,t))((x,y),(x,z),(y,y),(y,z))
((t,x),(t,t),(z,x),(z,t)) \\
& \qquad \times ((t,y),(t,z),(z,y),(z,z)),
\end{align*}
or
$$v=((x,x),(x,t),(y,x))((x,y),(x,z),(y,y)) ((t,x),(t,t),(z,x))((t,y),(t,z),(z,y))$$ 
(if $v(x,t)=(y,x)$).

\medskip
\noindent 
{\bf $\delta$): $v(x,x)=(y,t)$.}
In this case, one concludes analogously that $v$ is in
\begin{align*}
& \Big\{ ((x,x),(y,t))((x,y),(y,z))((t,x),(z,t))((t,y),(z,z)), \\
& ((x,x),(y,t))((x,y),(y,z))((x,z),(y,y))((x,t),(y,x))
((t,x),(z,t))((t,y),(z,z)) \\
& \qquad \times ((t,z),(z,y))((t,t),(z,x)), \\
& ((x,x),(y,t),(x,t))((x,y),(y,z),(x,z))
((t,x),(z,t),(t,t))((t,y),(z,z),(t,z)),  \\
& ((x,x),(y,t),(x,t),(y,x))((x,y),(y,z),(x,z),(y,y)) 
((t,x),(z,t),(t,t),(z,x)) \\
& \qquad \times ((t,y),(z,z),(t,z),(z,y)), \\
& ((x,x),(y,t),(y,x)) ((x,y),(y,z),(y,y))
((t,x),(z,t),(z,x)) ((t,y),(z,z),(z,y)), \\
& ((x,x),(y,t),(y,x),(x,t)) ((x,y),(y,z),(y,y),(x,z))
((t,x),(z,t),(z,x),(t,t)) \\
& \qquad \times ((t,y),(z,z),(z,y),(t,z)) \Big\}.
\end{align*}

We have therefore found $24$ possibilities for $v$
(see Fig.2) 
and they correspond exactly to the elements of $V$ as described 
in the third line of our thesis ii) (the six copies mentioned there correspond to the three possible choices for $E_R$ and the
two corresponding further choices for $E_C$).
So ii) again holds.

\[  
\beginpicture

\setcoordinatesystem units <0.28cm,0.28cm>
\setplotarea x from 6 to 7, y from 2 to 5

\setlinear

\plot -12 0 -12 4 /
\plot -11 0 -11 4 /
\plot -10 0 -10 4 /
\plot -9 0 -9 4 /
\plot -8 0 -8 4 /

\plot -12 0 -8 0 /
\plot -12 1 -8 1 /
\plot -12 2 -8 2 /
\plot -12 3 -8 3 /
\plot -12 4 -8 4 /

\plot -6 0 -6 4 /
\plot -5 0 -5 4 /
\plot -4 0 -4 4 /
\plot -3 0 -3 4 /
\plot -2 0 -2 4 /

\plot -6 0 -2 0 /
\plot -6 1 -2 1 /
\plot -6 2 -2 2 /
\plot -6 3 -2 3 /
\plot -6 4 -2 4 /

\plot -5.5 2.6 -2.5 2.6 /
\plot -4.5 2.4 -3.5 2.4 /
\plot -5.5 1.6 -2.5 1.6 /
\plot -4.5 1.4 -3.5 1.4 /

\plot 0 0 0 4 /
\plot 1 0 1 4 /
\plot 2 0 2 4 /
\plot 3 0 3 4 /
\plot 4 0 4 4 /

\plot 0 0 4 0 /
\plot 0 1 4 1 /
\plot 0 2 4 2 /
\plot 0 3 4 3 /
\plot 0 4 4 4 /

\plot 2.5 0.5 2.5 1.5 /
\plot 3.5 1.5 3.5 0.5 /
\plot 2.5 2.5 2.5 3.5 /
\plot 3.5 3.5 3.5 2.5 /

\plot 6 0 6 4 /
\plot 7 0 7 4 /
\plot 8 0 8 4 /
\plot 9 0 9 4 /
\plot 10 0 10 4 /

\plot 6 0 10 0 /
\plot 6 1 10 1 /
\plot 6 2 10 2 /
\plot 6 3 10 3 /
\plot 6 4 10 4 /

\plot 6.5 2.4 9.5 3.6 /
\arrow <0.15cm> [0.1,0.5] from  9.5 3.6 to 9.5 2.4 
\plot 9.5 2.4 6.5 2.4 /
\plot 7.5 2.6 8.5 3.4 /
\plot 8.5 3.4 8.5 2.6 /
\arrow <0.15cm> [0.1,0.5]  from 8.5 2.6 to 7.5 2.6 

\plot 6.5 1.6 9.5 0.4 /
\arrow <0.15cm> [0.1,0.5] from  9.5 0.4 to 9.5 1.6 
\plot 9.5 1.6 6.5 1.6 /
\plot 7.5 1.4 8.5 0.6 /
\plot 8.5 0.6 8.5 1.4 /
\arrow <0.15cm> [0.1,0.5]  from 8.5 1.4 to 7.5 1.4 

\plot 12 0 12 4 /
\plot 13 0 13 4 /
\plot 14 0 14 4 /
\plot 15 0 15 4 /
\plot 16 0 16 4 /

\plot 12 0 16 0 /
\plot 12 1 16 1 /
\plot 12 2 16 2 /
\plot 12 3 16 3 /
\plot 12 4 16 4 /

\plot 12.5 2.5 15.5 3.5 /
\plot 13.5 2.5 14.5 3.5 /
\plot 13.5 1.5 14.5 0.5 /
\plot 12.5 1.5 15.5 0.5 /

\plot 18 0 18 4 /
\plot 19 0 19 4 /
\plot 20 0 20 4 /
\plot 21 0 21 4 /
\plot 22 0 22 4 /

\plot 18 0 22 0 /
\plot 18 1 22 1 /
\plot 18 2 22 2 /
\plot 18 3 22 3 /
\plot 18 4 22 4 /

\plot 18.5 2.4 21.5 2.4 /
\arrow <0.15cm> [0.1,0.5]  from 21.5 2.4 to 21.5 3.6
\plot 21.5 3.6 18.5 2.4 /
\plot 19.5 2.6 20.5 2.6 /
\arrow <0.15cm> [0.1,0.5]  from 20.5 2.6 to 20.5 3.4
\plot 20.5 3.4 19.5 2.6 /

\plot 18.5 1.6 21.5 1.6 /
\arrow <0.15cm> [0.1,0.5]  from 21.5 1.6 to 21.5 0.4
\plot 21.5 0.4 18.5 1.6 /
\plot 19.5 1.4 20.5 1.4 /
\arrow <0.15cm> [0.1,0.5]  from 20.5 1.4 to 20.5 0.6
\plot 20.5 0.6 19.5 1.4 /

\plot -12 -6 -12 -2 /
\plot -11 -6 -11 -2 /
\plot -10 -6 -10 -2 /
\plot -9 -6 -9 -2 /
\plot -8 -6 -8 -2 /

\plot -12 -6 -8 -6 /
\plot -12 -5 -8 -5 /
\plot -12 -4 -8 -4 /
\plot -12 -3 -8 -3 /
\plot -12 -2 -8 -2 /

\plot -11.5 -3.6 -8.5 -3.6 /
\arrow <0.15cm> [0.1,0.5]  from -8.5 -3.6 to -8.5 -2.4 
\plot -8.5 -2.4 -11.5 -2.4 /
\plot -11.5 -2.4 -11.5 -3.5 /
\plot -10.5 -2.6 -9.5 -2.6 /
\arrow <0.15cm> [0.1,0.5]  from -9.5 -3.4 to -9.5 -2.6 
\plot -9.5 -3.4 -10.5 -3.4 /
\plot -10.5 -3.4 -10.5 -2.6 /

\plot -11.5 -4.4 -8.5 -4.4 /
\arrow <0.15cm> [0.1,0.5]  from -8.5 -4.4 to -8.5 -5.6 
\plot -8.5 -5.6 -11.5 -5.6 /
\plot -11.5 -5.6 -11.5 -4.4 /
\plot -10.5 -5.4 -9.5 -5.4 /
\arrow <0.15cm> [0.1,0.5]  from -9.5 -4.6 to -9.5 -5.4 
\plot -9.5 -4.6 -10.5 -4.6 /
\plot -10.5 -4.6 -10.5 -5.4 /

\plot -6 -6 -6 -2 /
\plot -5 -6 -5 -2 /
\plot -4 -6 -4 -2 /
\plot -3 -6 -3 -2 /
\plot -2 -6 -2 -2 /

\plot -6 -6 -2 -6 /
\plot -6 -5 -2 -5 /
\plot -6 -4 -2 -4 /
\plot -6 -3 -2 -3 /
\plot -6 -2 -2 -2 /

\plot -5.5 -3.6 -2.5 -3.6 /
\plot -2.5 -3.6 -5.5 -2.4 /
\arrow <0.15cm> [0.1,0.5]  from -5.5 -2.4 to -5.5 -3.6 
\arrow <0.15cm> [0.1,0.5]  from -4.5 -3.4 to -3.5 -3.4 
\plot -3.5 -3.4 -4.5 -2.6 /
\plot -4.5 -2.6 -4.5 -3.4 /

\plot -5.5 -4.4 -2.5 -4.4 /
\plot -2.5 -4.4 -5.5 -5.6 /
\arrow <0.15cm> [0.1,0.5]  from -5.5 -5.6 to -5.5 -4.4 
\arrow <0.15cm> [0.1,0.5]  from -4.5 -4.6 to -3.5 -4.6 
\plot -3.5 -4.6 -4.5 -5.6 /
\plot -4.5 -5.6 -4.5 -4.6 /

\plot 0 -6 0 -2 /
\plot 1 -6 1 -2 /
\plot 2 -6 2 -2 /
\plot 3 -6 3 -2 /
\plot 4 -6 4 -2 /

\plot 0 -6 4 -6 /
\plot 0 -5 4 -5 /
\plot 0 -4 4 -4 /
\plot 0 -3 4 -3 /
\plot 0 -2 4 -2 /

\plot 0.5 -5.5 0.5 -4.5 /
\plot 1.5 -5.5 1.5 -4.5 /
\plot 2.5 -5.5 2.5 -4.5 /
\plot 3.5 -5.5 3.5 -4.5 /

\plot 0.5 -3.5 0.5 -2.5 /
\plot 1.5 -3.5 1.5 -2.5 /
\plot 2.5 -3.5 2.5 -2.5 /
\plot 3.5 -3.5 3.5 -2.5 /

\plot 6 -6 6 -2 /
\plot 7 -6 7 -2 /
\plot 8 -6 8 -2 /
\plot 9 -6 9 -2 /
\plot 10 -6 10 -2 /

\plot 6 -6 10 -6 /
\plot 6 -5 10 -5 /
\plot 6 -4 10 -4 /
\plot 6 -3 10 -3 /
\plot 6 -2 10 -2 /

\plot 6.5 -2.5 6.5 -3.5 /
\plot 7.5 -2.5 7.5 -3.5 /
\plot 6.5 -4.5 6.5 -5.5 /
\plot 7.5 -4.5 7.5 -5.5 /

\plot 12 -6 12 -2 /
\plot 13 -6 13 -2 /
\plot 14 -6 14 -2 /
\plot 15 -6 15 -2 /
\plot 16 -6 16 -2 /

\plot 12 -6 16 -6 /
\plot 12 -5 16 -5 /
\plot 12 -4 16 -4 /
\plot 12 -3 16 -3 /
\plot 12 -2 16 -2 /

\plot 12.5 -3.6 15.5 -2.4 /
\plot 15.5 -2.4 12.5 -2.4 /
\arrow <0.15cm> [0.1,0.5] from 12.5 -2.4 to 12.5 -3.6
\plot 13.5 -3.4 14.5 -2.6 /
\arrow <0.15cm> [0.1,0.5] from 14.5 -2.6 to 13.5 -2.6 
\plot 13.5 -2.6 13.5 -3.4 /

\plot 12.5 -4.4 15.5 -5.6 /
\plot 15.5 -5.6 12.5 -5.6 /
\arrow <0.15cm> [0.1,0.5] from 12.5 -5.6 to 12.5 -4.4
\plot 13.5 -4.6 14.5 -5.4 /
\arrow <0.15cm> [0.1,0.5] from 14.5 -5.4 to 13.5 -5.4 
\plot 13.5 -5.4 13.5 -4.6 /

\plot 18 -6 18 -2 /
\plot 19 -6 19 -2 /
\plot 20 -6 20 -2 /
\plot 21 -6 21 -2 /
\plot 22 -6 22 -2 /

\plot 18 -6 22 -6 /
\plot 18 -5 22 -5 /
\plot 18 -4 22 -4 /
\plot 18 -3 22 -3 /
\plot 18 -2 22 -2 /

\plot 18.5 -2.5 18.5 -3.5 /
\plot 18.5 -3.5 21.5 -2.5 /
\arrow <0.15cm> [0.1,0.5] from 21.5 -2.5 to 21.5 -3.5 
\plot 21.5 -3.5  18.5 -2.5 /
\plot 19.5 -2.4 19.5 -3.6 /
\plot 19.5 -3.6 20.5 -2.4 /
\arrow <0.15cm> [0.1,0.5] from 20.5 -2.4 to 20.5 -3.6 
\plot 20.5 -3.6 19.5 -2.4 /

\plot 18.5 -5.5 18.5 -4.5 /
\plot 18.5 -4.5 21.5 -5.5 /
\arrow <0.15cm> [0.1,0.5] from 21.5 -5.5 to 21.5 -4.5 
\plot 21.5 -4.5  18.5 -5.5 /
\plot 19.5 -5.6 19.5 -4.4 /
\plot 19.5 -4.4 20.5 -5.6 /
\arrow <0.15cm> [0.1,0.5] from 20.5 -5.6 to 20.5 -4.4 
\plot 20.5 -4.4 19.5 -5.6 /

\plot -12 -12 -12 -8 /
\plot -11 -12 -11 -8 /
\plot -10 -12 -10 -8 /
\plot -9 -12 -9 -8 /
\plot -8 -12 -8 -8 /

\plot -12 -12 -8 -12 /
\plot -12 -11 -8 -11 /
\plot -12 -10 -8 -10 /
\plot -12 -9 -8 -9 /
\plot -12 -8 -8 -8 /

\plot -11.5 -8.4 -8.5 -8.4 /
\plot -10.5 -8.6 -9.5 -8.6 /
\plot -11.5 -9.4 -8.5 -9.4 /
\plot -10.5 -9.6 -9.5 -9.6 /
\plot -11.5 -10.4 -8.5 -10.4 /
\plot -10.5 -10.6 -9.5 -10.6 /
\plot -11.5 -11.4 -8.5 -11.4 /
\plot -10.5 -11.6 -9.5 -11.6 /

\plot -6 -12 -6 -8 /
\plot -5 -12 -5 -8 /
\plot -4 -12 -4 -8 /
\plot -3 -12 -3 -8 /
\plot -2 -12 -2 -8 /

\plot -6 -12 -2 -12 /
\plot -6 -11 -2 -11 /
\plot -6 -10 -2 -10 /
\plot -6 -9 -2 -9 /
\plot -6 -8 -2 -8 /

\plot -5.5 -8.4 -2.5 -8.4 /
\plot -4.5 -8.6 -3.5 -8.6 /
\plot -5.5 -11.4 -2.5 -11.4 /
\plot -4.5 -11.6 -3.5 -11.6 /

\plot 0 -12 0 -8 /
\plot 1 -12 1 -8 /
\plot 2 -12 2 -8 /
\plot 3 -12 3 -8 /
\plot 4 -12 4 -8 /

\plot 0 -12 4 -12 /
\plot 0 -11 4 -11 /
\plot 0 -10 4 -10 /
\plot 0 -9 4 -9 /
\plot 0 -8 4 -8 /

\plot 0.5 -8.4 3.5 -8.4 /
\plot 3.5 -8.4 3.5 -9.6 /
\plot 3.5 -9.6 0.5 -9.6 /
\arrow <0.15cm> [0.1,0.5] from 0.5 -9.6 to 0.5 -8.4 
\plot 1.5 -8.6 2.5 -8.6 /
\plot 2.5 -8.6 2.5 -9.4 /
\plot 2.5 -9.4 1.5 -9.4 /
\arrow <0.15cm> [0.1,0.5] from 1.5 -9.4 to 1.5 -8.6 

\arrow <0.15cm> [0.1,0.5] from 0.5 -10.4 to 0.5 -11.6 
\plot 0.5 -11.6 3.5 -11.6 /
\plot 3.5 -11.6 3.5 -10.4 /
\plot 3.5 -10.4 0.5 -10.4 /
\arrow <0.15cm> [0.1,0.5] from 1.5 -10.6 to 1.5 -11.4 
\plot 1.5 -11.4 2.5 -11.4 /
\plot 2.5 -11.4 2.5 -10.6 /
\plot 2.5 -10.6 1.5 -10.6 /

\plot 6 -12 6 -8 /
\plot 7 -12 7 -8 /
\plot 8 -12 8 -8 /
\plot 9 -12 9 -8 /
\plot 10 -12 10 -8 /

\plot 6 -12 10 -12 /
\plot 6 -11 10 -11 /
\plot 6 -10 10 -10 /
\plot 6 -9 10 -9 /
\plot 6 -8 10 -8 /

\plot 6.5 -8.4 9.5 -8.4 /
\arrow <0.15cm> [0.1,0.5] from 9.5 -8.4 to 9.5 -9.6 
\plot 9.5 -9.6 6.5 -8.4 /
\arrow <0.15cm> [0.1,0.5] from 7.5 -8.6 to 8.5 -8.6 
\plot 8.5 -8.6 8.5 -9.4 /
\plot 8.5 -9.4 7.5 -8.6 /

\plot 6.5 -11.6 9.5 -11.6 /
\arrow <0.15cm> [0.1,0.5] from 9.5 -11.6 to 9.5 -10.4 
\plot 9.5 -10.4 6.5 -11.6 /
\arrow <0.15cm> [0.1,0.5] from 7.5 -11.4 to 8.5 -11.4 
\plot 8.5 -11.4 8.5 -10.6 /
\plot 8.5 -10.6 7.5 -11.4 /

\plot 12 -12 12 -8 /
\plot 13 -12 13 -8 /
\plot 14 -12 14 -8 /
\plot 15 -12 15 -8 /
\plot 16 -12 16 -8 /

\plot 12 -12 16 -12 /
\plot 12 -11 16 -11 /
\plot 12 -10 16 -10 /
\plot 12 -9 16 -9 /
\plot 12 -8 16 -8 /

\arrow <0.15cm> [0.1,0.5] from 12.5 -8.6 to 15.5 -8.6 
\plot 15.5 -8.6 12.5 -9.4 /
\plot 12.5 -9.4 15.5 -9.4 /
\plot 15.5 -9.4 12.5 -8.6 /
\arrow <0.15cm> [0.1,0.5] from 13.5 -8.4 to 14.5 -8.4 
\plot 14.5 -8.4 13.5 -9.6 /
\plot 13.5 -9.6 14.5 -9.6 /
\plot 14.5 -9.6 13.5 -8.4 /

\arrow <0.15cm> [0.1,0.5] from 12.5 -11.4 to 15.5 -11.4 
\plot 15.5 -11.4 12.5 -10.6 /
\plot 12.5 -10.6 15.5 -10.6 /
\plot 15.5 -10.6 12.5 -11.4 /
\arrow <0.15cm> [0.1,0.5] from 13.5 -11.6 to 14.5 -11.6 
\plot 14.5 -11.6 13.5 -10.4 /
\plot 13.5 -10.4 14.5 -10.4 /
\plot 14.5 -10.4 13.5 -11.6 /

\plot 18 -12 18 -8 /
\plot 19 -12 19 -8 /
\plot 20 -12 20 -8 /
\plot 21 -12 21 -8 /
\plot 22 -12 22 -8 /

\plot 18 -12 22 -12 /
\plot 18 -11 22 -11 /
\plot 18 -10 22 -10 /
\plot 18 -9 22 -9 /
\plot 18 -8 22 -8 /

\plot 18.5 -8.4 21.5 -8.4 / 
\plot 21.5 -8.4 18.5 -9.6 /
\arrow <0.15cm> [0.1,0.5] from 18.5 -9.6 to 18.5 -8.4 
\plot 19.5 -8.6 20.5 -8.6 / 
\plot 20.5 -8.6 19.5 -9.4 /
\arrow <0.15cm> [0.1,0.5] from 19.5 -9.4 to 19.5 -8.6  

\plot 18.5 -11.6 21.5 -11.6 / 
\plot 21.5 -11.6 18.5 -10.4 /
\arrow <0.15cm> [0.1,0.5] from 18.5 -10.4 to 18.5 -11.6 
\plot 19.5 -11.4 20.5 -11.4 / 
\plot 20.5 -11.4 19.5 -10.6 /
\arrow <0.15cm> [0.1,0.5] from 19.5 -10.6 to 19.5 -11.4  

\plot -12 -18 -12 -14 /
\plot -11 -18 -11 -14 /
\plot -10 -18 -10 -14 /
\plot -9 -18 -9 -14 /
\plot -8 -18 -8 -14 /

\plot -12 -18 -8 -18 /
\plot -12 -17 -8 -17 /
\plot -12 -16 -8 -16 /
\plot -12 -15 -8 -15 /
\plot -12 -14 -8 -14 /

\plot -11.5 -14.5 -8.5 -15.5 /
\plot -10.5 -14.5 -9.5 -15.5 /
\plot -11.5 -17.5 -8.5 -16.5 /
\plot -10.5 -17.5 -9.5 -16.5 /

\plot -6 -18 -6 -14 /
\plot -5 -18 -5 -14 /
\plot -4 -18 -4 -14 /
\plot -3 -18 -3 -14 /
\plot -2 -18 -2 -14 /

\plot -6 -18 -2 -18 /
\plot -6 -17 -2 -17 /
\plot -6 -16 -2 -16 /
\plot -6 -15 -2 -15 /
\plot -6 -14 -2 -14 /

\plot -5.5 -14.5 -2.5 -15.5 /
\plot -4.5 -14.5 -3.5 -15.5 /
\plot -5.5 -15.5 -2.5 -14.5 /
\plot -4.5 -15.5 -3.5 -14.5 /
\plot -5.5 -16.5 -2.5 -17.5 /
\plot -4.5 -16.5 -3.5 -17.5 /
\plot -5.5 -17.5 -2.5 -16.5 /
\plot -4.5 -17.5 -3.5 -16.5 /

\plot 0 -18 0 -14 /
\plot 1 -18 1 -14 /
\plot 2 -18 2 -14 /
\plot 3 -18 3 -14 /
\plot 4 -18 4 -14 /

\plot 0 -18 4 -18 /
\plot 0 -17 4 -17 /
\plot 0 -16 4 -16 /
\plot 0 -15 4 -15 /
\plot 0 -14 4 -14 /

\plot 0.5 -14.4 3.5 -15.6 /
\arrow <0.15cm> [0.1,0.5] from 3.5 -15.6 to 3.5 -14.4 
\plot 3.5 -14.4 0.5 -14.4 /
\plot 1.5 -14.6 2.5 -15.4 /
\arrow <0.15cm> [0.1,0.5] from 2.5 -15.4 to 2.5 -14.6 
\plot 2.5 -14.6 1.5 -14.6 /

\plot 0.5 -17.6 3.5 -17.6 /
\arrow <0.15cm> [0.1,0.5] from 3.5 -16.4 to 3.5 -17.6 
\plot 3.5 -16.4 0.5 -17.6 /
\plot 1.5 -17.4 2.5 -17.4 /
\arrow <0.15cm> [0.1,0.5] from 2.5 -16.6 to 2.5 -17.6 
\plot 2.5 -16.6 1.5 -17.4 /

\plot 6 -18 6 -14 /
\plot 7 -18 7 -14 /
\plot 8 -18 8 -14 /
\plot 9 -18 9 -14 /
\plot 10 -18 10 -14 /

\plot 6 -18 10 -18 /
\plot 6 -17 10 -17 /
\plot 6 -16 10 -16 /
\plot 6 -15 10 -15 /
\plot 6 -14 10 -14 /

\plot 6.5 -14.5 9.5 -15.5 /
\arrow <0.15cm> [0.1,0.5] from 9.5 -15.5 to 9.5 -14.5 
\plot 9.5 -14.5 6.5 -15.5 /
\plot 6.5 -15.5 6.5 -14.5 /
\plot 7.5 -14.4 8.5 -15.6 /
\arrow <0.15cm> [0.1,0.5] from 8.5 -15.6 to 8.5 -14.4 
\plot 8.5 -14.4 7.5 -15.6 /
\plot 7.5 -15.6 7.5 -14.4 /

\plot 6.5 -16.5 9.5 -17.5 /
\arrow <0.15cm> [0.1,0.5] from 9.5 -16.5 to 9.5 -17.5 
\plot 9.5 -16.5 6.5 -17.5 /
\plot 6.5 -17.5 6.5 -16.5 /
\plot 7.5 -16.4 8.5 -17.6 /
\arrow <0.15cm> [0.1,0.5] from 8.5 -16.4 to 8.5 -17.6 
\plot 8.5 -16.4 7.5 -17.6 /
\plot 7.5 -17.6 7.5 -16.4 /

\plot 12 -18 12 -14 /
\plot 13 -18 13 -14 /
\plot 14 -18 14 -14 /
\plot 15 -18 15 -14 /
\plot 16 -18 16 -14 /

\plot 12 -18 16 -18 /
\plot 12 -17 16 -17 /
\plot 12 -16 16 -16 /
\plot 12 -15 16 -15 /
\plot 12 -14 16 -14 /

\plot 12.5 -14.4 15.5 -15.6 /
\plot 15.5 -15.6 12.5 -15.6 /
\arrow <0.15cm> [0.1,0.5] from 12.5 -15.6 to 12.5 -14.4 
\plot 13.5 -14.6 14.5 -15.4 /
\arrow <0.15cm> [0.1,0.5] from 14.5 -15.4 to 13.5 -15.4 
\plot 13.5 -15.4 13.5 -14.6 /

\plot 12.5 -17.6 15.5 -16.4 /
\plot 15.5 -16.4 12.5 -16.4 /
\arrow <0.15cm> [0.1,0.5] from 12.5 -16.4 to 12.5 -17.6 
\plot 13.5 -17.4 14.5 -16.6 /
\arrow <0.15cm> [0.1,0.5] from 14.5 -16.6 to 13.5 -16.6 
\plot 13.5 -16.6 13.5 -17.4 /

\plot 18 -18 18 -14 /
\plot 19 -18 19 -14 /
\plot 20 -18 20 -14 /
\plot 21 -18 21 -14 /
\plot 22 -18 22 -14 /

\plot 18 -18 22 -18 /
\plot 18 -17 22 -17 /
\plot 18 -16 22 -16 /
\plot 18 -15 22 -15 /
\plot 18 -14 22 -14 /

\plot 18.5 -14.6 21.5 -15.4 /
\plot 21.5 -15.4 18.5 -15.4 /
\plot 18.5 -15.4 21.5 -14.6 /
\arrow <0.15cm> [0.1,0.5] from 21.5 -14.6 to 18.5 -14.6 
\plot 19.5 -14.4 20.5 -15.6 /
\plot 20.5 -15.6 19.5 -15.6 /
\plot 19.5 -15.6 20.5 -14.4 /
\arrow <0.15cm> [0.1,0.5] from 20.5 -14.4 to 19.5 -14.4 

\plot 18.5 -16.6 21.5 -17.4 /
\arrow <0.15cm> [0.1,0.5] from  21.5 -17.4 to 18.5 -17.4
\plot 18.5 -17.4 21.5 -16.6 /
\plot 21.5 -16.6 18.5 -16.6 / 
\plot 19.5 -16.4 20.5 -17.6 /
\arrow <0.15cm> [0.1,0.5] from  20.5 -17.6 to 19.5 -17.6 
\plot 19.5 -17.6 20.5 -16.4 /
\plot 20.5 -16.4 19.5 -16.4 /

\put {The 24 elements $v \in V \subset S([4]^2)$ described in case B2b.} at 5 -22
\put{Rows and columns are indexed by $x,y,z,t$ in this order.} at 5 -24

\put{\bf{Fig.2}} at 5, -27

\endpicture 
\]

\newpage 

Conversely, assume that ii) holds. All the groups in part ii)
are isomorphic to $S_4$ by definition. It is easy to see that
both $1 \otimes S_4$ and $S_4 \otimes 1$ are bicompatible
subgroups. The bicompatibility of
the groups $N_{i,j}$ is established in the proof of
Theorem \ref{S4xZ2xZ2alt} while that of the groups 
$\widetilde{T}_{a,b}$ in the proof of Theorem \ref{S4}.
\end{proof}

\section{Outlook}\label{Outlook}

We present in this brief final section some directions for
future research and some open problems that arise naturally
from the results presented in this work.

In \cite{AJS18} other tables are presented. In particular, for 
the subgroups of ${\rm Out}(\O_2)$ which are maximal in $\pi(\lambda^{-1}(\P_2^5))$, and those in ${\rm Out}(\O_3)$
which are maximal in $\pi(\lambda^{-1}(\P_3^3))$ 
(see \cite[Tables 3 and 4]{AJS18}).
We feel that a natural next step would be to find explicitly
these subgroups. One can notice that the groups described in
Section 5 are not all Coxeter groups. Thus it seems to be difficult
to predict, given $n$ and $t$, which groups appear as subgroups 
of ${\rm Out}(\O_n)$ which are maximal in $\pi(\lambda^{-1}(\P_n^t))$. Related to this last question
is the one of classifying all the bicompatible 
subgroups of $S([n]^2)$ which are isomorphic to $S_n$
and also to their image in $\pi(\lambda^{-1}(\P_n^2))$,
which we feel might be doable.

Many of the generators of the groups studied in this work
arise from stable permutations of $[n]^2$ of rank $1$ and 
cycle-type $(2,...,2)$. The computations that we have carried out
for $n=4$ and cycle-types $(2,2)$ and $(2,2,2)$ (see Figures
3 and 4) show that there are 60 and 144 such
permutations and they seem to have quite an orderly 
structure. Thus it could be a promising avenue of research 
to try to classify all such stable permutations of rank $1$
(as was done for stable permutations of rank $1$ and cycle-type
$(r)$ in \cite[Thm. 3.1]{BCN}).

Finally, we feel that many of the results in Section 
\ref{horiz_vert} should be generalizable, both to $[n]^t$ 
for $t>2$, as well as to more general kind of permutations of
$[n]^2$.

\newpage

	\[  \beginpicture
		
	\setcoordinatesystem units <0.6cm,0.6cm>
	\setplotarea x from 7 to 8, y from 4 to 7
	
	\setlinear
	
	\plot 0 0 0 4 /
	\plot 1 0 1 4 /
	\plot 2 0 2 4 /
	\plot 3 0 3 4 /
	\plot 4 0 4 4 /
	
	\plot 0 0 4 0 /
	\plot 0 1 4 1 /
	\plot 0 2 4 2 /
	\plot 0 3 4 3 /
	\plot 0 4 4 4 /

	\plot 1.5 3.6 2.4 3.6 /
	\plot 2.6 3.6 3.5 3.6 /
	\plot 1.5 3.4 3.5 3.4 /
	
	\plot 0.5 2.6 2.4 2.6 /
	\plot 2.6 2.6 3.5 2.6 /
	\plot 0.5 2.4 3.5 2.4 /
	
	\plot 0.5 1.6 1.4 1.6 /
	\plot 1.6 1.6 3.5 1.6 /
	\plot 0.5 1.4 3.5 1.4 /
	
	\plot 0.5 0.6 1.4 0.6 /
	\plot 1.6 0.6 2.5 0.6 /
	\plot 0.5 0.4 2.5 0.4 /
	
	\put {The 12 horizontal stable transpositions of rank 1 in $S([4]^2)$} at 2 -1.5
	
	\endpicture \]

\[ 
\beginpicture
\setcoordinatesystem units <0.6cm,0.6cm>
\setplotarea x from 7 to 8, y from 4 to 7

\setlinear

\plot 0 0 0 4 /
\plot 1 0 1 4 /
\plot 2 0 2 4 /
\plot 3 0 3 4 /
\plot 4 0 4 4 /

\plot 0 0 4 0 /
\plot 0 1 4 1 /
\plot 0 2 4 2 /
\plot 0 3 4 3 /
\plot 0 4 4 4 /

\plot 3.4 1.5 3.4 2.4 /
\plot 3.4 2.6 3.4 3.5 /
\plot 3.6 1.5 3.6 3.5 /

\plot 2.4 0.5 2.4 2.4 /
\plot 2.4 2.6 2.4 3.5 /
\plot 2.6 0.5 2.6 3.5 /

\plot 1.4 0.5 1.4 1.4 /
\plot 1.4 1.6 1.4 3.5 /
\plot 1.6 0.5 1.6 3.5 /

\plot 0.4 0.5 0.4 1.4 /
\plot 0.4 1.6 0.4 2.5 /
\plot 0.6 0.5 0.6 2.5 /

\put {The 12 vertical stable transpositions of rank 1 in $S([4]^2)$} at 2 -1.5

\endpicture \]

\[ 
\beginpicture
\setcoordinatesystem units <0.6cm,0.6cm>
\setplotarea x from 7 to 8, y from 4 to 7

\setlinear

\color{black} 

\plot 0 0 0 4 /
\plot 1 0 1 4 /
\plot 2 0 2 4 /
\plot 3 0 3 4 /
\plot 4 0 4 4 /

\plot 0 0 4 0 /
\plot 0 1 4 1 /
\plot 0 2 4 2 /
\plot 0 3 4 3 /
\plot 0 4 4 4 /

\plot 0.5 0.6 1.4 1.5 /
\plot 1.6 1.7 3.5 3.6 /
\plot 0.5 0.3 2.5 2.3 /
\plot 2.6 2.4 3.5 3.3 /

\plot 0.6 1.4 1.4 0.6 /
\plot 0.6 2.4 2.4 0.6 /
\plot 1.6 3.6 3.6 1.6 /
\plot 2.6 3.6 3.6 2.6 /

\plot 0.5 2.6 3.5 1.5 /
\plot 2.6 0.5 1.5 3.5 /
\plot 0.5 1.5 3.5 2.5 /
\plot 1.5 0.5 2.5 3.5 /

\color{black} 
\put {The 12 remaining stable transpositions of rank 1 in $S([4]^2)$} at 2 -1.5

\put{\bf{Fig.3}} at 2, -4

\endpicture \]

\newpage

	\[  \beginpicture
	
	\setcoordinatesystem units <0.28cm,0.28cm>
	\setplotarea x from 6 to 7, y from 2 to 5
	
	\setlinear
	
	\plot -12 0 -12 4 /
	\plot -11 0 -11 4 /
	\plot -10 0 -10 4 /
	\plot -9 0 -9 4 /
	\plot -8 0 -8 4 /
	
	\plot -12 0 -8 0 /
	\plot -12 1 -8 1 /
	\plot -12 2 -8 2 /
	\plot -12 3 -8 3 /
	\plot -12 4 -8 4 /
	
	\plot -9.5 0.5 -8.5 0.5 /
	\plot -9.5 1.5 -8.5 1.5 /
	
	\plot -6 0 -6 4 /
	\plot -5 0 -5 4 /
	\plot -4 0 -4 4 /
	\plot -3 0 -3 4 /
	\plot -2 0 -2 4 /
	
	\plot -6 0 -2 0 /
	\plot -6 1 -2 1 /
	\plot -6 2 -2 2 /
	\plot -6 3 -2 3 /
	\plot -6 4 -2 4 /
	
	\plot -3.5 0.5 -3.5 1.5 /
	\plot -2.5 0.5 -2.5 1.5 /
	
	\plot 0 0 0 4 /
	\plot 1 0 1 4 /
	\plot 2 0 2 4 /
	\plot 3 0 3 4 /
	\plot 4 0 4 4 /
	
	\plot 0 0 4 0 /
	\plot 0 1 4 1 /
	\plot 0 2 4 2 /
	\plot 0 3 4 3 /
	\plot 0 4 4 4 /
	
	\plot 2.5 0.5 3.5 1.5 /
	\plot 2.5 1.5 3.5 0.5 /
	
	\plot 6 0 6 4 /
	\plot 7 0 7 4 /
	\plot 8 0 8 4 /
	\plot 9 0 9 4 /
	\plot 10 0 10 4 /
	
	\plot 6 0 10 0 /
	\plot 6 1 10 1 /
	\plot 6 2 10 2 /
	\plot 6 3 10 3 /
	\plot 6 4 10 4 /
	
	\plot 6.5 0.5 7.5 0.5 /
	\plot 6.5 1.5 7.5 1.5 /
	
	\plot 12 0 12 4 /
	\plot 13 0 13 4 /
	\plot 14 0 14 4 /
	\plot 15 0 15 4 /
	\plot 16 0 16 4 /
	
	\plot 12 0 16 0 /
	\plot 12 1 16 1 /
	\plot 12 2 16 2 /
	\plot 12 3 16 3 /
	\plot 12 4 16 4 /
	
	\plot 12.5 0.5 12.5 1.5 /
	\plot 13.5 0.5 13.5 1.5 /
	
	\plot 18 0 18 4 /
	\plot 19 0 19 4 /
	\plot 20 0 20 4 /
	\plot 21 0 21 4 /
	\plot 22 0 22 4 /
	
	\plot 18 0 22 0 /
	\plot 18 1 22 1 /
	\plot 18 2 22 2 /
	\plot 18 3 22 3 /
	\plot 18 4 22 4 /
	
	\plot 18.5 0.5 19.5 1.5 /
	\plot 18.5 1.5 19.5 0.5 /
	
	\plot -12 -6 -12 -2 /
	\plot -11 -6 -11 -2 /
	\plot -10 -6 -10 -2 /
	\plot -9 -6 -9 -2 /
	\plot -8 -6 -8 -2 /
	
	\plot -12 -6 -8 -6 /
	\plot -12 -5 -8 -5 /
	\plot -12 -4 -8 -4 /
	\plot -12 -3 -8 -3 /
	\plot -12 -2 -8 -2 /
	
	\plot -10.5 -5.5 -9.5 -5.5 /
	\plot -8.5 -4.5 -8.5 -3.5 /
	
	\plot -6 -6 -6 -2 /
	\plot -5 -6 -5 -2 /
	\plot -4 -6 -4 -2 /
	\plot -3 -6 -3 -2 /
	\plot -2 -6 -2 -2 /
	
	\plot -6 -6 -2 -6 /
	\plot -6 -5 -2 -5 /
	\plot -6 -4 -2 -4 /
	\plot -6 -3 -2 -3 /
	\plot -6 -2 -2 -2 /
	
	\plot -4.5 -5.5 -4.5 -4.5 /
	\plot -3.5 -3.5 -2.5 -3.5 /
	
	\plot 0 -6 0 -2 /
	\plot 1 -6 1 -2 /
	\plot 2 -6 2 -2 /
	\plot 3 -6 3 -2 /
	\plot 4 -6 4 -2 /
	
	\plot 0 -6 4 -6 /
	\plot 0 -5 4 -5 /
	\plot 0 -4 4 -4 /
	\plot 0 -3 4 -3 /
	\plot 0 -2 4 -2 /
	
	\plot 0.5 -5.5 0.5 -4.5 /
	\plot 2.5 -3.5 3.5 -3.5 /
	
	\plot 6 -6 6 -2 /
	\plot 7 -6 7 -2 /
	\plot 8 -6 8 -2 /
	\plot 9 -6 9 -2 /
	\plot 10 -6 10 -2 /
	
	\plot 6 -6 10 -6 /
	\plot 6 -5 10 -5 /
	\plot 6 -4 10 -4 /
	\plot 6 -3 10 -3 /
	\plot 6 -2 10 -2 /
	
	\plot 8.5 -5.5 8.5 -3.5 /
	\plot 7.5 -4.5 9.5 -4.5 /
	
	\plot 12 -6 12 -2 /
	\plot 13 -6 13 -2 /
	\plot 14 -6 14 -2 /
	\plot 15 -6 15 -2 /
	\plot 16 -6 16 -2 /
	
	\plot 12 -6 16 -6 /
	\plot 12 -5 16 -5 /
	\plot 12 -4 16 -4 /
	\plot 12 -3 16 -3 /
	\plot 12 -2 16 -2 /
	
	\plot 13.5 -4.5 14.5 -4.5 /
	\plot 13.5 -3.5 14.5 -3.5 /
	
	\plot 18 -6 18 -2 /
	\plot 19 -6 19 -2 /
	\plot 20 -6 20 -2 /
	\plot 21 -6 21 -2 /
	\plot 22 -6 22 -2 /
	
	\plot 18 -6 22 -6 /
	\plot 18 -5 22 -5 /
	\plot 18 -4 22 -4 /
	\plot 18 -3 22 -3 /
	\plot 18 -2 22 -2 /
	
	\plot 19.5 -4.5 19.5 -3.5 /
	\plot 20.5 -4.5 20.5 -3.5 /
	
	\plot -12 -12 -12 -8 /
	\plot -11 -12 -11 -8 /
	\plot -10 -12 -10 -8 /
	\plot -9 -12 -9 -8 /
	\plot -8 -12 -8 -8 /
	
	\plot -12 -12 -8 -12 /
	\plot -12 -11 -8 -11 /
	\plot -12 -10 -8 -10 /
	\plot -12 -9 -8 -9 /
	\plot -12 -8 -8 -8 /
	
	\plot -10.5 -10.5 -9.5 -9.5 /
	\plot -10.5 -9.5 -9.5 -10.5 /
	
	\plot -6 -12 -6 -8 /
	\plot -5 -12 -5 -8 /
	\plot -4 -12 -4 -8 /
	\plot -3 -12 -3 -8 /
	\plot -2 -12 -2 -8 /
	
	\plot -6 -12 -2 -12 /
	\plot -6 -11 -2 -11 /
	\plot -6 -10 -2 -10 /
	\plot -6 -9 -2 -9 /
	\plot -6 -8 -2 -8 /
	
	\plot -4.5 -11.5 -2.5 -11.5 /
	\plot -4.5 -9.5 -2.5 -9.5 /
	
	\plot 0 -12 0 -8 /
	\plot 1 -12 1 -8 /
	\plot 2 -12 2 -8 /
	\plot 3 -12 3 -8 /
	\plot 4 -12 4 -8 /
	
	\plot 0 -12 4 -12 /
	\plot 0 -11 4 -11 /
	\plot 0 -10 4 -10 /
	\plot 0 -9 4 -9 /
	\plot 0 -8 4 -8 /
	
	\plot 1.5 -11.5 1.5 -9.5 /
	\plot 3.5 -11.5 3.5 -9.5 /
	
	\plot 6 -12 6 -8 /
	\plot 7 -12 7 -8 /
	\plot 8 -12 8 -8 /
	\plot 9 -12 9 -8 /
	\plot 10 -12 10 -8 /
	
	\plot 6 -12 10 -12 /
	\plot 6 -11 10 -11 /
	\plot 6 -10 10 -10 /
	\plot 6 -9 10 -9 /
	\plot 6 -8 10 -8 /
	
	\plot 7.5 -11.5 9.5 -9.5 /
	\plot 7.5 -9.5 9.5 -11.5 /
	
	\plot 12 -12 12 -8 /
	\plot 13 -12 13 -8 /
	\plot 14 -12 14 -8 /
	\plot 15 -12 15 -8 /
	\plot 16 -12 16 -8 /
	
	\plot 12 -12 16 -12 /
	\plot 12 -11 16 -11 /
	\plot 12 -10 16 -10 /
	\plot 12 -9 16 -9 /
	\plot 12 -8 16 -8 /
	
	\plot 12.5 -9.5 14.5 -9.5 /
	\plot 15.5 -8.5 15.5 -10.5 /
	
	\plot 18 -12 18 -8 /
	\plot 19 -12 19 -8 /
	\plot 20 -12 20 -8 /
	\plot 21 -12 21 -8 /
	\plot 22 -12 22 -8 /
	
	\plot 18 -12 22 -12 /
	\plot 18 -11 22 -11 /
	\plot 18 -10 22 -10 /
	\plot 18 -9 22 -9 /
	\plot 18 -8 22 -8 /
	
	\plot 18.5 -11.5 20.5 -11.5 /
	\plot 18.5 -9.5 20.5 -9.5 /
	
	\plot -12 -18 -12 -14 /
	\plot -11 -18 -11 -14 /
	\plot -10 -18 -10 -14 /
	\plot -9 -18 -9 -14 /
	\plot -8 -18 -8 -14 /
	
	\plot -12 -18 -8 -18 /
	\plot -12 -17 -8 -17 /
	\plot -12 -16 -8 -16 /
	\plot -12 -15 -8 -15 /
	\plot -12 -14 -8 -14 /
	
	\plot -11.5 -16.5 -8.5 -16.5 /
	\plot -11.5 -15.5 -8.5 -15.5 /
	
	\plot -6 -18 -6 -14 /
	\plot -5 -18 -5 -14 /
	\plot -4 -18 -4 -14 /
	\plot -3 -18 -3 -14 /
	\plot -2 -18 -2 -14 /
	
	\plot -6 -18 -2 -18 /
	\plot -6 -17 -2 -17 /
	\plot -6 -16 -2 -16 /
	\plot -6 -15 -2 -15 /
	\plot -6 -14 -2 -14 /
	
	\plot -5.5 -16.5 -5.5 -15.5 /
	\plot -4.5 -17.5 -3.5 -17.5 /
	
	\plot 0 -18 0 -14 /
	\plot 1 -18 1 -14 /
	\plot 2 -18 2 -14 /
	\plot 3 -18 3 -14 /
	\plot 4 -18 4 -14 /
	
	\plot 0 -18 4 -18 /
	\plot 0 -17 4 -17 /
	\plot 0 -16 4 -16 /
	\plot 0 -15 4 -15 /
	\plot 0 -14 4 -14 /
	
	\plot 0.5 -16.5 0.5 -15.5 /
	\plot 3.5 -16.5 3.5 -15.5 /
	
	\plot 6 -18 6 -14 /
	\plot 7 -18 7 -14 /
	\plot 8 -18 8 -14 /
	\plot 9 -18 9 -14 /
	\plot 10 -18 10 -14 /
	
	\plot 6 -18 10 -18 /
	\plot 6 -17 10 -17 /
	\plot 6 -16 10 -16 /
	\plot 6 -15 10 -15 /
	\plot 6 -14 10 -14 /
	
	\plot 6.5 -16.5 9.5 -15.5 /
	\plot 6.5 -15.5 9.5 -16.5 /
	
	\plot 12 -18 12 -14 /
	\plot 13 -18 13 -14 /
	\plot 14 -18 14 -14 /
	\plot 15 -18 15 -14 /
	\plot 16 -18 16 -14 /
	
	\plot 12 -18 16 -18 /
	\plot 12 -17 16 -17 /
	\plot 12 -16 16 -16 /
	\plot 12 -15 16 -15 /
	\plot 12 -14 16 -14 /
	
	\plot 12.5 -17.5 12.5 -15.5 /
	\plot 13.5 -16.5 15.5 -16.5 /
	
	\plot 18 -18 18 -14 /
	\plot 19 -18 19 -14 /
	\plot 20 -18 20 -14 /
	\plot 21 -18 21 -14 /
	\plot 22 -18 22 -14 /
	
	\plot 18 -18 22 -18 /
	\plot 18 -17 22 -17 /
	\plot 18 -16 22 -16 /
	\plot 18 -15 22 -15 /
	\plot 18 -14 22 -14 /
	
	\plot 18.5 -17.5 18.5 -15.5 /
	\plot 20.5 -17.5 20.5 -15.5 /
	
	\plot -12 -24 -12 -20 /
	\plot -11 -24 -11 -20 /
	\plot -10 -24 -10 -20 /
	\plot -9 -24 -9 -20 /
	\plot -8 -24 -8 -20 /
	
	\plot -12 -24 -8 -24 /
	\plot -12 -23 -8 -23 /
	\plot -12 -22 -8 -22 /
	\plot -12 -21 -8 -21 /
	\plot -12 -20 -8 -20 /
	
	\plot -11.5 -23.5 -9.5 -21.5 /
	\plot -11.5 -21.5 -9.5 -23.5 /
	
	\plot -6 -24 -6 -20 /
	\plot -5 -24 -5 -20 /
	\plot -4 -24 -4 -20 /
	\plot -3 -24 -3 -20 /
	\plot -2 -24 -2 -20 /
	
	\plot -6 -24 -2 -24 /
	\plot -6 -23 -2 -23 /
	\plot -6 -22 -2 -22 /
	\plot -6 -21 -2 -21 /
	\plot -6 -20 -2 -20 /
	
	\plot -5.5 -23.5 -4.5 -23.5 /
	\plot -2.5 -21.5 -2.5 -20.5 /
	
	\plot 0 -24 0 -20 /
	\plot 1 -24 1 -20 /
	\plot 2 -24 2 -20 /
	\plot 3 -24 3 -20 /
	\plot 4 -24 4 -20 /
	
	\plot 0 -24 4 -24 /
	\plot 0 -23 4 -23 /
	\plot 0 -22 4 -22 /
	\plot 0 -21 4 -21 /
	\plot 0 -20 4 -20 /
	
	\plot 0.5 -22.5 1.5 -22.5 /
	\plot 3.5 -21.5 3.5 -20.5 /
	
	\plot 6 -24 6 -20 /
	\plot 7 -24 7 -20 /
	\plot 8 -24 8 -20 /
	\plot 9 -24 9 -20 /
	\plot 10 -24 10 -20 /
	
	\plot 6 -24 10 -24 /
	\plot 6 -23 10 -23 /
	\plot 6 -22 10 -22 /
	\plot 6 -21 10 -21 /
	\plot 6 -20 10 -20 /
	
	\plot 6.5 -23.5 8.5 -23.5 /
	\plot 9.5 -22.5 9.5 -20.5 /
	
	\plot 12 -24 12 -20 /
	\plot 13 -24 13 -20 /
	\plot 14 -24 14 -20 /
	\plot 15 -24 15 -20 /
	\plot 16 -24 16 -20 /
	
	\plot 12 -24 16 -24 /
	\plot 12 -23 16 -23 /
	\plot 12 -22 16 -22 /
	\plot 12 -21 16 -21 /
	\plot 12 -20 16 -20 /
	
	\plot 13.5 -23.5 13.5 -22.5 /
	\plot 14.5 -20.5 15.5 -20.5 /
	
	\plot 18 -24 18 -20 /
	\plot 19 -24 19 -20 /
	\plot 20 -24 20 -20 /
	\plot 21 -24 21 -20 /
	\plot 22 -24 22 -20 /
	
	\plot 18 -24 22 -24 /
	\plot 18 -23 22 -23 /
	\plot 18 -22 22 -22 /
	\plot 18 -21 22 -21 /
	\plot 18 -20 22 -20 /
	
	\plot 18.5 -23.5 18.5 -22.5 /
	\plot 20.5 -20.5 21.5 -20.5 /
	
	\plot -12 -30 -12 -26 /
	\plot -11 -30 -11 -26 /
	\plot -10 -30 -10 -26 /
	\plot -9 -30 -9 -26 /
	\plot -8 -30 -8 -26 /
	
	\plot -12 -30 -8 -30 /
	\plot -12 -29 -8 -29 /
	\plot -12 -28 -8 -28 /
	\plot -12 -27 -8 -27 /
	\plot -12 -26 -8 -26 /
	
	\plot -9.5 -26.5 -8.5 -26.5 /
	\plot -9.5 -27.5 -8.5 -27.5 /
	
	\plot -6 -30 -6 -26 /
	\plot -5 -30 -5 -26 /
	\plot -4 -30 -4 -26 /
	\plot -3 -30 -3 -26 /
	\plot -2 -30 -2 -26 /
	
	\plot -6 -30 -2 -30 /
	\plot -6 -29 -2 -29 /
	\plot -6 -28 -2 -28 /
	\plot -6 -27 -2 -27 /
	\plot -6 -26 -2 -26 /
	
	\plot -5.5 -29.5 -4.5 -29.5 /
	\plot -3.5 -27.5 -3.5 -26.5 /
	
	\plot 0 -30 0 -26 /
	\plot 1 -30 1 -26 /
	\plot 2 -30 2 -26 /
	\plot 3 -30 3 -26 /
	\plot 4 -30 4 -26 /
	
	\plot 0 -30 4 -30 /
	\plot 0 -29 4 -29 /
	\plot 0 -28 4 -28 /
	\plot 0 -27 4 -27 /
	\plot 0 -26 4 -26 /
	
	\plot 0.5 -28.5 1.5 -28.5 /
	\plot 2.5 -27.5 2.5 -26.5 /
	
	\plot 6 -30 6 -26 /
	\plot 7 -30 7 -26 /
	\plot 8 -30 8 -26 /
	\plot 9 -30 9 -26 /
	\plot 10 -30 10 -26 /
	
	\plot 6 -30 10 -30 /
	\plot 6 -29 10 -29 /
	\plot 6 -28 10 -28 /
	\plot 6 -27 10 -27 /
	\plot 6 -26 10 -26 /
	
	\plot 8.5 -27.5 8.5 -26.5 /
	\plot 9.5 -27.5 9.5 -26.5 /
	
	\plot 12 -30 12 -26 /
	\plot 13 -30 13 -26 /
	\plot 14 -30 14 -26 /
	\plot 15 -30 15 -26 /
	\plot 16 -30 16 -26 /
	
	\plot 12 -30 16 -30 /
	\plot 12 -29 16 -29 /
	\plot 12 -28 16 -28 /
	\plot 12 -27 16 -27 /
	\plot 12 -26 16 -26 /
	
	\plot 14.5 -27.5 15.5 -26.5 /
	\plot 14.5 -26.5 15.5 -27.5 /
	
	\plot 18 -30 18 -26 /
	\plot 19 -30 19 -26 /
	\plot 20 -30 20 -26 /
	\plot 21 -30 21 -26 /
	\plot 22 -30 22 -26 /
	
	\plot 18 -30 22 -30 /
	\plot 18 -29 22 -29 /
	\plot 18 -28 22 -28 /
	\plot 18 -27 22 -27 /
	\plot 18 -26 22 -26 /
	
	\plot 18.5 -28.5 21.5 -28.5 /
	\plot 20.5 -29.5 20.5 -26.5 /
	
	\plot -12 -36 -12 -32 /
	\plot -11 -36 -11 -32 /
	\plot -10 -36 -10 -32 /
	\plot -9 -36 -9 -32 /
	\plot -8 -36 -8 -32 /
	
	\plot -12 -36 -8 -36 /
	\plot -12 -35 -8 -35 /
	\plot -12 -34 -8 -34 /
	\plot -12 -33 -8 -33 /
	\plot -12 -32 -8 -32 /
	
	\plot -11.5 -33.5 -8.5 -33.5 /
	\plot -9.5 -32.5 -9.5 -35.5 /
	
	\plot -6 -36 -6 -32 /
	\plot -5 -36 -5 -32 /
	\plot -4 -36 -4 -32 /
	\plot -3 -36 -3 -32 /
	\plot -2 -36 -2 -32 /
	
	\plot -6 -36 -2 -36 /
	\plot -6 -35 -2 -35 /
	\plot -6 -34 -2 -34 /
	\plot -6 -33 -2 -33 /
	\plot -6 -32 -2 -32 /
	
	\plot -4.5 -32.5 -3.5 -32.5 /
	\plot -4.5 -35.5 -3.5 -35.5 /
	
	\plot 0 -36 0 -32 /
	\plot 1 -36 1 -32 /
	\plot 2 -36 2 -32 /
	\plot 3 -36 3 -32 /
	\plot 4 -36 4 -32 /
	
	\plot 0 -36 4 -36 /
	\plot 0 -35 4 -35 /
	\plot 0 -34 4 -34 /
	\plot 0 -33 4 -33 /
	\plot 0 -32 4 -32 /
	
	\plot 0.5 -35.5 0.5 -33.5 /
	\plot 1.5 -32.5 3.5 -32.5 /
	
	\plot 6 -36 6 -32 /
	\plot 7 -36 7 -32 /
	\plot 8 -36 8 -32 /
	\plot 9 -36 9 -32 /
	\plot 10 -36 10 -32 /
	
	\plot 6 -36 10 -36 /
	\plot 6 -35 10 -35 /
	\plot 6 -34 10 -34 /
	\plot 6 -33 10 -33 /
	\plot 6 -32 10 -32 /
	
	\plot 7.5 -32.5 8.5 -32.5 /
	\plot 9.5 -34.5 9.5 -33.5 /
	
	\plot 12 -36 12 -32 /
	\plot 13 -36 13 -32 /
	\plot 14 -36 14 -32 /
	\plot 15 -36 15 -32 /
	\plot 16 -36 16 -32 /
	
	\plot 12 -36 16 -36 /
	\plot 12 -35 16 -35 /
	\plot 12 -34 16 -34 /
	\plot 12 -33 16 -33 /
	\plot 12 -32 16 -32 /
	
	\plot 12.5 -34.5 12.5 -33.5 /
	\plot 13.5 -32.5 14.5 -32.5 /
	
	\plot 18 -36 18 -32 /
	\plot 19 -36 19 -32 /
	\plot 20 -36 20 -32 /
	\plot 21 -36 21 -32 /
	\plot 22 -36 22 -32 /
	
	\plot 18 -36 22 -36 /
	\plot 18 -35 22 -35 /
	\plot 18 -34 22 -34 /
	\plot 18 -33 22 -33 /
	\plot 18 -32 22 -32 /
	
	\plot 19.5 -32.5 21.5 -32.5 /
	\plot 19.5 -34.5 21.5 -34.5 /
	
	\plot -12 -42 -12 -38 /
	\plot -11 -42 -11 -38 /
	\plot -10 -42 -10 -38 /
	\plot -9 -42 -9 -38 /
	\plot -8 -42 -8 -38 /
	
	\plot -12 -42 -8 -42 /
	\plot -12 -41 -8 -41 /
	\plot -12 -40 -8 -40 /
	\plot -12 -39 -8 -39 /
	\plot -12 -38 -8 -38 /
	
	\plot -8.5 -38.5 -10.5 -38.5 /
	\plot -9.5 -41.5 -9.5 -39.5 /
	
	\plot -6 -42 -6 -38 /
	\plot -5 -42 -5 -38 /
	\plot -4 -42 -4 -38 /
	\plot -3 -42 -3 -38 /
	\plot -2 -42 -2 -38 /
	
	\plot -6 -42 -2 -42 /
	\plot -6 -41 -2 -41 /
	\plot -6 -40 -2 -40 /
	\plot -6 -39 -2 -39 /
	\plot -6 -38 -2 -38 /
	
	\plot -4.5 -40.5 -4.5 -38.5 /
	\plot -5.5 -41.5 -3.5 -41.5 /
	
	\plot 0 -42 0 -38 /
	\plot 1 -42 1 -38 /
	\plot 2 -42 2 -38 /
	\plot 3 -42 3 -38 /
	\plot 4 -42 4 -38 /
	
	\plot 0 -42 4 -42 /
	\plot 0 -41 4 -41 /
	\plot 0 -40 4 -40 /
	\plot 0 -39 4 -39 /
	\plot 0 -38 4 -38 /
	
	\plot 1.5 -40.5 1.5 -38.5 /
	\plot 0.5 -39.5 2.5 -39.5 /
	
	\plot 6 -42 6 -38 /
	\plot 7 -42 7 -38 /
	\plot 8 -42 8 -38 /
	\plot 9 -42 9 -38 /
	\plot 10 -42 10 -38 /
	
	\plot 6 -42 10 -42 /
	\plot 6 -41 10 -41 /
	\plot 6 -40 10 -40 /
	\plot 6 -39 10 -39 /
	\plot 6 -38 10 -38 /
	
	\plot 7.5 -40.5 7.5 -38.5 /
	\plot 9.5 -40.5 9.5 -38.5 /
	
	\plot 12 -42 12 -38 /
	\plot 13 -42 13 -38 /
	\plot 14 -42 14 -38 /
	\plot 15 -42 15 -38 /
	\plot 16 -42 16 -38 /
	
	\plot 12 -42 16 -42 /
	\plot 12 -41 16 -41 /
	\plot 12 -40 16 -40 /
	\plot 12 -39 16 -39 /
	\plot 12 -38 16 -38 /
	
	\plot 13.5 -38.5 15.5 -40.5 /
	\plot 13.5 -40.5 15.5 -38.5 /
	
	\plot 18 -42 18 -38 /
	\plot 19 -42 19 -38 /
	\plot 20 -42 20 -38 /
	\plot 21 -42 21 -38 /
	\plot 22 -42 22 -38 /
	
	\plot 18 -42 22 -42 /
	\plot 18 -41 22 -41 /
	\plot 18 -40 22 -40 /
	\plot 18 -39 22 -39 /
	\plot 18 -38 22 -38 /
	
	\plot 18.5 -40.5 21.5 -40.5 /
	\plot 19.5 -41.5 19.5 -38.5 /
	
	\plot -12 -48 -12 -44 /
	\plot -11 -48 -11 -44 /
	\plot -10 -48 -10 -44 /
	\plot -9 -48 -9 -44 /
	\plot -8 -48 -8 -44 /
	
	\plot -12 -48 -8 -48 /
	\plot -12 -47 -8 -47 /
	\plot -12 -46 -8 -46 /
	\plot -12 -45 -8 -45 /
	\plot -12 -44 -8 -44 /
	
	\plot -10.5 -47.5 -10.5 -44.5 /
	\plot -11.5 -45.5 -8.5 -45.5 /

	\plot -6 -48 -6 -44 /
	\plot -5 -48 -5 -44 /
	\plot -4 -48 -4 -44 /
	\plot -3 -48 -3 -44 /
	\plot -2 -48 -2 -44 /
	
	\plot -6 -48 -2 -48 /
	\plot -6 -47 -2 -47 /
	\plot -6 -46 -2 -46 /
	\plot -6 -45 -2 -45 /
	\plot -6 -44 -2 -44 /
	
	\plot -4.5 -47.5 -4.5 -44.5 /
	\plot -3.5 -47.5 -3.5 -44.5 /
	
	\plot 0 -48 0 -44 /
	\plot 1 -48 1 -44 /
	\plot 2 -48 2 -44 /
	\plot 3 -48 3 -44 /
	\plot 4 -48 4 -44 /
	
	\plot 0 -48 4 -48 /
	\plot 0 -47 4 -47 /
	\plot 0 -46 4 -46 /
	\plot 0 -45 4 -45 /
	\plot 0 -44 4 -44 /
	
	\plot 1.5 -47.5 2.5 -44.5 /
	\plot 1.5 -44.5 2.5 -47.5 /

	\plot 6 -48 6 -44 /
	\plot 7 -48 7 -44 /
	\plot 8 -48 8 -44 /
	\plot 9 -48 9 -44 /
	\plot 10 -48 10 -44 /
	
	\plot 6 -48 10 -48 /
	\plot 6 -47 10 -47 /
	\plot 6 -46 10 -46 /
	\plot 6 -45 10 -45 /
	\plot 6 -44 10 -44 /
	
	\plot 6.5 -44.5 7.5 -44.5 /
	\plot 6.5 -45.5 7.5 -45.5 /
	
	\plot 12 -48 12 -44 /
	\plot 13 -48 13 -44 /
	\plot 14 -48 14 -44 /
	\plot 15 -48 15 -44 /
	\plot 16 -48 16 -44 /
	
	\plot 12 -48 16 -48 /
	\plot 12 -47 16 -47 /
	\plot 12 -46 16 -46 /
	\plot 12 -45 16 -45 /
	\plot 12 -44 16 -44 /
	
	\plot 12.5 -44.5 14.5 -44.5 /
	\plot 12.5 -46.5 14.5 -46.5 /
	
	\plot 18 -48 18 -44 /
	\plot 19 -48 19 -44 /
	\plot 20 -48 20 -44 /
	\plot 21 -48 21 -44 /
	\plot 22 -48 22 -44 /
	
	\plot 18 -48 22 -48 /
	\plot 18 -47 22 -47 /
	\plot 18 -46 22 -46 /
	\plot 18 -45 22 -45 /
	\plot 18 -44 22 -44 /
	
	\plot 18.5 -44.5 21.5 -44.5 /
	\plot 18.5 -47.5 21.5 -47.5 /
	
	\plot -12 -54 -12 -50 /
	\plot -11 -54 -11 -50 /
	\plot -10 -54 -10 -50 /
	\plot -9 -54 -9 -50 /
	\plot -8 -54 -8 -50 /
	
	\plot -12 -54 -8 -54 /
	\plot -12 -53 -8 -53 /
	\plot -12 -52 -8 -52 /
	\plot -12 -51 -8 -51 /
	\plot -12 -50 -8 -50 /
	
	\plot -11.5 -50.5 -11.5 -51.5 /
	\plot -10.5 -50.5 -10.5 -51.5 /
	
	\plot -6 -54 -6 -50 /
	\plot -5 -54 -5 -50 /
	\plot -4 -54 -4 -50 /
	\plot -3 -54 -3 -50 /
	\plot -2 -54 -2 -50 /
	
	\plot -6 -54 -2 -54 /
	\plot -6 -53 -2 -53 /
	\plot -6 -52 -2 -52 /
	\plot -6 -51 -2 -51 /
	\plot -6 -50 -2 -50 /
	
	\plot -5.5 -50.5 -4.5 -51.5 /
	\plot -4.5 -50.5 -5.5 -51.5 /
	
	\plot 0 -54 0 -50 /
	\plot 1 -54 1 -50 /
	\plot 2 -54 2 -50 /
	\plot 3 -54 3 -50 /
	\plot 4 -54 4 -50 /
	
	\plot 0 -54 4 -54 /
	\plot 0 -53 4 -53 /
	\plot 0 -52 4 -52 /
	\plot 0 -51 4 -51 /
	\plot 0 -50 4 -50 /
	
	\plot 0.5 -52.5 0.5 -50.5 /
	\plot 2.5 -52.5 2.5 -50.5 /
	
	\plot 6 -54 6 -50 /
	\plot 7 -54 7 -50 /
	\plot 8 -54 8 -50 /
	\plot 9 -54 9 -50 /
	\plot 10 -54 10 -50 /
	
	\plot 6 -54 10 -54 /
	\plot 6 -53 10 -53 /
	\plot 6 -52 10 -52 /
	\plot 6 -51 10 -51 /
	\plot 6 -50 10 -50 /
	
	\plot 6.5 -52.5 8.5 -50.5 /
	\plot 8.5 -52.5 6.5 -50.5 /
	
	\plot 12 -54 12 -50 /
	\plot 13 -54 13 -50 /
	\plot 14 -54 14 -50 /
	\plot 15 -54 15 -50 /
	\plot 16 -54 16 -50 /
	
	\plot 12 -54 16 -54 /
	\plot 12 -53 16 -53 /
	\plot 12 -52 16 -52 /
	\plot 12 -51 16 -51 /
	\plot 12 -50 16 -50 /
	
	\plot 12.5 -53.5 12.5 -50.5 /
	\plot 15.5 -53.5 15.5 -50.5 /
	
	\plot 18 -54 18 -50 /
	\plot 19 -54 19 -50 /
	\plot 20 -54 20 -50 /
	\plot 21 -54 21 -50 /
	\plot 22 -54 22 -50 /
	
	\plot 18 -54 22 -54 /
	\plot 18 -53 22 -53 /
	\plot 18 -52 22 -52 /
	\plot 18 -51 22 -51 /
	\plot 18 -50 22 -50 /
	
	\plot 18.5 -53.5 21.5 -50.5 /
	\plot 21.5 -53.5 18.5 -50.5 /

	\put {The 60 stable permutation of rank 1 of cycle-type $(2,2)$ in $S([4]^2)$} at 5 -57.5
	
	\put{\bf{Fig.4}} at 5, -61
		
	\endpicture \]

\newpage

	\[  \beginpicture
		
	\setcoordinatesystem units <0.28cm,0.28cm>
	\setplotarea x from 4 to 8, y from 2 to 5
	
	\setlinear
	
	\plot -12 0 -12 4 /
	\plot -11 0 -11 4 /
	\plot -10 0 -10 4 /
	\plot -9 0 -9 4 /
	\plot -8 0 -8 4 /
	
	\plot -12 0 -8 0 /
	\plot -12 1 -8 1 /
	\plot -12 2 -8 2 /
	\plot -12 3 -8 3 /
	\plot -12 4 -8 4 /
	
	\plot -11.5 2.5 -10.5 2.5 /
	\plot -11.5 3.5 -10.5 3.5 /
	\plot -9.5 2.5 -9.5 3.5 /
	
	\plot -6 0 -6 4 /
	\plot -5 0 -5 4 /
	\plot -4 0 -4 4 /
	\plot -3 0 -3 4 /
	\plot -2 0 -2 4 /
	
	\plot -6 0 -2 0 /
	\plot -6 1 -2 1 /
	\plot -6 2 -2 2 /
	\plot -6 3 -2 3 /
	\plot -6 4 -2 4 /
	
	\plot -5.5 3.5 -4.5 3.5 /
	\plot -5.5 2.5 -4.5 2.5 /
	\plot -5.5 1.5 -4.5 1.5 /
	
	\plot 0 0 0 4 /
	\plot 1 0 1 4 /
	\plot 2 0 2 4 /
	\plot 3 0 3 4 /
	\plot 4 0 4 4 /
	
	\plot 0 0 4 0 /
	\plot 0 1 4 1 /
	\plot 0 2 4 2 /
	\plot 0 3 4 3 /
	\plot 0 4 4 4 /
	
	\plot 0.5 3.5 1.5 3.5 /
	\plot 0.5 2.5 1.5 2.5 /
	\plot 0.5 1.5 1.5 0.5 /
	
	\plot 6 0 6 4 /
	\plot 7 0 7 4 /
	\plot 8 0 8 4 /
	\plot 9 0 9 4 /
	\plot 10 0 10 4 /
	
	\plot 6 0 10 0 /
	\plot 6 1 10 1 /
	\plot 6 2 10 2 /
	\plot 6 3 10 3 /
	\plot 6 4 10 4 /
	
	\plot 6.5 2.5 7.5 2.5 /
	\plot 6.5 3.5 7.5 3.5 /
	\plot 7.5 0.5 7.5 1.5 /
	
	\plot 12 0 12 4 /
	\plot 13 0 13 4 /
	\plot 14 0 14 4 /
	\plot 15 0 15 4 /
	\plot 16 0 16 4 /
	
	\plot 12 0 16 0 /
	\plot 12 1 16 1 /
	\plot 12 2 16 2 /
	\plot 12 3 16 3 /
	\plot 12 4 16 4 /
	
	\plot 12.5 3.5 14.5 3.5 /
	\plot 12.5 1.5 14.5 1.5 /
	\plot 13.5 3.5 13.5 1.5 /
	
	\plot 18 0 18 4 /
	\plot 19 0 19 4 /
	\plot 20 0 20 4 /
	\plot 21 0 21 4 /
	\plot 22 0 22 4 /
	
	\plot 18 0 22 0 /
	\plot 18 1 22 1 /
	\plot 18 2 22 2 /
	\plot 18 3 22 3 /
	\plot 18 4 22 4 /
	
	\plot 18.5 1.5 20.5 1.5 /
	\plot 18.5 2.5 20.5 2.5 /
	\plot 18.5 3.5 20.5 3.5 /
	
	\plot 24 0 24 4 /
	\plot 25 0 25 4 /
	\plot 26 0 26 4 /
	\plot 27 0 27 4 /
	\plot 28 0 28 4 /
	
	\plot 24 0 28 0 /
	\plot 24 1 28 1 /
	\plot 24 2 28 2 /
	\plot 24 3 28 3 /
	\plot 24 4 28 4 /
	
	\plot 24.5 3.5 26.5 3.5 /
	\plot 24.5 1.5 26.5 1.5 /
	\plot 24.5 2.5 26.5 0.5 /
	
	\plot 30 0 30 4 /
	\plot 31 0 31 4 /
	\plot 32 0 32 4 /
	\plot 33 0 33 4 /
	\plot 34 0 34 4 /
	
	\plot 30 0 34 0 /
	\plot 30 1 34 1 /
	\plot 30 2 34 2 /
	\plot 30 3 34 3 /
	\plot 30 4 34 4 /
	
	\plot 30.5 3.5 32.5 3.5 /
	\plot 30.5 1.5 32.5 1.5 /
	\plot 32.5 2.5 32.5 0.5 /
	
	\plot -12 -6 -12 -2 /
	\plot -11 -6 -11 -2 /
	\plot -10 -6 -10 -2 /
	\plot -9 -6 -9 -2 /
	\plot -8 -6 -8 -2 /
	
	\plot -12 -6 -8 -6 /
	\plot -12 -5 -8 -5 /
	\plot -12 -4 -8 -4 /
	\plot -12 -3 -8 -3 /
	\plot -12 -2 -8 -2 /
	
	\plot -11.5 -5.5 -8.5 -5.5 /
	\plot -11.5 -2.5 -8.5 -2.5 /
	\plot -10.5 -5.5 -10.5 -2.5 /
	
	\plot -6 -6 -6 -2 /
	\plot -5 -6 -5 -2 /
	\plot -4 -6 -4 -2 /
	\plot -3 -6 -3 -2 /
	\plot -2 -6 -2 -2 /
	
	\plot -6 -6 -2 -6 /
	\plot -6 -5 -2 -5 /
	\plot -6 -4 -2 -4 /
	\plot -6 -3 -2 -3 /
	\plot -6 -2 -2 -2 /
	
	\plot -5.5 -2.5 -2.5 -2.5 /
	\plot -5.5 -3.5 -2.5 -3.5 /
	\plot -5.5 -5.5 -2.5 -5.5 /
	
	\plot 0 -6 0 -2 /
	\plot 1 -6 1 -2 /
	\plot 2 -6 2 -2 /
	\plot 3 -6 3 -2 /
	\plot 4 -6 4 -2 /
	
	\plot 0 -6 4 -6 /
	\plot 0 -5 4 -5 /
	\plot 0 -4 4 -4 /
	\plot 0 -3 4 -3 /
	\plot 0 -2 4 -2 /
	
	\plot 0.5 -3.5 1.5 -3.5 /
	\plot 0.5 -2.5 1.5 -2.5 /
	\plot 3.5 -2.5 3.5 -3.5 /
	
	\plot 6 -6 6 -2 /
	\plot 7 -6 7 -2 /
	\plot 8 -6 8 -2 /
	\plot 9 -6 9 -2 /
	\plot 10 -6 10 -2 /
	
	\plot 6 -6 10 -6 /
	\plot 6 -5 10 -5 /
	\plot 6 -4 10 -4 /
	\plot 6 -3 10 -3 /
	\plot 6 -2 10 -2 /
	
	\plot 6.5 -2.5 7.5 -2.5 /
	\plot 6.5 -3.5 7.5 -3.5 /
	\plot 6.5 -4.5 6.5 -5.5 /
	
	\plot 12 -6 12 -2 /
	\plot 13 -6 13 -2 /
	\plot 14 -6 14 -2 /
	\plot 15 -6 15 -2 /
	\plot 16 -6 16 -2 /
	
	\plot 12 -6 16 -6 /
	\plot 12 -5 16 -5 /
	\plot 12 -4 16 -4 /
	\plot 12 -3 16 -3 /
	\plot 12 -2 16 -2 /
	
	\plot 12.5 -2.5 13.5 -2.5 /
	\plot 12.5 -3.5 13.5 -3.5 /
	\plot 12.5 -5.5 13.5 -4.5 /
	
	\plot 18 -6 18 -2 /
	\plot 19 -6 19 -2 /
	\plot 20 -6 20 -2 /
	\plot 21 -6 21 -2 /
	\plot 22 -6 22 -2 /
	
	\plot 18 -6 22 -6 /
	\plot 18 -5 22 -5 /
	\plot 18 -4 22 -4 /
	\plot 18 -3 22 -3 /
	\plot 18 -2 22 -2 /
	
	\plot 18.5 -2.5 19.5 -2.5 /
	\plot 18.5 -3.5 19.5 -3.5 /
	\plot 18.5 -5.5 19.5 -5.5 /
	
	\plot 24 -6 24 -2 /
	\plot 25 -6 25 -2 /
	\plot 26 -6 26 -2 /
	\plot 27 -6 27 -2 /
	\plot 28 -6 28 -2 /
	
	\plot 24 -6 28 -6 /
	\plot 24 -5 28 -5 /
	\plot 24 -4 28 -4 /
	\plot 24 -3 28 -3 /
	\plot 24 -2 28 -2 /
	
	\plot 24.5 -2.5 26.5 -2.5 /
	\plot 24.5 -4.5 26.5 -4.5 /
	\plot 27.5 -2.5 27.5 -4.5 /
	
	\plot 30 -6 30 -2 /
	\plot 31 -6 31 -2 /
	\plot 32 -6 32 -2 /
	\plot 33 -6 33 -2 /
	\plot 34 -6 34 -2 /
	
	\plot 30 -6 34 -6 /
	\plot 30 -5 34 -5 /
	\plot 30 -4 34 -4 /
	\plot 30 -3 34 -3 /
	\plot 30 -2 34 -2 /
	
	\plot 30.5 -2.5 32.5 -2.5 /
	\plot 30.5 -4.5 32.5 -4.5 /
	\plot 30.5 -5.5 30.5 -3.5 /
	
	\plot -12 -12 -12 -8 /
	\plot -11 -12 -11 -8 /
	\plot -10 -12 -10 -8 /
	\plot -9 -12 -9 -8 /
	\plot -8 -12 -8 -8 /
	
	\plot -12 -12 -8 -12 /
	\plot -12 -11 -8 -11 /
	\plot -12 -10 -8 -10 /
	\plot -12 -9 -8 -9 /
	\plot -12 -8 -8 -8 /
	
	\plot -11.5 -8.5 -9.5 -8.5 /
	\plot -11.5 -10.5 -9.5 -10.5 /
	\plot -11.5 -11.5 -9.5 -9.5 /
	
	\plot -6 -12 -6 -8 /
	\plot -5 -12 -5 -8 /
	\plot -4 -12 -4 -8 /
	\plot -3 -12 -3 -8 /
	\plot -2 -12 -2 -8 /
	
	\plot -6 -12 -2 -12 /
	\plot -6 -11 -2 -11 /
	\plot -6 -10 -2 -10 /
	\plot -6 -9 -2 -9 /
	\plot -6 -8 -2 -8 /
	
	\plot -5.5 -11.5 -3.5 -11.5 /
	\plot -5.5 -10.5 -3.5 -10.5 /
	\plot -5.5 -8.5 -3.5 -8.5 /
	
	\plot 0 -12 0 -8 /
	\plot 1 -12 1 -8 /
	\plot 2 -12 2 -8 /
	\plot 3 -12 3 -8 /
	\plot 4 -12 4 -8 /
	
	\plot 0 -12 4 -12 /
	\plot 0 -11 4 -11 /
	\plot 0 -10 4 -10 /
	\plot 0 -9 4 -9 /
	\plot 0 -8 4 -8 /
	
	\plot 0.5 -11.5 3.5 -11.5 /
	\plot 0.5 -8.5 3.5 -8.5 /
	\plot 2.5 -11.5 2.5 -8.5 /
	
	\plot 6 -12 6 -8 /
	\plot 7 -12 7 -8 /
	\plot 8 -12 8 -8 /
	\plot 9 -12 9 -8 /
	\plot 10 -12 10 -8 /
	
	\plot 6 -12 10 -12 /
	\plot 6 -11 10 -11 /
	\plot 6 -10 10 -10 /
	\plot 6 -9 10 -9 /
	\plot 6 -8 10 -8 /
	
	\plot 6.5 -11.5 9.5 -11.5 /
	\plot 6.5 -8.5 9.5 -8.5 /
	\plot 6.5 -9.5 6.5 -10.5 /
	
	\plot 12 -12 12 -8 /
	\plot 13 -12 13 -8 /
	\plot 14 -12 14 -8 /
	\plot 15 -12 15 -8 /
	\plot 16 -12 16 -8 /
	
	\plot 12 -12 16 -12 /
	\plot 12 -11 16 -11 /
	\plot 12 -10 16 -10 /
	\plot 12 -9 16 -9 /
	\plot 12 -8 16 -8 /
	
	\plot 12.5 -8.5 15.5 -8.5 /
	\plot 12.5 -11.5 15.5 -11.5 /
	\plot 12.5 -9.5 15.5 -10.5 /
	
	\plot 18 -12 18 -8 /
	\plot 19 -12 19 -8 /
	\plot 20 -12 20 -8 /
	\plot 21 -12 21 -8 /
	\plot 22 -12 22 -8 /
	
	\plot 18 -12 22 -12 /
	\plot 18 -11 22 -11 /
	\plot 18 -10 22 -10 /
	\plot 18 -9 22 -9 /
	\plot 18 -8 22 -8 /
	
	\plot 18.5 -11.5 21.5 -11.5 /
	\plot 18.5 -8.5 21.5 -8.5 /
	\plot 21.5 -9.5 21.5 -10.5 /
	
	\plot 24 -12 24 -8 /
	\plot 25 -12 25 -8 /
	\plot 26 -12 26 -8 /
	\plot 27 -12 27 -8 /
	\plot 28 -12 28 -8 /
	
	\plot 24 -12 28 -12 /
	\plot 24 -11 28 -11 /
	\plot 24 -10 28 -10 /
	\plot 24 -9 28 -9 /
	\plot 24 -8 28 -8 /
	
	\plot 24.5 -8.5 25.5 -9.5 /
	\plot 24.5 -9.5 25.5 -8.5 /
	\plot 26.5 -9.5 26.5 -8.5 /
	
	\plot 30 -12 30 -8 /
	\plot 31 -12 31 -8 /
	\plot 32 -12 32 -8 /
	\plot 33 -12 33 -8 /
	\plot 34 -12 34 -8 /
	
	\plot 30 -12 34 -12 /
	\plot 30 -11 34 -11 /
	\plot 30 -10 34 -10 /
	\plot 30 -9 34 -9 /
	\plot 30 -8 34 -8 /
	
	\plot 30.5 -8.5 33.5 -8.5 /
	\plot 30.5 -11.5 33.5 -11.5 /
	\plot 30.5 -10.5 33.5 -9.5 /
	
	\plot -12 -18 -12 -14 /
	\plot -11 -18 -11 -14 /
	\plot -10 -18 -10 -14 /
	\plot -9 -18 -9 -14 /
	\plot -8 -18 -8 -14 /
	
	\plot -12 -18 -8 -18 /
	\plot -12 -17 -8 -17 /
	\plot -12 -16 -8 -16 /
	\plot -12 -15 -8 -15 /
	\plot -12 -14 -8 -14 /
	
	\plot -11.5 -14.5 -8.5 -14.5 /
	\plot -11.5 -16.5 -8.5 -16.5 /
	\plot -11.5 -17.5 -8.5 -17.5 /
	
	\plot -6 -18 -6 -14 /
	\plot -5 -18 -5 -14 /
	\plot -4 -18 -4 -14 /
	\plot -3 -18 -3 -14 /
	\plot -2 -18 -2 -14 /
	
	\plot -6 -18 -2 -18 /
	\plot -6 -17 -2 -17 /
	\plot -6 -16 -2 -16 /
	\plot -6 -15 -2 -15 /
	\plot -6 -14 -2 -14 /
	
	\plot -5.5 -14.5 -4.5 -15.5 /
	\plot -5.5 -15.5 -4.5 -14.5 /
	\plot -2.5 -15.5 -2.5 -14.5 /
	
	\plot 0 -18 0 -14 /
	\plot 1 -18 1 -14 /
	\plot 2 -18 2 -14 /
	\plot 3 -18 3 -14 /
	\plot 4 -18 4 -14 /
	
	\plot 0 -18 4 -18 /
	\plot 0 -17 4 -17 /
	\plot 0 -16 4 -16 /
	\plot 0 -15 4 -15 /
	\plot 0 -14 4 -14 /
	
	\plot 0.5 -16.5 2.5 -14.5 /
	\plot 0.5 -14.5 2.5 -16.5 /
	\plot 1.5 -14.5 1.5 -16.5 /
	
	\plot 6 -18 6 -14 /
	\plot 7 -18 7 -14 /
	\plot 8 -18 8 -14 /
	\plot 9 -18 9 -14 /
	\plot 10 -18 10 -14 /
	
	\plot 6 -18 10 -18 /
	\plot 6 -17 10 -17 /
	\plot 6 -16 10 -16 /
	\plot 6 -15 10 -15 /
	\plot 6 -14 10 -14 /
	
	\plot 6.5 -17.5 9.5 -14.5 /
	\plot 6.5 -14.5 9.5 -17.5 /
	\plot 7.5 -14.5 7.5 -17.5 /
	
	\plot 12 -18 12 -14 /
	\plot 13 -18 13 -14 /
	\plot 14 -18 14 -14 /
	\plot 15 -18 15 -14 /
	\plot 16 -18 16 -14 /
	
	\plot 12 -18 16 -18 /
	\plot 12 -17 16 -17 /
	\plot 12 -16 16 -16 /
	\plot 12 -15 16 -15 /
	\plot 12 -14 16 -14 /
	
	\plot 12.5 -14.5 14.5 -16.5 /
	\plot 12.5 -16.5 14.5 -14.5 /
	\plot 15.5 -16.5 15.5 -14.5 /
	
	\plot 18 -18 18 -14 /
	\plot 19 -18 19 -14 /
	\plot 20 -18 20 -14 /
	\plot 21 -18 21 -14 /
	\plot 22 -18 22 -14 /
	
	\plot 18 -18 22 -18 /
	\plot 18 -17 22 -17 /
	\plot 18 -16 22 -16 /
	\plot 18 -15 22 -15 /
	\plot 18 -14 22 -14 /
	
	\plot 18.5 -17.5 21.5 -14.5 /
	\plot 18.5 -14.5 21.5 -17.5 /
	\plot 20.5 -14.5 20.5 -17.5 /
	
	\plot 24 -18 24 -14 /
	\plot 25 -18 25 -14 /
	\plot 26 -18 26 -14 /
	\plot 27 -18 27 -14 /
	\plot 28 -18 28 -14 /
	
	\plot 24 -18 28 -18 /
	\plot 24 -17 28 -17 /
	\plot 24 -16 28 -16 /
	\plot 24 -15 28 -15 /
	\plot 24 -14 28 -14 /
	
	\plot 25.5 -14.5 26.5 -14.5 /
	\plot 24.5 -15.5 24.5 -16.5 /
	\plot 27.5 -15.5 27.5 -16.5 /
	
	\plot 30 -18 30 -14 /
	\plot 31 -18 31 -14 /
	\plot 32 -18 32 -14 /
	\plot 33 -18 33 -14 /
	\plot 34 -18 34 -14 /
	
	\plot 30 -18 34 -18 /
	\plot 30 -17 34 -17 /
	\plot 30 -16 34 -16 /
	\plot 30 -15 34 -15 /
	\plot 30 -14 34 -14 /
	
	\plot 31.5 -14.5 32.5 -14.5 /
	\plot 31.5 -15.5 32.5 -15.5 /
	\plot 31.5 -16.5 32.5 -16.5 /
	
	\plot -12 -24 -12 -20 /
	\plot -11 -24 -11 -20 /
	\plot -10 -24 -10 -20 /
	\plot -9 -24 -9 -20 /
	\plot -8 -24 -8 -20 /
	
	\plot -12 -24 -8 -24 /
	\plot -12 -23 -8 -23 /
	\plot -12 -22 -8 -22 /
	\plot -12 -21 -8 -21 /
	\plot -12 -20 -8 -20 /
	
	\plot -10.5 -21.5 -9.5 -22.5 /
	\plot -10.5 -22.5 -9.5 -21.5 /
	\plot -10.5 -20.5 -9.5 -20.5 /
	
	\plot -6 -24 -6 -20 /
	\plot -5 -24 -5 -20 /
	\plot -4 -24 -4 -20 /
	\plot -3 -24 -3 -20 /
	\plot -2 -24 -2 -20 /
	
	\plot -6 -24 -2 -24 /
	\plot -6 -23 -2 -23 /
	\plot -6 -22 -2 -22 /
	\plot -6 -21 -2 -21 /
	\plot -6 -20 -2 -20 /
	
	\plot -5.5 -23.5 -5.5 -21.5 /
	\plot -3.5 -23.5 -3.5 -21.5 /
	\plot -4.5 -20.5 -2.5 -20.5 /
	
	\plot 0 -24 0 -20 /
	\plot 1 -24 1 -20 /
	\plot 2 -24 2 -20 /
	\plot 3 -24 3 -20 /
	\plot 4 -24 4 -20 /
	
	\plot 0 -24 4 -24 /
	\plot 0 -23 4 -23 /
	\plot 0 -22 4 -22 /
	\plot 0 -21 4 -21 /
	\plot 0 -20 4 -20 /
	
	\plot 1.5 -20.5 3.5 -20.5 /
	\plot 1.5 -21.5 3.5 -21.5 /
	\plot 1.5 -23.5 3.5 -23.5 /
	
	\plot 6 -24 6 -20 /
	\plot 7 -24 7 -20 /
	\plot 8 -24 8 -20 /
	\plot 9 -24 9 -20 /
	\plot 10 -24 10 -20 /
	
	\plot 6 -24 10 -24 /
	\plot 6 -23 10 -23 /
	\plot 6 -22 10 -22 /
	\plot 6 -21 10 -21 /
	\plot 6 -20 10 -20 /
	
	\plot 7.5 -23.5 9.5 -21.5 /
	\plot 7.5 -21.5 9.5 -23.5 /
	\plot 7.5 -20.5 9.5 -20.5 /
	
	\plot 12 -24 12 -20 /
	\plot 13 -24 13 -20 /
	\plot 14 -24 14 -20 /
	\plot 15 -24 15 -20 /
	\plot 16 -24 16 -20 /
	
	\plot 12 -24 16 -24 /
	\plot 12 -23 16 -23 /
	\plot 12 -22 16 -22 /
	\plot 12 -21 16 -21 /
	\plot 12 -20 16 -20 /
	
	\plot 12.5 -21.5 14.5 -21.5 /
	\plot 13.5 -20.5 13.5 -22.5 /
	\plot 15.5 -20.5 15.5 -22.5 /
	
	\plot 18 -24 18 -20 /
	\plot 19 -24 19 -20 /
	\plot 20 -24 20 -20 /
	\plot 21 -24 21 -20 /
	\plot 22 -24 22 -20 /
	
	\plot 18 -24 22 -24 /
	\plot 18 -23 22 -23 /
	\plot 18 -22 22 -22 /
	\plot 18 -21 22 -21 /
	\plot 18 -20 22 -20 /
	
	\plot 19.5 -23.5 21.5 -23.5 /
	\plot 19.5 -21.5 21.5 -21.5 /
	\plot 19.5 -20.5 21.5 -22.5 /
	
	\plot 24 -24 24 -20 /
	\plot 25 -24 25 -20 /
	\plot 26 -24 26 -20 /
	\plot 27 -24 27 -20 /
	\plot 28 -24 28 -20 /
	
	\plot 24 -24 28 -24 /
	\plot 24 -23 28 -23 /
	\plot 24 -22 28 -22 /
	\plot 24 -21 28 -21 /
	\plot 24 -20 28 -20 /
	
	\plot 24.5 -22.5 27.5 -22.5 /
	\plot 25.5 -20.5 25.5 -23.5 /
	\plot 26.5 -20.5 26.5 -23.5 /
	
	\plot 30 -24 30 -20 /
	\plot 31 -24 31 -20 /
	\plot 32 -24 32 -20 /
	\plot 33 -24 33 -20 /
	\plot 34 -24 34 -20 /
	
	\plot 30 -24 34 -24 /
	\plot 30 -23 34 -23 /
	\plot 30 -22 34 -22 /
	\plot 30 -21 34 -21 /
	\plot 30 -20 34 -20 /
	
	\plot 31.5 -20.5 32.5 -20.5 /
	\plot 31.5 -21.5 31.5 -22.5 /
	\plot 32.5 -21.5 32.5 -22.5 /
	
	\plot -12 -30 -12 -26 /
	\plot -11 -30 -11 -26 /
	\plot -10 -30 -10 -26 /
	\plot -9 -30 -9 -26 /
	\plot -8 -30 -8 -26 /
	
	\plot -12 -30 -8 -30 /
	\plot -12 -29 -8 -29 /
	\plot -12 -28 -8 -28 /
	\plot -12 -27 -8 -27 /
	\plot -12 -26 -8 -26 /
	
	\plot -10.5 -26.5 -9.5 -26.5 /
	\plot -10.5 -29.5 -9.5 -29.5 /
	\plot -8.5 -27.5 -8.5 -28.5 /
	
	\plot -6 -30 -6 -26 /
	\plot -5 -30 -5 -26 /
	\plot -4 -30 -4 -26 /
	\plot -3 -30 -3 -26 /
	\plot -2 -30 -2 -26 /
	
	\plot -6 -30 -2 -30 /
	\plot -6 -29 -2 -29 /
	\plot -6 -28 -2 -28 /
	\plot -6 -27 -2 -27 /
	\plot -6 -26 -2 -26 /
	
	\plot -4.5 -26.5 -2.5 -26.5 /
	\plot -4.5 -27.5 -4.5 -29.5 /
	\plot -2.5 -27.5 -2.5 -29.5 /
	
	\plot 0 -30 0 -26 /
	\plot 1 -30 1 -26 /
	\plot 2 -30 2 -26 /
	\plot 3 -30 3 -26 /
	\plot 4 -30 4 -26 /
	
	\plot 0 -30 4 -30 /
	\plot 0 -29 4 -29 /
	\plot 0 -28 4 -28 /
	\plot 0 -27 4 -27 /
	\plot 0 -26 4 -26 /
	
	\plot 1.5 -26.5 3.5 -26.5 /
	\plot 1.5 -28.5 3.5 -28.5 /
	\plot 2.5 -27.5 2.5 -29.5 /
	
	\plot 6 -30 6 -26 /
	\plot 7 -30 7 -26 /
	\plot 8 -30 8 -26 /
	\plot 9 -30 9 -26 /
	\plot 10 -30 10 -26 /
	
	\plot 6 -30 10 -30 /
	\plot 6 -29 10 -29 /
	\plot 6 -28 10 -28 /
	\plot 6 -27 10 -27 /
	\plot 6 -26 10 -26 /
	
	\plot 6.5 -29.5 8.5 -29.5 /
	\plot 7.5 -28.5 7.5 -26.5 /
	\plot 9.5 -28.5 9.5 -26.5 /
	
	\plot 12 -30 12 -26 /
	\plot 13 -30 13 -26 /
	\plot 14 -30 14 -26 /
	\plot 15 -30 15 -26 /
	\plot 16 -30 16 -26 /
	
	\plot 12 -30 16 -30 /
	\plot 12 -29 16 -29 /
	\plot 12 -28 16 -28 /
	\plot 12 -27 16 -27 /
	\plot 12 -26 16 -26 /
	
	\plot 13.5 -29.5 15.5 -29.5 /
	\plot 13.5 -27.5 15.5 -27.5 /
	\plot 13.5 -28.5 13.5 -26.5 /
	
	\plot 18 -30 18 -26 /
	\plot 19 -30 19 -26 /
	\plot 20 -30 20 -26 /
	\plot 21 -30 21 -26 /
	\plot 22 -30 22 -26 /
	
	\plot 18 -30 22 -30 /
	\plot 18 -29 22 -29 /
	\plot 18 -28 22 -28 /
	\plot 18 -27 22 -27 /
	\plot 18 -26 22 -26 /
	
	\plot 18.5 -27.5 21.5 -27.5 /
	\plot 19.5 -29.5 19.5 -26.5 /
	\plot 20.5 -29.5 20.5 -26.5 /
	
	\plot 24 -30 24 -26 /
	\plot 25 -30 25 -26 /
	\plot 26 -30 26 -26 /
	\plot 27 -30 27 -26 /
	\plot 28 -30 28 -26 /
	
	\plot 24 -30 28 -30 /
	\plot 24 -29 28 -29 /
	\plot 24 -28 28 -28 /
	\plot 24 -27 28 -27 /
	\plot 24 -26 28 -26 /
	
	\plot 25.5 -28.5 26.5 -28.5 /
	\plot 25.5 -27.5 26.5 -27.5 /
	\plot 25.5 -29.5 25.5 -26.5 /
	
	\plot 30 -30 30 -26 /
	\plot 31 -30 31 -26 /
	\plot 32 -30 32 -26 /
	\plot 33 -30 33 -26 /
	\plot 34 -30 34 -26 /
	
	\plot 30 -30 34 -30 /
	\plot 30 -29 34 -29 /
	\plot 30 -28 34 -28 /
	\plot 30 -27 34 -27 /
	\plot 30 -26 34 -26 /
	
	\plot 32.5 -26.5 33.5 -26.5 /
	\plot 32.5 -27.5 33.5 -27.5 /
	\plot 30.5 -29.5 30.5 -28.5 /
	
	\plot -12 -36 -12 -32 /
	\plot -11 -36 -11 -32 /
	\plot -10 -36 -10 -32 /
	\plot -9 -36 -9 -32 /
	\plot -8 -36 -8 -32 /
	
	\plot -12 -36 -8 -36 /
	\plot -12 -35 -8 -35 /
	\plot -12 -34 -8 -34 /
	\plot -12 -33 -8 -33 /
	\plot -12 -32 -8 -32 /
	
	\plot -9.5 -35.5 -9.5 -34.5 /
	\plot -8.5 -35.5 -8.5 -34.5 /
	\plot -8.5 -32.5 -9.5 -32.5 /
	
	\plot -6 -36 -6 -32 /
	\plot -5 -36 -5 -32 /
	\plot -4 -36 -4 -32 /
	\plot -3 -36 -3 -32 /
	\plot -2 -36 -2 -32 /
	
	\plot -6 -36 -2 -36 /
	\plot -6 -35 -2 -35 /
	\plot -6 -34 -2 -34 /
	\plot -6 -33 -2 -33 /
	\plot -6 -32 -2 -32 /
	
	\plot -3.5 -32.5 -3.5 -33.5 /
	\plot -2.5 -32.5 -2.5 -33.5 /
	\plot -5.5 -34.5 -4.5 -34.5 /
	
	\plot 0 -36 0 -32 /
	\plot 1 -36 1 -32 /
	\plot 2 -36 2 -32 /
	\plot 3 -36 3 -32 /
	\plot 4 -36 4 -32 /
	
	\plot 0 -36 4 -36 /
	\plot 0 -35 4 -35 /
	\plot 0 -34 4 -34 /
	\plot 0 -33 4 -33 /
	\plot 0 -32 4 -32 /
	
	\plot 2.5 -35.5 3.5 -35.5 /
	\plot 2.5 -34.5 3.5 -34.5 /
	\plot 2.5 -32.5 3.5 -33.5 /
	
	\plot 6 -36 6 -32 /
	\plot 7 -36 7 -32 /
	\plot 8 -36 8 -32 /
	\plot 9 -36 9 -32 /
	\plot 10 -36 10 -32 /
	
	\plot 6 -36 10 -36 /
	\plot 6 -35 10 -35 /
	\plot 6 -34 10 -34 /
	\plot 6 -33 10 -33 /
	\plot 6 -32 10 -32 /
	
	\plot 8.5 -35.5 9.5 -35.5 /
	\plot 8.5 -34.5 9.5 -34.5 /
	\plot 8.5 -33.5 9.5 -32.5 /
	
	\plot 12 -36 12 -32 /
	\plot 13 -36 13 -32 /
	\plot 14 -36 14 -32 /
	\plot 15 -36 15 -32 /
	\plot 16 -36 16 -32 /
	
	\plot 12 -36 16 -36 /
	\plot 12 -35 16 -35 /
	\plot 12 -34 16 -34 /
	\plot 12 -33 16 -33 /
	\plot 12 -32 16 -32 /
	
	\plot 14.5 -35.5 15.5 -35.5 /
	\plot 14.5 -34.5 15.5 -34.5 /
	\plot 15.5 -33.5 15.5 -32.5 /
	
	\plot 18 -36 18 -32 /
	\plot 19 -36 19 -32 /
	\plot 20 -36 20 -32 /
	\plot 21 -36 21 -32 /
	\plot 22 -36 22 -32 /
	
	\plot 18 -36 22 -36 /
	\plot 18 -35 22 -35 /
	\plot 18 -34 22 -34 /
	\plot 18 -33 22 -33 /
	\plot 18 -32 22 -32 /
	
	\plot 19.5 -34.5 20.5 -34.5 /
	\plot 19.5 -33.5 20.5 -33.5 /
	\plot 19.5 -32.5 20.5 -35.5 /
	
	\plot 24 -36 24 -32 /
	\plot 25 -36 25 -32 /
	\plot 26 -36 26 -32 /
	\plot 27 -36 27 -32 /
	\plot 28 -36 28 -32 /
	
	\plot 24 -36 28 -36 /
	\plot 24 -35 28 -35 /
	\plot 24 -34 28 -34 /
	\plot 24 -33 28 -33 /
	\plot 24 -32 28 -32 /
	
	\plot 26.5 -32.5 27.5 -32.5 /
	\plot 26.5 -33.5 27.5 -33.5 /
	\plot 25.5 -34.5 25.5 -35.5 /
	
	\plot 30 -36 30 -32 /
	\plot 31 -36 31 -32 /
	\plot 32 -36 32 -32 /
	\plot 33 -36 33 -32 /
	\plot 34 -36 34 -32 /
	
	\plot 30 -36 34 -36 /
	\plot 30 -35 34 -35 /
	\plot 30 -34 34 -34 /
	\plot 30 -33 34 -33 /
	\plot 30 -32 34 -32 /
	
	\plot 32.5 -32.5 33.5 -32.5 /
	\plot 32.5 -34.5 33.5 -34.5 /
	\plot 32.5 -35.5 33.5 -35.5 /
	
	\plot -12 -42 -12 -38 /
	\plot -11 -42 -11 -38 /
	\plot -10 -42 -10 -38 /
	\plot -9 -42 -9 -38 /
	\plot -8 -42 -8 -38 /
	
	\plot -12 -42 -8 -42 /
	\plot -12 -41 -8 -41 /
	\plot -12 -40 -8 -40 /
	\plot -12 -39 -8 -39 /
	\plot -12 -38 -8 -38 /
	
	\plot -9.5 -41.5 -8.5 -40.5 /
	\plot -9.5 -40.5 -8.5 -41.5 /
	\plot -9.5 -38.5 -8.5 -38.5 /
	
	\plot -6 -42 -6 -38 /
	\plot -5 -42 -5 -38 /
	\plot -4 -42 -4 -38 /
	\plot -3 -42 -3 -38 /
	\plot -2 -42 -2 -38 /
	
	\plot -6 -42 -2 -42 /
	\plot -6 -41 -2 -41 /
	\plot -6 -40 -2 -40 /
	\plot -6 -39 -2 -39 /
	\plot -6 -38 -2 -38 /
	
	\plot -3.5 -38.5 -3.5 -39.5 /
	\plot -2.5 -38.5 -2.5 -39.5 /
	\plot -5.5 -41.5 -4.5 -41.5 /
	
	\plot 0 -42 0 -38 /
	\plot 1 -42 1 -38 /
	\plot 2 -42 2 -38 /
	\plot 3 -42 3 -38 /
	\plot 4 -42 4 -38 /
	
	\plot 0 -42 4 -42 /
	\plot 0 -41 4 -41 /
	\plot 0 -40 4 -40 /
	\plot 0 -39 4 -39 /
	\plot 0 -38 4 -38 /
	
	\plot 2.5 -41.5 3.5 -41.5 /
	\plot 2.5 -40.5 3.5 -40.5 /
	\plot 2.5 -39.5 2.5 -38.5 /
	
	\plot 6 -42 6 -38 /
	\plot 7 -42 7 -38 /
	\plot 8 -42 8 -38 /
	\plot 9 -42 9 -38 /
	\plot 10 -42 10 -38 /
	
	\plot 6 -42 10 -42 /
	\plot 6 -41 10 -41 /
	\plot 6 -40 10 -40 /
	\plot 6 -39 10 -39 /
	\plot 6 -38 10 -38 /
	
	\plot 7.5 -40.5 8.5 -40.5 /
	\plot 7.5 -39.5 8.5 -39.5 /
	\plot 7.5 -41.5 8.5 -38.5 /
	
	\plot 12 -42 12 -38 /
	\plot 13 -42 13 -38 /
	\plot 14 -42 14 -38 /
	\plot 15 -42 15 -38 /
	\plot 16 -42 16 -38 /
	
	\plot 12 -42 16 -42 /
	\plot 12 -41 16 -41 /
	\plot 12 -40 16 -40 /
	\plot 12 -39 16 -39 /
	\plot 12 -38 16 -38 /
	
	\plot 13.5 -40.5 14.5 -40.5 /
	\plot 13.5 -39.5 14.5 -39.5 /
	\plot 14.5 -38.5 14.5 -41.5 /
	
	\plot 18 -42 18 -38 /
	\plot 19 -42 19 -38 /
	\plot 20 -42 20 -38 /
	\plot 21 -42 21 -38 /
	\plot 22 -42 22 -38 /
	
	\plot 18 -42 22 -42 /
	\plot 18 -41 22 -41 /
	\plot 18 -40 22 -40 /
	\plot 18 -39 22 -39 /
	\plot 18 -38 22 -38 /
	
	\plot 19.5 -41.5 21.5 -41.5 /
	\plot 19.5 -39.5 21.5 -39.5 /
	\plot 19.5 -40.5 21.5 -38.5 /
	
	\plot 24 -42 24 -38 /
	\plot 25 -42 25 -38 /
	\plot 26 -42 26 -38 /
	\plot 27 -42 27 -38 /
	\plot 28 -42 28 -38 /
	
	\plot 24 -42 28 -42 /
	\plot 24 -41 28 -41 /
	\plot 24 -40 28 -40 /
	\plot 24 -39 28 -39 /
	\plot 24 -38 28 -38 /
	
	\plot 25.5 -39.5 26.5 -39.5 /
	\plot 25.5 -40.5 26.5 -40.5 /
	\plot 27.5 -39.5 27.5 -40.5 /
	
	\plot 30 -42 30 -38 /
	\plot 31 -42 31 -38 /
	\plot 32 -42 32 -38 /
	\plot 33 -42 33 -38 /
	\plot 34 -42 34 -38 /
	
	\plot 30 -42 34 -42 /
	\plot 30 -41 34 -41 /
	\plot 30 -40 34 -40 /
	\plot 30 -39 34 -39 /
	\plot 30 -38 34 -38 /
	
	\plot 31.5 -41.5 33.5 -41.5 /
	\plot 31.5 -39.5 33.5 -39.5 /
	\plot 32.5 -39.5 32.5 -41.5 /
	
	\plot -12 -48 -12 -44 /
	\plot -11 -48 -11 -44 /
	\plot -10 -48 -10 -44 /
	\plot -9 -48 -9 -44 /
	\plot -8 -48 -8 -44 /
	
	\plot -12 -48 -8 -48 /
	\plot -12 -47 -8 -47 /
	\plot -12 -46 -8 -46 /
	\plot -12 -45 -8 -45 /
	\plot -12 -44 -8 -44 /
	
	\plot -10.5 -47.5 -8.5 -47.5 /
	\plot -10.5 -45.5 -8.5 -45.5 /
	\plot -8.5 -46.5 -8.5 -44.5 /
	
	\plot -6 -48 -6 -44 /
	\plot -5 -48 -5 -44 /
	\plot -4 -48 -4 -44 /
	\plot -3 -48 -3 -44 /
	\plot -2 -48 -2 -44 /
	
	\plot -6 -48 -2 -48 /
	\plot -6 -47 -2 -47 /
	\plot -6 -46 -2 -46 /
	\plot -6 -45 -2 -45 /
	\plot -6 -44 -2 -44 /
	
	\plot -4.5 -47.5 -3.5 -47.5 /
	\plot -4.5 -46.5 -3.5 -46.5 /
	\plot -4.5 -45.5 -3.5 -45.5 /
	
	\plot 0 -48 0 -44 /
	\plot 1 -48 1 -44 /
	\plot 2 -48 2 -44 /
	\plot 3 -48 3 -44 /
	\plot 4 -48 4 -44 /
	
	\plot 0 -48 4 -48 /
	\plot 0 -47 4 -47 /
	\plot 0 -46 4 -46 /
	\plot 0 -45 4 -45 /
	\plot 0 -44 4 -44 /
	
	\plot 1.5 -47.5 3.5 -47.5 /
	\plot 1.5 -46.5 3.5 -46.5 /
	\plot 1.5 -45.5 3.5 -45.5 /

	\plot 6 -48 6 -44 /
	\plot 7 -48 7 -44 /
	\plot 8 -48 8 -44 /
	\plot 9 -48 9 -44 /
	\plot 10 -48 10 -44 /
	
	\plot 6 -48 10 -48 /
	\plot 6 -47 10 -47 /
	\plot 6 -46 10 -46 /
	\plot 6 -45 10 -45 /
	\plot 6 -44 10 -44 /
	
	\plot 7.5 -46.5 8.5 -45.5 /
	\plot 7.5 -45.5 8.5 -46.5 /
	\plot 9.5 -45.5 9.5 -46.5 /
	
	\plot 12 -48 12 -44 /
	\plot 13 -48 13 -44 /
	\plot 14 -48 14 -44 /
	\plot 15 -48 15 -44 /
	\plot 16 -48 16 -44 /
	
	\plot 12 -48 16 -48 /
	\plot 12 -47 16 -47 /
	\plot 12 -46 16 -46 /
	\plot 12 -45 16 -45 /
	\plot 12 -44 16 -44 /
	
	\plot 13.5 -47.5 15.5 -45.5 /
	\plot 13.5 -45.5 15.5 -47.5 /
	\plot 14.5 -45.5 14.5 -47.5 /
	
	\plot 18 -48 18 -44 /
	\plot 19 -48 19 -44 /
	\plot 20 -48 20 -44 /
	\plot 21 -48 21 -44 /
	\plot 22 -48 22 -44 /
	
	\plot 18 -48 22 -48 /
	\plot 18 -47 22 -47 /
	\plot 18 -46 22 -46 /
	\plot 18 -45 22 -45 /
	\plot 18 -44 22 -44 /
	
	\plot 20.5 -47.5 20.5 -46.5 /
	\plot 21.5 -47.5 21.5 -46.5 /
	\plot 20.5 -45.5 21.5 -45.5 /

	\plot 24 -48 24 -44 /
	\plot 25 -48 25 -44 /
	\plot 26 -48 26 -44 /
	\plot 27 -48 27 -44 /
	\plot 28 -48 28 -44 /
	
	\plot 24 -48 28 -48 /
	\plot 24 -47 28 -47 /
	\plot 24 -46 28 -46 /
	\plot 24 -45 28 -45 /
	\plot 24 -44 28 -44 /
	
	\plot 26.5 -47.5 27.5 -47.5 /
	\plot 26.5 -46.5 27.5 -46.5 /
	\plot 26.5 -45.5 27.5 -45.5 /
	
	\plot 30 -48 30 -44 /
	\plot 31 -48 31 -44 /
	\plot 32 -48 32 -44 /
	\plot 33 -48 33 -44 /
	\plot 34 -48 34 -44 /
	
	\plot 30 -48 34 -48 /
	\plot 30 -47 34 -47 /
	\plot 30 -46 34 -46 /
	\plot 30 -45 34 -45 /
	\plot 30 -44 34 -44 /
	
	\plot 32.5 -47.5 33.5 -46.5 /
	\plot 32.5 -46.5 33.5 -47.5 /
	\plot 32.5 -45.5 33.5 -45.5 /

	\put {The stable permutations of rank 1 of cycle-type $(2,2,2)$ in $S([4]^2)$ up to transposition} at 12 -52
	
	\put{\bf{Fig.5}} at 9, -56
	
	\endpicture \]

\newpage
\noindent
{\bf Acknowledgments:}
FB acknowledges the MIUR Excellence Department Projects 
CUP E83C18000100006 and E83C23000330006 for financial support.
RC acknowledges financial support from Sapienza University of Rome (Progetti di Ateneo 2023) and GNAMPA-INDAM Project ``Operator Algebras and Infinite Quantum Systems'' CUP E53C23001670001.

\medskip
\noindent 
{\bf Data availability:} Data sharing is not applicable in this article as no new data was created or analyzed in this study.

\bigskip

\bigskip
{\parindent=0pt Addresses of the authors:\\
	
	\smallskip
	
	\noindent Francesco Brenti, Dipartimento di Matematica, Universit\`a di Roma 
	``Tor Vergata'', 
	Via della Ricerca Scientifica 1, I-00133 Roma, Italy.\\ E-mail: brenti@mat.uniroma2.it 
	
	\medskip
	
	\noindent Roberto Conti, Dipartimento di Scienze di Base e Applicate per l'Ingegneria, \\
	Sapienza Universit\`a di Roma, 
	Via A. Scarpa 16,
	I-00161 Roma, Italy.
	\\ E-mail: roberto.conti@sbai.uniroma1.it
	
	\medskip
	
	\noindent Gleb Nenashev, 
	Department of Mathematics and Computer Science, St. Petersburg State University, 14 line of the VO 29B, 199178 St. Petersburg, Russia,
	\\ E-mail: g.nenashev@spbu.ru   
	\par}

\end{document}